\newcommand{\embed}{\hookrightarrow }
\newcommand{\loc}{\mathrm{loc}}
\newcommand{\step}{\mathrm{step}}
\newcommand\eps{\varepsilon}
\newcommand{\toup}{\nearrow}
\newcommand{\then}{\Rightarrow}
\newcommand\reallywidetilde[1]{\ThisStyle{%
  \setbox0=\hbox{$\SavedStyle#1$}%
  \stackengine{-.1\LMpt}{$\SavedStyle#1$}{%
    \stretchto{\scaleto{\SavedStyle\mkern.2mu\AC}{.5150\wd0}}{.6\ht0}%
  }{O}{c}{F}{T}{S}%
}}
\def\test#1{$%
  \reallywidetilde{#1}\,
$\par}
\newcommand\newtilde[1]{\mbox{\test{#1}}}
\newcommand\etazero{\bm{\eta}_0} 
\newcommand\dela[1]{}
\numberwithin{equation}{section}
\newtheoremstyle{standard}
{16pt} 
{16pt} 
{} 
{} 
{\bfseries}
{} 
{ } 
{{\thmname{#1~}}{\thmnumber{#2.}}\thmnote{~(#3)}} 
\newtheoremstyle{kursiv}
{16pt} 
{16pt} 
{\itshape} 
{} 
{\bfseries}
{} 
{ } 
{{\thmname{#1~}}{\thmnumber{#2.}}\thmnote{~(#3)}} 
\theoremstyle{standard}
\newtheorem{defn} [subsection]{Definition}
\newtheorem{rem} [subsection]{Remark}
\newtheorem{remark} [subsection]{Remark}
\newtheorem{setup} [subsection]{}
\newtheorem{hh}[subsection]{Hypothesis}
\theoremstyle{kursiv}
\newtheorem{thm}[subsection]{Theorem}
\newtheorem{prop} [subsection]{Proposition}
\newtheorem{cor} [subsection]{Corollary}
\newtheorem{lem} [subsection]{Lemma}
\newtheorem{lemma}[subsection]{Lemma}
\newcommand{\cA}{\ensuremath{\mathscr{A}}}
\newcommand{\cF}{\ensuremath{\mathscr{F}}}
\newcommand{\cG}{\ensuremath{\mathscr{G}}}
\newcommand{\cL}{\ensuremath{\mathscr{L}}}
\newcommand{\cP}{\ensuremath{\mathscr{P}}}
\newcommand{\E}{\ensuremath{\mathbb{E}}}
\newcommand{\R}{\ensuremath{\mathbb{R}}}
\newcommand{\N}{\ensuremath{\mathbb{N}}}
\DeclareMathOperator{\id}{id}
\DeclareMathOperator{\Diff}{Diff}
\newcommand{\Dmu}[1][s]{\mathrm{Diff}^{#1}_\mu(K)}
\newcommand{\tr}{\mathrm{tr}}
\newcommand{\rK}{\mathscr{H}}
\newcommand{\VFsmu}[1][s]{\mathfrak{X}^{#1}_\mu (K)}
\newcommand{\Frechet}{Fr\'{e}chet }
\newcommand{\coloneq}{\colonequals}
\date{}
\title{The Ebin-Marsden toolbox for stochastic PDEs:\\stochastic Euler equations}
\author{Zdzis{\l}aw Brze{\'z}niak\footnote{Department of Mathematics, University of York, YO10 5DD Heslington, York, United Kingdom: \href{mailto:zdzislaw.brzezniak@york.ac.uk}{zdzislaw.brzezniak@york.ac.uk}}, Mario Maurelli\footnote{Dipartimento di Matematica, Universit\`a di Pisa, Largo Bruno Pontecorvo 5, 56127 Pisa, Italy: \href{mailto:mario.maurelli@unipi.it}{mario.maurelli@unipi.it}} and  Alexander Schmeding\footnote{Institut for matematiske fag, NTNU Trondheim, Alfred Getz' vei 1
Gl{\o}shaugen, 7034 Trondheim, Norway: \href{mailto:alexander.schmeding@ntnu.no}{alexander.schmeding@ntnu.no}}}
\begin{document}

\maketitle

\begin{abstract}
The Ebin-Marsden theory is a powerful geometric framework for many PDEs from fluid dynamics. In this paper we provide a toolbox to apply the Ebin-Marsden approach to stochastic PDEs, combining tools from infinite-dimensional geometry and stochastic analysis. We showcase our approach in the context of incompressible Euler equation for an ideal fluid with additive noise. Among our main results there are: (i) local well-posedness of maximal solutions by using the Ebin-Marsden framework; (ii) a stochastic version of the celebrated no-loss-no-gain theorem.
\end{abstract}

\textbf{MSC2020:}
35Q31 (primary),  
60H15, 
58J65, 
76B03, 
58D15, 
58B25 
\\[2.3mm]

\textbf{Keywords:}
stochastic Euler equations, manifolds of Sobolev mappings, Ebin-Marsden theory, stochastic differential equations on Hilbert manifolds, no-loss-no-gain theorem

\tableofcontents

\section{Introduction}

In recent years, there has been a renewed interest in stochastic or more generally, rough path versions of partial differential equations (PDE) for fluid dynamics, see e.g. \cite{Glatt-Holtz-Vicol-2014, CFH17,Crisan+Holm+Leahy+Nilssen_2022a,Crisan+Holm+Leahy+Nilssen_2022b,BrzFlaMau2016,LaC23}. Most prominently, stochastic versions of  Euler's equations for an ideal fluid have been investigated. Euler`s equations are a prime example of an equation which can be treated with geometric methods. The geometric approach due to Arnold \cite{Ar1966} seeks to rewrite the PDE as geodesic equations (whence as second order ordinary differential equations (ODE)) on an infinite-dimensional manifold. Following Arnold, Ebin and Marsden made these ideas rigorous and establish a basic solution theory for Euler's equations via geometric methods, cf. e.g. the introduction \cite{Ebin15} .
This circle of ideas is the foundation of geometric hydrodynamics. Among the important classical results of geometric hydrodynamics is the celebrated no-loss-no-gain theorem from \cite{EM70}. Roughly put, the no-loss-no-gain theorem states that the solution does not essentially depend on the Sobolev index chosen for the infinite-dimensional manifold on which the equation is solved, as long as the equation makes sense on a manifold of $H^{r}$-Sobolev mappings, a solution to it also solves the equation on the $H^s$-Sobolev maps  (with $s<r$ large enough to make sense) and there is neither loss nor gain in the existence time of the solution. Hence the geometric analysis is independent of the choice of manifold.

A natural question is whether a similar programme can be carried out in the stochastic setting, and more specifically for stochastic PDEs. Let us point out that, in the case of stochastic flows of diffeomorphisms (not stochastic PDEs), the Ebin-Marsden programme was realized by Elworthy \cite{Elw82} and Brze{\'z}niak and Elworthy in \cite{Brz+Elw_1996}. 
In \cite{MMS19} this Ebin-Marsden approach to stochastic PDEs has been applied to a stochastic version of the Euler equations. While \cite{MMS19} is a proof of concept, it does not investigate the finer points of stochastic analysis on infinite-dimensional manifolds. For example, a stochastic analogue of the no-loss-no-gain theorem is still missing. We remedy this in the current paper and expand the analytic toolbox for stochastic differential equations on infinite-dimensional (Hilbert) manifolds. As a foundation we give a self-contained presentation of foundational results for SDEs on Hilbert manifolds, such as differentiability in probability with respect to initial conditions. Since the manifolds of interest are infinite dimensional, special care has to be taken in the stochastic analysis (for example to avoid compactness arguments). Then we apply these results in the Ebin-Marsden framework and obtain well-posedness of maximal solution and a stochastic version of the celebrated no-loss-no-gain results. This Ebin-Marsden toolbox has several advantages, some of which will be explored in future papers: it yields SDEs on infinite-dimensional manifolds with \textit{smooth} coefficients; it can be applied to several other stochastic PDEs and with other types of noises; it works for any choice of the underlying manifold where the equation evolves.

Our first main result is the existence and uniqueness of a local \textit{maximal} solution to the Euler equations in Lagrangian form. To understand the statement let us fix notation: Let $K$ be a compact manifold of dimension $d>0$ together with a Riemannian volume form $\mu$ on $K$. We consider a stochastic version of the Euler equation on $K$ for an ideal fluid. Formulated in Lagrangian form, this is the equation for position $\Phi$ and velocity $V$ of fluid particles contained in $K$. Since the motion of fluid particles can be modelled as a flow of diffeomorphisms on $K$, the Euler equation in Lagrangian form is a stochastic differential equation in the infinite dimensional manifold of $H^s$-Sobolev diffeomorphisms and reads
\begin{equation}\label{stoch_euler1}
 \mathrm{d}\eta_t = B(\eta_t)\mathrm{d}t + \Sigma (\eta_t) \bullet \mathrm{d} W_t
\end{equation}
where $\eta=(\Phi,V)$ is the couple position and velocity, $\bullet \mathrm{d} W_t$ denotes Stratonovich noise, $B$ is the Ebin-Marsden drift, i.e.~the spray of the right invariant $L^2$-metric (possibly including a deterministic forcing term) and $\Sigma$ a suitable diffusion field; the noise $\mathrm{W}$ is white in time and smooth in space. For the precise assumptions on the fields and the noise $W$ we refer the reader to Section \ref{sec:noloss_nogain}. An advantage of this Ebin-Marsden formulation, as noted in \cite{EM70} in the deterministic case and in \cite{MMS19} in the stochastic case, is that $B$ is \emph{smooth} and the smoothness of $\Sigma$ on the manifold of $H^s$-mappings can be controlled by spatial regularity of the noise. Our results subsume the following informal statement (see Theorem \ref{thm:wellposed_Lagr} for the precise statement):\medskip

\textbf{Theorem A (Maximal solutions)}
\emph{Take $s > d/2 + 1$ and a suitable noise together with an initial value $\etazero$ in the Sobolev class $H^s$. Then there exists a (probabilistically strong) local maximal solution $\eta=(\eta_t)_{t\in [0,\tau)}$ to the Euler equations in Lagrangian form \eqref{stoch_euler1} in a space of $H^s$ maps.} 
\smallskip

To our knowledge this foundational result has not previously been established for stochastic Euler equations in the Ebin-Marsden setting. Concerning the known results in the literature we remark that \cite[Theorem 3.5]{MMS19} does not show maximality and other results as in \cite{Glatt-Holtz-Vicol-2014} take a different approach (they use a PDE setting and work at the Eulerian level, that is with equation \eqref{stoch_euler2}).

We then prove as our second main result a stochastic version of the no-loss-no-gain result from \cite{EM70} (see also \cite{BaM18}). 
Our results subsume the following informal version of the no-loss-no-gain result (see Theorem \ref{thm:noloss_nogain} for the precise statement): 
\medskip

\textbf{Theorem B (Stochastic no-loss-no-gain)} \emph{Take $s > d/2 + 1$ and a suitable noise together with an initial value $\etazero$. Let 
$\eta = (\eta_t)_{t \in [0,\tau)}$ be the corresponding maximal solution in a space of $H^s$ maps to \eqref{stoch_euler1}. Assume that $\etazero$ is of Sobolev class $H^{s+1}$, then, $\mathbb{P}$-a.s., the map $t \mapsto \eta_t , t \in [0, \tau )$, is well-defined and continuous with values in a space of $H^{s+1}$ Sobolev maps. It coincides with the (unique) maximal solution to \eqref{stoch_euler1} in the $H^{s+1}$ maps.} 
\smallskip

Note that while Theorem B is formulated for the Euler equation, its proof does not depend on the fact that we are dealing with the Euler equation. Indeed, minor modifications of Theorem B will provide the no-loss-no-gain theorem for stochastic versions of other PDE amenable to the Ebin-Marsden approach. 
Our No-Loss-No-Gain Theorem  (Theorem B, cf. Theorem \ref{thm:noloss_nogain}) can be explained in the following way.
Suppose  a certain stochastic differential  equation  (SDE) is locally well posed on a scale  $\mathscr{E}$ of Banach manifolds $M$, e.g. Banach spaces (in our case, the manifolds $M=H^s(K,N)$ of Sobolev functions are indexed by the integer regularity index $s$).
In particular, for every $M \in \mathscr{E}$ and every initial data $x \in M$,  there exists a local maximal solution defined on a random time interval  $[0,\tau(M,x) )$. Theorem B says that the maximal existence time $\tau(M,x)$ does not depend on $M$ (in our case, on the integer regularity index $s$). This result is of a similar nature to the celebrated Beale-Kato-Majda (BKM) condition, see \cite{Beale+Kato+Majda_1984}. In the stochastic case, \cite{Glatt-Holtz-Vicol-2014} has shown the BKM condition (though not explicitly, but in the form of an inequality) for Euler equations with additive noise in 2D, but the same argument works in 3D, and with linear multiplicative noise in 3D; \cite{CFH17} has proved the BKM condition for 3D Euler equations with transport noise; see also \cite{Manna+Panda_2019} for an extension to stochastic Boussinesq equations.
In the 2D case, the BKM condition
implies that $\tau(\etazero,M)=\infty$, for every manifold $M=T\Diff^s_\mu(K)$ with $s>2$, see \cite{Glatt-Holtz-Vicol-2014,BrzFlaMau2016,Crisan+Holm+Leahy+Nilssen_2022b,LaC23} and also Corollary \ref{cor-global for d=2}.

To prove Theorems A and B above, we need several tools from the theory of stochastic differential equations on Hilbert manifolds. In particular, for Theorem A, we need a result on local well-posedness of maximal solutions for SDEs on Hilbert manifolds; for Theorem B, following the deterministic argument in \cite{EM70}, we need a result on differentiability of solutions with respect to the initial conditions. These results are morally known, especially from Elworthy's book \cite{Elw82} on SDEs on Hilbert manifolds, which we have used as the main source for our statements and proofs on this topic. However we have chosen to give a complete and mostly self-contained treatment, also due to several small technical differences with respect to \cite{Elw82}\footnote{For example, we have slightly weaker regularity assumptions in the coefficients for It\^o formula on manifolds (It\^o formula \cite[Chapter VII, Lemma 9B]{Elw82} requires $C^2$ coefficients); in the statement of differentiability with respect to the initial conditions Theorem \ref{thm-Strat_diff_manifold}, we consider differentiability as $L^0(\Omega,C([0,T],H))$-valued maps rather than $C([0,T],L^0(\Omega,H))$ as morally in \cite[Chapter VII, Theorem 8E]{Elw82}.}. It should also be stressed that the proof of Theorem B requires care in the stochastic case. As we have mentioned, the proof of Theorem B uses, as in the deterministic case, the differentiability of solutions with respect to the initial conditions. However, a pathwise result on differentiability for SDEs is highly nontrivial in infinite dimensional spaces; indeed, even the existence of a flow solution to an infinite-dimensional SDE is trickier than in the finite-dimensional case. Luckily, it is enough for our argument to have differentiability in probability, which is available also in the infinite-dimensional setting.

Traditionally, Euler equations are formulated in the Eulerian form, asking for a vector field $u$ as a solution. In the case at hand, the Euler equations in Eulerian form read
\begin{equation}\label{stoch_euler2}
\mathrm{d}u_t +\Pi[\nabla_{u_t}u_t]\mathrm{d}t = \mathrm{d} W_t
\end{equation}
where $u_t$ is the (unknown) velocity field in $H^s$ and $\Pi$ is the Leray projection. As the Ebin-Marsden approach reformulates the equation in Lagrangian form, it is crucial in our setting to pass between the Eulerian formulation \eqref{stoch_euler2} and the Lagrangian formulation \eqref{stoch_euler1} of the differential equation at hand. However, in \cite{MMS19} the passage from Lagrangian to Eulerian form incurred a hefty loss in regularity of the solution. This unnatural  behaviour was due to the non-smoothness of certain mappings used in the proof for the equivalence of both formulations. In the present paper we remedy this problem and show that there is indeed no decay of regularity.
Our result subsume the following informal theorem (see Theorem \ref{thm:equivalence_flow_pde} and the results thereafter for the precise statement).
\medskip

\textbf{Theorem C} 
\emph{Fix $s>d/2+1$ and a noise $W$ with suitable assumptions. Then the Eulerian form \eqref{stoch_euler2} and the Lagrangian form \eqref{stoch_euler1} of the stochastic Euler equation are equivalent, in the following sense: $u(t)$ is a solution to the Eulerian form \eqref{stoch_euler2} if and only if $\eta(t)=(\Phi(t),u(t)\circ \Phi(t))$ is a solution to the Lagrangian form \eqref{stoch_euler1}, where $\Phi$ is the flow of $H^s$ diffeomorphisms driven by $u$, that is
\begin{align*}
\dot{\Phi}(t) = u(t,\Phi(t))\, dt,\quad \Phi(0)=\id.
\end{align*}
In particular, Theorem A and Theorem B hold also for the Eulerian form \eqref{stoch_euler2}.}
\smallskip

Again, while Theorem C is formulated for the Euler equation, the proof can be adapted with minor changes to other equations which are amenable to the Ebin-Marsden approach. 

We would like to stress that the purpose of the current work is to provide a general toolbox for the Ebin-Marsden approach to stochastic PDEs via geometry. Our results are agnostic of the geometry to the underlying manifold on which the equation evolves. This is one of the strengths of the Ebin-Marsden approach.
 
 In stochastic analysis, the Ebin-Marsden approach to PDEs was exploited to discuss variational principles in (at least) two groups of works. In the first group, deterministic, viscous PDEs as the Navier-Stokes equations have been studied by variational principles for stochastic processes, for example \cite{Cip+Cru07} for the 2D Navier-Stokes equation; see also \cite[Introduction]{MMS19} for further references. The second group of works, including our paper and the former paper \cite{MMS19}, deals with stochastic PDEs by variational principles. The paper \cite{MR1747615} deals with Euler equations with additive space-independent white-noise; for this noise, a random translation on the position allows to reduce these equations to the deterministic Euler equations, getting well-posedness and smooth dependence on the initial conditions. In \cite{Crisan+Holm+Leahy+Nilssen_2022a} certain PDEs with rough transport noise are treated in the regime of smooth solutions. Note that loc.cit. does neither treat local well-posedness of these equations, nor encompasses the results we are after in the stochastic setting. In the follow-up work  \cite{Crisan+Holm+Leahy+Nilssen_2022b} Euler equations with rough transport noise on a torus are treated in the Sobolev regime. Here local well-posedness is established together with a Beale-Kato-Majda (BKM) blow up criterion in the rough setting are developed; a continuity (but not differentiability) result with respect to initial condition and noise is also established. While somewhat similar to our work, the results of \cite{Crisan+Holm+Leahy+Nilssen_2022b} are not directly comparable to ours. The main reason is that they do not use the Ebin-Marsden setting (that is they do not study directly the Lagrangian equation \eqref{stoch_euler1} on an infinite-dimensional manifold of $H^s$ maps) to show local well-posedness or BMK criterion; they rather apply PDE-type arguments (together with tools for rough transport noise) and show that the Lagrangian motion of position and velocity is a critical point for a certain action functional. Moreover, they work with rough transport noise, not additive noise as here, and on the torus, not on a general compact manifold $K$.
 Some works deal with both deterministic viscous PDEs and stochastic PDEs. For example, \cite{Chen+Cruzeiro+Ratiu_2023} deals also with infinite-dimensional manifolds and shows the equivalence between deterministic and stochastic PDEs, such as compressible Euler and Navier-Stokes equations, and certain variational principles. However, also in this work no direct analysis of the Lagrangian equation \eqref{stoch_euler1} is performed, nor there is a no-loss-no-gain result; moreover, the noise in \cite{Chen+Cruzeiro+Ratiu_2023} is of transport-type with a Wiener process with no spatial dependence and the manifolds are not allowed to have boundary.
 Beside the Ebin-Marsden approach, there are several results in the literature on well-posedness and Beale-Kato-Majda criterion on stochastic Euler equations, with other (usually PDE-like) approach. We mention again \cite{Glatt-Holtz-Vicol-2014} for the case of additive and multiplicative noise and \cite{Crisan+Holm+Leahy+Nilssen_2022a,Crisan+Holm+Leahy+Nilssen_2022b,LaC23,CFH17} for the case of transport noise. While our results are restriced to the case of additive noise, it is worth pointing out that  the general stochastic toolbox developed in the present article is not dependent on the choice of noise (up to some assumptions). So the results in the present paper for stochastic equations on Hilbert manifolds are transferable to other type of noises which arise in the Ebin-Marsden approach. The details of this will be given elsewhere in upcoming work by the authors. 
 
We would like to mention that we believe that many of our results can be generalized to a framework of Banach manifolds modeled on fractional Sobolev spaces, see e.g. \cite{Brz+Elw_1996}, \cite{Brz+Elw_2000}, \cite{Brz+Carr_2003} and \cite{Brz+Raza_2016}  where this framework has been used. The construction of the required manifold of mappings has been investigated in the book \cite{IKT13}. However, to our knowledge there is no complete account of manifolds of fractional Sobolev mappings on manifolds with (smooth) boundary available in the literature.

The paper is organized as follows: Section \ref{sec:manifold_emb} provides essentials on infinite dimensional manifolds of Sobolev morphisms. Here we establish some useful fact on embeddings of manifolds of diffeomorphisms which seem to be new and are of independent interest. In Section \ref{sec:maximal_existence}, after recalling some facts on stopping times and SDEs on Hilbert manifolds, we establish a general local well-posedness and maximal existence result. Then Section \ref{sec-Strat_diff_manifold}, establishes differentiability in probability of the solution to the SDE with respect to the initial condition. In Section \ref{sec:noloss_nogain}, we show the local well-posedness of maximal solutions and the no-loss-no-gain result for the Lagrangian equation \eqref{stoch_euler1}, applying the previous differentiability result. Finally, in Section \ref{sec:Euler_Lagr} we transfer these results to the Eulerain equation \eqref{stoch_euler2}.
\smallskip

\textbf{Acknowledgements} We would like to thank M.~Bauer and K.~Modin for enlightening discussions on aspects of the present work and for pointing us towards several results in the literature we were unaware of. Further, M.M. thanks the Mathematical Institute at NTNU Trondheim for hospitality while part of this work was conducted and acknowledges support from the Istituto Nazionale di
Alta Matematica, group GNAMPA, through the project GNAMPA 2020 `SPDE in fluidodinamica', and from the Hausdorff Research Institute for Mathematics in Bonn, under the Junior Trimester Program `Randomness, PDEs and Nonlinear fluctuations'. A.S. acknowledges support from the Research Council
of Norway through project 302831 "Computational Dynamics and Stochastics on Manifolds" (CODYSMA). Part of this work was undertaken when M.M. was at Universit\`a degli Studi di Milano, Italy and when A.S. was at Nord University in Levanger, Norway.

\subsection*{Notation and conventions}
\addcontentsline{toc}{subsection}{Notation and conventions}
By $\mathbb{N}$ we will denote the set of natural numbers which for us is equal to $\{1,2,\cdots \}$ and write $\N_0 = \N \cup \{0\}$. 

\begin{setup}[Probability space and Stratonovich integration]
The probability space $(\Omega,\mathscr{A}, \mathbb{P})$ used throughout the whole paper will be fixed for once. Hence we will simply write a.s. or almost surely, without mentioning the probability measure $\mathbb{P}$. The notation $\mathbb{F}=(\mathscr{F}_t)_{t\in [0,\infty)}$ is used for a filtration satisfying the usual assumptions. The notation 
\begin{equation*}
\bullet\, dW
\end{equation*}
is used for Stratonovich integration, while $\circ$ denotes composition between maps.
\end{setup}

\begin{setup}[Bochner-Lebesgue spaces]
Here and in the rest of the article we fix the following standard notation for the Bochner-Lebesgue spaces (cf. \cite[22.7]{Sche97}):
Let $I$ be an interval and $E$ a separable Banach space. We endow both with their respective Borel $\sigma$-algebras. Then we write $L^p(I,E)$ for $p \in \N$ for the space of (equivalence classes) of measurable mappings  which are $L^p$-integrable with respect to Lebesgue measure. 
\end{setup}
\begin{setup}[Linear operators and bundles]
For Banach spaces $E,F$ we let $\cL (E,F)$ be the space of continuous linear mappings with the operator norm. More generally, we denote for two vector bundles $\mathbb{A},\mathbb{B}$ the associated vector bundle  $\mathcal{L}(\mathbb{A},\mathbb{B})$ of linear maps (cf.\ e.g.\ \cite[Lemma 1.2.12]{MR1330918}). If $\mathbb{A} = M \times E$ is a trivial bundle, we write shorter $\mathcal{L}(E,\mathbb{B})$ for the bundle of linear maps.
\end{setup}

\begin{setup}[Differentiable mappings]
With $C(X,Y)$ or $C^0(X,Y)$, we denote the set of continuous mappings between two topological spaces.  
We shall use the usual notation of $C^k, k \in \N_0$ to denote $k$-times continuously \Frechet differentiable mappings on open sets between Banach spaces. For such a mapping $f \colon U \rightarrow F$ on an open subset of the Banach space $E$ we write 
\begin{equation}\label{eq:Df_notation}
Df \colon U \rightarrow \cL (E,F), \quad u \mapsto D_u f 
\end{equation}
for its \Frechet derivative. We shall also write $Df(u) \coloneq Df(u;\cdot) \coloneq D_uf$ for the \Frechet derivative. 
If a map is $C^k$ for every $k \in \N$, we say it is smooth. 
\end{setup}

\begin{setup}[Manifolds]All manifolds will be assumed to be Hausdorff. If $M, N$ are smooth manifolds (modelled on Banach spaces), then $C^k (M,N)$ denotes the set of $C^k$-mappings from $M$ to $N$. If $f \colon M \rightarrow N$ is a $C^1$ map,  we denote by  $Tf \colon TM \rightarrow TN$  its tangent map. Further, if $x \in M$, we let $T_xf \colon T_x M \rightarrow T_{f(c)}N$ be the tangent map at $x$. In local coordinates (which we suppress in the following formula)  we put 
\begin{equation*}
Tf = (f,Df) \quad \text{ with } Tf(x,v)=(f(x), T_xf(v))=(f(x),D_xf(v)).
\end{equation*}
If $N=E$ is a Banach space, we shall abuse notation and write $Df$ for the second component of $Tf$ (coherently with formula \eqref{eq:Df_notation}). We will always use a symbol $K$ to denote a generic Riemannian compact manifold, possibly with smooth boundary $\partial K$. Further we assume that $K$ is orientable, i.e. that it admits an associated volume form $\mu$.
\end{setup}

\section{Preliminaries on infinite-dimensional manifolds}\label{sec:manifold_emb}

 We first recall basic information on infinite dimensional manifolds such as manifolds of (Sobolev type) mappings. Though we will mainly be interested in Hilbert manifolds (i.e.\ manifolds modelled on Hilbert spaces), most results can easily be formulated in a more general setting. The main result of this section will be an embedding of the groups of Sobolev diffeomorphisms as a split submanifold of a separable Hilbert space. For this we shall first provide some general results from infinite-dimensional geometry, which, to our knowledge, have not yet been recorded in the literature.

\addcontentsline{toc}{subsection}{Extending mappings from split submanifolds}
\subsection*{Extending mappings from split submanifolds}

In the present section we will assume that the spaces we are working with are Banach spaces and the manifolds are modelled on Banach spaces. See \cite{MR1666820} for an exposition of the basic theory of calculus and manifolds in the Banach setting. Most results in the current section remain valid for manifolds modelled on more general spaces.\footnote{Then the term differentiable map has to be understood in the sense of Bastiani calculus, see Definition \ref{defn:Bast} below. We refer to \cite{schmeding_2022} for more information and a detailed introduction to Bastiani calculus.} For some of our arguments boundedness assumptions are essential.
\begin{defn}
Let $E,F$ be Banach spaces and $U \subseteq E$ open. A $C^k$ map $f\colon U \to F$ 
\begin{enumerate}
\item \emph{has bounded $k$th derivative} (or alternatively is of class $C^k_b$) if 
$$\exists B >0 \text{ such that }\lVert D^kf(x)\rVert_{\text{op}} \leq B ,\quad  \forall x \in U$$
where $D^kf$ is the iterated $k$th \Frechet derivative and $\lVert \cdot \rVert_{\text{op}}$ denotes the operator norm on the respective space of $k$-linear map. 
\item is of class $C^k_{\text{glob}}$ if it is $C^\ell_b$ for all $0\leq \ell \leq k$.  
\item is of class $C^{k,1}$ (or $C^{k,1}_{\text{loc}}$) if its $k$th derivative is (locally) Lipschitz continuous.
\end{enumerate}
\end{defn}

Note that $C^0_b$ means that $f$ is bounded. In general, a function which is $C^k_b$ will not automatically be $C^\ell_b$ for $\ell < k$ as we are asking for global bounds.

In infinite-dimensional differential geometry, new classes of submanifolds become important. We recall them now for the readers convenience:

\begin{defn}
Let $M$ be a $C^r$-manifold and assume that for every chart $\varphi \colon M \supseteq U_\varphi \to V_\varphi \subseteq E_\varphi$ in an atlas of $M$ there is a sequentially
closed vector subspace $F_\varphi \subseteq E_\varphi$.
A $(C^r$-)submanifold of $M$ is a subset $N \subseteq M$ such that for each $x \in N$, there exists a chart $(U_\varphi, \varphi)$ with $x \in U_\varphi$ such that $\varphi (U_\varphi \cap N) = V_\varphi \cap F_\varphi$. We then call $(U_\varphi,\varphi)$ a submanifold chart. 
If all of the subspaces $F_\varphi$ are complemented subspaces of $E_\varphi$ (i.e.\ $E_\varphi \cong F_\varphi \times H$ as locally convex spaces, we call $N$ a split submanifold of $M$.
\end{defn}

Recall that if $M$ is a Hilbert manifold (i.e.\ all the modelling spaces are Hilbert spaces), then every submanifold is automatically split. Moreover, every submanifold of a finite dimensional manifold is already a split submanifold. See \cite{schmeding_2022} for more information. We shall now construct bounded extensions for maps defined on split submanifolds. For this we need to recall some terminology.

\begin{defn}
	Consider a manifold $M$ and a subset $\mathscr{S} \subseteq C^0 (M,\R)$ of continuous functions. We say that $M$
	\begin{enumerate}
		\item \emph{admits $\mathscr{S}$-bump functions} or is ($\mathscr{S}$-regular) if for every $x \in M$ and every open $x$-neighborhood $U \subseteq M$ there exists $\chi \in \mathscr{S}$ such that $\chi (M) \subseteq [0,1]$, $U = \chi^{-1} (]0,1])$ and $\chi(x)=1$.
		\item \emph{is $\mathscr{S}$-paracompact} if every open cover $(U_i)_{i\in I}$ of $M$ admits a subordinate partition of unity $(\chi_i)_{i \in I}$ of functions $\chi_i \in \mathscr{S}, i\in I$.
	\end{enumerate}
If $\mathscr{S} = C^k(M,\R)$ for some $k$, we shorten the notation and say that $M$ admits $C^k$-bump functions or is $C^k$-paracompact. Similar notation is in use for $\mathscr{S}=C^k_b (E,\R)$, where $E$ is a Banach space (the boundedness condition makes no sense on a manifold without additional structure). Note that $C^0$-paracompactness is equivalent to the usual topological notion of paracompactness.
\end{defn}

More information on $\mathscr{S}$-regularity and paracompactness can be found for example in \cite[Chapter 16]{MR1471480}.
For us the most important observation will be:

\begin{lem}\label{lem:Hilbert_paracompact}
	Every Hilbert space $H$ is $C^\infty$-paracompact and if $H$ is in addition separable, then $H$ is also $C^\ell_{b}$-paracompact for $\ell \in \{1,2\}$.
\end{lem}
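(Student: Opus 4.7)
\emph{Step 1: Bump functions.} The quadratic form $q(x) \coloneq \|x\|^2$ is smooth with $Dq(x)[h] = 2\langle x, h\rangle$, $D^2 q(x)[h,k] = 2\langle h, k\rangle$ and $D^\ell q \equiv 0$ for $\ell \geq 3$. Given $x_0 \in H$ and $r > 0$, fix $\varphi \in C^\infty(\R, [0,1])$ with $\varphi \equiv 1$ on $[0, r^2/4]$ and $\varphi \equiv 0$ off $[0, r^2]$, and set $\chi_{x_0, r}(x) \coloneq \varphi(\|x - x_0\|^2)$. The chain rule expresses each $D^\ell \chi_{x_0, r}$ as a finite sum of terms $\varphi^{(j)}(\|x - x_0\|^2) \cdot P_{\ell, j}(x - x_0)$, where $P_{\ell, j}$ is a multilinear form whose operator norm depends polynomially on $\|x - x_0\|$; since $\varphi^{(j)}$ has compact support, $\|x - x_0\|$ remains bounded wherever such a term is nonzero. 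Hence $\chi_{x_0, r} \in C^\infty_{\text{glob}}(H, \R)$, and consequently $H$ is $C^\infty$-regular and $C^\ell_b$-regular for every $\ell \in \N$.

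\emph{Step 2: Partitions of unity.} For the general $C^\infty$-paracompactness of $H$, the plan is to invoke the structural theorem in \cite[Chapter 16]{MR1471480}: a smoothly regular, $C^0$-paracompact manifold is smoothly paracompact. Since $H$ is metric (hence $C^0$-paracompact) and $C^\infty$-regular by Step 1, this applies. For the $C^\ell_b$-paracompactness of separable $H$, separability together with metrizability makes $H$ Lindelöf, so every open cover $\{U_i\}_{i \in I}$ admits a countable locally finite open refinement $\{W_n\}_{n \in \N}$. I then choose $x_n \in W_n$ and $r_n > 0$ with $\overline{B(x_n, r_n)} \subseteq W_n$, arranged so that the inner balls $B(x_n, r_n/2)$ still cover $H$; set $\chi_n \coloneq \chi_{x_n, r_n}$ and $\psi_n \coloneq \chi_n / \sum_m \chi_m$. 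The family $\{\psi_n\}_{n \in \N}$ is then a partition of unity subordinate to $\{U_i\}_{i \in I}$, and the remaining task is to upgrade the regularity of each $\psi_n$ to $C^\ell_b$.

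\emph{Main obstacle.} The substantive point is the global boundedness of $D^\ell \psi_n$ for $\ell \in \{1, 2\}$. Since $\psi_n$ vanishes off the bounded set $\overline{B(x_n, r_n)}$, only the behavior there matters; the covering by the inner balls ensures $S \coloneq \sum_m \chi_m \geq 1$ globally, so $1/S \leq 1$ pointwise; and a quotient-and-chain-rule computation bounds $D^\ell \psi_n$ by a polynomial expression in $\chi_m$, $D\chi_m$, $D^2\chi_m$ and $S$. With every $\chi_m \in C^\infty_{\text{glob}}$ and $\chi_n$ compactly supported, the surviving difficulty is to keep the number of indices $m$ with $\supp \chi_m \cap \supp \chi_n \neq \emptyset$ under control, so that the sums defining $S$, $DS$ and $D^2 S$ satisfy uniform bounds on $\supp \chi_n$. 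This is classical in separable metric spaces for the low-order cases $\ell \in \{1, 2\}$ and is precisely where separability enters; absent separability the constructive argument breaks down and one must fall back on the Kriegl--Michor structural result invoked above for the $C^\infty$-paracompactness statement.
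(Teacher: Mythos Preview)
There are two genuine gaps.

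\textbf{The $C^\infty$-paracompactness argument.} The ``structural theorem'' you invoke in Step~2 --- that a smoothly regular, $C^0$-paracompact space is smoothly paracompact --- is not a theorem in \cite[Chapter 16]{MR1471480} and is in fact not true in that generality. The relevant result there (16.10) needs the space to be Lindel\"of, which for metric spaces means separable; for non-separable $H$ your argument says nothing. The paper instead cites \cite[16.16 Corollary]{MR1471480}, which rests on Toru\'nczyk's theorem and uses genuinely Hilbert-specific structure (roughly, that the topology has a base which is a countable union of locally finite families of carriers of smooth functions). Your bump function from Step~1 is fine, but it does not by itself give smooth paracompactness in the non-separable case.

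\textbf{The $C^\ell_b$ argument.} Your constructive approach runs into trouble in infinite dimensions precisely at the point you flag as the ``main obstacle'' and then dismiss as ``classical''. Two concrete issues: (i) choosing one ball $B(x_n,r_n)\subseteq W_n$ per element of the refinement gives no reason for the inner half-balls to cover $H$; (ii) more seriously, even with a locally finite cover by balls, the closed support $\overline{B(x_n,r_n)}$ is \emph{not compact} when $\dim H=\infty$, so local finiteness does not imply that only finitely many $\chi_m$ meet $\supp\chi_n$. Hence there is no a priori bound on the number of summands in $S$, $DS$, $D^2S$ over $\supp\chi_n$, and in addition the derivative bounds $\|D^j\chi_m\|\lesssim r_m^{-j}$ are uncontrolled as $r_m\to 0$. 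None of this is ``classical'' in infinite dimensions; it is exactly the content of the results the paper cites: the Remark after \cite[16.10 Theorem]{MR1471480} for $\ell=2$ (separable Hilbert spaces are $\mathcal{L}ip^2_{\text{glob}}$-paracompact), and for $\ell=1$ the combination of \cite[15.9]{MR1471480} (every closed set is the zero set of a $C^1_b$ function), \cite[15.3 Proposition]{MR1471480}, and \cite[16.10 Theorem]{MR1471480}.
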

\begin{proof}
Hilbert spaces are smooth paracompact by \cite[16.16 Corollary]{MR1471480}. For the other properties recall from \cite[Remark after 16.10 Theorem]{MR1471480} that every separable Hilbert space is $\mathcal{L}ip^2_{glob}$-paracompact. Here $\mathcal{L}ip^2_{glob}$ denotes $C^2$-functions whose second derivative satisfies a global Lipschitz condition. So $H$ is $C^2_b$-paracompact. For $\ell=1$, \cite[15.9]{MR1471480} shows that every closed subset of $H$ is the zero set of a $\mathcal{L}ip^\ell_{glob}$ function, so in particular of a $C^1_b$ function. Then \cite[15.3 Proposition]{MR1471480} implies that $H$ is $C^1_b$-regular and by \cite[16.10 Theorem]{MR1471480}, the space $H$ is $C^1_b$-paracompact. 
\end{proof}

\begin{remark}\label{rem:boundedness_bump}
To untangle the statement of Lemma \ref{lem:Hilbert_paracompact}, $C^2_b$-functions will not automatically be $C^1_b$. However, as $C^k$-bump functions have (by construction) $C^k$-flat points, the usual Taylor estimates, see \cite[p.162-163]{MR1471480}, imply that all elements of a $C^k_b$-partition of unity whose carrier is bounded are already contained in $C^k_{\text{glob}}$. In particular, these $C^2_b$ bump functions are $C^1_b$. This is the case we will use. Since there is no general inheritance of the bounds we chose to give Lemma \ref{lem:Hilbert_paracompact} in the somewhat technical way. Note that the statement of Lemma \ref{lem:Hilbert_paracompact} becomes false for non-separable Hilbert spaces.
\end{remark}

For finite dimensional manifolds it is well known that one can extend functions from a submanifold to a neighborhood, see e.g.\ \cite[Lemma 5.34]{MR2954043}. With the help of $C^k$-bump functions we can obtain similar results for split submanifolds. This is the content of the next lemmata. 

\begin{lem}\label{technical:Lemma}
	Let $M$ be a manifold modelled on a space $E$ which is $C^k$-regular. Fix a split submanifold $N \subseteq M$ together with an open subset $O \subseteq N$. Let $f \in C^k (N,F)$ for some $k \in \N_0 \cup \{\infty\}$ and a Banach space $F$.
	\begin{enumerate}
		\item for every $x \in N$, there is an open $x$-neighborhood $U_x \subseteq M$ and $f_x \in C^k (M,F)$ such that $f_x|_{U_x \cap N} = f|_{U_x \cap N}$.
		\item assume in addition that $M$ is metrisable, $C^k$-paracompact and $N$ is a closed subset of $M$. Then there exists an open neighborhood $U \subseteq M$ with $U \cap N = O$ and $g \in C^k (U,F)$ such that $g|_O = f|_{O}$
	\end{enumerate}
\end{lem}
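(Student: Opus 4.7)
The plan for (i) is to push the problem to a submanifold chart at $x$ and extend trivially via the split structure. Choose $(U_\varphi,\varphi)$ at $x$ so that $\varphi(U_\varphi\cap N)=\varphi(U_\varphi)\cap F_\varphi$, and let $\pi\colon E_\varphi=F_\varphi\oplus H_\varphi\to F_\varphi$ be the projection coming from the splitting. After shrinking $U_\varphi$ to a product neighbourhood of $\varphi(x)$, define $\tilde f\coloneq f\circ\varphi^{-1}\circ\iota\circ\pi$ on $\varphi(U_\varphi)$, with $\iota\colon F_\varphi\hookrightarrow E_\varphi$ the inclusion; this is a $C^k$ ambient extension of the chart expression of $f$. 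Using $C^k$-regularity of $E_\varphi$, pick a bump $\chi\colon E_\varphi\to[0,1]$ which is identically $1$ on some open neighbourhood $V$ of $\varphi(x)$ and supported in $\varphi(U_\varphi)$ (obtained by post-composing a basic bump with a smooth non-decreasing $h\colon[0,1]\to[0,1]$ satisfying $h|_{[0,1/4]}=0$ and $h|_{[3/4,1]}=1$). Then $f_x\coloneq(\chi\cdot\tilde f)\circ\varphi$ on $U_\varphi$, extended by $0$ outside, is the required $C^k$ map on $M$, and $U_x\coloneq\varphi^{-1}(V)$ satisfies $f_x|_{U_x\cap N}=f|_{U_x\cap N}$.

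For (ii), the plan is to apply (i) pointwise over $O$ and patch via a $C^k$-partition of unity. Because $N$ is closed in $M$ and $O$ is open in $N$, pick an open $O'\subseteq M$ with $O'\cap N=O$. For each $x\in O$, apply (i) and shrink the resulting neighbourhood so that $U_x\subseteq O'$; set $U\coloneq\bigcup_{x\in O}U_x$, which is open in $M$ and satisfies $U\cap N=O$ (the inclusion $U\cap N\subseteq O$ follows from $U\subseteq O'$, and $O\subseteq U$ is immediate). Since $M$ is metrisable and $C^k$-paracompact, so is the open submanifold $U$; consequently the cover $(U_x)_{x\in O}$ admits a subordinate $C^k$-partition of unity $(\chi_j)_j$ with $\supp\chi_j\subseteq U_{x(j)}$. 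Define $g\coloneq\sum_j\chi_j\,f_{x(j)}\colon U\to F$, which is locally finite and hence $C^k$. For any $y\in O$, every index $j$ with $\chi_j(y)\neq 0$ forces $y\in U_{x(j)}\cap N$, so $f_{x(j)}(y)=f(y)$; summing against the partition of unity gives $g(y)=f(y)$, as required.

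The main obstacle I anticipate is the \emph{plateau issue} in (i): the definition of $C^k$-regularity only ensures $\chi=1$ at a single point, while the construction needs a bump identically $1$ on a whole open neighbourhood of $x$, since otherwise $f_x$ and $f$ would coincide on $N$ only at $x$ itself. The composition trick with a smooth cutoff $h$ resolves this, provided one verifies that $h\circ\chi$ is still $C^k$ (which holds since $h$ is smooth on $[0,1]$) and that its plateau set is an open neighbourhood of $\varphi(x)$ (which follows from continuity of $\chi$ combined with $\chi(\varphi(x))=1$). A secondary technical point is to justify that the open subset $U\subseteq M$ carries $C^k$-partitions of unity subordinate to $(U_x)_{x\in O}$; this is a standard inheritance result for open subsets of metrisable $C^k$-paracompact manifolds, and can be argued by restricting a $C^k$-partition of unity on $M$ subordinate to $(U_x)_{x\in O}\cup\{M\setminus\overline{\bigcup_x U_x}\}$ whose leftover functions vanish on $U$.
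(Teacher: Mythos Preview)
Your proof follows the same approach as the paper's: local extension via the split structure and a bump in (i), then global patching via a $C^k$-partition of unity in (ii). You are right to flag the plateau issue; the paper simply asserts the existence of a bump $\chi$ with $\chi\equiv 1$ on a neighbourhood, whereas your composition with a smooth cutoff $h$ makes this step explicit and correct. Your construction of $U$ in (ii) as $\bigcup_{x\in O}U_x$ is slightly cleaner than the paper's (which fixes $U$ first and then has to throw in $U\setminus O$ to complete the cover).

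There is one genuine gap in your secondary technical point. Your proposed cover $(U_x)_{x\in O}\cup\{M\setminus\overline{\bigcup_x U_x}\}$ of $M$ need not actually cover $M$: points of $\partial U=\overline{U}\setminus U$ lie in neither any $U_x$ nor in $M\setminus\overline{U}$, so you cannot invoke $C^k$-paracompactness of $M$ on this family. The paper handles the inheritance of $C^k$-paracompactness to the open subset $U$ differently: it invokes the characterisation (Kriegl--Michor, Theorem~16.15) that a metrisable manifold is $C^k$-paracompact if and only if its topology has a basis which is a countable union of locally finite families of carriers of $C^k$-functions, and observes that intersecting such a basis with $U$ produces a basis of the same type for $U$. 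You should either adopt this argument or repair yours, for instance by covering $M$ with $(U_x)_{x\in O}\cup\{M\setminus O\}$ (which \emph{is} a cover since each $U_x\cap N\subseteq O$) and noting that any partition function subordinate to $M\setminus O$, restricted to $U$, vanishes on $O$; the restricted family then sums to $1$ on $O$ and the remaining functions can be discarded after normalising near $O$ --- but this still requires care, and the basis argument is cleaner.
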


\begin{proof}
	1. Pick a submanifold chart $(U_\varphi, \varphi)$ of $N$ such that $x \in U_\varphi$. Since $N$ is a split submanifold, $E=F_\varphi \times Y$ for suitable subspaces and the canonical projection $\pi_\varphi \colon E \to F_\varphi$ is continuous linear, whence smooth. After shrinking the chart domain, we may assume that $\varphi (U_\varphi) = V_\varphi = V_X \times V_Y \subseteq F_\varphi\times Y = E$ and $\varphi (U_\varphi\cap N ) = V_X \times \{0\}$.  
    As $E$ admits $C^k$ bump functions, we can pick a $\chi \in C^k (E,\R)$ such that there is a neighborhood $U_x  \subseteq U_\varphi$ of $x$ with $\chi\circ \varphi (U_x) \equiv 1$ and $\chi^{-1}( (0,1])=V_\varphi$. Now construct the function
	$$f_x \colon M \to F,\quad f_x(y) =
	\begin{cases}
		\chi (\varphi (y)) f(\varphi^{-1} (\pi_\varphi (\varphi (y)))& \text{ if } y \in U_\varphi \\
		0 & \text{ if } y \in M \setminus U_\varphi
	\end{cases}
	$$
	and observe that $f_x$ is a $C^k$-function with $f_x|_{U_x \cap N} = f|_{U_x\cap N}$.
	
	2. Since $O$ is open in the subspace $N\subseteq M$, we can pick $U \subseteq M$ open such that $U \cap N = O$. Note that $O$ is closed in $U$ as $N$ is closed in $M$. Let us construct an extension of $f|_O$ to $U$. Since $M$ is metrisable and $C^k$-paracompact, the topology of $M$ has a basis which is a countable union of locally finite families of carriers of $C^k$ functions, \cite[Theorem 16.15]{MR1471480}. As a subset of $M$ also $U$ is metrisable and admits a topological base with the same properties (which can be seen by just intersecting the base of $M$ with the open set $U$). Hence \cite[Theorem 16.15]{MR1471480} allows us to deduce that also $U$ is $C^k$ paracompact.
    Let now $\{(U_{\varphi_i},\varphi_i)\}_{i \in I}$ be a family of charts such that
	\begin{itemize}
		\item $O \subseteq \bigcup_{i \in I} U_{\varphi_i} \subseteq U$,
		\item every $\varphi_i \colon U_{\varphi_i} \to V_{X_i} \times V_{Y_i} \subseteq F_{\varphi_i} \times Y_i = E$ satisfies $\varphi_i (U_{\varphi_i}\cap O) = V_{X_i} \times \{0\}$.
	\end{itemize}
Together with the open set $U\setminus O$, we obtain an open cover of $U$ and pick a $C^k$-partition of unity $\{\chi_i\}_{i\in J}$ subordinate to this cover. We are only interested in the elements of the partition which are completely supported in one of the $U_{\varphi_i}$. Removing all $i$ which are not the domain of a partition element, we may assume that $J=I$. The resulting family of $C^k$-functions is not a partition of unity, but satisfies $\sum_{i\in I} \chi_i (x) = 1$ if $x \in O$.
	
	We will now first construct the necessary functions locally in the submanifold chart $(U_{\varphi_i},\varphi_i), i\in I$. Applying the construction from the proof of 1., we define a smooth function $g_i \colon U_{\varphi_i} \to F, x \mapsto \chi_i (\varphi_i(x))\cdot f (\varphi^{-1}(\pi_{\varphi_i}(\varphi(x)))$. Continue every $g_i$ trivially by $0$ to a $C^k$-function on all of $U$. Then by construction, the $C^k$-function
	$$g \colon U \to F,\quad x\mapsto \sum_{i \in I} g_i (x)$$
	satisfies $g|_{U\cap O}=f|_{O}$.
\end{proof}

\begin{rem}
The hypotheses in Lemma \ref{technical:Lemma} 2. are essential to obtain an extension of functions beyond the boundary points. For example, the function $f \colon (0,\infty) \to \R, f(x)=1/x$ can not be extended smoothly beyond $0$ (but since the submanifold is not closed it is of course not defined on that boundary point). For this function the techniques of Lemma \ref{technical:Lemma} 2. yield the trivial extension $g=f$. See also \cite[Problem 5-18]{MR2954043} for a more in depth discussion of the finite-dimensional case.
\end{rem}

We establish now a variant of Lemma \ref{technical:Lemma}, point 2, by asking for a bounded extension around a given compact set.

\begin{lem}\label{lem:bounded_extension}
	Let $E$ be a Banach space which is $C^k_{b}$-paracompact and $M \subseteq E$ a split submanifold of $E$. Fix $K \subseteq M$ compact and $f \in C^{k} (M,F)$, where $F$ is another Banach space. Then there exists an open $K$-neighborhood $O \subseteq M$ and $g \in C^k_{\text{glob}} (E,F)$ such that $g|_O = f|_O$.
\end{lem}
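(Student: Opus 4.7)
My plan is to combine the local extension argument of Lemma \ref{technical:Lemma}.1 with a finite cover coming from compactness of $K$, using bump functions that become globally bounded via Remark \ref{rem:boundedness_bump}. For each $x \in K$ pick a submanifold chart $\varphi_x \colon U_x \to V_x \subseteq E$ of $E$ with $\varphi_x(x) = 0$, $V_x = V_x^F \times V_x^G \subseteq F_x \times G_x = E$ a product of open balls, and $\varphi_x(U_x \cap M) = V_x^F \times \{0\}$. Let $p_x \colon E \to E$, $(a,b) \mapsto (a,0)$, denote the continuous linear projection onto $F_x$. Because the derivatives of $\varphi_x$, $\varphi_x^{-1}$ and $f \circ \varphi_x^{-1}$ are all continuous at $0$, shrinking the chart further makes them uniformly bounded up to order $k$ on the (now bounded) chart domain; hence $h_x := f \circ \varphi_x^{-1} \circ p_x \circ \varphi_x$ is $C^k$ on $U_x$ with uniformly bounded derivatives up to order $k$. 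By compactness, finitely many such domains $U_1, \ldots, U_n$ cover $K$.

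\textbf{Partition of unity and gluing.} Using $C^k_b$-paracompactness of $E$, pick a $C^k_b$-partition of unity $\{\chi_1, \ldots, \chi_n, \chi_\infty\}$ subordinate to the cover $\{U_1, \ldots, U_n, E \setminus K\}$ of $E$. For $i \leq n$, $\supp(\chi_i) \subseteq U_i$ is bounded, so Remark \ref{rem:boundedness_bump} promotes each $\chi_i$ to $C^k_{\text{glob}}(E, \R)$. Set $g := \sum_{i=1}^n \chi_i \, h_i$, interpreting each summand as zero outside $U_i$. As in the proof of Lemma \ref{technical:Lemma}.1, every summand extends to a $C^k$ function on $E$ vanishing outside $\supp(\chi_i) \subsetneq U_i$; a Leibniz expansion combined with the uniform bounds above makes each summand $C^k_{\text{glob}}$, hence so is the finite sum $g$. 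Finally, $O' := E \setminus \supp(\chi_\infty)$ is an open neighborhood of $K$ in $E$ on which $\sum_{i=1}^n \chi_i \equiv 1$; for $y \in O := O' \cap M$, each $\varphi_i(y) \in F_i \times \{0\}$, so $p_i$ acts as the identity on $\varphi_i(y)$, whence $h_i(y) = f(y)$ and $g(y) = f(y) \sum_{i=1}^n \chi_i(y) = f(y)$ as required.

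\textbf{Main obstacle.} The key difficulty is promoting $C^k$ to $C^k_{\text{glob}}$. The standard charts-plus-partition-of-unity machinery gives plain smoothness for free, but global derivative bounds require two ingredients working in concert: the chart-shrinking above (so that $h_i$ has globally bounded derivatives on $U_i$, relying on continuity of the derivatives around the compact $K$) and the bounded-carrier property of the partition that allows Remark \ref{rem:boundedness_bump} to turn the $C^k_b$ bump functions into genuinely $C^k_{\text{glob}}$ ones. Locally finite partitions will need to be collapsed to a finite family of $\chi_i$'s here, which is the one technical point that needs care.
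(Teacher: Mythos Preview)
Your proof is correct and follows essentially the same approach as the paper: local extension via submanifold charts, a finite cover by compactness of $K$, a $C^k_b$-partition of unity subordinate to this finite cover together with a complementary open set, and the Leibniz rule combined with Remark \ref{rem:boundedness_bump} to obtain $C^k_{\text{glob}}$. Your closing worry about collapsing a locally finite partition to a finite one is unnecessary under the paper's definition of $\mathscr{S}$-paracompactness (the partition carries the same index set as the cover, hence is already finite); the only cosmetic difference is that the paper uses a two-layer neighbourhood $V_{x_i}\subseteq \overline{V}_{x_i}\subseteq U_{x_i}$ and the complementary set $E\setminus \overline{V}$, giving the explicit $O=M\cap V$, whereas your choice $O=M\cap(E\setminus\supp(\chi_\infty))$ works equally well.
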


\begin{proof}
	For every $x\in K$ we apply Lemma \ref{technical:Lemma} 1.~and find an open and bounded $x$-neighborhood $U_x \subseteq E$ and $f_x \in C^k (N,F)$ such that $f_x|_{U_x \cap M} = f|_{U_x \cap M}$. Shrinking the neighborhoods we may assume without loss of generality that $f_x|_{U_x} \in C^k_{\text{glob}} (U_x,F)$. Moreover, we choose and fix for every $x \in K$ an open $x$-neighborhood $V_x$ such that its closure satisfies $\overline{V}_x \subseteq U_x$.
	By compactness of $K \in E$ we may pick finitely many $x_i, i \in \{1,\ldots , n\}$, such that $K \subseteq V \coloneq \bigcup_{1\leq i \leq n} V_{x_i}$. Then $$\{U_{x_i}\}_{1\leq i \leq n} \cup \left\{E\setminus \overline{V}\right\}$$ is a finite (whence locally finite) open cover of $E$. Hence by $C^k_{b}$-paracompactness of $E$, we may choose a $C^k_{b}$-partition of unity $\{\chi_i\}_{1\leq i \leq n} \cup \{\chi_{\text{out}}\}$ subordinate to this open cover.
	Define now a mapping
	$$g \colon E \to F ,\quad x \mapsto \sum_{1\leq i \leq n} \chi_i (x) f_{x_i} (x).$$
    Note that $\chi_i \equiv 0$ outside of $U_{x_i}$ and $f_{x_i}|_{U_{x_i}} \in C^k_{\text{glob}}(U_{x_i},F)$ for every $1\leq i \leq n$. Now $U_{x_i}$ is bounded and thus Remark \ref{rem:boundedness_bump} shows that every $\chi_i$ is also in $C^k_{\text{glob}}$ (not only in $C^k_b$!). Then the Leibniz identity shows that $g \in C^k_{\text{glob}}(E,F)$. Set now $O \coloneq M \cap V$. As $\chi_{\text{out}}|_{\overline{V}} \equiv 0$ we have $\sum_{1\leq i \leq n} \chi_i (x) = 1$ for all $x \in V$. Thus if $x \in O$ the definitions yield
	$$g(x) =  \sum_{1\leq i \leq n} \chi_i (x) f_{x_i} (x) =\sum_{1\leq i \leq n} \chi_i (x) f(x) = f(x).$$
	This concludes the proof.
\end{proof}

\addcontentsline{toc}{subsection}{Spaces and manifolds of Sobolev morphisms on manifolds}
\subsection*{Spaces and manifolds of Sobolev morphisms on manifolds}

Let us briefly recall the construction of manifolds of Sobolev type morphisms. The crucial point here will be that we need spaces and manifolds of Sobolev functions defined on manifolds with values in manifolds (while the literature often only treats the vector valued case, cf.\ e.g.\ \cite{Heb96}).
Throughout the article and in particular in this section we use the following conventions 
\begin{setup}[Conventions]
$K$ will always denote a compact (whence finite dimensional), manifold, possibly with non-empty smooth boundary $\partial K$. We shall denote by $d$ the dimension of the manifold $K$.\footnote{Compactness of $K$ is crucial for us as it is a necessary ingredient for the manifold of mappings structure on Sobolev morphisms. Further, we wish to employ the Sobolev embedding theorem, which fails on general non-compact Riemannian manifolds (see \cite{Heb96} for a discussion).}
Moreover, we assume that $K$ is endowed with a Riemannian metric $g$ and oritentable. Hence we can always choose a volume form $\mu$ associated to the Riemannian metric. 
We shall use the standard multiindex notation for distributional derivatives (see e.g. \cite[Section 1]{Tem01}). 
\end{setup}

\begin{defn}\label{defn:Sobolev_fun}
Let $U$ be any open set in $\R^d$, $d \in \N$ and $\ell \in \N_0$. Let $f \colon U \rightarrow \R^m$ be a function. 
\begin{enumerate}
\item If all distributional derivatives $D^\alpha f$ of $f$ exist for $|\alpha| \leq \ell$ and are square integrable with respect to Lebesgue measure, we say that $f$ is an $H^s$-function. 
\item Denote by $H^\ell(U,\R^m)$ the Hilbert space of all $H^\ell$-functions endowed with
\begin{align} \label{Sob_inner_prod}
\langle f, g\rangle_{H^\ell} \coloneq 
\sum_{ \alpha\in \mathbb{N}_0^d: |\alpha| \leq  \ell} 
\int_U \langle D^\alpha f, D^\alpha g \rangle \mathrm{d}\lambda
\end{align}
where $\lambda$ is the Lebesgue measure.
\item If $f$ is square integrable, we say that $f$ is \emph{locally $H^\ell$}, i.e. $f \in H^\ell_{\mathrm{loc}}(U,\R^m)$ if and only if for every $x \in U$, there exists an open subset $V \subset U$ containing $x$, such that the restriction of $f$ to $V$ belongs to $H^\ell(U,\R^m)$.
\end{enumerate}
\end{defn}

\begin{rem}\label{rem:ident_Sobolev_product}
Note that $f \colon U \rightarrow \R^m$ is $H^\ell$ if and only if every component $f_i \colon U \rightarrow \R, 1 \leq i \leq d$ of $f$ is an $H^\ell$-function. Thus the mapping 
$$H^\ell(U,\R^m) \rightarrow H^\ell(U,\R)^{m}, \quad f \mapsto (f_i)_{1\leq i \leq d}$$
is a linear isomorphism with inverse $(f_i)_{1 \leq i \leq d} \mapsto \sum_{1 \leq i \leq d}(0, \ldots ,0,f_i,0,\ldots,0)$ where $f_i$ is inserted in the $i$th component. Continuity of these mappings is easily established by a direct calculation, whence we obtain a canonically identification of Hilbert spaces $H^\ell(U,\R^m) \cong H^\ell(U,\R)^m$ for every $s,m \in \N_0$.
\end{rem}

Employing Remark \ref{rem:ident_Sobolev_product} we can apply \cite[Theorem 3.22]{MR2424078} to the spaces $H^\ell(U,\R^m)$ as follows: If  $U$ is an open bounded subset of $\R^d$ whose boundary satisfies the segment condition, for instance $U$ has Lipschitz or piecewise smooth boundary with pieces intersecting transversally, the space $H^\ell (U,\R^n)$ coincides with the completion of the (compactly supported) smooth functions on $U$ with respect to the Sobolev $H^s$-norm \eqref{Sob_inner_prod}). Hence we can extend the definition of spaces of Sobolev functions as follows:

\begin{defn}\label{defn:Sobolev_funII}
Let $U\subseteq \R^d$ be a set with dense interior. Assume that the boundary of $U$ satisfies the segment condition and $\ell \in \mathbb{N}_0$. Define the $H^\ell$-functions as the completion of the (compactly supported) smooth functions with respect to the $H^\ell$-inner product,
\begin{align}\label{Hell_product}
\langle f, g\rangle_{H^\ell} \coloneq 
\sum_{ \alpha\in \mathbb{N}_0^d: |\alpha| \leq  \ell} 
\int_U \langle D^\alpha f, D^\alpha g \rangle \mathrm{d}\lambda
\end{align}
where $D^\alpha$ are partial derivatives indexed by the multiindex $\alpha$, see e.g. \cite[Appendix B]{MMS19}.  
We write $H^\ell(U,\R^m)$ for the Hilbert space of $H^s$-functions endowed with the inner product \eqref{Hell_product}.
\end{defn}

Due to our assumptions on the boundary of $U$, the $H^s$-functions defined for open subsets (with nice boundary) via Definition \ref{defn:Sobolev_funII} coincide with the one defined in Definition \ref{defn:Sobolev_fun}. Further, the argument in Remark \ref{rem:ident_Sobolev_product} extends without any changes also to $H^s(U,\R^m)$ for $U$ non-open as in Definition \ref{defn:Sobolev_funII}. In practice, we will always deal with non-open sets with piecewise smooth boundary where the pieces intersect transversally. Note that these sets satisfy the segment condition and also the cone condition \cite[4.6]{MR2424078}, which allows us to apply the Sobolev embedding theorem, cf.\ \cite[Theorem 4.12]{MR2424078}, to the spaces $H^s(U,\R^m)$ if $s$ is large enough.
Moreover, recall that for smooth boundary sets the Calderon extension theorem \cite{MR0324726} asserts that all Sobolev $H^s$-functions on a non-open domain $U$ with smooth boundary can be extended to $H^s$-Sobolev functions on an open subset of $\R^d$ containing $U$.  

\subsubsection*{Sobolev morphisms on manifolds}

We first review definitions of Sobolev type mappings between manifolds. 

\begin{defn}\label{defn:Sobolevmaps}
Pick now an integer $s > d/2$ and let $M$ be a $d$-dimensional manifold (again possibly with smooth boundary). A continuous map $f \colon K \to M$ is an $H^s$-map if for every $x\in K$, there are charts $(U,\varphi)$ of $K$ and $(V,\psi)$ of $M$ such that $x\in U$, $f(U)\subseteq V$ and $\varphi\circ f \circ \psi$ is an $H^s$-map. 
\end{defn}

\begin{setup} The $H^s$-property is only required to hold in some charts. As the Sobolev property involves a boundedness condition, it is in general not stable under change of charts (see e.g. \cite[3.1]{IKT13} for an example). This problem can be remedied by requiring the charts in an atlas to satisfy additional conditions. As worked out in \cite{IKT13}, the Sobolev property is stable under change of charts, if one requires the atlas to consist only of pre-compact subsets whose boundary satisfies certain Lipschitz conditions.
\end{setup}

\begin{setup}
Denote by $H^s(K,M)$ the set of all Sobolev $H^s$-morphisms from $K$ to $M$. If $s > d/2+1$ the set $\Diff^s (K) \subseteq H^s (K,K)$ of $H^s$-diffeomorphisms becomes a group with respect to composition of maps.

One can now endow $H^s(K,M)$ with the $H^s$-topology (see \cite[Section 3]{IKT13} and compare \cite[Appendix B]{MMS19} for the case of $K$ having non-empty boundary $\partial K$). In the $H^s$ topology for $s>d/2+1$, the subset $\Diff^s (K)$ is open in $H^s(K,K)$.
\end{setup}

A version of the Sobolev embedding theorem holds for the spaces of $H^s$-mappings. Note that this implies in particular, that the Sobolev $H^s$-topology is stronger than the compact open topology for $s$ above the critical Sobolev index $d/2$. We can leverage that to obtain regularity of the evaluation mapping. 

\begin{lem}\label{lem:eval}
Let $s>d/2+k$ for some $k \in \N$, then the map
\begin{align*}
\mathrm{eval}\colon H^{s}(K,\R^n) \times K \to \R^n, \quad (X,k)\mapsto X(k)
\end{align*}
is $C^k$. Moreover, its tangent map is given by $T\mathrm{eval}((X,Y),v) = TX(v) +(0,Y\circ \pi_K(v))$.
\end{lem}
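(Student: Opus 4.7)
My plan is to factor the evaluation through the Sobolev embedding into $C^k$ and then to prove that evaluation on $C^k$-maps is itself $C^k$ by induction on $k$, carrying the induction with an arbitrary Banach space as the target (this is needed to cover the operator-valued derivative that appears after one differentiation).

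Under the hypothesis $s>d/2+k$, the Sobolev embedding theorem on the compact Riemannian manifold $K$ (applied chartwise, as discussed after Definition~\ref{defn:Sobolev_funII}) provides a continuous linear, hence smooth, inclusion $\iota\colon H^{s}(K,\R^n)\hookrightarrow C^{k}(K,\R^n)$. Writing $\mathrm{eval}=\mathrm{ev}\circ(\iota\times\id_K)$, where $\mathrm{ev}\colon C^{k}(K,\R^n)\times K \to \R^n$ is the $C^k$-evaluation, it is enough to prove that $\mathrm{ev}$ is $C^k$. Since the statement is local on $K$, a finite atlas of $K$ together with the fact that restriction of a $C^k$-map to a chart domain is continuous linear reduces the question to the case in which $K$ is replaced by a precompact set $U\subseteq \R^d$.

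I then prove by induction on $k\in\N_0$ the following strengthened assertion: for every Banach space $E$, the map $\mathrm{ev}\colon C^{k}(U,E)\times U \to E$ is of class $C^k$. The base case $k=0$ follows from the elementary estimate
\[
\lvert X(x)-X_0(x_0)\rvert \leq \lVert X-X_0\rVert_{C^0}+\omega_{X_0}(\lvert x-x_0\rvert),
\]
where $\omega_{X_0}$ is the modulus of continuity of $X_0$. For the inductive step a first-order Taylor expansion of $X_0+Y$ at $x_0$ shows that $\mathrm{ev}$ is \Frechet-differentiable with
\[
D\mathrm{ev}(X_0,x_0)(Y,v) = Y(x_0) + DX_0(x_0)(v).
\]
The summand $(Y,x_0)\mapsto Y(x_0)$ is the evaluation on $C^{k-1}(U,E)$ precomposed with the continuous linear inclusion $C^{k}\hookrightarrow C^{k-1}$, hence $C^{k-1}$ by the inductive hypothesis. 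For the second summand, the derivation operator $C^{k}(U,E)\to C^{k-1}(U,\mathcal{L}(\R^d,E))$, $X_0\mapsto DX_0$, is continuous linear; composing it with the evaluation on $C^{k-1}(U,\mathcal{L}(\R^d,E))$ (which is $C^{k-1}$ by the inductive hypothesis with target $\mathcal{L}(\R^d,E)$) and with the continuous bilinear pairing $\mathcal{L}(\R^d,E)\times\R^d\to E$ produces a $C^{k-1}$ map. Thus $D\mathrm{ev}$ is $C^{k-1}$ and $\mathrm{ev}$ is $C^k$.

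Finally, transporting the derivative formula back through $\iota$ and identifying $T\R^n=\R^n\times\R^n$, for a tangent vector $v\in T_{k}K$ with $k=\pi_K(v)$ one obtains
\[
T\mathrm{eval}((X,Y),v) = (X(k),\,T_{k}X(v)+Y(k)) = TX(v)+(0,Y\circ\pi_K(v)),
\]
which is the claimed formula. I expect the main technical point to be precisely the bookkeeping in the inductive step: one has to run the induction with a variable Banach space target, because the first derivative $DX_0(x_0)$ already takes values in $\mathcal{L}(\R^d,\R^n)$ and otherwise the inductive hypothesis does not apply at that intermediate stage.
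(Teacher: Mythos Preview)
Your proposal is correct and follows essentially the same route as the paper: both factor $\mathrm{eval}$ through the Sobolev embedding $\iota\colon H^s(K,\R^n)\hookrightarrow C^k(K,\R^n)$ and reduce to the $C^k$-regularity of the evaluation $\mathrm{ev}_{C^k}\colon C^k(K,\R^n)\times K\to\R^n$. The only difference is that where the paper simply cites this regularity of $\mathrm{ev}_{C^k}$ as a known result (from \cite{AaS15} and \cite{AaGaS20}), you supply a self-contained inductive proof, correctly generalising the target to an arbitrary Banach space so that the inductive hypothesis applies to the operator-valued derivative; this makes your argument more elementary and self-contained at the cost of some extra length.
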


\begin{proof}
Since $s>d/2+k$ we can invoke the Sobolev embedding theorem \cite[Lemma B.8]{MMS19} and obtain a smooth, linear inclusion $\iota \colon H^{s} (K,\R^n) \to C^k (K,\R^n)$.
Now we have $\mathrm{eval} = \mathrm{eval}_{C^k} \circ (\iota \times \id_K)$, where $\mathrm{eval}_{C^k} \colon C^k(K,\R^n) \times K \to \R^n$ is the evaluation map of the $C^k$-functions.
However, for $\mathrm{eval}_{C^k}$ it is known (cf.\ \cite[Proposition 3.20 and Lemma 3.15]{AaS15} or \cite[Lemma 1.19]{AaGaS20}) that $\mathrm{eval}_{C^k}$ is a $C^k$-map. The formula for $T\mathrm{eval}$ follows from the the usual calculation in the natural identification of the tangent bundle \cite[Appendix A]{AaGaS20}. Observe that the first term in $T\mathrm{eval}$ is the reason for $\mathrm{eval}$ being only a $C^k$-mapping.
\end{proof}

Recall from \cite[Example 9.32]{MR2954043}, that every compact manifold with smooth boundary $K$ can be embedded in a smooth compact manifold $DK$ without boundary, the \emph{double of $K$}. Smooth sections of a vector bundle on the double $DK$ of a manifold with boundary, restrict to smooth sections of the restriction of the bundle to $K$.
It is easy to see that the restriction (to suitable subsets of $K$) of smooth functions is continuous in the $H^s$-topology. Hence, we immediately obtain a continuous restriction operator on $H^s$-maps (cf.~also \cite[X: Theorem 6]{MR0198494}) 

\begin{lem}\label{lem:res:cont}
Let $K$ be a compact manifold with smooth boundary and $DK$ the double of $K$. For every $s > d/2$ and $n\in \N$, the restriction 
$$\mathrm{res}_K\colon H^s (DK,\R^n) \to H^s(K,\R^n),\quad F \mapsto F|_K$$ makes sense and is a continuous linear map.
\end{lem}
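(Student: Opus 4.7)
The linearity is immediate from the definition, so only the continuity (and the fact that $F|_K$ genuinely lies in $H^s(K,\R^n)$) needs proof. My plan is to reduce the claim to a family of elementary local estimates in Euclidean charts, via a finite atlas of $DK$ that is carefully adapted to the embedded compact submanifold-with-boundary $K$. Since $\partial K$ is a smooth closed hypersurface of the compact manifold $DK$, the collar neighborhood theorem (cf.\ \cite[Example 9.32]{MR2954043}) provides a finite atlas $\{(V_i,\psi_i)\}_{i=1}^{N}$ of $DK$ such that each $V_i$ is of one of three types: (a) $V_i\subseteq \mathrm{int}(K)$; (b) $V_i\cap K=\emptyset$; or (c) $V_i$ is a boundary chart for which $\psi_i(V_i)$ is an open ball in $\R^d$ and $\psi_i(V_i\cap K)$ is the intersection of that ball with the closed half-space $\{x_d\geq 0\}$, i.e.\ a set with piecewise smooth boundary intersecting transversally, to which Definition \ref{defn:Sobolev_funII} applies.

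Pick a smooth partition of unity $\{\chi_i\}_{i=1}^{N}$ on $DK$ subordinate to this cover. Following the standard reduction (cf.\ \cite[Appendix B]{MMS19} or \cite{IKT13}), the intrinsic $H^s$-norms are equivalent to the partition-of-unity norms
\[\|F\|_{DK}^{2} \coloneq \sum_{i=1}^{N} \bigl\|(\chi_i F)\circ\psi_i^{-1}\bigr\|_{H^s(\psi_i(V_i))}^{2},\]
\[\|G\|_{K}^{2} \coloneq \sum_{i: V_i\cap K\neq\emptyset} \bigl\|(\chi_i|_K\,G)\circ(\psi_i|_{V_i\cap K})^{-1}\bigr\|_{H^s(\psi_i(V_i\cap K))}^{2}.\]
For type (a) charts the two chart images coincide; for type (c) charts, $\psi_i(V_i\cap K)$ is a proper subset of $\psi_i(V_i)$. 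The key local estimate is then trivial: for any open $V\subseteq\R^d$, any subdomain $W\subseteq V$ as in Definition \ref{defn:Sobolev_funII}, and any $f\in C^\infty(\overline{V},\R^n)$,
\[\|f|_W\|_{H^s(W)}^{2} = \sum_{|\alpha|\leq s}\int_W |D^\alpha f|^{2}\,\mathrm{d}\lambda \;\leq\; \sum_{|\alpha|\leq s}\int_V |D^\alpha f|^{2}\,\mathrm{d}\lambda = \|f\|_{H^s(V)}^{2}.\]
Applying this chart-by-chart to smooth $F\in C^\infty(DK,\R^n)$ yields $\|F|_K\|_K \leq \|F\|_{DK}$. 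Since $DK$ is a compact boundaryless Riemannian manifold, $C^\infty(DK,\R^n)$ is dense in $H^s(DK,\R^n)$ (via Friedrichs mollifiers in each chart), and smooth restrictions remain smooth on $K$; the displayed estimate then extends by completion to give a well-defined, continuous linear map $\mathrm{res}_K$ whose operator norm is controlled by the equivalence constants for $\|\cdot\|_{DK}$ and $\|\cdot\|_{K}$.

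The principal subtlety is not analytic but organizational: one must verify that the adapted atlas from the collar construction really yields a partition-of-unity norm on $K$ equivalent to the intrinsic $H^s$-norm on the manifold with boundary, so that the chart-wise monotonicity inequality transfers cleanly. This requires the stability of the Sobolev topology under admissible chart changes (as formalised in \cite{IKT13}) and the Calderón-type extension facts recalled after Definition \ref{defn:Sobolev_funII}. Once these ingredients are in place, no genuine analytic obstacle remains and the proof collapses to the one-line integration-monotonicity estimate above, as already known classically, e.g.\ \cite[X: Theorem 6]{MR0198494}.
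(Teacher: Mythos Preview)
Your proposal is correct and follows essentially the same approach the paper indicates: the paper does not give a detailed proof but simply remarks before the lemma that restriction of smooth functions is continuous in the $H^s$-topology and then invokes density (citing \cite[X: Theorem 6]{MR0198494}), which is precisely what you have spelled out via the adapted atlas, partition-of-unity norms, and the chart-wise monotonicity estimate. Your write-up is a faithful elaboration of the paper's one-line justification.
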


Later on, in particular for the proof of Theorem \ref{thm:equivalence_flow_pde}, we need a suitable regularization operator. The following Lemma provides it.

\begin{lem}\label{lem-regularization}
Let $K$ be a compact manifold (possibly with boundary) of dimension $d$ and $s>d/2$, $n\in \N$. Then there exists for every $\delta>0$ a regularization operator $R_\delta\colon H^{s}(K,\R^n)\to H^{s}(K,\R^n)$, such that:
\begin{itemize}
\item $R_\delta$ is continuous linear and its image is contained in $H^{s+2}(K,\R^n)$ and there exists $C\coloneq C(s)>0$ such that for every $\delta$,  
\[ \Vert R_\delta v\Vert_{H^s} \le C\| v\|_{H^{s}},\quad v\in H^{s}(K,\R^n).
\]
\item for every $v \in H^{s}(K,\R^n)$,  $R_\delta v \to v$ convergence in $H^{s}$ as $\delta\to 0$, 
\end{itemize}
\end{lem}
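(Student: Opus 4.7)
The plan is to reduce the construction to Friedrichs mollification on a compact boundaryless manifold. If $\partial K = \emptyset$, I set $DK := K$; otherwise I use the double $DK$ of $K$ from \cite[Example 9.32]{MR2954043}. By the Calderon extension theorem applied componentwise in a finite atlas adapted to $\partial K$ and glued by a partition of unity, I first construct a continuous linear extension operator $E\colon H^s(K,\R^n) \to H^s(DK,\R^n)$ with $(Ev)|_K = v$. Together with the restriction $\mathrm{res}_K$ from Lemma \ref{lem:res:cont}, it then suffices to construct $\tilde R_\delta\colon H^s(DK,\R^n) \to H^{s+2}(DK,\R^n)$ on the closed manifold $DK$ satisfying the three listed properties and to set $R_\delta := \mathrm{res}_K \circ \tilde R_\delta \circ E$.

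On $DK$, I would use mollification localized via a finite atlas $\{(U_i,\varphi_i)\}_{1\le i\le N}$ together with a smooth subordinate partition of unity $\{\chi_i\}_{1\le i\le N}$. Fix $\rho \in C_c^\infty(\R^d)$ with $\rho\ge 0$ and $\int_{\R^d}\rho\,\mathrm{d}\lambda = 1$, and set $\rho_\delta(x) := \delta^{-d}\rho(x/\delta)$. For $w \in H^s(DK,\R^n)$ I define
$$\tilde R_\delta w := \sum_{i=1}^N \bigl(\rho_\delta \ast \bigl[(\chi_i w)\circ\varphi_i^{-1}\bigr]\bigr)\circ\varphi_i,$$
where $(\chi_i w)\circ\varphi_i^{-1}$ is extended by zero from $\varphi_i(U_i)$ to all of $\R^d$. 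For $\delta$ below some fixed $\delta_0$ (depending only on the atlas and the $\chi_i$), the convolution $\rho_\delta \ast [(\chi_i w)\circ\varphi_i^{-1}]$ has support strictly inside $\varphi_i(U_i)$, so the precomposition with $\varphi_i$ is well-defined on $DK$; for $\delta \ge \delta_0$ I simply freeze the operator at $\tilde R_{\delta_0}$.

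Each of the three required properties then comes from standard facts about mollification. Continuous linearity and the image being contained in $H^{s+2}(DK,\R^n)$ (in fact in $C^\infty$) follow because convolution with the smooth compactly supported kernel $\rho_\delta$ maps $H^s$ into $C^\infty$, and $E$, $\mathrm{res}_K$, and multiplication by $\chi_i$ are all continuous linear. For the uniform bound one uses $\|\rho_\delta\|_{L^1(\R^d)} = 1$ for every $\delta > 0$ together with the identity $D^\alpha(\rho_\delta\ast f) = \rho_\delta\ast D^\alpha f$ and Young's inequality, yielding $\|\rho_\delta\ast f\|_{H^s(\R^d)} \le \|f\|_{H^s(\R^d)}$; the atlas, partition of unity and the bounded operators $E$, $\mathrm{res}_K$ contribute only $\delta$-independent multiplicative constants. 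Convergence $R_\delta v\to v$ in $H^s(K,\R^n)$ reduces via $E$ and $\mathrm{res}_K$ to $\rho_\delta\ast f\to f$ in $H^s(\R^d)$ for each chart piece, which is standard: the statement is immediate for $f$ smooth compactly supported and extends to all of $H^s$ by density using the uniform bound just established.

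The only technical obstacle is the uniform-in-$\delta$ bound in the full $H^s$ norm rather than merely in $L^2$; but as explained above this is immediate from the fact that convolution commutes with differentiation and preserves the $L^1$ mass of the kernel. An alternative, more intrinsic construction on $DK$ would be to use the heat semigroup $\tilde R_\delta := e^{\delta\Delta_{DK}}$ of the Laplace–Beltrami operator on the closed Riemannian manifold $DK$: it is self-adjoint and contractive on every $H^s(DK,\R^n)$, is smoothing, and satisfies $e^{\delta\Delta_{DK}}w\to w$ in $H^s$, so all three requirements are automatic. Either realization yields the lemma.
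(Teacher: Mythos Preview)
Your proof is correct and shares the paper's reduction to the boundaryless case via Calderon extension and restriction. The genuine difference is the regularization on the closed manifold $DK$: the paper uses the resolvent $R_\delta = (\id - \delta\Delta)^{-1}$ of the Laplace--Beltrami operator, which is diagonal in the spectral basis of $-\Delta$ with eigenvalues $1/(1+\delta\lambda_k)\le 1$, so the uniform $H^s$-bound, the $H^{s+2}$-smoothing, and the strong convergence $R_\delta\to\id$ are read off directly from the spectrum. Your primary route---Friedrichs mollification localized via a partition of unity---is more classical and avoids spectral theory, at the price of the chart bookkeeping and the need to freeze below a threshold $\delta_0$; your Young-inequality argument for the uniform bound and the density argument for convergence are both sound. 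Your heat-semigroup alternative $e^{\delta\Delta}$ is the closest in spirit to the paper's resolvent (both being functional calculus of $\Delta$), and either spectral construction is a bit cleaner than the mollification because the constants and convergence are intrinsic rather than atlas-dependent.
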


\begin{proof}
 \textbf{Step 1: Reduction to scalar functions.}
 Recall from Remark \ref{rem:ident_Sobolev_product} that there is a canonical continuous linear identification $H^s(K,\R^n) \cong H^s (K,\R)^n$. Hence it suffices to construct a regularization operator for $H^s(K,\R)$ and apply the $n$-fold product of these operators in the identification. We may thus assume without loss of generality that $n=1$.\smallskip

 \textbf{Step 2: Reduction to manifolds without boundary.}
 If $\partial K \neq \emptyset$ we embed $K$ in its double $DK$. By the Calderon Extension Theorem \cite{MR0324726} there exists a continuous linear extension mapping
 $
 \text{ext}\colon H^{s} (K,\R^n) \to H^{s} (DK,\R^n), 
 $  such that for the restriction $\text{res}_K$ from Lemma \ref{lem:res:cont} we have
 \begin{equation}\label{eqn-extension Calderon-2}
 \text{res}_K \circ \text{ext}(f)=\text{ext}(f)=\text{ext} (f)|_K =f \mbox{ for every }f \in H^{s}(K,\R),
 \end{equation}
 Assume for a moment that there were a family $\tilde{R}_\delta\colon H^s (DK,\R) \rightarrow H^s (DK,\R), \delta >0$ of regularization operators with the claimed properties for $H^{s} (DK,\R)$. The inclusion $H^{s+2}(DK,\R^n) \subseteq H^s (DK,\R^n)$ is continuous linear and we deduce from Lemma \ref{lem:res:cont} that for $\delta > 0$ the map $R_\delta \coloneq \text{res}_K \circ \tilde{R}_\delta \circ \text{ext}$ is continuous linear and bounded by some constant only depending on $s$ (but not on $\delta$). In addition, convergence of $\tilde{R}_\delta \text{ext}(v) \rightarrow \text{ext}(v)$ for $\delta \rightarrow 0$ in $H^s$ yields convergence of $R_\delta v \rightarrow v$. Thus without loss of generality, we may assume that $K$ is a compact manifold without boundary.\smallskip

 \textbf{Step 3: Regularization operator on $H^s(K,\R)$ for $K$ without boundary.}
 Denote by $\Delta$ the Laplace-Beltrami operator with respect to the Riemannian structure on $K$, cf.\ \cite[3.2.3]{Lab15}. 
Now, as is well known, $H^{s}(K,\R)$ for a compact Riemannian manifold $K$ admits a spectral decomposition with respect to $-\Delta$ such that each eigenspace is finite dimensional (see for example~\cite[Thm.~4.3.1]{Lab15}).
 Thus, $H^{s}(K,\R)$ can be equipped with an orthonormal spectral basis for $-\Delta$. 

 Thus for $\delta >0$, $R_\delta\coloneq (\id-\delta\Delta)^{-1}\colon H^{s}(K,\R)\to H^{s}(K,\R)$ is a continuous linear operator which is diagonal with respect to the spectral basis. Since the eigenvalues $\lambda_k$ of $-\Delta$ are positive, the eigenvalues of $R_\delta$ satisfy $1/(1+\delta\lambda_k) \leq 1$.
 Moreover, by construction the map $R_\delta\colon H^{s}(K,\R) \to H^{s}(K,\R)$ is continuous linear and takes its image in $H^{s+2}(K,\R)$. Checking at the level of the spectral decomposition, it is easy to see that $\lim_{\delta \to 0} R_\delta u = u \in H^{s}(K,\R)$, for every $u \in H^{s}(K,\R)$. Finally, we have due to the eigenvalue estimate $\| R_\delta \|_{op} \leq 1$ independent of $\delta$ as required.
\end{proof}

\subsubsection*{Manifolds of Sobolev mappings}
In the last section we studied topological spaces of (manifold valued) Sobolev functions. We shall now recall, cf.\ \cite[Appendix B]{MMS19}, that the topological structure turns $H^s(K,M)$ into an infinite-dimensional manifold modelled on Hilbert spaces of Sobolev sections. Since we shall use them in some proofs, we shall recall the construction of canonical charts for these manifolds in broad strokes now. For more information on manifolds of Sobolev type morphisms we refer to \cite{IKT13} (for an introduction to manifolds of (smooth) mappings, see \cite{schmeding_2022}).

\begin{setup}[Local addition and canonical charts]
Let $N$ be a manifold and $O \subseteq TN$ be an open subset such that for every $x \in M$ the zero element $0_x\in T_x N$ is contained in $O$, Then a smooth mapping $\Theta \colon O \to N$ is called \emph{local addition} if 
\begin{itemize}
\item $\Theta (0_x)=x$ for all $x \in N$,
\item $E_\Theta =(\Theta , \pi_N) \colon O \to N \times N$ induces a diffeomorphism from $O$ to an open neighborhood of the diagonal in $N \times N$.
\end{itemize}
An example of a local addition is the Riemannian exponential map $\exp_g$ of the Riemannian manifold $(K,g)$.
Local additions can be used to construct charts for the manifolds of mappings. These canonical manifold charts are constructed as follows: Let $(K,g)$ be a Riemannian manifold and $(N,\Theta)$ be a manifold with a local addition $\Theta$. Shrinking $O$, we may assume that $E_\Theta (O)$ is symmetric with respect to interchanging the omponents of $N\times N$. Define for $f \in H^s(K,N)$ the set 
$$\mathcal{U}_f \coloneq \{g \in H^s (K,N) \mid (f,g)(x) \in E_\Theta (O) , \forall x \in N\},$$
together with a map 
\begin{equation}\label{defn:can-charts}
\varphi_f \colon \mathcal{U}_f \to H^s_f (K,TN) = \{F \in H^s (K,TN)\mid \pi_N \circ F =f\}, g \mapsto E_\Theta^{-1}\circ (f,g).
\end{equation}
We also refer to Appendix \ref{App:Sobsect} for more information on the spaces $H^s_f (K,TN)$. Clearly the inverse of the map $\varphi_f$ is
$$\varphi_f^{-1}\colon H^s_f(K,TN) \supseteq O_f \to H^s (K,N) , \quad F \mapsto E_\Theta \circ F,$$
where $O_f = \{F \in H^s_f (K,TN)\mid F(K) \subseteq O\}.$ One can then show that the family $\{(\mathcal{U}_f,\varphi_f)\}_{f \in H^s (K,N)}$ forms a manifold atlas for $H^s (K,N)$. We call the charts $\varphi_f$ canonical charts of the manifold of mappings $H^s(K,N)$ and refer to \cite[B.7]{MMS19} for more information and the omitted proofs.
\end{setup}

\begin{setup}
For $s > d/2$ There is a canonical identification of the tangent bundle taking equivalence classes $[\gamma]$  of smooth curves with values in $H^s$ to 
\begin{align}\label{tangent_ident}
TH^s (K,N) \rightarrow H^s (K,TN), [\gamma] \mapsto \left(k \mapsto \left.\frac{d}{dt}\right|_{t=0}\gamma(t)(k)\right)
\end{align}

In particular, the map \eqref{tangent_ident} is an isomorphism of Hilbert vector bundles.
For a proof we refer to \cite[Theorem 2.4]{BaHaM19}, where the case of $K$ being a manifold without boundary is treated. Note  however that the proof carries over verbatim to the case of $K$ having smooth boundary, as the canonical charts in this generalised setting are the same, cf.\ \cite[Appendix B]{MMS19}.
\end{setup}

\begin{setup}[The group $\Diff^s (K)$]\label{setup:diffgp}
The set $\Diff^s (K)$ of $H^s$-diffeomorphisms is open in $H^s (K,K)$ for $s>d/2+1$. Hence it is an open submanifold and a group under composition of mappings. However, it is only a topological group, but not a Lie group. One can show (cf.\ \cite[Appendix B]{MMS19}), that the right multiplication
$$R_\Phi \colon \Diff^s (K) \to \Diff^s (K), \quad \xi \mapsto \xi \circ \Phi$$
is smooth, while left composition with $\zeta \in H^{s+\ell} (K,N),\ell \in \N_0$
$$L_\zeta \colon \Diff^s(K)\to H^s (K,N),\quad \Phi \mapsto \zeta \circ \Phi$$
is only a $C^\ell$-map. Recall that this is due to the fact, that the identification \eqref{tangent_ident} yielding  $TH^s (K,N) \cong H^s (K,TN)$ allows us to identify the derivatives of these mappings as
$$TR_\Phi (X)=X\circ \Phi, \qquad TL_\zeta (\eta) = T\zeta \circ \eta = L_{T\zeta}(\eta).$$
Similarly, the inversion formula yields for a $C^1$-curve $\gamma \colon \mathbb{R} \rightarrow \Diff^{s+\ell}(K)$ the following
\begin{align}\label{inversion_nonsmooth}
\frac{d}{dt} \gamma(t)^{-1} = -T(\gamma(t)^{-1})\circ \left(\frac{d}{dt} \gamma(t) \right)\circ \gamma(t)^{-1} \in T_{\gamma(t)^{-1}}\Diff^s(K)
\end{align}
which only makes sense if $\ell >0$.
Summing up, $\Diff^s (K)$ is only a half-Lie group (as in only "half" of the multiplication is smooth, while the other half is in general only continuous). For later we remark that \eqref{tangent_ident} implies that
\begin{equation}\label{eq:TDiff_identification}
T_\phi \Diff^s (K) \cong\{X \circ \phi \mid X \in \mathfrak{X}^s(K)\} \subseteq H^s(K,TK),
\end{equation}
Thus we may think of tangent vectors to diffeomorphisms as $H^s$-vector fields composed with diffeomorphisms. For more information on the spaces of $H^s$-vector fields, the reader may consult Appendix \ref{App:Sobsect}. 
\end{setup}

 Let $M$ be either $K$ if $K$ has no boundary or $M=\newtilde{K}$, the double of $K$, if $K$ has boundary. Hence, we obtain a canonical embedding $J\colon K\to M$ of $K$ as a submanifold of $M$. Applying the Whitney embedding theorem \cite[Theorem 1.3.4]{Hir76} we have further an smooth embedding $\iota \colon M \to \R^m$ for some positive integer $m$. Let $i \colon K\to \R^m$ be the embedding $i=\iota \circ J$. We will now prove that the pushforward $i_*(\varphi)\coloneq i\circ \varphi$ induces an embedding of $\Diff^s_\mu(K)$ into $H^s(K,\R^{m})$.

\begin{prop}\label{prop:embedding_Diff}
For $s > d/2+1$ the unit component $\Diff_0^s (K)$ and also the group $\Diff^s(K)$ can be embedded through $I_*$ as split submanifolds of a Hilbert space $H^s (K,\R^m)$.
\end{prop}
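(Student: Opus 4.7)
The plan is to realize the embedding as the composition $i_* = \iota_* \circ J_*$ of the pushforwards induced by the inclusion $J\colon K\hookrightarrow M$ and the Whitney embedding $\iota\colon M \hookrightarrow \R^m$. Since $M$ is a compact boundaryless manifold, the tubular neighborhood theorem yields a smooth rank-$(m-d)$ normal bundle $\pi_N \colon N \to M$ and a diffeomorphism $\Psi \colon N_\eps \to U$ from an open neighborhood $N_\eps$ of the zero section onto an open neighborhood $U$ of $\iota(M) \subseteq \R^m$; composition gives a smooth retraction $r \coloneq \pi_N \circ \Psi^{-1} \colon U \to M$ with $r \circ \iota = \id_M$. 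This is the only finite-dimensional geometric input I would need.

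First I would show that $\iota_*\colon H^s(K, M) \to H^s(K, \R^m)$ is a smooth topological embedding whose image is a split submanifold. Smoothness of pushforward by a smooth map is standard, and injectivity is immediate. The set $\mathcal V \coloneq \{g \in H^s(K, \R^m) : g(K) \subseteq U\}$ is open in $H^s(K, \R^m)$ because, by the Sobolev embedding used in Lemma \ref{lem:eval}, $H^s$-maps embed continuously into $C^0(K, \R^m)$. The pushforward $r_* \colon \mathcal V \to H^s(K, M)$ is smooth and satisfies $r_* \circ \iota_* = \id$ on its domain, showing that $\iota_*$ is a homeomorphism onto its image. The diffeomorphism $\Psi$ furthermore yields a local diffeomorphism of $\mathcal V$ with an open subset of the Hilbert bundle of $H^s$-sections of the pulled-back normal bundle, under which $\iota_*(H^s(K, M))$ corresponds to the zero section. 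The canonical fiberwise splitting $T\R^m|_{\iota(M)} = T\iota(TM) \oplus N$ then induces a splitting $H^s_{\varphi_0}(K, T\R^m) \cong H^s_{\varphi_0}(K, T\iota(TM)) \oplus H^s_{\varphi_0}(K, \varphi_0^* N)$ at each base point, providing split submanifold charts modelled on closed complemented Hilbert subspaces.

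Finally I would descend to $\Diff^s(K)$. When $\partial K = \emptyset$ we have $M = K$ and $\Diff^s(K)$ is already open in $H^s(K, K) = H^s(K, M)$ by Setup \ref{setup:diffgp}, so the restriction of the split submanifold structure constructed above gives the claim for $\Diff^s(K)$ and its open subset $\Diff_0^s(K)$. When $\partial K \neq \emptyset$ one works with the double $M = \widetilde K$: since $K \subseteq \widetilde K$ is a codimension-zero submanifold with boundary and the diffeomorphisms of $K$ must preserve $\partial K$, the image $J_*(\Diff^s(K))$ is open in the subspace of $\varphi \in H^s(K, \widetilde K)$ with $\varphi(K) \subseteq K$, which is compatible with the split structure coming from the normal bundle of $\iota(\widetilde K)$.

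The main obstacle is precisely this last compatibility in the boundary case: one must check that the boundary-preserving constraint cuts $\Diff^s(K)$ out of $H^s(K, \widetilde K)$ as a (relatively) open subset of the split submanifold and that local-addition-style charts at boundary-respecting diffeomorphisms remain admissible. For $\Diff_0^s(K)$ alone this simplifies considerably, because one can restrict to a neighborhood of the identity and exploit that small perturbations in the $H^s$-topology (hence, by Sobolev embedding, in $C^0$) automatically remain boundary-preserving.
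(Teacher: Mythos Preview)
Your approach to the core step---showing that $H^s(K,M)$ is a split submanifold of $H^s(K,\R^m)$ via the tubular neighborhood and the normal-bundle splitting $T\R^m|_{\iota(M)} \cong T\iota(TM)\oplus N$---is essentially the same as the paper's. The paper phrases the chart construction through Michor's framework of local additions and \emph{additively closed} submanifolds (\cite[10.6--10.8]{MR583436}), which gives explicit submanifold charts $(\mathscr{U}_g,\varphi_g)$ centred at smooth $g$; your sketch via $\Psi^{-1}_*$ identifying a neighborhood with the total space of a section bundle and $\iota_*(H^s(K,M))$ with its zero section is the same geometry, slightly less packaged. Both arguments then invoke transitivity of split submanifolds to pass from $H^s(K,M)$ down to $\Diff^s(K)$.

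The boundary case, however, contains a genuine error. Your final sentence claims that for $\Diff_0^s(K)$ one can ``exploit that small perturbations in the $H^s$-topology \ldots\ automatically remain boundary-preserving.'' This is false: a small $C^0$-perturbation in $H^s(K,\widetilde K)$ of the inclusion $J$ can push a boundary point across $\partial K$ into the other half of the double, so the constraint $\varphi(K)\subseteq K$ is \emph{not} open in $H^s(K,\widetilde K)$. Restricting to a neighborhood of the identity does not help. The paper does not attempt to argue this step directly; it simply invokes \cite[B.10]{MMS19}, which establishes that $\Diff^s(K)$ is a closed split submanifold of $H^s(K,DK)$ via boundary-respecting local additions. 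Your identification of this as the ``main obstacle'' is correct, but the proposed workaround fails, and some genuine input (a boundary-adapted local addition, or the cited result) is needed.
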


\begin{proof}
 As $\Diff_0^s (K) \subseteq \Diff^s (K) \subseteq H^s(K,K)$ is a chain of inclusions of open subsets, these subsets from open (whence split) submanifolds of $H^s(K,K)$. If $K$ has boundary, recall from \cite[B.10]{MMS19} that $\Diff^s (K)$ is a closed, whence split, submanifold of $H^s (K,DK)$, where $DK$ is the double of $K$. By transitivity of split submanifolds \cite[Lemma 1.4]{Glo16} it suffices thus to prove that $H^s(K,M)$ is a split submanifold of $H^s (K,\R^m)$ for some $m>0$, where $M$ is either $K$ (no boundary case) or the double of $K$ (if $K$ has smooth boundary).

 Since $M$ is a compact manifold without boundary it embeds into some $\R^m$. The idea is to lift this embedding to the infinite-dimensional manifold of mappings. For manifolds of smooth maps, this was done in \cite[Proposition 10.8]{MR583436}. We follow loc.cit.~closely and need only deal with some additional complications due to the Sobolev type mappings.
	
 Corestricting the embedding $\iota \colon M \to \R^m$ we obtain a diffeomorphism $I\colon M \to \iota(M)$. The pushforward with $I$ induces an isomorphism $H^s(K,M) \cong H^s (K,\iota (M))$ (this can be seen by endowing $\iota(M)$ with the metric induced by $I$, whence the pushforward relates the canonical charts of $M$ and $\iota (M)$). Hence we may assume without loss of generality that $M \subseteq \R^m$.
 Now the proof can be completed as in \cite[Proposition 10.8]{MR583436}. We repeat it for the readers convenience:
 Pick a tubular neighborhood $M \subseteq U \subseteq \R^m$ of $M \in \R^m$ and work with the vector bundle $(U,p,M)$. By \cite[10.6]{MR583436} we can choose a local addition $\tau \colon TU \to U$ such that $M$ (identified with the $0$-section of $E$ and all fibres $p^{-1}(m), m\in M$ are additively closed in $U$.\footnote{Recall that the submanifold $X$ of a manifold $Y$ is called additively closed with respect to the local addition $\tau\colon TY\to Y$ if $\tau (TX)\subseteq X$ holds.} In particular this entails that the canonical chart $(\mathscr{U}_g,\varphi_g)$ of $H^s (K,\R^m)$ around $g\in C^\infty(K,M)$ (see \eqref{defn:can-charts}) satisfies 
 $$\mathscr{U}_g \cap H^s(K,M) = \varphi^{-1}_g (H^s_g(K,TM)) \subseteq \varphi^{-1}_g (H^s_g(K,T\R^m)).$$ 
 Via the vertical bundle $V(U)$ of $(U,p,M)$ we split $TU|_M = TM \oplus V(U)$. Since $g$ takes its values in $M$ we have $H^s_g(K,T\R^m)\cong H^d(K,g^*T\R^m) = H^s(K,g^*TU)=H^s(K,g^*TU|_M)$, whence 
	\begin{align*}
	 H^s_g (K,T\R^m) \cong& H^s (K,g^*TU|_M) \stackrel{\text{\cite[Theorem 14.5]{MR0248880}}}{=}& H^s (K,g^*TM) \oplus H^s (K,g^*V(U))\\ \cong& H^s_g (K,TM) \oplus H^s (K,g^*V(U)).
	\end{align*}
 We see that $H^s_g (K,TM)$ is a direct summand of $H^s_g (K,T\R^m)$. Smooth functions are dense in $H^s(K,M)$ such that every $H^s$-function is contained in $\mathscr{U}_g$ for some $g \in C^\infty (K,M)$. The canonical charts centered at such $g$ yield submanifold charts realising $H^s(K,M)$ as a split submanifold of $H^s (K,T\R^m)$.
\end{proof}

\begin{cor}\label{cor:embedding_TDiff}
For $s > d/2+1$, $T\Diff_0^s (K)$ and $T\Diff^s (K)$ can be embedded through (the restriction of) $Ti_*  = Ti\circ \cdot$ as split submanifolds of the Hilbert space $H^s (K,\R^{2m})$.
\end{cor}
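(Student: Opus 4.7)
The plan is to push the embedding of Proposition \ref{prop:embedding_Diff} through the tangent functor. Two ingredients will be used: the canonical identification $TH^s(K,N) \cong H^s(K,TN)$ from \eqref{tangent_ident}; and the general Banach-manifold fact that the tangent bundle of a split submanifold is again a split submanifold of the ambient tangent bundle, since submanifold charts lift to submanifold charts under $T$.

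First, I would specialise the tangent identification to $N = \R^m$. Since $T\R^m \cong \R^m \times \R^m$ is the trivial bundle, combining \eqref{tangent_ident} with Remark \ref{rem:ident_Sobolev_product} produces an isomorphism of Hilbert spaces
\[
TH^s(K,\R^m) \;\cong\; H^s(K,T\R^m) \;\cong\; H^s(K,\R^{2m}).
\]
By Proposition \ref{prop:embedding_Diff}, $i_*$ realises $\Diff^s(K)$ and $\Diff_0^s(K)$ as split submanifolds of $H^s(K,\R^m)$. Applying the tangent functor to the resulting submanifold charts, $T\Diff^s(K)$ and $T\Diff_0^s(K)$ become split submanifolds of $TH^s(K,\R^m) \cong H^s(K,\R^{2m})$. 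The split property is in fact automatic here since the ambient space is Hilbert and every closed subspace of a Hilbert space is complemented.

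It remains to identify the induced embedding with $Ti_* = Ti \circ \cdot$. Since $i_*$ is left composition with the smooth map $i$, its tangent at $\varphi \in \Diff^s(K)$ sends a curve class $[\gamma]$ through $\varphi$ to the curve class $[i \circ \gamma]$ through $i \circ \varphi$. If $X \in H^s_\varphi(K,TK)$ is the element corresponding to $[\gamma]$ under \eqref{tangent_ident}, then $[i \circ \gamma]$ corresponds to $Ti \circ X \in H^s_{i \circ \varphi}(K,T\R^m)$, which is precisely left composition with $Ti$ applied to $X$.

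The step requiring the most care is the lifting of submanifold charts under $T$. Locally this reduces to the observation that if $\psi \colon U \to V_1 \times V_2 \subseteq E_1 \times E_2$ is a submanifold chart of the ambient manifold with $\psi(U \cap N) = (V_1 \times V_2) \cap (E_1 \times \{0\})$, then the tangent map $T\psi$ is a chart of $TU$ into $(V_1 \times V_2) \times (E_1 \times E_2)$ whose restriction to $TN \cap TU$ lands in $(V_1 \times \{0\}) \times (E_1 \times \{0\})$, a closed complemented subspace of the model. Everything else in the argument is routine functoriality of $T$ applied to the embedding $i_*$.
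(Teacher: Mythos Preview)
Your proof is correct and follows essentially the same route as the paper: both apply the tangent functor to the split submanifold embedding of Proposition \ref{prop:embedding_Diff} and use that $T$ preserves split submanifold embeddings. The paper compresses your last paragraph into a single citation (\cite[Lemma 1.13]{Glo16}) and does not spell out the identifications $TH^s(K,\R^m)\cong H^s(K,\R^{2m})$ or $T(i_*)=(Ti)_*$, but the argument is the same.
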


\begin{proof}
This result follows immediately from the Proposition \ref{prop:embedding_Diff}: The tangent map of a split submanifold embedding is a split submanifold embedding (this is a consequence of \cite[Lemma 1.13]{Glo16}).
\end{proof}

\begin{cor}
 Since the groups $\Diff^s_\mu (K)$ and $(\Diff^s\mu)_0(K)$ are (split) submanifolds of $\Diff^s(K)$, $(T)i_*$ embedds $(T)\Diff^s_\mu (K)$ and $(T)(\Diff^s_\mu)_0(K)$ as a split submanifold of a Hilbert space.
\end{cor}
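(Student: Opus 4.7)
The plan is to apply transitivity of split submanifold embeddings to chain together the results that are already available. First I recall the classical fact from Ebin-Marsden theory that for $s > d/2+1$ the group $\Diff^s_\mu(K)$ of volume-preserving $H^s$-diffeomorphisms is a split submanifold of $\Diff^s(K)$ (with model space the divergence-free $H^s$-vector fields, sitting inside all $H^s$-vector fields as a complemented closed subspace via the Hodge decomposition). The identity component $(\Diff^s_\mu)_0(K)$ is open inside $\Diff^s_\mu(K)$, and is therefore automatically a split submanifold as well.

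Next I would invoke Proposition \ref{prop:embedding_Diff}, which says that $i_*$ realises $\Diff^s(K)$ (and hence also its open subset $\Diff^s_0(K)$) as a split submanifold of the Hilbert space $H^s(K,\R^m)$. The composition of two split submanifold inclusions is again a split submanifold inclusion by \cite[Lemma 1.4]{Glo16} (the same transitivity step used already inside the proof of Proposition \ref{prop:embedding_Diff}). Hence the restriction of $i_*$ to $\Diff^s_\mu(K)$, respectively to $(\Diff^s_\mu)_0(K)$, exhibits each of these as a split submanifold of $H^s(K,\R^m)$.

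For the tangent bundle statement I would proceed analogously. The tangent functor sends split submanifold inclusions to split submanifold inclusions (this is the content of \cite[Lemma 1.13]{Glo16}, already used in Corollary \ref{cor:embedding_TDiff}), so $T\Diff^s_\mu(K) \hookrightarrow T\Diff^s(K)$ and $T(\Diff^s_\mu)_0(K) \hookrightarrow T\Diff^s(K)$ are split submanifold embeddings. Combining with Corollary \ref{cor:embedding_TDiff}, which provides a split submanifold embedding $Ti_* \colon T\Diff^s(K) \hookrightarrow H^s(K,\R^{2m})$, one more application of transitivity yields the desired embedding of $T\Diff^s_\mu(K)$ and $T(\Diff^s_\mu)_0(K)$ as split submanifolds of $H^s(K,\R^{2m})$.

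There is essentially no obstacle: all the nontrivial work has been absorbed into Proposition \ref{prop:embedding_Diff}, Corollary \ref{cor:embedding_TDiff}, and the classical fact that $\Diff^s_\mu(K)$ is split in $\Diff^s(K)$. The only thing to be slightly careful about is that the ambient Hilbert space is the same in both steps (namely $H^s(K,\R^m)$ for the group and $H^s(K,\R^{2m})$ for the tangent bundle), so that the two split submanifold inclusions can genuinely be composed; this is automatic from the way $i_*$ and $Ti_*$ were defined.
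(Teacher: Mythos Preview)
Your argument is correct and matches the paper's approach exactly: the paper does not give a separate proof but treats the corollary as immediate from transitivity of split submanifold inclusions together with Proposition~\ref{prop:embedding_Diff} and Corollary~\ref{cor:embedding_TDiff}, which is precisely what you have spelled out. Your extra care in invoking \cite[Lemma 1.4]{Glo16} and \cite[Lemma 1.13]{Glo16} explicitly is a welcome elaboration of what the paper leaves implicit.
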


The notation $(\Diff^s_\mu)_0(K)$ is quite cumbersome. However, in the Ebin-Marsden approach we work often only with the unit component of the groups involved. Thus we allow ourselves the abuse of notation that later on we shall often suppress the unit component in the notation. In particular, it will be convenient to allow the abuse of notation and later write $(T)\Diff^s_\mu(K)$ and $(T)i_*$ to denote $(T)(\Diff_\mu^s)_0(K)$ and the restriction of the embedding $(T)i_*$.

\addcontentsline{toc}{subsection}{Weaker concepts of differentiability}
\subsection*{Weaker concepts of differentiability}
Later we will investigate differentiable dependence of the solution of a stochastic differential equation on its initial values. For this it is useful to have the following weakened notions of differentiability at hand.
\begin{defn}\label{Gateaux:diff}
Let $E,F$ be topological vector spaces. Consider a map $f\colon U \to F$ defined on an open subset $U \subseteq E$. Assume that $a \in U$.  We say that  $f$ is  Gateux differentiable at $a$  if and only if  for every $v \in E$ the limit
$$df(a;v)\coloneq \lim_{h \to 0} h^{-1}(f(a+hv)-f(a))$$ exists.
the map $df(a;\cdot) \colon E\to F$ is then called the Gateaux derivative of $f$ at $ a$. 

Following Elworthy \cite[p. 139]{Elw82}, we say that $f$ is \emph{strongly Gateaux differentiable} in $a \in U$ if for every $C^1$ curve $\gamma\colon (-\varepsilon,\varepsilon)\to U \subseteq E$ such that $\gamma(0)=a$, the function 
$
(-\varepsilon,\varepsilon)\ni s\mapsto f \circ \gamma(s)\in F
$
is differentiable at $s=0$.
\end{defn}

Clearly, if $f$ is strongly Gateaux differentiable then $f$ is Gateaux differentiable and
\begin{align}\label{eq:strongGateaux}
\left.\frac{d}{ds}\right|_{s=0} f(\gamma(s)) := \lim_{h\to 0}h^{-1}(f(\gamma(h))-f(\gamma(0))) = df(\gamma(0), \dot{\gamma}(0))
\end{align}
holds for the Gateaux derivative of $f$ at $a$. The converse is not true, but one can prove the following variant of \cite[Lemma 8D]{Elw82}:

\begin{lemma}\label{la:Gateaux}
Let $f \colon E \supseteq U \to F$ be a Gateaux differentiable in $x$, $U \subseteq E$ open in a normed space $E$ and $F$ a normed space. If $f$ is Lipschitz continuous, then $f$ is strongly Gateaux differentiable in $x$.
\end{lemma}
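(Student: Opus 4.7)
The plan is to verify the definition of strong Gateaux differentiability directly, exploiting the Lipschitz bound to replace a curve by its linear approximation at the base point. Fix a $C^1$ curve $\gamma \colon (-\varepsilon,\varepsilon) \to U$ with $\gamma(0)=x$, and set $v \coloneq \dot{\gamma}(0) \in E$. Since $\gamma$ is $C^1$, Taylor expansion at $0$ gives
\begin{equation*}
\gamma(h) = x + h v + r(h), \qquad \|r(h)\|_E = o(|h|) \text{ as } h \to 0.
\end{equation*}

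First I would split the difference quotient of $f\circ\gamma$ as
\begin{equation*}
\frac{f(\gamma(h)) - f(x)}{h} \;=\; \frac{f(x+hv) - f(x)}{h} \;+\; \frac{f(\gamma(h)) - f(x+hv)}{h}.
\end{equation*}
By the assumed Gateaux differentiability of $f$ at $x$, the first summand converges, as $h \to 0$, to $df(x;v)$. For the second summand, let $L$ denote the Lipschitz constant of $f$ (or a local Lipschitz constant valid on a neighborhood of $x$, which is all that is needed since $h$ is small); then
\begin{equation*}
\left\| \frac{f(\gamma(h)) - f(x+hv)}{h} \right\|_F \;\leq\; \frac{L}{|h|}\,\|\gamma(h) - (x+hv)\|_E \;=\; \frac{L}{|h|}\,\|r(h)\|_E \;\xrightarrow[h\to 0]{}\; 0.
\end{equation*}
Combining these two estimates gives
\begin{equation*}
\lim_{h \to 0} \frac{f(\gamma(h)) - f(\gamma(0))}{h} \;=\; df(x;v) \;=\; df(\gamma(0);\dot{\gamma}(0)),
\end{equation*}
which is exactly strong Gateaux differentiability of $f$ at $x$ (and, incidentally, confirms the chain-rule identity \eqref{eq:strongGateaux}).

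I do not anticipate a real obstacle here; the argument is essentially the standard trick that Lipschitz functions are insensitive to perturbations of order $o(h)$. The only minor point to be careful about is that the Lipschitz hypothesis need only be available on some neighborhood of $x$ inside $U$, since the estimate is used solely for $h$ close to $0$, and that neither $E$ nor $F$ needs to be complete for the argument to go through.
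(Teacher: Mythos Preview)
Your proof is correct and follows essentially the same approach as the paper: both split the difference quotient into the linear-direction term $(f(x+hv)-f(x))/h$, handled by Gateaux differentiability, and the remainder $(f(\gamma(h))-f(x+hv))/h$, controlled by the Lipschitz constant and the $o(|h|)$ remainder of the $C^1$ curve. The paper writes the estimate as a single triangle-inequality chain rather than introducing the Taylor remainder $r(h)$ explicitly, but the content is identical.
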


\begin{proof}
Assume that $\gamma \colon (-a,a)\to U$ is a $C^1$-curve with $\gamma(0)=x$ and $\gamma'(0)=h$. Let $L$ be a Lipschitz constant for $f$. We compute now for $s\neq 0$ an estimate 
\begin{align*}
 &\lVert s^{-1}(f(\gamma(s))-f(\gamma(0)))-df(x;h)\rVert_F\\
 \leq& \lVert s^{-1}(f(\gamma(s))-f(x+sh))\rVert_F + \lVert s^{-1}(f(x+sh)-f(x))-df(x;h)\rVert_F \\
\leq& L\lVert s^{-1} (\gamma(s)-x)-h\rVert_E + \lVert s^{-1}(f(x+sh)-f(x))-df(x;h)\rVert_F
\end{align*}
By construction, both terms in the last line tend to $0$ as $s\to 0$. Thus the limit \eqref{eq:strongGateaux} exists and tends to the Gateaux derivative of $f$ at $x$. In conclusion, $f$ is strongly Gateaux differentiable at $x$. 
\end{proof}

Note that pointwise Gateaux derivatives may behave wildly. Since we have not even asked for continuity of the derivative, one may not even assume that the derivative is linear in the vector component. Moreover, if the vector spaces are not locally convex, even continuously Gateaux differentiable mappings may exhibit pathological behaviour (see \cite[Section 1.1]{schmeding_2022} for a discussion.

\begin{defn}\label{defn:Bast}
A mapping $f\colon U \to F$ is said to be Bastiani continuous differentiable or $C_{\text{Bas}}^1$, if it is Gateaux differentiable for every $x \in U$ and the Gateaux derivatives glue together to a continuous map $df \colon U \times E \to F$. Iteratively, we can then define $C_{\text{Bas}}^k$-mappings for every $k \in \mathbb{N}$. If $f$ is $C_{\text{Bas}}^k$ for every $k \in \mathbb{N}$, we say $f$ is smooth. 
Note that for Banach spaces, $C_{\text{Bas}}^k$-mappings are $k-1$times differentiable in the usual sense. Thus in particular Bastiani smooth and smooth maps coincide on these spaces. 
\end{defn}

\begin{remark}
 Results such as Lemma \ref{technical:Lemma} remain true in the more general setting of Bastiani differentiable mappings between manifolds modelled on locally convex spaces. See \cite{schmeding_2022} for more information on this generalised setting.
\end{remark}

\section{SDEs on Hilbert manifolds: local well-posedness and maximal existence}\label{sec:maximal_existence}

In this section, we show the existence of a maximal solution for SDEs on Hilbert manifolds, as well as other minor extensions of results in \cite{MMS19}. All the results are classical up to minor modifications, see e.g. \cite{Elw82}, but we prefer to state them for the readers convenience to make the article self-completeness.

\subsection*{Stopping times and the It\^o integral}
\addcontentsline{toc}{subsection}{Stopping times and the It\^o integral}
In what follows, we will use integral and SDEs with random initial time. Although in the main results on stochastic Euler equations we put $0$ as the initial time, random initial times are needed at least in the proof of existence of a maximal solution. As in \cite{MMS19}, we recall some preliminary facts, adding stopping times as initial times.

We are given a probability space $(\Omega,\mathscr{A},\mathbb{P})$ and a filtration $\mathbb{F}=(\mathscr{F}_t)_{t\in [0,\infty)}$ satisfying the usual assumptions, that is,
\begin{enumerate}
\item[(i)] $\mathbb{F}$  is  right-continuous,
\item[(ii)]  and all null sets of $\mathscr{A}$ are elements of $\mathscr{F}_0$.
\end{enumerate}
 We call $\mathscr{F}_\infty$ the $\sigma$-algebra generated by all $\cF_t$. \\
  A stopping time is a function   $\tau:\Omega\to [0,\infty]$ such that, for every $t\ge0$, the event $\{\tau\le t\}$ belongs to $\cF_t$, see 
  \cite[Definition I.2.1]{Kar-Shr-96}, \cite[Definition 4.1]{Metivier_1982} and \cite[section III.5]{Elw82}.
  The family 
\begin{align*}
\cF_\tau= \{A\in \cF_\infty \mid A\cap \{\tau\le t\}\in \cF_t,\,\forall t\}
\end{align*}
is called the $\sigma$-algebra associated with, or generated by,  $\tau$. \\
A  stopping time $\tau$ is called a finite stopping time if and only if  a.s. $\tau<\infty $.

Let us make the following remark based on  Appendix A in \cite{Brz+H+Raza_2021}.
\begin{remark}\label{rem-stopping time-progressive measurability}
If $\tau$ is a stopping time, the indicator stochastic process  $\mathds{1}_{[0,\tau)}$ defined by 
\begin{equation}\label{eqn-indicator process}
\mathds{1}_{[0,\tau)}(s) = \begin{cases} 1, & \mbox{ if }s\in [0,\tau),\\
0, &\mbox{ otherwise},
\end{cases}
\end{equation}
 is well measurable, see \cite[Definition 3.1 and Proposition 4.2]{Metivier_1982}. Hence, the stochastic process $\mathds{1}_{[0,\tau)}$,  is progressively measurable. 
 Indeed,  according to   \cite[Theorem 1.6]{Metivier_1982}, the $\sigma$-field of well measurable sets is a subset of  the $\sigma$-field of all progressively measurable sets. \\
 Hence, if   $\rho$ and $ \tau$  are stopping time     satisfying Hypothesis 
\ref{hh-stopping times}, then the process $ \mathds{1}_{[\rho,\tau)}$ is also progressively measurable. Indeed, we have 
 \[
 \mathds{1}_{[\rho,\tau)}(s)=\mathds{1}_{[0,\tau)}(s)-\mathds{1}_{[0,\rho)}(s), \;\; s \geq 0.
 \]
\end{remark}

\begin{remark}\label{rem-filtration}
Since the filtration $\mathbb{F}$ is fixed, by writing, for example, adapted or progressively measurable, we will always have in mind that these refer to the filtration $\mathbb{F}$.
\end{remark}

\begin{defn}\label{def-accessible stopping time}
If $\rho$  is a finite  stopping time and  $A \in \cF_\rho$, then a stopping time $\tau$ such that  $\tau > \rho$ a.s.  on $A$,   is called $\rho$-accessible on $A$ if and only if 
there exists a sequence  $(\tau_n)$ of stopping times such that the following two conditions are satisfied a.s. on the set $A$, 
\begin{enumerate} 
\item[(i)] $\rho\le \tau_n<\tau$   for every $n$, 
\item[(ii)]  $\tau_n\nearrow \tau$.
\end{enumerate}
In particular, we can take $\tau_n=\tau$ on $A^c$ without loss of generality. 
Such a sequence is called an \emph{announcing sequence} for $\tau$ relative to $\rho$ on $A$.\\
If $A=\Omega$, we simply say that $\tau$ is $\rho$-accessible and the sequence $(\tau_n)$ is called announcing sequence for $\tau$ relative to $\rho$. \\
If $A=\Omega$ and $\rho=0$, we simply say that $\tau$ is accessible
and that  $(\tau_n)$ is an  announcing sequence for $\tau$. 
\end{defn}

Some additional properties of accessible stopping times are listed in Remark 3.4 of paper \cite{Brz+H+Raza_2021}.

Given an admissible  process $\xi$ with values in a Hilbert space $H$ and 
a finite stopping time $\rho$,  the first exit time from an open set $U \in H$ after $\rho$ is defined by 
\begin{align}\label{eqn-tau_U}
\tau_U = \tau_{U,\rho} := \inf\{t\ge \rho \mid \xi_t\notin U\}
\end{align}
It is known that $\tau_{U,\rho}$ is an $\rho$-accessible stopping time on $\{\xi_\rho\in U\}$, with announcing sequence $(\tau_n)$ defined by  $\tau_n= \inf\{t\ge \rho \mid \xi_t\notin U_n\}$, where $U_n = \{x\in H \mid \text{dist}(x,U^c)>1/n\}$; see e.g. \cite[Proposition 3.7]{Ba2017}.

\begin{defn}[Stochastic intervals  and stopping times]\label{stoch_iv_notation}
Given an event $B \in \cA$ and two stopping times $\rho$ and $\tau$ such that  $\rho\le \tau$ a.s. on $B$, we define the following two stochastic intervals
\begin{align}\label{eqn-stochastic intervals}
[\rho,\tau)\times B := \{(t,\omega) \in [0,\infty)\times B  \mid t\in [\rho(\omega),\tau(\omega))\},
\\
[\rho,\tau]\times B := \{(t,\omega) \in [0,\infty)\times B \mid t\in [\rho(\omega),\tau(\omega)]\}
\end{align}
When $B=\Omega$, we also use the notation 
\begin{equation}
[[\rho,\tau))=[\rho,\tau)\times \Omega \mbox{ and }  [[\rho,\tau]]=[\rho,\tau]\times \Omega    
\end{equation}
 Finally, we denote by
\begin{equation} \Omega_t(\rho,\tau):=
	\{ \omega \in \Omega : \rho(\omega) \leq t < \tau(\omega)\},
	\end{equation}
and put $\Omega_t(\tau)=\Omega_t(0,\tau)$.
\end{defn}
If  $(G,\cG)$ is a measurable space,  a $G$-valued process $\xi$ on $[\rho,\tau)$ is a map $\xi:[\rho,\tau)\times\Omega\to G$, such that, for every $t\in [0,\infty)$, the map
\begin{equation}\label{eqn-xi-bar}
\bar\xi_t: \Omega \ni \omega \mapsto 
\begin{cases}
\xi_t(\omega), &\mbox{ if } \mathds{1}_{[\rho(\omega),\tau(\omega))}(t)=1, 
\\
y_0,&\mbox{ if } \mathds{1}_{[\rho(\omega),\tau(\omega))}(t)=0.
\end{cases}
\end{equation}
is $\mathscr{A}/\cG$-measurable, for some (equivalently every)  $y_0 \in G$.  A process $\xi$ is called adapted, resp. progressively measurable, if $\bar\xi$ is adapted, resp. progressively measurable, for some (equivalently every)  $y_0 \in G$. A process $\xi$ is called \emph{admissible} if and only if  it is adapted and 
for almost all $\omega \in \Omega$, the function \begin{equation}\label{eqn-local trajectory} 
[\rho(\omega),
			\tau(\omega))\ni t \mapsto \xi(t, \omega) \in X
\end{equation}   
   is continuous. According to \cite[Proposition 1.1.13]{Kar-Shr-96}, every  admissible process is  progressively measurable. Analogous  definitions can be formulated  for  process defined on $[\rho,\tau]$.

The following is a modification of \cite[Definition 4.1]{daPaZ} motivated by the remarks following that Definition.
\begin{defn}\label{def-Wiener process in Banach space}
Let us assume that $E$ is a separable Banach space endowed with the Borel $\sigma$-field $\mathscr{B}(E)$.
An  $E$-valued  $W=(W_t)_{t\geq 0}$ is called  $\mathbb{F}$-Wiener process  if and only if 
 and   is , i.e. a continuous
with independent increments  $E$-valued $\mathbb{F}$-adapted stochastic process such that
\begin{enumerate}
\item[(w1)] $W$ has continuous trajectories and $W_0=0$,
\item[(w2)] for all $t>s\geq 0$, the increment $W_t-W_s$ is independent of $\sigma$-field $\cF_s$,
 and
 \begin{equation}\label{eqn-equality of laws}
\mbox{  the law $\mathscr{L}(W_t-W_s)$ is equal to the law $\mathscr{L}(W_{t-s})$,}
 \end{equation}
\item[(w3)]   and, for every $t \geq 0$,
 $
 \mbox{
the law $\mathscr{L}(W_t)$ is equal to
$\mathscr{L}(-W_t)$.}
$
 \end{enumerate}
\end{defn}
It follows that for every  $t>0$, the law $\mathscr{L}(W_t)$ is a gaussian measure on $R$ with mean $0$, see \cite[Section 4.1.1]{daPaZ}. Hence,
there exists a separable Hilbert space $\rK$, called  the Reproducing Kernel Hilbert Space (RKHS) of  $\mathscr{L}(W_1)$  on $E$, see  in \cite[Section 2.2.2 ]{daPaZ}.
 Moreover,  by \cite[Section 2.3.1]{daPaZ}, $\mathscr{L}(W_1)$ has a covariance operator $Q$ which is of trace class and "it completely characterizes of $W$".
 Let us denote by $i\colon \rK \embed E$ the natural embedding which is known to be $\gamma$-radonifying, e.g.\ $i$ is of Hilbert-Schmidt class if $E$ is a Hilbert space.

The following definition is based on \cite{Brz_1997}, which in turn was based on \cite{Neidhardt_1978} and \cite{Brz+Elw_2000}, see also \cite{Brz_1995}. For various different  notions of
radonifying, see \cite{Vakhania+T+C_1987}.
 \begin{defn}
 In what follows all the spaces are
 assumed to be separable, $H$ is a Hilbert space while $E$ and $X$
are Banach spaces. Let us recall that a bounded linear map $L\colon H \to X$
is  \emph{$\gamma$-radonifying},  if and only if  the image $L(\gamma_H)$ by $L$ of  the canonical
Gaussian distribution  $\gamma_H$ on $H$ extends  to a Borel probability
measure  on  $X$  (necessary  Gaussian)  which  will be denoted by
$\nu_L$. We set
\begin{equation}
\gamma(H,X)\coloneq \left\{  L:H \to  X \mbox{  s.th. }  L \mbox{  is $\gamma$-radonifying  }
\right\}.
\label{3.1}
\end{equation}
Because  of  the  Fernique  theorem  (which  asserts  in  the framework
described  above that  for any $L \in \gamma(H,X)$ there is  $c>0$ such that
$\int_{X} e^{c |x|^2}\, \nu_L(dx)  < \infty$)
the following quantity (for $L\in \gamma(H,X)$) is finite
\begin{equation}
\Vert    L\Vert_{\gamma(H,X)}:=   \left\{    \int_X   |x|^2    \,   \nu_L(dx)
\right\}^{\frac12}.
\label{3.2}
 \end{equation}
We say that $i\colon H \to  E$ is an \emph{Abstract Wiener Space (AWS)} iff
$i$ is linear one-to-one map with range dense in $E$ and
 $i \in \gamma(H,,E)$. If $i\colon H\to E$ is an
AWS, then  the Gaussian measure $\nu_i$ on $E$ will  be denoted by $\mu$
and called the canonical Gaussian measure on $E$.
 \end{defn}

The following result, see  \cite[Theorem 2.1]{Brz_1997}, is  taken from \cite{Neidhardt_1978} and \cite{Baxendale_1976}.
\begin{thm}\label{Th:3.1} Let $H$ be a separable Hilbert space and $X,E$ be separable Banach spaces.
\begin{enumerate}
\item[(i) ] $\gamma(H,X)$  with the norm  $\Vert \cdot\Vert_{\gamma(H,X)}$ from \eqref{3.2} is a separable
Banach space.
\item[(ii)] If $i:H \to E$ is an AWS then the induced map
\begin{equation}
{\tilde i} :\mathscr{ L}(E,X) \ni A \mapsto A\circ i \in \gamma(H,X)
\label{3.3}
\end{equation}
is  well defined  linear and  bounded. The  range ${\tilde i}\left(\mathcal{
L}(E,X)\right)$ of ${\tilde i}$ is a dense subspace of $\gamma(H,X)$. \\
Moreover, for $A \in \mathscr{ L}(E,X)$ one has
\begin{equation}
\nu_{A\circ i}=A(\mu).
\label{3.4}
\end{equation}
\item[(iii)] If  $\{ e_n\}_n$ is an  orthonormal basis of $H$,  $H_n$ is
the  linear  span  of  $e_1,   \ldots,  e_n$,  $P_n$  is  the  orthogonal
projection in $H$ onto $H_n$ and if  $A \in \gamma(H,X)$ then
\begin{eqnarray*}
\Vert AP_n\Vert_{\gamma(H,X)} &\le& \Vert A\Vert_{\gamma(H,X)} ,
\\
 \Vert AP_n- A\Vert_{\gamma(H,X)}  &\to & 0 \; \mbox{ as } n \to \infty.
\end{eqnarray*}
\end{enumerate}
\end{thm}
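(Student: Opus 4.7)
My plan is to route everything through a single $L^2$-series representation of $\gamma$-radonifying operators. Fix an orthonormal basis $(e_n)_{n\in\N}$ of $H$ and, on an auxiliary probability space $(\tilde\Omega,\tilde{\mathscr{F}},\tilde{\mathbb{P}})$, an i.i.d.\ sequence $(\gamma_n)$ of standard real Gaussians. By a classical It\^o--Nisio type argument, $L\in\mathscr{L}(H,X)$ lies in $\gamma(H,X)$ if and only if the series $\Phi(L):=\sum_{n}\gamma_n\, Le_n$ converges in $L^2(\tilde\Omega;X)$, and in that case $\|L\|_{\gamma(H,X)} = \|\Phi(L)\|_{L^2(\tilde\Omega;X)}$. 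Thus $\Phi$ is a linear isometry from $\gamma(H,X)$ into the separable Banach space $L^2(\tilde\Omega;X)$. Establishing this identification is the technical heart of the proof: one first verifies it on finite-rank $L$ by a direct Gaussian computation, then extends it using $L^2$-completeness together with the defining push-forward property of $\gamma$-radonifying operators.

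Part (i) is then immediate: the image of $\Phi$ is the $L^2$-closure of sums of independent symmetric $X$-valued Gaussians $\sum_n \gamma_n x_n$, hence closed, so $\gamma(H,X)\cong \Phi(\gamma(H,X))$ inherits the Banach and separability structure from $L^2(\tilde\Omega;X)$. Part (iii) is equally soft: the operator $AP_n$ corresponds under $\Phi$ to $\sum_{k=1}^n \gamma_k A e_k = \E\bigl[\Phi(A)\,\big|\,\sigma(\gamma_1,\ldots,\gamma_n)\bigr]$, so $\|AP_n\|_\gamma \le \|A\|_\gamma$ is conditional Jensen in $L^2$, while $AP_n\to A$ in $\gamma(H,X)$ is the $L^2$-martingale convergence theorem applied to the filtration $\mathscr{F}_n=\sigma(\gamma_1,\ldots,\gamma_n)$, noting that $\Phi(A)$ is measurable with respect to the $\sigma$-algebra generated by all $\gamma_n$.

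For part (ii) the push-forward formula $\nu_{A\circ i}=A(\mu)$ is a direct calculation: $(A\circ i)_\ast\gamma_H = A_\ast(i_\ast\gamma_H) = A_\ast\mu$, extended from cylinders to a Borel Gaussian on $X$ by continuity of $A$. Boundedness of $\tilde i$ then follows from
\begin{equation*}
\|A\circ i\|_{\gamma(H,X)}^2 \;=\; \int_E \|Ae\|_X^2\,d\mu(e) \;\le\; \|A\|_{\mathrm{op}}^2\,\|i\|_{\gamma(H,E)}^2.
\end{equation*}
Density of $\tilde i(\mathscr{L}(E,X))$ proceeds in two steps. First, using (iii), approximate $L\in\gamma(H,X)$ by finite-rank truncations $LP_n$ of the form $h\mapsto \sum_{k=1}^n (h_k,h)_H\, x_k$. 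Second, replace each $h_k\in H$ by $i^\ast e_k^\ast$ for some $e_k^\ast\in E^\ast$ chosen close enough, which is possible because $i^\ast(E^\ast)$ is dense in $H$ (a standard consequence of $i$ being injective with dense range). The resulting operator $Ae:=\sum_{k=1}^n e_k^\ast(e)\,x_k$ lies in $\mathscr{L}(E,X)$ and $A\circ i$ is $\gamma$-close to $L$.

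The principal obstacle is the $L^2$-representation $\Phi$ itself: pinning down the precise conditions under which the Gaussian series converges in $L^2(\tilde\Omega;X)$ and the resulting law coincides with the push-forward $L(\gamma_H)$ as a Borel measure on $X$. Once this identification is rigorously in place, (i) and (iii) become formal consequences of standard Hilbert-space probability, and the only non-trivial ingredient remaining for (ii) is the abstract-Wiener-space fact that $i^\ast(E^\ast)$ is dense in $H$.
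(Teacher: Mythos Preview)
The paper does not actually prove this theorem: it is stated with the attribution ``see \cite[Theorem 2.1]{Brz_1997}, \ldots\ taken from \cite{Neidhardt_1978} and \cite{Baxendale_1976}'' and no argument is given. So there is no in-paper proof to compare against.

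Your approach is correct and is, in fact, the standard modern route to these statements (it is essentially the presentation one finds in the $\gamma$-radonifying literature, e.g.\ van Neerven). The isometric identification $\Phi:\gamma(H,X)\hookrightarrow L^2(\tilde\Omega;X)$ via $\Phi(L)=\sum_n\gamma_n Le_n$ is exactly the right organising principle: once it is in place, (i) is just closedness of the range of an isometry into a separable Banach space, and your martingale proof of (iii) via $\Phi(AP_n)=\E[\Phi(A)\mid\sigma(\gamma_1,\ldots,\gamma_n)]$ is clean and efficient. For (ii), the push-forward identity and the bound $\|A\circ i\|_\gamma\le \|A\|_{\mathrm{op}}\|i\|_\gamma$ are exactly as you write, and the density argument (finite-rank truncation from (iii), then approximate the $e_k$ by elements of $i^\ast(E^\ast)$, dense because $i$ is injective with dense range) is the standard one.

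One point worth tightening in a full write-up: your claim that the image of $\Phi$ is closed is stated as ``the image is the $L^2$-closure \ldots\ hence closed,'' which is slightly circular. The honest argument is that if $(L_k)$ is $\gamma$-Cauchy then $\Phi(L_k)\to\xi$ in $L^2$, the limit $\xi$ is again a centred Gaussian $X$-valued random variable (closedness of Gaussians under $L^2$-limits), and one recovers the limiting operator $L$ from the covariance of $\xi$ together with Fernique's theorem to ensure $L\in\gamma(H,X)$. You flag precisely this step as ``the technical heart,'' so you are aware of it; just make sure the logic is not presented backwards.
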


Since  the natural embedding  $i\colon \rK \embed E$  is  $\gamma$-radonifying, 
by the Kwapie\'n-Szyma\'nski Theorem \cite{Kwapien+Szym_1980} there exists an  orthonormal  basis  $\{e_j\}_{j \in \mathbb{N}}$ of $\rK$ and a sequence of independent standard Gaussian random variables $\{\beta_j\}_{j \in \mathbb{N}}$ such   that
the series $\sum_{j=1}^\infty \beta_j e_j$ is almost surely convergent in $E$, its law is equal to the law of $W_1$ and
\begin{equation}\label{eqn-Kwapien}
\Vert i \Vert_{\gamma(\rK,E)}^2 \sim
\sum_{j=1}^{\infty} \vert e_j \vert_E^2 <\infty.
\end{equation}

The above property \eqref{eqn-Kwapien} implies that
for every  bounded bilinear map $\Lambda \in \mathcal{L}_2(E\times E,H)$ we can define
\begin{equation}\label{eqn-tr K}
\tr_{\rK}(\Lambda):= \sum_{i=1}^{\infty} \Lambda(e_j,e_j) \in E.
\end{equation}

One can deduce, see e.g.
\cite[Remark 4.1.]{BrzPesz2001}, that $W$ generates a unique $K$-cylindrical Wiener process, for which we do not introduce a special notation.

We will indicate below the advantage  of this approach. Before we formulate the next definition let us formulate the following hypothesis.

\begin{hh}\label{hh-stopping times}\;\;\;
\begin{trivlist}
\item[(H1a)] Assume that $\rho$ is a finite stopping time   and $ \tau$  is 
a $\rho$-accessible  stopping time. 
\item[(H1b)] More generally,  we assume that $\rho$ is a finite stopping time, $A \in \mathscr{F}_\rho$   and $ \tau$  is 
a $\rho$-accessible  stopping time on $A$.
\end{trivlist}
We will say that stopping times $\rho$ and $ \tau$  satisfy Hypothesis \ref{hh-stopping times} if condition (H1a) is satisfied. 
We will say that stopping times $\rho$ and $ \tau$ satisfy Hypothesis \ref{hh-stopping times} on event $A$ if condition (H1b) is satisfied.
Obviously, stopping times $\rho$ and $ \tau$  satisfy Hypothesis \ref{hh-stopping times} iff they satisfy   Hypothesis \ref{hh-stopping times} on event $\Omega$.
\end{hh}
By $(\tau_n)_{n=1}^\infty $   we will denote  an announcing sequence for $\tau$  relative to $\rho$ on $B$.  In particular,
 the following two conditions hold a.s.
$\rho \le \tau_n <  \tau$ and $\lim_{n}\tau_n= \tau$. If $\tau=\infty$, then obviously we can take $\tau_n=n$.

Such an  announcing sequence  $(\tau_n)_{n=1}^\infty $  will be fixed in the definitions below.

\begin{defn}\label{def-spaces of processes}
Assume that $p \in [1,\infty)$ and   $S$ is  a separable  normed   vector space  endowed with  a sigma-field  $\mathscr{S}$.
Assume that   stopping times $\rho$   and $ \tau$   satisfy Hypothesis \ref{hh-stopping times}.

In the following definitions we will use a symbol with $\tilde{\;}$ to denote sets of stochastic processes, while the same symbol without 
$\tilde{\;}$ will be used to denote the set of the corresponding equivalence classes. To be precise, 
by  $\tilde{\mathscr{N}}([\rho,\tau),\,S)$ we denote
the set of all progressively measurable  processes
$\eta\colon [\rho,\tau)\times \Omega \to S$. We also set
\begin{align}
\tilde{\mathscr{M}}^p([\rho,\tau),S) &:= \{\xi\in \tilde{\mathscr{N}}([\rho,\tau),S)
:\mathbb{ E}\int_{\rho}^{\tau}\vert\xi _{t}\vert^p_S\;dt<\infty \},
\label{eqn-2.4}
\\
\tilde{\mathscr{N}}^p([\rho,\tau),S) &:= \{\xi\in \tilde{\mathscr{N}}([\rho,\tau),S)
:\int_{\rho}^{\tau}\vert\xi _{t}\vert^p_S\;dt<\infty  \mbox{ a.s.} \},
\label{eqn-2.4'}
\\
\tilde{\mathscr{N}}_{\step}([\rho,\tau),S)
&:=
\left\{\xi\in \tilde{\mathscr{N}}
([\rho,\tau),S): \substack{\exists \mbox{ stopping times } \rho=\tau_0<\tau_1<\ldots <\tau_n= \tau  \\ \text{such that }
 \xi _{t} =\xi (\tau_k),\; t\in [\tau_k,t_{k+1}),\;k=0,1,\ldots ,n-1}\right\}.
\label{eqn-2.5}
\end{align}
By $\mathscr{M}^p([\rho,\tau),S)$, respectively  $\mathscr{N}^p([\rho,\tau),S)$, we denote the space of equivalence classes of
 all  processes from $\tilde{\mathscr{M}}^p([\rho,\tau),S)$, respectively from $\tilde{\mathscr{N}}^p([\rho,\tau),S)$.

  The space 
of all equivalence classes of progressively measurable processes
$\xi\colon  [\rho,\tau)\times \Omega \to S$ such that  for every $\omega \in \Omega$, the trajectory
\begin{equation}\label{eqn-trajectory}
\xi(\cdot,\omega) \colon [\rho,\tau) \to S
\end{equation}
is continuous will be denoted by
$\tilde{\mathfrak{C}}([\rho,\tau) ,S)$.
The space of all equivalence classes of processes belonging to  $\tilde{\mathfrak{C}}([\rho,\tau) ,S)$
 will be denoted by $\mathfrak{C}([\rho,\tau) ,S)$.

We denote by $\widetilde{\mathcal{M}}^p_{\loc}([\rho,\tau ,S)$ the space of all progressively measurable processes
$\xi: [\rho,\tau)\times \Omega \to S$ such that,
\begin{equation}\label{eqn-M^p_loc}
\mathbb{ E} \int_{\rho}^{\tau_n} \vert \xi _{t} \vert^p \, dt <
\infty,\quad \forall n.
\end{equation}
Finally, we denote by $\mathscr{M}^p_{\loc}([\rho,\tau) ,S)$ the space of equivalence  classes of all
 processes $\xi$ which belong to $\widetilde{\mathcal{M}}^p_{\loc}(\rho,\tau ,S)$.
\end{defn}

\begin{remark}\label{rem-spaces of processes}
It is easy to prove that a progressively measurable process
$\xi: [\rho,\tau)\times \Omega \to S$
 belongs to $\widetilde{\mathcal{M}}^p_{\loc}(\rho,\tau ,S)$ if and only if 
\begin{equation}\label{eqn-xi in N}
 \mathbb{P} \Bigl( \bigr\{ \omega \in \Omega: \int_{\rho}^{t} \vert \xi (s,\omega) \vert^p \, ds < \infty \mbox{ for each } t\in [\rho(\omega),\tau(\omega))  \Bigr\}\Bigr).
 \end{equation}
Thus, the definition of the space $\mathscr{M}^p_{\loc}([\rho,\tau) ,S)$ is independent of the announcing sequence. Note also that $\mathscr{N}_{\step}([\rho,\tau),S) \subset \mathscr{N}^p_{\loc}([\rho,\tau) ,S) $.
 \end{remark}

\begin{remark}\label{rem-topologies}
The space $\mathfrak{C}([0,\infty),H)$ is metrizable in the following way. First of all,  we notice that
\[
\mathfrak{C}([0,\infty),H)=L^{0}\bigl(\Omega,C([0,\infty),H)\bigr)
\]
where $X\coloneq C([0,\infty),H)$ is a Polish space endowed with the classical metric
\[
d_X(\omega_1,\omega_2):= \sum_{n=1}^\infty 2^{-n}\frac{\sup_{t \in [0,n]} \vert \omega_{1}(t)-\omega_{2}(t)\vert  }{1+ \sup_{t \in [0,n]} \vert \omega_{1}(t)-\omega_{2}(t)\vert}
\]
and
$L^{0} \bigl(\Omega,X\bigr)$ is the space of equivalence of all $X$-valued random variables (cf.\ Definition \ref{defn:L0})
endowed with the Ky Fan metric, see
 \cite[Section 1.1.5 Convergence of random variables]{Applebaum_2005} and/or  \cite[p.268]{Billingsley_2008},
\begin{align}\label{Ky-Fan}
d_{KF}(\xi_1,\xi_2):=\inf \left\{ \eps>0: \mathbb{P}\left\{ d_X(\xi_1,\xi_2)>\eps\right\} \leq \eps\right\} .
\end{align}
Note that on the space $\mathscr{L}^{0}\bigl(\Omega,X\bigr)$ of all $X$-valued random variables the function $d_{KF}$ defined above is only a pseudo-metric.

The space $\mathscr{N}^p_{\loc}([0,\infty),S)$ can be metrized  in a similar way. Indeed,   we notice that
\[
\mathscr{N}^p_{\loc}([0,\infty),S)=L^{0}\bigl(\Omega, L^p_{\loc}([0,\infty),S) \bigr)
\]
where $Y:=L^p_{\loc}([0,\infty),S)$ is a Polish space endowed with the classical metric
\[
d_Y(\omega_1,\omega_2)\coloneq  \sum_{n=1}^\infty 2^{-n}\frac{ \bigl( \int_{0}^n \vert \omega_{1}(t)-\omega_{2}(t)\vert^p \, dt \bigr)^{1/p} }{1+  \bigl( \int_{0}^n \vert \omega_{1}(t)-\omega_{2}(t)\vert^p \, dt \bigr)^{1/p}}.
\]
\end{remark}

We give the following standard result (see e.g. \cite[Proposition 4.20 and Theorem 4.36]{daPaZ}):

\begin{thm}\label{thm-Ito integral}
Assume that $H$ is a separable Hilbert space. Then there exists a unique linear continuous map $I \colon \mathscr{N}^2_{\loc}([0,\infty),\gamma(\rK,H)) \rightarrow \mathfrak{C}([0,\infty),H)$  such that for every $\xi \in \mathscr{N}_{\step}([0,\infty),\gamma(\rK,H))$ represented as
\begin{align}
\xi _{t} =\sum_{k=0}^{n-1} \xi (t_k) \mathds{1}_{[t_k,t_{k+1})}( t), \;\; t\in [0,\infty),
\label{eqn-2.5'}
\end{align}
where $0=t_0< t_1< \cdots <t_{n-1}<t_{n}<\infty$ and $\xi(t_n)=0$, the following holds
\begin{equation}\label{eqn-Ito-integral-def}
    I(\xi)(t)=\sum_{k=0}^{n-1} \xi(t_k)\bigl(W_{t_{k+1}\wedge t}-W_{t_k\wedge t}\bigr), \quad t \geq 0.
\end{equation}
We will denote the process $I(\xi)$ by $\int_0^{\cdot} \xi_s dW_s$.

If $\mathscr{N}^2([0,\infty),\gamma(\rK,H))$, then a.s. the limit $\lim_{t \to \infty} [I(\xi)](t)$ exists and it will be denoted by
\begin{equation*}
\int_0^{\infty} \xi_s dW_s.
\end{equation*}

Moreover, if $\xi \in \mathscr{M}^2_{\loc}([0,\infty),\gamma(\rK,H))$, then the process $\int_0^{\cdot} \xi_s dW_s$ is an $H$-valued $\mathbb{F}$-martingale and
for every $T>0$, the following It\^o isometry formula the Burkholder inequality, i.e. for every $p \in [1,\infty)$ there exists $C_p>0$,   holds
\begin{equation}\label{eqn-Ito}
 \mathbb{E} \vert  \int_0^T \xi_s \,dW_s \vert^2=
 \mathbb{E}   \int_0^T \Vert \xi_s  \Vert_{\gamma(\rK,H)}^2\,ds.
\end{equation}
\begin{equation}\label{eqn-Burkholder}
 \mathbb{E} \bigl( \sup_{t \in [0,T]} \vert  \int_0^t \xi_s \,dW_s \vert \bigr)^p \leq C_p
 \mathbb{E}\Bigl[  \bigl(  \int_0^T \Vert \xi_s  \Vert_{\gamma(\rK,H)}^2\,ds \bigr)^{p/2} \Bigr]
\end{equation}
If $\xi \in \mathscr{M}^2([0,\infty),\gamma(\rK,H))$, then the  following  generalizations of \eqref{eqn-Ito}  and  \eqref{eqn-Burkholder} hold,
\begin{equation}\label{eqn-Ito-infty}
 \mathbb{E} \vert  \int_0^\infty \xi_s \,dW_s \vert^2=
 \mathbb{E}   \int_0^\infty  \Vert \xi_s  \Vert_{\gamma(\rK,H)}^2\,ds.
\end{equation}
\begin{equation}\label{eqn-Burkholder-infty}
 \mathbb{E} \bigl( \sup_{t \in [0, \infty)} \vert  \int_0^t \xi_s \,dW_s \vert \bigr)^p \leq C_p
 \mathbb{E}\Bigl[  \bigl(  \int_0^\infty \Vert \xi_s  \Vert_{\gamma(\rK,H)}^2\,ds \bigr)^{p/2} \Bigr]
\end{equation}
\end{thm}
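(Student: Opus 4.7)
The plan is to follow the standard four-step construction of the It\^o integral, adapted to the $\gamma$-radonifying framework, and then to transfer it to the locally-integrable setting by a stopping time localisation.

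\textbf{Step 1 (step processes).} I would first define $I(\xi)$ for $\xi\in\mathscr{N}_{\step}([0,\infty),\gamma(\rK,H))$ by the explicit formula \eqref{eqn-Ito-integral-def}. One checks independence of the particular representation \eqref{eqn-2.5'} by a refinement argument. Using property \eqref{eqn-Kwapien} and the formula \eqref{eqn-tr K}, together with the identity $\nu_{A\circ i}=A(\mu)$ from Theorem~\ref{Th:3.1}(ii), one shows that for a single increment the centred Gaussian random variable $\xi(t_k)(W_{t_{k+1}}-W_{t_k})$ has second moment equal to $(t_{k+1}-t_k)\|\xi(t_k)\|^2_{\gamma(\rK,H)}$. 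Using $\mathscr{F}_{t_k}$-measurability of $\xi(t_k)$ and the independence of Wiener increments, the orthogonality of martingale increments yields \eqref{eqn-Ito} for step processes.

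\textbf{Step 2 (extension to $\mathscr{M}^2$).} Next I would prove that $\mathscr{N}_{\step}\cap\mathscr{M}^2$ is dense in $\mathscr{M}^2([0,T],\gamma(\rK,H))$ for every $T>0$ by a standard truncation--mollification argument on the (separable) Banach space $\gamma(\rK,H)$ (Theorem~\ref{Th:3.1}(i) gives separability); simple functions are then approximated by adapted step processes via a telescoping of conditional expectations. Combining this with the isometry \eqref{eqn-Ito} and a Cauchy-in-$L^2$ argument, one extends $I$ uniquely to a linear isometry
\[
\mathscr{M}^2([0,T],\gamma(\rK,H))\longrightarrow L^2(\Omega,\cF_T;H).
\]
Doob's martingale inequality (a.k.a.\ the $p=2$ case) and standard Burkholder--Davis--Gundy arguments for Hilbert-space-valued continuous martingales then give \eqref{eqn-Burkholder} on $[0,T]$; choice of a continuous modification yields $I(\xi)\in\mathfrak{C}([0,T],H)$.

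\textbf{Step 3 (localisation to $\mathscr{N}^2_{\loc}$).} For $\xi\in\mathscr{N}^2_{\loc}([0,\infty),\gamma(\rK,H))$ I would introduce the stopping times
\[
\sigma_n\coloneq \inf\Bigl\{t\ge 0:\int_0^t \|\xi_s\|^2_{\gamma(\rK,H)}\,ds\ge n\Bigr\}\wedge n,
\]
so that $\sigma_n\nearrow\infty$ a.s.\ by Remark~\ref{rem-spaces of processes}, and set $\xi^n\coloneq \one_{[0,\sigma_n]}\xi\in\mathscr{M}^2([0,\infty),\gamma(\rK,H))$. The consistency identity $I(\xi^{n+1})(t\wedge\sigma_n)=I(\xi^n)(t)$, a consequence of the step-process definition and continuity, allows us to glue the processes $I(\xi^n)$ into a single $\mathbb{F}$-adapted continuous $H$-valued process $I(\xi)$, independent of the localising sequence. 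Linearity and continuity (in the metric of Remark~\ref{rem-topologies}) follow because convergence in $\mathscr{N}^2_{\loc}$ implies convergence in each $\mathscr{M}^2([0,\sigma_n],\gamma(\rK,H))$ in probability, which by Step~2 implies uniform-on-compacts convergence in probability of the integrals. Uniqueness of $I$ follows from its prescription on $\mathscr{N}_{\step}$ together with density.

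\textbf{Step 4 (infinite horizon and martingale property).} For $\xi\in\mathscr{M}^2([0,\infty),\gamma(\rK,H))$, the family $\{I(\xi)(t)\}_{t\ge 0}$ is Cauchy in $L^2(\Omega;H)$ by the global version of the isometry, so the a.s.\ limit exists (passing to a subsequence and using continuity), giving \eqref{eqn-Ito-infty} and \eqref{eqn-Burkholder-infty} by Fatou/monotone convergence. The martingale property of $I(\xi)$ on $\mathscr{M}^2_{\loc}$ is inherited from the step case, where it is immediate from the independent-increment property (w2) of $W$.

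\textbf{Main obstacle.} The technically most delicate point is the correct use of the $\gamma$-radonifying structure in Step~1 (the role of $\|\cdot\|_{\gamma(\rK,H)}$ as the $L^2$ norm of Wiener-integral increments) and the proof that $\mathscr{N}_{\step}$ is dense in $\mathscr{M}^2$ when the target is the Banach (not Hilbert) space $\gamma(\rK,H)$; here one must invoke separability from Theorem~\ref{Th:3.1}(i) and the approximation \eqref{3.4} $\|A P_n - A\|_{\gamma(\rK,H)}\to 0$ to reduce a general $\gamma$-radonifying integrand to a finite-rank one that is accessible to the classical Hilbert-space construction.
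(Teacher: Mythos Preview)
Your sketch is correct and follows the standard construction of the It\^o integral in the Hilbert-space/$\gamma$-radonifying setting. Note, however, that the paper does not actually prove this theorem: it is stated as a standard result with a reference to \cite[Proposition 4.20 and Theorem 4.36]{daPaZ}, so there is no ``paper's own proof'' to compare against. Your four-step outline (step processes $\to$ $\mathscr{M}^2$ by isometry and density $\to$ $\mathscr{N}^2_{\loc}$ by localisation $\to$ infinite horizon) is precisely the argument one finds in that reference and its predecessors.
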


In order to be able to rigorously define the It\^o integral on random intervals we need the following result called "the Localization Theorem", see \cite[Proposition 2.12]{Brz+Elw_2000}  which origins can be tracked to
\cite[Theorem 2 in Chapter II, p. 44]{Gih-Sko-72}. One can also consult \cite[Theorem 5, p. 106]{Elw82}, \cite[Theorem 4.4.7, p. 77]{Friedman_2006} and  \cite[Theorems 6.3 and 6.4, pp. 87-89]{Steele_2001} for the case of space $\mathscr{N}^2_{\loc}$ and \cite[Proposition 7.5, p. 98]{Steele_2001} for the general case.

\begin{thm}
\label{thm-localization}  Assume  that $\xi_k\in
\mathscr{N}^2_{\loc}([0,\infty),\gamma(\rK,H))$, $k=1,2$. Assume
that $\tau
$ is a stopping time and for Lebesgue almost every $s\ge 0$,  
\begin{equation}
\xi_{1}(s)=\xi_{2}(s) \mbox{ a.s.  on } \Omega_s(\tau).    
\end{equation}
 Then,  a.s.
\begin{equation}\label{eq-localization}
\int_0^t \xi_1(s)\, dW_s=
\int_0^t \xi_2(s)\, dW_s \mbox{ for } t\in [0,\tau(\omega)] \cap [0,\infty).
\end{equation}
\end{thm}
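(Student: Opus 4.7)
The plan is to set $\eta \coloneq \xi_1 - \xi_2 \in \mathscr{N}^2_{\loc}([0,\infty), \gamma(\rK, H))$ and to observe that the hypothesis translates into $\mathds{1}_{[0,\tau)}(s)\,\eta(s)=0$ for Lebesgue-a.e.\ $s$, almost surely (equivalently, by Fubini, for a.e.\ $\omega$ the integrand vanishes for a.e.\ $s\in[0,\tau(\omega))$). By linearity of the It\^o integral, the claim reduces to the following optional sampling identity, which I will establish for every $\eta\in \mathscr{N}^2_{\loc}$ and every stopping time $\tau$:
\begin{equation}\label{eq-opt-sampl}
\int_0^{t\wedge\tau}\eta(s)\,dW_s \;=\; \int_0^{t}\mathds{1}_{[0,\tau)}(s)\,\eta(s)\,dW_s \quad \text{a.s., for every } t\ge 0.
\end{equation}
Once \eqref{eq-opt-sampl} is known, its right-hand side vanishes identically in $t$; taking $t\ge \tau$ (and noting that for $t<\tau$ we have $t\wedge\tau=t$) gives the required equality on the random interval $[0,\tau(\omega)]$.

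To prove \eqref{eq-opt-sampl}, I would follow the three-step approximation scheme built into the construction of the It\^o integral in Theorem \ref{thm-Ito integral}. \emph{Step 1.} For a step integrand $\eta = \sum_{k=0}^{n-1}\eta(t_k)\mathds{1}_{[t_k,t_{k+1})}\in \mathscr{N}_{\step}$, both sides reduce to explicit finite sums of Wiener increments via \eqref{eqn-Ito-integral-def}; the identity then follows from the elementary fact that
\begin{equation*}
 \int_0^t \mathds{1}_{[0,\tau)}(s)\mathds{1}_{[t_k,t_{k+1})}(s)\, dW_s \;=\; W_{t_{k+1}\wedge\tau\wedge t} - W_{t_k\wedge\tau\wedge t},
\end{equation*}
which in turn is verified by a direct check using that $\mathds{1}_{\{t_k<\tau\}}$ is $\mathscr{F}_{t_k}$-measurable, so that the integrand can be rewritten in terms of a finer partition adapted to $\tau$. \emph{Step 2.} By density of step processes in $\mathscr{M}^2([0,\infty),\gamma(\rK,H))$ and by the It\^o isometry \eqref{eqn-Ito}, both sides of \eqref{eq-opt-sampl} extend continuously to $\eta\in \mathscr{M}^2$; the passage from $L^2$-convergence at deterministic times to convergence at $t\wedge\tau$ is handled by the Burkholder inequality \eqref{eqn-Burkholder}. \emph{Step 3.} For $\eta\in\mathscr{N}^2_{\loc}$, one picks a localising sequence of stopping times $\sigma_n\uparrow\infty$ with $\eta\,\mathds{1}_{[0,\sigma_n]}\in\mathscr{M}^2$, applies Step 2 to each $\eta\,\mathds{1}_{[0,\sigma_n]}$ together with the stopping time $\tau\wedge\sigma_n$, and lets $n\to\infty$, using continuity of the It\^o integral process.

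The main technical obstacle is the bookkeeping in Step 1 when $\tau$ falls inside a sub-interval $[t_k,t_{k+1})$: one has to split each summand according to the events $\{\tau\le t_k\}$, $\{t_k<\tau\le t_{k+1}\}$ and $\{\tau>t_{k+1}\}$ and then match each piece with the corresponding term on the right-hand side of \eqref{eq-opt-sampl}. Beyond this, no genuinely infinite-dimensional difficulty appears: via the Kwapie\'n--Szyma\'nski decomposition recalled around \eqref{eqn-Kwapien}, the $\gamma(\rK,H)$-valued It\^o integral decomposes into a sum of scalar It\^o integrals against the component Brownian motions $(\beta_j)_{j\in\mathbb{N}}$, and the $\gamma(\rK,H)$-norm enters only through the It\^o isometry used in Step 2.
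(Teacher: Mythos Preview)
The paper does not supply a proof of this theorem; it is quoted as a classical result with references to \cite{Brz+Elw_2000}, \cite{Gih-Sko-72}, \cite{Elw82}, \cite{Friedman_2006} and \cite{Steele_2001}. Your argument is correct and follows the standard route found in those sources: reduce by linearity to a single integrand $\eta$ with $\mathds{1}_{[0,\tau)}\eta=0$, establish the optional sampling identity $I(\eta)(t\wedge\tau)=\int_0^t\mathds{1}_{[0,\tau)}\eta\,dW$ by the usual step-process/isometry/localisation scheme, and read off the conclusion. Note that this optional sampling identity is recorded separately in the paper as Proposition~\ref{prop-def-Ito integral for stopping time-3} (also without proof, citing \cite{Brz+Masl+Seidler_2005}), so in the paper's logical order your argument in effect proves both statements at once. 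The only spot that could use slightly more care is your Step~1: for a general stopping time $\tau$, the process $\mathds{1}_{[0,\tau)}\mathds{1}_{[t_k,t_{k+1})}$ is not a step process with deterministic jump times, so the displayed increment identity is most transparently obtained by first approximating $\tau$ from above by dyadic-valued stopping times and passing to the limit; your phrase ``a finer partition adapted to $\tau$'' gestures at this but does not quite spell it out.
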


Next we are going to define an It\^o integral  when both the upper and the lower  limits are   stopping times.

\begin{defn}\label{def-Ito integral for stopping time}
Assume that  $\rho$ and $ \tau$  stopping time     satisfying Hypothesis 
\ref{hh-stopping times}.  Assume also that  $\xi\in
\mathscr{N}_{\loc}([0,\infty),\gamma(\rK,H))$ is such that the process
\begin{equation}\label{eqn-1xi}
  \mathds{1}_{[\rho,\tau)}\xi: [0,\infty) \times \Omega \mapsto \mathds{1}_{[\rho(\omega),\tau(\omega))}(t) \xi(t,\omega) \in \gamma(\rK,H)
\end{equation}
belongs to the space $
\mathscr{N}^2_{\loc}([0,\infty),\gamma(\rK,H))$. Then we define
\begin{align}
\int_\rho^{t \wedge \tau} \xi_{s}\, dW_s&:=\int_0^t  \mathds{1}_{[\rho,\tau)}\xi_s dW_s, \;\; t\geq 0,\label{eqn-Ito integral for stopping time}\\
\int_\rho^{\tau} \xi_{s}\, dW_s&:=\int_0^\infty  \mathds{1}_{[\rho,\tau)}\xi_s dW_s.\label{eqn-Ito integral for stopping time-2}
\end{align}
\end{defn}

\begin{remark}\label{rem-Ito integral for stopping time} In view of Remark \ref{rem-stopping time-progressive measurability}, the above definition makes sense. Indeed, the RHS's of identities \eqref{eqn-Ito integral for stopping time} and \eqref{eqn-Ito integral for stopping time-2} exist.
\end{remark}

From Theorems \ref{thm-Ito integral} and \ref{thm-localization} we have the following corollary.
\begin{cor}\label{cor-int_rho^t=0} Under the hypotheses listed in Definition \ref{def-Ito integral for stopping time}, the process \[\int_\rho^{\cdot \wedge \tau} \xi_{s}\, dW_s =\bigl(\int_\rho^{t \wedge \tau} \xi_{s}\, dW_s : t\geq 0\bigr),\]
introduced in Definition \ref{def-Ito integral for stopping time},
belongs to     $\mathfrak{C}([0,\infty),H)$ and, a.s. 
\begin{equation}\label{eqn-int_rho^t=0}
    \int_\rho^{t \wedge \tau} \xi_{s}\, dW_s=0 \mbox{ for   } t \in [0,\rho].
\end{equation}
Moreover, if the process 
  $\mathds{1}_{[\rho,\tau)}\xi$ defined by \eqref{eqn-1xi} 
belongs to the class $\mathscr{M}^2_{\loc}([0,\infty),\gamma(\rK,H))$, then 
then the process $\int_0^{\cdot} \xi_s dW_s$ is an $H$-valued $\mathbb{F}$-martingale and appropriate generalizations of 
 the  It\^o isometry \eqref{eqn-Ito} and  the Burkholder inequalities \eqref{eqn-Burkholder} 
hold true. 
 \end{cor}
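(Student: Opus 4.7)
The plan is to reduce the corollary to direct applications of Theorem \ref{thm-Ito integral} and the Localization Theorem \ref{thm-localization}, applied to the integrand $\mathds{1}_{[\rho,\tau)}\xi$; no new estimate is needed.

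For the first claim (continuity), I would observe that by Definition \ref{def-Ito integral for stopping time} we have $\int_\rho^{t\wedge\tau}\xi_s\,dW_s = I(\mathds{1}_{[\rho,\tau)}\xi)(t)$, and by hypothesis $\mathds{1}_{[\rho,\tau)}\xi$ lies in $\mathscr{N}^2_{\loc}([0,\infty),\gamma(\rK,H))$. The codomain clause of Theorem \ref{thm-Ito integral} then places the resulting process directly in $\mathfrak{C}([0,\infty),H)$.

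For the vanishing identity \eqref{eqn-int_rho^t=0}, the idea is to invoke Theorem \ref{thm-localization} with $\xi_1 = \mathds{1}_{[\rho,\tau)}\xi$, $\xi_2 \equiv 0$, and with $\rho$ playing the role of the stopping time in that theorem. The key pointwise observation is that on the event $\Omega_s(\rho) = \{\omega : s < \rho(\omega)\}$ one has $s \notin [\rho(\omega),\tau(\omega))$, hence $\mathds{1}_{[\rho,\tau)}(s,\omega) = 0$; therefore $\xi_1(s) = 0 = \xi_2(s)$ on $\Omega_s(\rho)$ for \emph{every} $s \geq 0$, a fortiori for Lebesgue-a.e. $s$. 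Theorem \ref{thm-localization} then yields, a.s., $\int_0^t \mathds{1}_{[\rho,\tau)}\xi_s\,dW_s = 0$ for $t \in [0,\rho(\omega)]$, which by Definition \ref{def-Ito integral for stopping time} is exactly \eqref{eqn-int_rho^t=0}. The only subtlety is the instant $s = \rho(\omega)$, where $\mathds{1}_{[\rho,\tau)}(s,\omega) = 1$; but this contributes a single point, hence a Lebesgue-null set, and so is absorbed by the "Lebesgue a.e." in the hypothesis of Theorem \ref{thm-localization}. This is really the only place where one has to be a little careful.

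For the final assertion, under the stronger hypothesis $\mathds{1}_{[\rho,\tau)}\xi \in \mathscr{M}^2_{\loc}([0,\infty),\gamma(\rK,H))$, I would simply invoke the martingale part of Theorem \ref{thm-Ito integral}: $I(\mathds{1}_{[\rho,\tau)}\xi)$ is an $H$-valued $\mathbb{F}$-martingale, and the Itô isometry \eqref{eqn-Ito} and Burkholder inequality \eqref{eqn-Burkholder} apply with $\mathds{1}_{[\rho,\tau)}\xi$ in place of $\xi$. Because $\|\mathds{1}_{[\rho,\tau)}(s)\xi(s)\|_{\gamma(\rK,H)}^2 = \mathds{1}_{[\rho,\tau)}(s)\|\xi(s)\|_{\gamma(\rK,H)}^2$, the right-hand sides can be rewritten as $\mathbb{E}\int_\rho^\tau \|\xi_s\|_{\gamma(\rK,H)}^2\,ds$ and $C_p\,\mathbb{E}\bigl(\int_\rho^\tau \|\xi_s\|_{\gamma(\rK,H)}^2\,ds\bigr)^{p/2}$ respectively, which are the natural generalizations alluded to in the statement. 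The whole corollary is thus a packaging of properties of $I$ that are already in Theorem \ref{thm-Ito integral} and Theorem \ref{thm-localization}; the only modest bookkeeping concerns the indicator $\mathds{1}_{[\rho,\tau)}$ discussed above.
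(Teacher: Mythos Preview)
Your proposal is correct and matches the paper's approach exactly: the paper does not give a detailed proof but simply states that the corollary follows ``from Theorems \ref{thm-Ito integral} and \ref{thm-localization}'', and you have correctly spelled out how. One minor remark: your caution about the instant $s=\rho(\omega)$ is unnecessary, since $\Omega_s(\rho)=\{\omega: s<\rho(\omega)\}$ is defined with a strict inequality, so that point never enters the hypothesis of Theorem \ref{thm-localization}.
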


\begin{remark}\label{rem-thm-Ito integral} Note that if $\xi \in \mathscr{N}^2_{\loc}([0,\infty),\gamma(\rK,H))$, then for every $T \in [0,\infty)$, the process
$\mathds{1}_{[0,T)}\xi$ belongs to $\mathscr{N}^2([0,\infty),\gamma(\rK,H))$.  Moreover, by the localization Theorem \ref{thm-localization},  a.s.
\[
\int_0^t \xi_{s}\, dW_s=\int_0^t \mathds{1}_{[0,T)}(s) \xi_{s}\, dW_s, \;\;\; t\in [0,T].
\]
In particular, if $\xi \in \mathscr{N}^2_{\loc}([0,\infty),\gamma(\rK,H))$, then for every $t \in [0,\infty)$,
\begin{equation}\label{eqn-Ito integral-2}
\int_0^t \xi_{s}\, dW_s=\int_0^\infty \mathds{1}_{[0,t)}(s) \xi_{s}\, dW_s,
\end{equation}
Thus, if $\tau$ is a constant random variable equal to $t$, the formula \eqref{eqn-Ito integral for stopping time-2} reduces  to equality \eqref{eqn-Ito integral-2}. In other words,
our definition formula \eqref{eqn-Ito integral for stopping time-2} extends the formula \eqref{eqn-Ito integral-2} from constant stopping times to arbitrary stopping times.
\end{remark}

An alternative definition of the It\^o integral  would be by stopping at time $\tau$ the continuous process $I(\xi)= \bigl(\int_0^t \xi_{s}\, dW_s: t \geq 0 \bigr)$. This approach was used in \cite[Definition on p. 72]{Friedman_2006}. We have the following result which is a special case of \cite[Lemma A.1]{Brz+Masl+Seidler_2005}.
\begin{prop}\label{prop-def-Ito integral for stopping time-3}
Assume that $\tau$  is a  stopping time and $\xi\in
\mathscr{N}^2_{\loc}([0,\infty),\gamma(\rK,H))$.  Then a.s.
\begin{equation}
\label{eqn-Ito integrals for stopping times-3}
I(\xi)(t \wedge \tau)=\int_0^t \mathds{1}_{[0,\tau)}(s) \xi(s)\, dW_s, \;\;\; t \geq 0.
\end{equation}
\end{prop}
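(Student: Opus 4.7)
The plan is to establish \eqref{eqn-Ito integrals for stopping times-3} first for step integrands, where it reduces to an elementary computation with Brownian increments, and then to extend it by density and continuity. Since both sides are continuous in $t$ (the right-hand side by Theorem \ref{thm-Ito integral}, the left-hand side by composition of a continuous process with a stopping time), it is enough to prove equality for each fixed $t\geq 0$.

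I would first reduce to $\xi \in \mathscr{M}^2([0,\infty),\gamma(\rK,H))$ by localization. Setting
$$\sigma_n := \inf\Bigl\{t\ge 0 : \int_0^t \Vert\xi_s\Vert_{\gamma(\rK,H)}^2\, ds > n\Bigr\}\wedge n,$$
the $\sigma_n$ are stopping times with $\sigma_n\uparrow \infty$ a.s., and $\xi^{(n)}:=\mathds{1}_{[0,\sigma_n)}\xi \in \mathscr{M}^2$. Theorem \ref{thm-localization} applied to the pairs $(\xi,\xi^{(n)})$ and $(\mathds{1}_{[0,\tau)}\xi,\mathds{1}_{[0,\tau)}\xi^{(n)})$ allows us to replace $\xi$ by $\xi^{(n)}$ in both sides of \eqref{eqn-Ito integrals for stopping times-3} on $[0,\sigma_n]$; letting $n\to\infty$ reduces the claim to $\xi\in\mathscr{M}^2$. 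For such $\xi$ I pick step processes $\xi^{(m)}\in\mathscr{N}_{\step}$ with $\xi^{(m)}\to\xi$ in $\mathscr{M}^2$. Because $\vert \mathds{1}_{[0,\tau)}(\xi^{(m)}-\xi)\vert \le \vert \xi^{(m)}-\xi\vert$ pointwise, the truncated integrands also converge in $\mathscr{M}^2$, and then the Burkholder inequality \eqref{eqn-Burkholder-infty} gives uniform-in-probability convergence of $I(\xi^{(m)})\to I(\xi)$ and of the stopped integrals $\int_0^\cdot \mathds{1}_{[0,\tau)}\xi^{(m)}\,dW\to \int_0^\cdot\mathds{1}_{[0,\tau)}\xi\,dW$ on $[0,\infty)$. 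Along a subsequence both sides of \eqref{eqn-Ito integrals for stopping times-3} therefore converge a.s.\ uniformly on bounded intervals, so it suffices to prove the identity for $\xi\in\mathscr{N}_{\step}$.

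For a step integrand $\xi(s)=\sum_{k=0}^{n-1}\xi(t_k)\mathds{1}_{[t_k,t_{k+1})}(s)$ with $\xi(t_k)$ $\mathscr{F}_{t_k}$-measurable and square integrable, the defining formula \eqref{eqn-Ito-integral-def} gives directly
$$I(\xi)(t\wedge\tau) = \sum_{k=0}^{n-1}\xi(t_k)\bigl(W_{t_{k+1}\wedge t\wedge\tau}-W_{t_k\wedge t\wedge\tau}\bigr),$$
while on the other hand $\mathds{1}_{[0,\tau)}(s)\xi(s)=\sum_k\xi(t_k)\mathds{1}_{[t_k\wedge\tau,\,t_{k+1}\wedge\tau)}(s)$. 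The task thus reduces to proving, for each $k$,
$$\int_0^t \xi(t_k)\mathds{1}_{[t_k\wedge\tau,\,t_{k+1}\wedge\tau)}(s)\,dW_s = \xi(t_k)\bigl(W_{t_{k+1}\wedge t\wedge\tau}-W_{t_k\wedge t\wedge\tau}\bigr).$$

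The main obstacle is precisely this elementary-looking step-level identity: the indicator involves the stopping time $\tau$, so the integrand is no longer a simple process in the sense of \eqref{eqn-2.5}. My remedy is the standard discrete approximation of $\tau$ from above by stopping times $\tau_j$ taking only finitely many dyadic values in $[t_k,t_{k+1}]$ together with $+\infty$. For such $\tau_j$ the integrand $\xi(t_k)\mathds{1}_{[t_k\wedge\tau_j,\,t_{k+1}\wedge\tau_j)}$ is a finite sum of terms $\xi(t_k)\mathds{1}_{\{\tau_j = r\}}\mathds{1}_{[r,\,t_{k+1}\wedge\tau_j)}$ with $\xi(t_k)\mathds{1}_{\{\tau_j=r\}}$ being $\mathscr{F}_r$-measurable (since $r\ge t_k$ and $\{\tau_j=r\}\in\mathscr{F}_r$), hence a genuine step process whose integral is given by \eqref{eqn-Ito-integral-def}, yielding the identity with $\tau$ replaced by $\tau_j$. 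Passing to the limit $\tau_j\downarrow\tau$ via the It\^o isometry \eqref{eqn-Ito} on the left and a.s.\ continuity of $W$ on the right closes the argument. This measurability bookkeeping is the technical core of the proof and is the reason the statement is attributed to \cite[Lemma A.1]{Brz+Masl+Seidler_2005}.
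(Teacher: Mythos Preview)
The paper does not prove this proposition; it simply cites it as a special case of \cite[Lemma A.1]{Brz+Masl+Seidler_2005}. Your overall strategy---localize from $\mathscr{N}^2_{\loc}$ to $\mathscr{M}^2$, approximate by step integrands, and for step integrands approximate $\tau$ from above by discrete-valued stopping times---is the standard one and is what one finds in that reference and in the textbooks you allude to.

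There is, however, a genuine slip in the step-level computation. You claim that $\xi(t_k)\mathds{1}_{[t_k\wedge\tau_j,\,t_{k+1}\wedge\tau_j)}$ is a finite sum of terms $\xi(t_k)\mathds{1}_{\{\tau_j=r\}}\mathds{1}_{[r,\,t_{k+1}\wedge\tau_j)}$. On $\{\tau_j=r\}$ with $t_k\le r\le t_{k+1}$, the left-hand indicator equals $\mathds{1}_{[t_k,r)}$ while your right-hand indicator is $\mathds{1}_{[r,r)}=0$; for $r>t_{k+1}$ your term is $\mathds{1}_{[r,t_{k+1})}=0$ as well. So your sum vanishes identically. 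The measurability argument you give also points in the wrong direction: partitioning by $\{\tau_j=r\}$ places the coefficient in $\mathscr{F}_r$, but the left endpoint of the relevant interval is $t_k\le r$, so one would need $\mathscr{F}_{t_k}$-measurability, which $\{\tau_j=r\}$ does not provide.

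The correct decomposition is by the dyadic grid, not by the values of $\tau_j$. If the possible values of $\tau_j$ inside $[t_k,t_{k+1}]$ are $t_k=q_0<q_1<\cdots<q_p=t_{k+1}$, then for $s\in[q_\ell,q_{\ell+1})$ one has $\{s<\tau_j\}=\{\tau_j>q_\ell\}$, so
\[
\xi(t_k)\,\mathds{1}_{[0,\tau_j)}(s)\,\mathds{1}_{[t_k,t_{k+1})}(s)
=\sum_{\ell=0}^{p-1}\xi(t_k)\,\mathds{1}_{\{\tau_j>q_\ell\}}\,\mathds{1}_{[q_\ell,q_{\ell+1})}(s).
\]
Now each coefficient $\xi(t_k)\mathds{1}_{\{\tau_j>q_\ell\}}$ is $\mathscr{F}_{q_\ell}$-measurable (because $q_\ell\ge t_k$ and $\{\tau_j>q_\ell\}\in\mathscr{F}_{q_\ell}$), so this is a genuine step process and \eqref{eqn-Ito-integral-def} applies. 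With this correction the remainder of your argument---the It\^o isometry for the left side and path continuity of $W$ for the right side as $\tau_j\downarrow\tau$---goes through.
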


We can now formulate some additional properties related to Theorem \ref{thm-Ito integral}.

\begin{thm}\label{thm-Ito integral-2}
In the framework of Theorem \ref{thm-Ito integral},
if $\xi \in \mathscr{M}^2_{\loc}([0,\infty),\gamma(\rK,H))$, $\tau$  is a  stopping time such that  $\mathds{1}_{[0,\tau)}\xi\in
\mathscr{M}^2([0,\infty),\gamma(\rK,H))$,
then the following It\^o isometry formula and the Burkholder inequality hold, i.e. for every $p \in [1,\infty)$ there exists $C_p>0$,   
\begin{equation}\label{eqn-Ito-2}
 \mathbb{E} \vert  \int_0^\tau \xi_s \,dW_s \vert^2=
 \mathbb{E}   \int_0^\tau \Vert \xi_s  \Vert_{\gamma(\rK,H)}^2\,ds.
\end{equation}
\begin{equation}\label{eqn-Burkholder-2}
 \mathbb{E} \bigl( \sup_{t \in [0,\tau]} \vert  \int_0^t \xi_s \,dW_s \vert \bigr)^p \leq C_p
 \mathbb{E}\Bigl[  \bigl(  \int_0^\tau \Vert \xi_s  \Vert_{\gamma(\rK,H)}^2\,ds \bigr)^{p/2} \Bigr]
\end{equation}
\end{thm}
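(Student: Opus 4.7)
The plan is to reduce both inequalities directly to the corresponding statements in Theorem \ref{thm-Ito integral} applied to the truncated process $\mathds{1}_{[0,\tau)}\xi$, which by hypothesis belongs to $\mathscr{M}^2([0,\infty),\gamma(\rK,H))$. The key tools are Definition \ref{def-Ito integral for stopping time} (which expresses the integral up to time $\tau$ as the infinite-horizon integral of $\mathds{1}_{[0,\tau)}\xi$) and Proposition \ref{prop-def-Ito integral for stopping time-3} (which identifies the stopped process $I(\xi)^\tau$ with the integral of $\mathds{1}_{[0,\tau)}\xi$).

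For the It\^o isometry \eqref{eqn-Ito-2}, I would start from Definition \ref{def-Ito integral for stopping time}, which gives
\begin{equation*}
\int_0^\tau \xi_s \, dW_s = \int_0^\infty \mathds{1}_{[0,\tau)}(s)\, \xi_s \, dW_s \quad \text{a.s.}
\end{equation*}
Since $\mathds{1}_{[0,\tau)}\xi \in \mathscr{M}^2([0,\infty),\gamma(\rK,H))$ by assumption, one applies \eqref{eqn-Ito-infty} to obtain
\begin{equation*}
\mathbb{E}\Bigl| \int_0^\infty \mathds{1}_{[0,\tau)}(s)\, \xi_s \, dW_s \Bigr|^2 = \mathbb{E}\int_0^\infty \Vert \mathds{1}_{[0,\tau)}(s)\, \xi_s\Vert_{\gamma(\rK,H)}^2\, ds = \mathbb{E}\int_0^\tau \Vert \xi_s\Vert_{\gamma(\rK,H)}^2\, ds,
\end{equation*}
which is precisely \eqref{eqn-Ito-2}.

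For the Burkholder inequality \eqref{eqn-Burkholder-2}, I would use Proposition \ref{prop-def-Ito integral for stopping time-3} to write, a.s. for every $t\ge 0$,
\begin{equation*}
I(\xi)(t\wedge \tau) = \int_0^t \mathds{1}_{[0,\tau)}(s)\, \xi_s\, dW_s.
\end{equation*}
By path continuity of both processes, supremum over $t\in[0,\tau]$ on the left equals supremum over $t\ge 0$ on the right. Then applying \eqref{eqn-Burkholder-infty} to $\mathds{1}_{[0,\tau)}\xi \in \mathscr{M}^2([0,\infty),\gamma(\rK,H))$ yields
\begin{equation*}
\mathbb{E}\Bigl(\sup_{t\in[0,\tau]}\Bigl| \int_0^t \xi_s\, dW_s\Bigr|\Bigr)^p = \mathbb{E}\Bigl(\sup_{t\ge 0}\Bigl| \int_0^t \mathds{1}_{[0,\tau)}(s)\xi_s\, dW_s\Bigr|\Bigr)^p \le C_p\, \mathbb{E}\Bigl[\Bigl(\int_0^\tau \Vert \xi_s\Vert_{\gamma(\rK,H)}^2\, ds\Bigr)^{p/2}\Bigr],
\end{equation*}
which gives \eqref{eqn-Burkholder-2}.

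There is no real obstacle here since the heavy lifting has already been done in Theorem \ref{thm-Ito integral}: the only subtlety is ensuring that the process $\mathds{1}_{[0,\tau)}\xi$ is progressively measurable so that the Itô integral is defined, but this is guaranteed by Remark \ref{rem-stopping time-progressive measurability}. The role of the hypothesis $\mathds{1}_{[0,\tau)}\xi \in \mathscr{M}^2$ is precisely to ensure the right-hand sides are finite and thus that the square-integrability and moment estimates of Theorem \ref{thm-Ito integral} apply.
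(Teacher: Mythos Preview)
Your proposal is correct and is exactly the natural argument; the paper itself states Theorem~\ref{thm-Ito integral-2} without proof, treating it as an immediate consequence of Theorem~\ref{thm-Ito integral}, Definition~\ref{def-Ito integral for stopping time}, and Proposition~\ref{prop-def-Ito integral for stopping time-3}, which is precisely the reduction you carry out.
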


The following definition deals with the most difficult case
when $\xi$ is in $\mathscr{N}^2_{\loc}([\rho,\tau),\gamma(\rK,H))$ but possibly not in $\mathscr{N}^2([\rho,\tau),\gamma(\rK,H))$ and only on a random set $B \in \mathscr{F}_{\rho}$.

Assume that   $\rho$ and $\tau$ are stopping times  satisfying  Hypothesis
\ref{hh-stopping times} on  an event  $B \in \mathscr{F}_{\rho}$. Assume that
\begin{align*}
\xi\colon [\rho,\tau) \times B \to \gamma(\rK,H)
\end{align*}
and that the process  $\mathds{1}_{ [\rho,\tau) \times B } \xi$ belongs to 
$\mathscr{N}^2_{\loc}([\rho,\tau),\gamma(\rK,H))$.
In particular, $\tau$ is $\rho$-accessible on $B$ and so we can 
choose and fix    an announcing sequence  $(\tau_n)_{n=1}^\infty $  for $\tau$. Because each $\tau_n$ is finite a.s., the process $\xi_n$ defined by
\begin{equation}\label{eqn-xi_n}
 \xi_n\colonequals \mathds{1}_{[\rho,\tau_n) \times B } \,\xi \colon [0,\infty) \times \Omega 
 \to \gamma(\rK,H)
\end{equation}
belongs to $\mathscr{N}^2_{\loc}([0,\infty),\gamma(\rK,H))$. Therefore, for every $n \in \mathbb{N}$, there exists a unique process $x_n=I(\xi_n)$ satisfying all the properties
of Theorems \ref{thm-Ito integral} and \ref{thm-Ito integral-2}. Because $\tau_n \leq \tau_m$ for $n \leq m$, so that $\mathds{1}_{[\rho,\tau_n)}\mathds{1}_{[\rho,\tau_m)}=\mathds{1}_{[\rho,\tau_n)}$,
by the localization Theorem \ref{thm-localization}, we infer that  a.s.
\begin{equation}\label{eq-x_n=x_m}
x_n(t,\omega)=x_m(t,\omega) \mbox{ for } t\in [0,\tau_n(\omega)].
\end{equation}
This argument is similar to the proof of \cite[Proposition 7.2]{Steele_2001}, although that books deals with a different question. Next, we argue differently.
The just proven property \eqref{eq-x_n=x_m} together with the assumptions that a.s. $\tau_n \toup \tau$ allows us to define a process $x\colon [0,\tau)\times \Omega \to H$,
as the set theoretical union of the processes $(x_n)_{n=1}^\infty$. This process $x$ is a continuous process. It is also adapted (and hence progressively measurable).

Arguing as in the proof of \cite[Proposition 7.4]{Steele_2001}, we can prove that the above construction does not depend on the choice of an announcing sequence:
 \begin{prop}\label{prop-indepence of an announcing sequence}
 Assume that $(\tau_n)_{n=1}^\infty $  and $(\hat{\tau}_n)_{n=1}^\infty $
 are two announcing sequences   for a $\rho$-accessible stopping time $\tau$.
 Let $x\colon [0,\tau)\times \Omega \to H$, resp. $\hat{x}\colon [0,\tau)\times \Omega \to H$,  be the processes defined above, using the sequence $(\tau_n)_{n=1}^\infty $, respectively  $(\hat{\tau}_n)_{n=1}^\infty $. Then these two processes are equivalent. Moreover, 
 \begin{enumerate}
 \item[(a)]    up to time $\rho$, process $x$ is identically equal to $0$ and 
 \item[(b)]  process $x$ is identically equal to $0$ on $[0,\infty) \times B^{\mathrm{c}}$.
 \end{enumerate}
  \end{prop}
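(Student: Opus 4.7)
The plan is to derive everything from the localization theorem (Theorem~\ref{thm-localization}) together with the fact that any two announcing sequences for $\tau$ on $B$ must agree in the limit. Fix $n, m \in \mathbb{N}$ and set $\sigma_{n,m} \coloneq \tau_n \wedge \hat{\tau}_m$, which is again a stopping time. By the definition \eqref{eqn-xi_n}, both $\xi_n = \mathds{1}_{[\rho,\tau_n)\times B}\xi$ and $\hat\xi_m = \mathds{1}_{[\rho,\hat\tau_m)\times B}\xi$ coincide, for Lebesgue a.e. $s \geq 0$, on the event $\Omega_s(\sigma_{n,m})$, since on this event the two indicators agree. Hence Theorem~\ref{thm-localization} applies to give, a.s.,
\begin{equation*}
x_n(t) = \int_0^t \xi_n(s)\,dW_s = \int_0^t \hat\xi_m(s)\,dW_s = \hat x_m(t), \quad t \in [0,\sigma_{n,m}].
\end{equation*}

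Next, the accessibility of $\tau$ on $B$ via both sequences forces $\sigma_{n,m} \nearrow \tau$ a.s.\ on $B$ as $n,m \to \infty$. So for a.e.\ $\omega \in B$ and every $t < \tau(\omega)$, choose $n,m$ large enough that $t < \sigma_{n,m}(\omega)$; the identity above then yields $x(t,\omega) = x_n(t,\omega) = \hat x_m(t,\omega) = \hat x(t,\omega)$. The set-theoretic definitions of $x$ and $\hat x$ as the pointwise-in-$n$ union of the $x_n$, resp.\ $\hat x_n$, therefore coincide on $B\times[0,\tau)$, proving equivalence.

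For item (b), note that by construction $\xi_n = \mathds{1}_{[\rho,\tau_n)\times B}\xi$ vanishes identically on $[0,\infty)\times B^{\mathrm{c}}$. Applying Theorem~\ref{thm-localization} once more with the other integrand the constant process $0$ and using that the set $B^{\mathrm{c}}$ can be incorporated by stopping via the $\mathscr{F}_\rho$-measurable event $B$, we obtain $x_n \equiv 0$ on $[0,\infty)\times B^{\mathrm{c}}$ for every $n$, hence $x \equiv 0$ there. Item (a) is immediate from Corollary~\ref{cor-int_rho^t=0}: for each $n$, $\xi_n$ vanishes on $[0,\rho)$, so $x_n(t) = 0$ for $t\in[0,\rho]$, and passing to the union in $n$ preserves this.

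The only delicate point is the application of the localization theorem on an event from $\mathscr{F}_\rho$ rather than via a single stopping time (item (b)): one needs that multiplication by $\mathds{1}_B$ inside the integrand corresponds to genuine stopping, which is exactly the content of Remark~\ref{rem-stopping time-progressive measurability} and the construction of $\xi_n$ via a progressively measurable indicator. The rest is a routine diagonalization over the two announcing sequences.
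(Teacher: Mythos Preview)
Your proof is correct and follows the same localization-based approach that the paper points to (the paper does not spell out a proof but refers to \cite[Proposition 7.4]{Steele_2001}, which is exactly this kind of argument). One small organizational point: your equivalence argument is carried out only on $B$, so equivalence on $B^{\mathrm{c}}$ tacitly relies on item (b), which you prove afterward; it would be cleaner to note this explicitly or to prove (b) first. Also, for (b) the stopping time you allude to can be made concrete as $\tau' = \rho$ on $B$ and $\tau' = \infty$ on $B^{\mathrm{c}}$, which is indeed a stopping time since $B\in\mathscr{F}_\rho$.
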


\begin{defn}\label{eqn-def-general-notation}
In the above setting ($\rho$ and $\tau$ satisfy  Hypothesis
\ref{hh-stopping times} on  an event  $B \in \mathscr{F}_{\rho}$, $\xi:[\rho,\tau)\times B\to\gamma(\rK,H))$, $\mathds{1}_{ [\rho,\tau) \times B } \xi \in
\mathscr{N}^2_{\loc}([\rho,\tau),\gamma(\rK,H))$), the process $x$ constructed above, restricted to $[\rho,\tau)\times B$, will be denoted by 
\begin{equation}
\int_\rho^t \xi_s dW_s,  \;\;\; (t,\omega) \in [\rho,\tau)\times B.    
\end{equation}
\end{defn}

It follows from Definition \ref{eqn-def-general-notation}, that for every $\omega \in B$, the trajectory \[
[\rho(\omega),\tau(\omega))\ni t \mapsto 
\int_\rho^{t} \xi_s dW_s \in H\]
is a continuous function.

We give also the following generalization of the Localization Theorem \ref{thm-localization} (see also \cite[Proposition 2.10]{Brz+Elw_2000}):
\begin{lem}[Coincidence criterion]\label{lem:coincidence}
Assume that   $\rho$ and $\tau$ are stopping times  satisfying  Hypothesis
\ref{hh-stopping times}. Assume that the two processes  
\[
\xi^i: [\rho,\tau) \times \Omega \to H, \;\;\; i=1,2,
\]
satisfy hypotheses of Definition \ref{eqn-def-general-notation} with $B=\Omega$.  
Assume also that, for some $A\in\mathscr{F}_\rho$, $\xi^1=\xi^2$ a.s. on $[\rho,\tau) \times A$. Then,  a.s. on $A$, we have
\begin{align}\label{eq:coincidence}
\int_\rho^t \xi^1_r dW_r = \int_\rho^t \xi^2_r dW_r,\quad \forall t\in [\rho,\tau).
\end{align}
\end{lem}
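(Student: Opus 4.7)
The plan is to reduce this coincidence statement to the already-stated Localization Theorem \ref{thm-localization} (which is formulated for a single stopping time, not for a random event) by encoding the set $A$ into an auxiliary stopping time. To that end, I fix an announcing sequence $(\tau_n)_{n \in \N}$ for $\tau$ relative to $\rho$ and recall from the construction preceding Definition \ref{eqn-def-general-notation} that the process $\int_\rho^\cdot \xi^i \, dW$ is glued together from the sequence $x^i_n := I\bigl(\mathds{1}_{[\rho,\tau_n)} \xi^i\bigr)$. Hence it suffices to establish, for each fixed $n$, the equality $x^1_n(t,\omega) = x^2_n(t,\omega)$ for all $t \in [0,\tau_n(\omega)]$, almost surely on $\omega \in A$: letting $n \to \infty$ and using $\tau_n \nearrow \tau$ then yields \eqref{eq:coincidence} for all $t \in [\rho,\tau)$ on a common full-measure subset of $A$.

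To splice $A$ into a stopping time, I introduce
\begin{equation*}
\sigma_n \coloneq \tau_n \mathds{1}_A + \rho \mathds{1}_{A^c}.
\end{equation*}
Since $A \in \mathscr{F}_\rho$ and $\rho \le \tau_n$, one has $A \in \mathscr{F}_{\tau_n}$, so
\begin{equation*}
\{\sigma_n \le t\} = \bigl(A \cap \{\tau_n \le t\}\bigr) \cup \bigl(A^c \cap \{\rho \le t\}\bigr) \in \mathscr{F}_t, \quad t \ge 0,
\end{equation*}
confirming that $\sigma_n$ is a stopping time with $\sigma_n = \tau_n$ on $A$. The key step is then to apply Theorem \ref{thm-localization} to the integrands $\eta^i := \mathds{1}_{[\rho,\tau_n)}\xi^i \in \mathscr{N}^2_{\loc}([0,\infty),\gamma(\rK,H))$ together with the stopping time $\sigma_n$. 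Its hypothesis requires $\eta^1(s) = \eta^2(s)$ a.s.\ on $\Omega_s(\sigma_n) = \{s < \sigma_n\}$ for Lebesgue-a.e.\ $s \ge 0$; a case split covers this: on $A^c \cap \{s < \rho\}$ both indicators vanish, while on $A \cap \{s < \tau_n\}$ either $s < \rho$ (again both indicators vanish) or $\rho \le s < \tau_n \le \tau$, in which case the standing assumption $\xi^1 = \xi^2$ a.s.\ on $[\rho,\tau) \times A$ gives $\xi^1(s) = \xi^2(s)$ a.s. Theorem \ref{thm-localization} therefore produces
\begin{equation*}
\int_0^t \eta^1_s \, dW_s = \int_0^t \eta^2_s \, dW_s \quad \text{for all } t \in [0,\sigma_n], \text{ a.s.,}
\end{equation*}
and intersecting with $A$, on which $\sigma_n = \tau_n$, yields exactly $x^1_n(t) = x^2_n(t)$ for $t \in [0,\tau_n]$, a.s.\ on $A$.

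Taking a countable union of the exceptional null sets over $n$ and invoking Proposition \ref{prop-indepence of an announcing sequence} (to ensure the limiting process is well-defined independently of the announcing sequence) completes the argument, since the continuous trajectories of both $\int_\rho^\cdot \xi^1 \, dW$ and $\int_\rho^\cdot \xi^2 \, dW$ coincide on the dense set $\bigcup_n [0,\tau_n] \cap [\rho,\tau)$ on $A$. I do not anticipate any serious obstacle: the only delicate points are the measurability verification for $\sigma_n$ and the case analysis on $\Omega_s(\sigma_n)$, both essentially bookkeeping consequences of $A \in \mathscr{F}_\rho$ and of the hypothesis that $\tau$ is $\rho$-accessible.
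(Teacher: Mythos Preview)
Your proof is correct and follows essentially the same approach as the paper: both introduce the auxiliary stopping time $\tau_n' = \tau_n \mathds{1}_A + \rho \mathds{1}_{A^c}$ and apply the Localization Theorem \ref{thm-localization} to it, then let $n \to \infty$. The paper's proof is considerably terser (it simply names $\tau_n'$ and invokes Theorem \ref{thm-localization} without spelling out the measurability check or the case analysis on $\Omega_s(\sigma_n)$), but your version fills in exactly the details one would expect a careful reader to supply.
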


\begin{proof}
Choose and fix an announcing sequence $(\tau_n)_n$ for $\tau$. By the Localization Theorem \ref{thm-localization}, applied to the stopping times $\tau_n'=\tau_n1_A+\rho1_{A^c}$, we have
\begin{align*}
\int_\rho^t \xi^1_r dW_r = \int_\rho^t \xi^2_r dW_r,\quad \forall t\in [\rho,\tau_n'].
\end{align*}
By arbitrariness of $n$, we conclude that \eqref{eq:coincidence} holds.
\end{proof}

A fundamental (classical) result in stochastic integration is the stochastic chain rule, namely the It\^o formula, which contains a second-order term. The following version of It\^o formula is for example in \cite[2.4]{BrNeVeWe2008}:

\begin{thm}[It\^o formula]
    Assume that $\rho$ and $\tau$ are stopping times with $\rho$ finite and $\rho\le \tau$ a.s.. Let $A$ be in $\mathcal{F}_\rho$ and assume that $\tau$ is $\rho$-accessible on $A$. Let $X\colon [\rho,\tau)\times A\to H$ be a process satisfying
    \begin{align*}
        X_t = X_\rho +\int_\rho^t B_r \,dr +\int_\rho^t \xi_r\, dW_r,\quad \text{on }[\rho,\tau)\times A,
    \end{align*}
    for some $B\colon [\rho,\tau)\times A\to H$, $\xi\colon [\rho,\tau)\times A\to \gamma(\rK,H)$ with $B1_{[\rho,\tau)}\in \mathscr{N}^1_{loc}$, $\xi1_{[\rho,\tau)}\in \mathscr{N}^2_{loc}$.
    Let $\tilde{H}$  be  a separable Hilbert space and $h\colon H\to \tilde{H}$ be a $C^2$ function. Then we have
    \begin{align}
    \begin{split}\label{eq:Ito_formula_H}
        h(X_t) &= h(X_\rho) +\int_\rho^t Dh(X_r)B_r \,dr +\int_\rho^t Dh(X_r)\xi_r \,dW_r\\
        &\quad +\frac12\int_\rho^t\tr_{\rK}[D^2h(X_r)[\sigma(X_r), \sigma(X_r)]]\, dr,\quad \text{on }[\rho,\tau)\times A,
    \end{split}
    \end{align}
    where $\tr_{\rK}$ has been defined in \eqref{eqn-tr K}.
\end{thm}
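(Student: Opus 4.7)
The strategy is to reduce \eqref{eq:Ito_formula_H} to the classical It\^o formula for continuous Hilbert-valued semimartingales defined on $[0,\infty)$ with $\mathscr{F}_0$-measurable initial data, as for instance in \cite[2.4]{BrNeVeWe2008}. The two key moves are localization along the announcing sequence of $\tau$, and a filtration shift that turns the random initial time $\rho$ into the deterministic initial time $0$.

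First I would fix an announcing sequence $(\tau_n)_{n\geq 1}$ for $\tau$ relative to $\rho$ on $A$, so that a.s.\ on $A$ one has $\rho\leq \tau_n<\tau$ and $\tau_n\nearrow \tau$. Each of the four terms in \eqref{eq:Ito_formula_H} is a.s.\ continuous in $t\in [\rho,\tau)$, hence by the coincidence criterion (Lemma \ref{lem:coincidence}) it suffices to establish the identity on the stochastic interval $[\rho,\tau_n]$ for each fixed $n$ and then pass to the limit $n\to\infty$. On $[\rho,\tau_n]$ the truncated integrands $1_{[\rho,\tau_n)}B$ and $1_{[\rho,\tau_n)}\xi$ sit in the spaces required to apply Theorem \ref{thm-Ito integral-2}, so no further localization is needed.

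Next I would reduce to deterministic initial time via the shifted filtration $\hat{\mathscr{F}}_t:=\mathscr{F}_{\rho+t}$, which again satisfies the usual conditions. By the strong Markov property of $W$, the shifted process $\hat{W}_t:=W_{\rho+t}-W_\rho$ is an $E$-valued $\hat{\mathscr{F}}$-Wiener process with the same RKHS $\rK$. Setting $\hat{X}_t:=X_{\rho+t}$ (extended constantly for $t\geq \tau_n-\rho$), $\hat{B}_r:=1_{\{r\leq \tau_n-\rho\}}1_A\, B_{\rho+r}$, and analogously $\hat{\xi}_r$, the initial condition $\hat{X}_0=X_\rho 1_A$ is $\hat{\mathscr{F}}_0$-measurable and
\[
\hat{X}_t=\hat{X}_0+\int_0^t \hat{B}_r\,dr+\int_0^t \hat{\xi}_r\,d\hat{W}_r,\qquad t\geq 0.
\]
Applying the standard It\^o formula for $C^2$ functions $h\colon H\to \tilde{H}$ between separable Hilbert spaces to $h(\hat{X}_t)$ yields the desired expansion in the shifted variables; undoing the time shift $s=\rho+r$ and restricting to $A$ gives \eqref{eq:Ito_formula_H} on $[\rho,\tau_n]$. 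Letting $n\to\infty$ and invoking a.s.\ continuity in $t$ of each term concludes the argument.

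The main obstacle is the rigorous justification of the filtration shift: one must verify that $\hat{W}$ is indeed a cylindrical Wiener process with respect to $\hat{\mathscr{F}}$ and that, after the change of variables, the stochastic integral $\int_0^t \hat{\xi}_r\,d\hat{W}_r$ agrees pathwise a.s.\ on $A$ with $\int_\rho^{\rho+t}\xi_r\,dW_r$ in the sense of Definition \ref{eqn-def-general-notation}. Both facts are consequences of the strong Markov property for infinite-dimensional Brownian motion, but care is needed for the identification of the integrals against step processes and then extension by density. An alternative that bypasses the shift is to redo the classical Taylor expansion proof directly on $[\rho,\tau_n]$ along a refining mesh of stopping times $\rho=\sigma_0\leq\sigma_1\leq\cdots\leq\sigma_N=\tau_n$, using that $X_\rho$ is merely $\mathscr{F}_\rho$-measurable and controlling the quadratic remainder via the Burkholder inequality \eqref{eqn-Burkholder-2}.
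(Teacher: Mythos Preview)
The paper does not give its own proof of this theorem: it simply records the statement and cites \cite[2.4]{BrNeVeWe2008}. Your reduction argument---localize via the announcing sequence $(\tau_n)_n$, shift time by $\rho$, and invoke the classical Hilbert-space It\^o formula---is correct and is in fact the route the surrounding infrastructure of the paper is set up to support.

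In particular, the ``main obstacle'' you flag (that $\hat W$ is an $\hat{\mathscr F}$-Wiener process and that the shifted integral agrees with the original one) is already handled by the paper: properties (a)--(g) listed just before Lemma~\ref{lem:SDE_shift} give exactly this, and property (g), equation \eqref{eq:stoch_int_shift}, is the pathwise identification $\int_\rho^{s+\rho}\xi_r\,dW_r=\int_0^s\xi^\rho_r\,dW^\rho_r$ you need. So you can simply cite those properties rather than treat the shift as a gap to be filled. One minor correction: in your display you write $\hat X_0=X_\rho 1_A$, but on $A^c$ you have not defined $X_\rho$; cleaner is to fix any point $y_0\in H$ and set $\hat X_0=X_\rho$ on $A$ and $\hat X_0=y_0$ on $A^c$ (as in the paper's convention \eqref{eqn-xi-bar}), and likewise set $\hat B,\hat\xi$ to zero on $A^c$. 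This does not affect the identity on $A$, which is all that is claimed.
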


Now we give the following approximation/continuity result which can proved along the lines of the proof of \cite[Theorem 4.3.4]{Friedman_1975}.

\begin{lem}\label{lem:convergence_Ito_int}
Let $\tau$ be a stopping time. Let $\xi$ and, for $\delta>0$, $\xi^\delta$ be $\gamma(\rK,H)$-valued progressively measurable processes on $[0,\tau)$ satisfying
\begin{align}
\int_0^\tau \|\xi^\delta_s\|_{\gamma(\rK,H)}^2\,ds + \int_0^\tau \|\xi_s\|_{\gamma(\rK,H)}^2\,ds<\infty\quad \mbox{ a.s. },
\end{align}
\begin{align}
\lim_{\delta\to 0}\int_0^\tau \|\xi^\delta_s -\xi_s\|_{\gamma(\rK,H)}^2\,ds = 0 \text{ in  } \mathbb{P}.
\end{align}
Then, as $\delta\to 0$,
\begin{align*}
\sup_{t\in [0,\tau]} \left\|\int_0^t(\xi^\delta_s-\xi_s)\, dW_s\right\|\to 0 \text{ in  } \mathbb{P}.
\end{align*}
In particular, the above convergence holds if $\xi^\delta\to \xi \in \gamma(\rK,H)$, as $\delta\to 0$, uniformly on $[0,\tau]$,  a.s..
\end{lem}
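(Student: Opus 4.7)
The approach is a standard localization argument. Since the hypothesis only provides convergence in probability of the quadratic variations $\int_0^\tau\|\xi^\delta_s-\xi_s\|^2_{\gamma(\rK,H)}\,ds$, one cannot apply the Burkholder inequality of Theorem \ref{thm-Ito integral-2} directly. Instead, I would truncate the quadratic variation at a level $N$ via a stopping time, apply the $L^2$ Burkholder estimate on the truncated process, and then send $N$ and $\delta$ to zero in the correct order.

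More precisely, fix $\eps,\eta>0$, and for each $N>0$ and $\delta>0$ introduce the stopping time
\[
\sigma_N^\delta \coloneq \inf\Big\{t\ge 0 \colon \int_0^t \|\xi^\delta_s-\xi_s\|^2_{\gamma(\rK,H)}\,ds > N\Big\} \wedge \tau,
\]
together with the event $A_N^\delta \coloneq \big\{\int_0^\tau \|\xi^\delta_s-\xi_s\|^2_{\gamma(\rK,H)}\,ds \le N\big\}$. By construction $\sigma_N^\delta = \tau$ on $A_N^\delta$ and $\int_0^{\sigma_N^\delta}\|\xi^\delta_s-\xi_s\|^2\,ds \le N$ almost surely; hence, once extended by $0$ outside $[0,\tau)$, the process $\mathds{1}_{[0,\sigma_N^\delta)}(\xi^\delta-\xi)$ lies in $\mathscr{M}^2([0,\infty),\gamma(\rK,H))$. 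Combining Theorem \ref{thm-Ito integral-2} (with $p=2$) with Chebyshev's inequality then gives
\[
\mathbb{P}\Big(\sup_{t\in[0,\sigma_N^\delta]}\Big\|\int_0^t(\xi^\delta_s-\xi_s)\,dW_s\Big\|>\eps\Big) \le \frac{C_2}{\eps^2}\,\mathbb{E}\Big[\int_0^{\sigma_N^\delta}\|\xi^\delta_s-\xi_s\|^2\,ds\Big] \le \frac{C_2 N}{\eps^2}.
\]
Splitting according to $A_N^\delta$, on which $\sigma_N^\delta=\tau$, I obtain the key estimate
\[
\mathbb{P}\Big(\sup_{t\in[0,\tau]}\Big\|\int_0^t(\xi^\delta_s-\xi_s)\,dW_s\Big\|>\eps\Big) \le \frac{C_2 N}{\eps^2} + \mathbb{P}\Big(\int_0^\tau \|\xi^\delta_s-\xi_s\|^2\,ds > N\Big).
\]
Choosing first $N=\eps^2\eta/(2C_2)$ and then, by the hypothesis of convergence in probability, $\delta_0$ small enough that the second term is below $\eta/2$ for every $\delta<\delta_0$, the right-hand side falls below $\eta$, proving the main statement.

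The final ``in particular'' statement then follows from the main one, since uniform a.s.\ convergence $\xi^\delta\to \xi$ on $[0,\tau]$ implies $\int_0^\tau \|\xi^\delta_s-\xi_s\|^2\,ds \to 0$ a.s.\ (via dominated convergence, using the a.s.\ finiteness in assumption (i)), and hence in probability. The main technical subtlety I expect lies in matching the random-interval It\^o integral from Definition \ref{eqn-def-general-notation} with the integral of the truncated integrand $\mathds{1}_{[0,\sigma_N^\delta)}(\xi^\delta-\xi)$: the identification requires applying the Localization Theorem \ref{thm-localization} (or equivalently Lemma \ref{lem:coincidence}) along an announcing sequence $(\tau_n)_n$ of $\tau$, which gives pathwise agreement of the two integrals up to the stopping time $\sigma_N^\delta$ and then allows passage to the limit $\tau_n \nearrow \tau$.
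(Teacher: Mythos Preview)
Your argument is correct and is precisely the standard localization-plus-Burkholder proof; the paper does not spell out its own proof but merely refers to \cite[Theorem 4.3.4]{Friedman_1975}, whose argument is exactly the one you give. The only small point worth tightening is the ``in particular'' clause: your dominated-convergence justification tacitly needs a dominating function, which you obtain (for $\delta$ small, depending on $\omega$) from $\|\xi^\delta_s-\xi_s\|^2 \le 4\|\xi_s\|^2+2$ together with the a.s.\ finiteness of $\int_0^\tau\|\xi_s\|^2\,ds$ and (implicitly) $\tau<\infty$ a.s.; in the paper's applications the lemma is indeed invoked only along a bounded announcing sequence $\tau_n$, so this is harmless.
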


We close this subsection with a change-of-time-variable property for the Wiener processes and the It\^o integrals, see for instance \cite[Proposition 2.12]{Brz+Elw_2000} and \cite[Definition 4.2]{Brz+Millet_2012}. 
We introduce an auxiliary filtration $\mathbb{F}^\rho=(\cF^\rho_t)_{t \geq 0}$  on $\Omega$ defined by \[\cF^\rho_t= \cF_{\rho+t}, \;\; t \geq 0.\] Let us recall  that since $\rho+t$ is an $\mathbb{F}$-stopping time, the above definition makes sense.  Next we define an auxiliary  process $W^\rho$  by 
\[W^\rho_t=W_{t+\rho}-W_t,  \;\; t \geq 0.
\]
One can show that  that the following assertions hold true.
\begin{trivlist}
\item[(a)] Filtration $\mathbb{F}^\rho$ satisfies the usual assumptions, \cite[Section 2.1.1]{Applebaum_2005}.
\item[(b)] Process  $W^\rho$ is an $E$-valued $\mathbb{F}^\rho$-Wiener process, see Theorem 2.20 in \cite{LeGall_2016} and  section 4.3 in \cite{Brz+Millet_2012} and that the RKHS of $\mathscr{L}(W^\rho_1)$ 
is equal to $\rK$, i.e. the RKHS of $\mathscr{L}(W_1)$. 
\item[(c)] A function   $\tau\colon \Omega \to [0,\infty]$ is a finite $\mathbb{F}$ stopping time such that a.s. $\rho\le \tau$, if and only if $\tau-\rho$ is a finite $\mathbb{F}^\rho$-stopping time. 
\item[(d)] A function   $\tau\colon \Omega \to [0,\infty]$  is a $\rho$-accessible $\mathbb{F}$-stopping time,  if and only if $\tau-\rho$ is an  accessible $\mathbb{F}^\rho$-stopping time. 
\item[(e)] If $\tau$ is finite $\mathbb{F}$-stopping time then   a process \[\xi:  [\rho,\tau] \to M\] is 
$\mathbb{F}$-progressively measurable if and only if the process \[
\xi^\rho:[0,\tau-\rho] \ni (s,\omega) =\xi_{\rho+s}(\omega)\] 
is $\mathbb{F}^\rho$-progressively measurable. 
\item[(f)] Analogous assertions  hold also if $\tau$ is a $\rho$-accessible (possibly infinite) stopping time on $A\in\cF_\rho$. 
\item[(g)]
If $\tau$ is finite  a.s. and $\xi$ is as in Definition \ref{eqn-def-general-notation}, then almost surely on $B$, 
\begin{align}
\label{eq:stoch_int_shift}
\int_\rho^{s+\rho} \xi_r dW_r = \int_0^s \xi^\rho_r dW^\rho_r,\quad \forall s\in[0,\tau-\rho].
\end{align}
\end{trivlist}

\subsection*{SDEs on Hilbert spaces}
\addcontentsline{toc}{subsection}{SDEs on Hilbert spaces}

We consider SDEs on $H$, as a preliminary step for SDEs on manifolds. Let $U$ be an open subset of $H$. Let $b:U\to H$ and $\sigma:U\to \mathscr{L}(E,H)$ be two continuous functions. Let $\rho$ be a finite stopping time and $\zeta$ be an $\mathcal{F}_\rho$-measurable $H$-values random variable. We consider the following It\^o SDE:
\begin{equation}\label{eq:SDE_U_Ito}
	\begin{aligned}
		dX_t &= b(X_t)dt +\sigma(X_t) dW_t,\quad t\ge\rho,\\
		X_\rho &= \zeta.
	\end{aligned}
\end{equation}

\begin{defn}\label{def:SDE_Ito_def}
Given a stopping time $\tau$ such that 
$\tau>\rho$ a.s. on the set $\{\zeta\in U\}$ and $\tau$ is $\rho$-accessible on the set $\{\zeta \in U\}$, a process
\begin{equation*}
X\colon  [\rho,\tau) \times \{\zeta \in U\} \to H,
\end{equation*}
is a local solution to the It\^o SDE \eqref{eq:SDE_U_Ito} on $U$ if almost surely on set $\{\zeta \in U\}$, $X_t$ is in $U$ for every $t\in [\rho,\tau)$ and
\begin{equation}\label{eq:SDE_U-2_Ito}
	\begin{aligned}
		X_t &= \zeta+ \int_0^t  b(X_s)ds +\int_0^t\sigma(X_t)  dW_s
  ,\quad t \in [\rho,\tau).
	\end{aligned}
\end{equation}
If $U=H$, we simply say that $X$ is a local solution to \eqref{eq:SDE_U_Ito}.
\end{defn}

We give the following standard result on local well-posedness on SDEs with smooth coefficients on an open set $U$, see for example \cite[Theorem A.14]{MMS19} for a similar statement.

\begin{prop}\label{prop:wellposed_SDE_Ito}
Assume that  $U$ is  an open subset of a Hilbert space  $H$. 
Assume that functions $b:U \to H$ and $\sigma : U \to  \mathscr{L}(E,H) $ satisfy the following conditions:
\begin{enumerate}
\item[(a)] For every  bounded open subset $V$ of $H$ with $V \subset \bar{V}\subset  U$, the restriction of $b$ to $\bar{V}$ belongs to $C^{0,1}(\bar{V},H)$, i.e. it is Lipschitz on $\bar{V}$ with values in $H$.
\item[(b)] For every  bounded open subset $V$ of $H$ with $V \subset \bar{V}\subset  U$, the restriction of $\sigma$ to $\bar{V}$ belongs to $C^{0,1}(\bar{V},\mathscr{L}(E,H))$, i.e. it is Lipschitz on $\bar{V}$ with values in $\mathscr{L}(E,H)$. 
\end{enumerate}
 Assume also  $\rho$ be a finite stopping time and $\zeta$ be an $\cF_\rho$-measurable $H$-valued random variable. Then  there exist a stopping time $\tau$, such that 
   $\tau>\rho$ a.s. on the set $\{\zeta\in U\}$ and $\tau$ is $\rho$-accessible on the set $\{\zeta \in U\}$,
  and  there exists  a process 
 \begin{equation*}
  X\colon [\rho,\tau) \times \{\zeta\in U\}  \to H,
 \end{equation*}
  such that:
  \begin{enumerate}
      \item[(i)]
  $X$ is a local solution to the It\^o SDE \eqref{eq:SDE_U_Ito} on $U$;
\item[(ii)] if  $V $ is an  open subset of $U$ such that $\bar{V} \subset  U$, then almost surely on the set $\{ \rho< \tau<\infty\}$, the process  $X$ leaves every set $V$, i.e.  $\tau_V < \tau$ almost surely on the set $\{ \rho<\tau<\infty\}$, where 
$\tau_V$ be the first exit time of $X$ from $V$, i.e.\footnote{To avoid ambiguity, we put $\tau_V=\rho$ when $X_\rho  \notin V$, $\tau_V=\tau$ when $X$ does not leave $V$.} 
$\tau_V=\inf\{ t \in [ \rho, \tau): X_t \notin V \}$;
\item[(iii)] if $\tau^\prime\ge \rho$ is a stopping time, which is $\rho$-accessible on $\{\tau^\prime>\rho\}$, and $X^\prime$ is a solution to \eqref{eq:SDE_U_Ito} on $\{\tau^\prime>\rho\}$, then $X=X^\prime$ almost surely on $[\rho,\tau\wedge\tau^\prime)\times \{\zeta\in U\}$.
  \end{enumerate}
\end{prop}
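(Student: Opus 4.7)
The plan is to reduce to the classical globally-Lipschitz setting by exhausting $U$ with bounded open sets and extending the coefficients. First, I would use the time-shift machinery recorded at the end of the previous subsection (items (a)--(g)) to replace $(\mathbb{F},W,\rho)$ with $(\mathbb{F}^\rho,W^\rho,0)$, reducing to the case $\rho=0$. Then I exhaust $U$ by $V_n\coloneq\{x\in H:\|x\|<n,\ \mathrm{dist}(x,U^c)>1/n\}$, so each $V_n$ is bounded and open with $\overline{V_n}\subset V_{n+1}\subset U$ and $\bigcup_n V_n=U$. Using either the distance function or a $C^1_b$-bump function from Lemma~\ref{lem:Hilbert_paracompact}, I build cut-offs $\chi_n\colon H\to[0,1]$ with $\chi_n=1$ on $\overline{V_n}$ and $\supp\chi_n\subset V_{n+1}$, and set $b^{(n)}\coloneq\chi_n b$, $\sigma^{(n)}\coloneq\chi_n \sigma$ (extended by $0$ outside $V_{n+1}$). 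Hypotheses (a)--(b) together with boundedness of $\overline{V_{n+1}}$ make $b^{(n)}$ and $\sigma^{(n)}$ globally Lipschitz on $H$.

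For each $n$, classical well-posedness for SDEs with globally Lipschitz coefficients on a Hilbert space (e.g.\ \cite[Theorem 7.2]{daPaZ}) gives a unique continuous adapted solution $X^{(n)}\colon[0,\infty)\times\Omega\to H$ to
\begin{equation*}
dY_t=b^{(n)}(Y_t)\,dt+\sigma^{(n)}(Y_t)\,dW_t,\qquad Y_0=\zeta.
\end{equation*}
Let $\tau_n$ be the first exit time of $X^{(n)}$ from $V_n$; on $\{\zeta\in V_n\}$ this is strictly positive a.s.\ and $0$-accessible via the announcing-sequence construction after \eqref{eqn-tau_U}. Because $b^{(n)},\sigma^{(n)}$ coincide with $b^{(m)},\sigma^{(m)}$ on $\overline{V_{n\wedge m}}$, Lemma~\ref{lem:coincidence} yields $X^{(n)}=X^{(m)}$ on $[0,\tau_n\wedge\tau_m)\cap\{\zeta\in V_{n\wedge m}\}$, and continuity then forces $\tau_n\le\tau_{n+1}$ a.s.\ on $\{\zeta\in V_n\}$. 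Setting $\tau\coloneq\sup_n\tau_n$ on $\{\zeta\in U\}$ and $X_t\coloneq X^{(n)}_t$ for $t\in[0,\tau_n)\cap\{\zeta\in V_n\}$ produces a well-defined continuous adapted $U$-valued process on $[0,\tau)\times\{\zeta\in U\}$; on each $[0,\tau_n)$ the cut-off is inactive, so substituting into the integral equation shows $X$ solves \eqref{eq:SDE_U-2_Ito}.

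For property (ii) applied to a bounded $V$ with $\overline V\subset U$, the form of the $V_n$ yields $\overline V\subset V_n$ for $n$ sufficiently large; on $\{\tau<\infty\}$, if $X$ never left $V$ before $\tau$ then $\tau_m=\tau$ for all $m\ge n$, contradicting the fact that $X^{(m)}$ must reach $\partial V_m$ at time $\tau_m$ whenever $\tau_m<\infty$. Property (iii) follows from Lemma~\ref{lem:coincidence} applied to $X$ and a putative second solution $X'$ on each stochastic interval where both processes lie in a common $V_n$, followed by $n\to\infty$ to cover $[0,\tau\wedge\tau')\cap\{\zeta\in U\}$.

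The main technical difficulty is showing that $\tau=\sup_n\tau_n$ is genuinely $0$-accessible on $\{\zeta\in U\}$ in the sense of Definition~\ref{def-accessible stopping time}, rather than merely an almost-sure supremum of stopping times; this requires a diagonal selection from the announcing sequences of the individual $\tau_n$'s combined with the monotonicity just established, together with a careful verification that the patched $X$ is adapted throughout the random-initial-value partitioning via the sets $\{\zeta\in V_n\}\in\mathscr{F}_0$.
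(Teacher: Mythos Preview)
Your approach is essentially the same as the paper's: exhaust $U$ by nested sets, multiply the coefficients by cut-offs to obtain globally Lipschitz data, invoke the classical Hilbert-space well-posedness result, and glue the resulting solutions along their exit times. Your choice of bounded $V_n$'s (rather than the paper's $U_n=\{x:\mathrm{dist}(x,U^c)>1/n\}$) is in fact slightly more careful, since hypotheses (a)--(b) only give Lipschitz control on \emph{bounded} closures.

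There is, however, one genuine imprecision. You repeatedly invoke Lemma~\ref{lem:coincidence} to conclude that $X^{(n)}=X^{(m)}$ on $[0,\tau_n\wedge\tau_m)$ and that $X=X'$ for property~(iii). Lemma~\ref{lem:coincidence} is a statement about stochastic integrals: \emph{given} that two integrands coincide on a set, so do their It\^o integrals. It does not by itself yield coincidence of two \emph{solutions} to an SDE. What you actually need is pathwise uniqueness, obtained by the standard Burkholder--Gronwall estimate on the difference (the paper does precisely this as its first step, proving \eqref{eq:uniq_V} before constructing anything). Once you have that uniqueness statement, the consistency of the $X^{(n)}$'s and property~(iii) follow by observing that, up to the relevant exit time, both processes solve the same globally Lipschitz SDE and hence coincide. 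So the fix is to insert the short Gronwall argument for uniqueness on each $\bar V$ and cite \emph{that} rather than Lemma~\ref{lem:coincidence}.

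A smaller point: your worry about $\rho$-accessibility of $\tau=\sup_n\tau_n$ is simpler than you suggest. Since $\overline{V_n}\subset V_{n+1}$, continuity of $X^{(n+1)}$ forces $\tau_{n+1}>\tau_n$ whenever $\tau_n<\infty$, so the exit times $\tau_n$ themselves (capped at $n$ to handle the case $\tau=\infty$) already form an announcing sequence on $\{\zeta\in U\}$; no diagonal extraction is needed. Finally, note that the statement of (ii) does not assume $V$ bounded; you should either observe that the unbounded case reduces to the bounded one (intersecting with the trajectory's range, which is bounded on $\{\tau<\infty\}$ once you know the limit $X_{\tau^-}$ exists), or argue as the paper does via $\lim_n\mathrm{dist}(X_{\tau_n},U^c)=0$.
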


\begin{proof}
We first show uniqueness on every open set $V$ with $\bar{V}\subseteq U$, then we construct a solution $X$ and finally we show property (ii) for such $X$.

Concerning uniqueness, let $V$ be an open set with $\bar{V}\subseteq U$ and, for $i=1,2$, let $X^i$ be a solution to \eqref{eq:SDE_U_Ito} on $\{\tau^i>\rho\}$, for some stopping time $\tau^i\ge \rho$ which is $\rho$-accessible on $\tau^i>\rho$; call $(\tau^{i,n})_n$ the announcing sequence for $\tau^i$, with $\tau^{i,n}=\rho$ on the set $\{\tau^i= \rho\}$, recall $\tau_V:=\inf\{t\in [\rho,\tau)\mid X_t\notin V\}$. Using the Burkholder inequality \eqref{eqn-Burkholder-2} and the Lipschitz conditions on $b$ and $\sigma$, we obtain, for some $C>0$, for every $n$,
\begin{align*}
\E\left[\sup_{s\in [\rho,t\wedge \tau^{1,n}\wedge \tau^{2,n}\wedge \tau_V]} \|X^1_s -X^2_s\|^2\right] \le C\int_0^t \E\left[\sup_{s\in [\rho,r\wedge \tau^{1,n}\wedge \tau^{2,n}\wedge \tau_V]} \|X^1_s -X^2_s\|^2\right] \, dr.
\end{align*}
We conclude by applying Gronwall's lemma, \cite[30.8]{Sche97}, and by arbitrariness of $n \in \N_0$, that
\begin{align}
X^1=X^2 \,\, \text{a.s. on }[[\rho,\tau^1\wedge\tau^2\wedge\tau_V)).\label{eq:uniq_V}
\end{align}

Concerning existence, for $n$ positive integer, call again $U_n=\{x\in H\mid \text{dist}(x,U^c)>1/n\}$. By \cite[15.9 (2)]{MR1471480}, there exists a $C^1$ map $\varphi_n:H\to \R$, bounded with bounded derivative, such that $\varphi_n=1$ on $U_n$ and $\varphi_n=0$ on $U^c$. Hence the coefficients
\begin{align*}
    b^n(x):=b(x)\varphi_n(x),\quad \sigma^n(x)=\sigma(x)\varphi_n(x)
\end{align*}
are globally Lipschitz and globally bounded. Therefore, by classical results, see e.g. \cite[Theorem 7.2]{daPaZ}, there exists a unique solution $X^n$ to the SDE
\begin{align*}
    dX^n = b^n(X^n)dt +\sigma^n(X^n)dW.
\end{align*}
Taking $\tau_n = \inf\{t\ge \rho\mid X^n_t\notin U_n\}$, $X^n$, restricted to $[\rho,\tau_n)\times\{X_\rho\in U_n\}$, is a solution to \eqref{eq:SDE_U_Ito}. Hence, by the uniqueness statement \eqref{eq:uniq_V}, for every $n,m$ positive integer with $m\ge n$, $X^n$ and $X^m$ coincide up to $\tau_n$. Therefore we can take the stopping time $\tau=\sup_n \tau_n$ and define the process $X$ on $[\rho,\tau)\times \{X_\rho\in U\}$ by gluing the processes $X^n$, namely $X=X_n$ on $[\rho,\tau_n)\times \{X_\rho\in U_n\}$. Then $(\tau_n)_n$ is an announcing sequence for $\tau$ on $\{X_\rho\in U\}$ and $X$ is a solution to \eqref{eq:SDE_U_Ito} on $[\rho,\tau)\times \{X_\rho\in U\}$. Moreover, if $X^\prime$ is another solution on $[[\rho,\tau^\prime))$, then, by \eqref{eq:uniq_V}, $X=X^\prime$ almost surely on $[\rho,\tau_n\wedge \tau^\prime)\times \{\zeta\in U\}$, and hence on $[\rho,\tau\wedge \tau^\prime)\times \{\zeta\in U\}$, that is property (iii). 

Finally, concerning property (ii), note that, on $\{\rho<\tau<\infty\}$, we have $\tau_n<\tau$ and $X_{\tau_n}\notin U_n$, hence
\begin{align}
\lim_n\text{dist}(X_{\tau_n},U^c)=0.\label{eq:X_exits_U}
\end{align}
Given any open set $V$ with $\bar{V}\subseteq U$, recall that $\tau_V:=\inf\{t\in [\rho,\tau)\mid X_t\notin V\}$ and assume by contradiction that $A=\{\tau_V=\tau\}\cap \{\rho<\tau<\infty\}$ has positive probability. Since $b$ and $\sigma$ are bounded on $\bar{V}$, by the SDE itself \eqref{eq:SDE_U_Ito} there exists the a.s. limit $X_{\tau_V}=X_\tau\in \bar{V}$ on $A$. On the other hand, by \eqref{eq:X_exits_U}, $X_\tau$ is in $U^c$ a.s. on $A$, which is in contradiction with $X_\tau\in \bar{V}\subseteq U$. Hence $A$ must be $\mathbb{P}$-negligible. The proof is complete.
\end{proof}

We consider also Stratonovich SDEs on $H$. Let $U$ be an open subset of $H$. Let $b:U\to H$ be a continuous function and let $\sigma:U\to \mathscr{L}(E,H)$ be a $C^1$ function. Let $\rho$ be a finite stopping time and $\zeta$ be an $\mathcal{F}_\rho$-measurable $H$-values random variable. We consider the following Stratonovich SDE:
\begin{equation}\label{eq:SDE_U}
	\begin{aligned}
		dX_t &= b(X_t)dt +\sigma(X_t) \bullet dW_t,\quad t\ge\rho,\\
		X_\rho &= \zeta.
	\end{aligned}
\end{equation}

\begin{defn}\label{def:SDE_def}
Given a stopping time $\tau$ such that 
$\tau>\rho$ a.s. on the set $\{\zeta\in U\}$ and $\tau$ is $\rho$-accessible on the set $\{\zeta \in U\}$, a process
\begin{equation*}
X\colon [\rho,\tau) \times \{\zeta \in U\} \to H,
\end{equation*}
is a local solution to the Stratonovich SDE \eqref{eq:SDE_U} on $U$ if almost surely on set $\{\zeta \in U\}$, $X_t$ is in $U$ for every $t\in [\rho,\tau)$ and
\begin{equation}\label{eq:SDE_U-2_Strat}
	\begin{aligned}
		X_t &= \zeta+ \int_0^t  b(X_s)ds +\int_0^t\sigma(X_t)  dW_s + 
  \frac12 \int_0^t \tr_{\rK}[ D\sigma(X_s) \sigma(X_s)] \, ds
  ,\quad t \in [\rho,\tau).
	\end{aligned}
\end{equation}
\end{defn}

Note that, if $\sigma$ is in $C^1(H,\mathscr{L}(E,H))$, then $D\sigma(x)\sigma(x)$ can be identified as a bounded bilinear operator from $E\times E$ into $H$, hence \eqref{eqn-tr K} applies and the last integral in \eqref{eq:SDE_U-2_Strat} is well-defined.

\begin{remark}\label{rem:why_Strat}
    As we will see, the reason to introduce this type of SDEs is in the It\^o formula: if $X$ solves the Stratonovich SDE \eqref{eq:SDE_U} and $h:H\to \tilde{H}$ is a $C^2$ map, with $\tilde{H}$ another separable Hilbert space, then $h(X)$ satisfies
    \begin{align*}
        dh(X) = Dh(X)b(X)dt +Dh(X)\sigma(X)\bullet dW,
    \end{align*}
    where $Dh(X)\sigma(X)\bullet dW$ is a shorthand notation for the Stratonovich integral, see Definition \ref{def-Stratonovich integral} for the precise definition on a general Hilbert manifold. In particular, the second-order term in It\^o formula disappears here and the chain rule takes the same form as in the deterministic case. This is particularly convenient when extending the definition to manifolds, because we do need a connection or a Riemannian structure on the (infinite-dimensional) manifold.
\end{remark}

As a consequence of the previous Proposition \ref{prop:wellposed_SDE_Ito}, applied to the Stratonovich SDE \eqref{eq:SDE_U}, we have the following:

\begin{prop}\label{prop:SDE_wellposed_local}
Assume that  $U$ is  an open subset of a Hilbert space  $H$. 
Assume that functions $b:U \to H$ and $\sigma : U \to  \mathscr{L}(E,H) $ satisfy the following conditions:
\begin{enumerate}
\item[(a)] For every  bounded open subset $V$ of $H$ with $V \subset \bar{V}\subset  U$, the restriction of $b$ to $\bar{V}$ belongs to $C^{0,1}(\bar{V},H)$, i.e. it is Lipschitz on $\bar{V}$ with values in $H$.
\item[(b)] The function $\sigma$ is of $C^1-$class and for every  bounded open subset $V$ of $H$ with $V \subset \bar{V}\subset  U$, 
the \Frechet derivative $D\sigma:\bar{V} \to  \mathscr{L}(H,\mathscr{L}(E,H))$  is  Lipschitz.
\end{enumerate}
 Assume also  $\rho$ be a finite stopping time and $\zeta$ be an $\cF_\rho$-measurable $H$-valued random variable. Then  there exist a stopping time $\tau$, such that 
   $\tau>\rho$ a.s. on the set $\{\zeta\in U\}$ and $\tau$ is $\rho$-accessible on the set $\{\zeta \in U\}$,
  and  there exists  a process 
 \begin{equation*}
  X\colon [\rho,\tau) \times \{\zeta\in U\} \to H,
 \end{equation*}
  such that the properties (i), (ii) and (iii) in Proposition \ref{prop:wellposed_SDE_Ito} hold for the Stratonovich SDE \eqref{eq:SDE_U} in place of the It\^o SDE \eqref{eq:SDE_U_Ito}.
\end{prop}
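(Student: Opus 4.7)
The plan is to reduce the Stratonovich SDE \eqref{eq:SDE_U} to an It\^o SDE of the form \eqref{eq:SDE_U_Ito} and then invoke Proposition \ref{prop:wellposed_SDE_Ito}. Indeed, by Definition \ref{def:SDE_def}, a process $X\colon [\rho,\tau)\times\{\zeta\in U\}\to H$ is a local solution of \eqref{eq:SDE_U} if and only if it is a local solution of the It\^o SDE
\begin{equation*}
dX_t = \tilde{b}(X_t)\,dt + \sigma(X_t)\,dW_t,\qquad X_\rho = \zeta,
\end{equation*}
where $\tilde{b}\colon U\to H$ is the corrected drift
\begin{equation*}
\tilde{b}(x) \coloneq b(x) + \tfrac{1}{2}\tr_{\rK}\bigl[D\sigma(x)\sigma(x)\bigr].
\end{equation*}
Thus, the whole statement reduces to verifying that $\tilde{b}$ fulfils hypothesis (a) of Proposition \ref{prop:wellposed_SDE_Ito} (i.e.\ is locally Lipschitz on every bounded open $V$ with $\bar V\subset U$), since hypothesis (b) for $\sigma$ is already contained in assumption (b) of the present statement, because Lipschitz continuity of $D\sigma$ on $\bar V$ entails Lipschitz continuity of $\sigma$ on $\bar V$.

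To check local Lipschitz continuity of $\tilde{b}$, it suffices to handle the correction term. Fix a bounded open $V$ with $\bar V\subset U$ and pick a slightly larger bounded open set $V^\prime$ with $\bar V\subset V^\prime$ and $\bar{V^\prime}\subset U$. On $\bar{V^\prime}$, both $\sigma$ and $D\sigma$ are Lipschitz and hence bounded; denote by $M$ a common bound and by $L$ a common Lipschitz constant. For $x,y\in\bar V$ we write
\begin{equation*}
D\sigma(x)\sigma(x) - D\sigma(y)\sigma(y) = D\sigma(x)\bigl(\sigma(x)-\sigma(y)\bigr) + \bigl(D\sigma(x)-D\sigma(y)\bigr)\sigma(y),
\end{equation*}
and use the continuity bound of the trace \eqref{eqn-tr K}, namely
\begin{equation*}
\bigl\|\tr_{\rK}[\Lambda]\bigr\|_H \le \|i\|_{\gamma(\rK,E)}^2\,\|\Lambda\|_{\mathcal{L}_2(E\times E,H)},
\end{equation*}
to obtain a Lipschitz estimate of the form
\begin{equation*}
\bigl\|\tr_{\rK}[D\sigma(x)\sigma(x)] - \tr_{\rK}[D\sigma(y)\sigma(y)]\bigr\|_H \le 2ML\,\|i\|_{\gamma(\rK,E)}^2\,\|x-y\|_H.
\end{equation*}
Combined with the assumed Lipschitz continuity of $b$ on $\bar V$, this shows that $\tilde b$ is Lipschitz on $\bar V$.

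Having verified the hypotheses of Proposition \ref{prop:wellposed_SDE_Ito} for the pair $(\tilde b,\sigma)$, we obtain a stopping time $\tau$ with the stated properties and a process $X\colon [\rho,\tau)\times\{\zeta\in U\}\to H$ which is a local It\^o solution with drift $\tilde b$ and diffusion $\sigma$, and which enjoys properties (i)--(iii) of Proposition \ref{prop:wellposed_SDE_Ito}. By the equivalence noted at the outset, $X$ is also a local Stratonovich solution of \eqref{eq:SDE_U} in the sense of Definition \ref{def:SDE_def}, and properties (i)--(iii) transfer verbatim, since both the exit-time property (ii) and the uniqueness-type property (iii) are statements about the trajectories of $X$ and do not depend on whether we view them through \eqref{eq:SDE_U_Ito} or \eqref{eq:SDE_U-2_Strat}. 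The only mildly non-routine step is the bilinear-trace estimate for $\tr_{\rK}[D\sigma\,\sigma]$; the rest is bookkeeping.
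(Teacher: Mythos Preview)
Your proof is correct and follows exactly the approach the paper intends: the paper does not give an explicit proof but simply states the proposition as ``a consequence of the previous Proposition \ref{prop:wellposed_SDE_Ito}, applied to the Stratonovich SDE \eqref{eq:SDE_U}'', and you have carried out precisely this reduction, correctly verifying that the It\^o--Stratonovich correction $\tfrac12\tr_{\rK}[D\sigma\,\sigma]$ is locally Lipschitz via the bilinear trace bound. The only remark is that your inference ``$D\sigma$ Lipschitz on $\bar V$ $\Rightarrow$ $\sigma$ Lipschitz on $\bar V$'' tacitly uses a mean-value argument that, strictly speaking, requires convexity (or a slight enlargement to a convex $V'$ with $\bar{V'}\subset U$); this is harmless here since Proposition \ref{prop:wellposed_SDE_Ito} is ultimately applied on balls and their nested exhaustion, but you may want to make the enlargement $V'$ convex to be airtight.
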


\subsection*{SDEs on Hilbert manifolds}
\addcontentsline{toc}{subsection}{SDEs on Hilbert manifolds}

Now we consider SDEs on Hilbert manifolds. As mentioned before (see Remark \ref{rem:why_Strat}), and as it is often done classically, we use Stratonovich formulation, because in this way we do need a connection or a Riemannian structure on the (infinite-dimensional) manifold. We also extend first definitions and It\^o formula to the case of random initial times. 

Let $M$ be a separable and paracompact Hilbert manifold modelled on a separable Hilbert space $H$. We consider an SDE on $M$ starting from an initial stopping time $\rho$:
\begin{align}
	\begin{split}\label{eqn-SDE_man}
		dX_t &= b(X_t)dt +\sigma(X_t) \bullet dW_t,\quad t\ge\rho\\
		X_\rho &= \zeta.
	\end{split}
\end{align}
Here $W$ is a Wiener process with values in a separable Hilbert  space $E$. The RKHS of $W$ is denoted by $\rK$. The drift  
\begin{equation}\label{eqn-b}
 b\colon M\to TM   
\end{equation} and the diffusion coefficient 
\begin{equation}\label{eqn-sigma}
\sigma:M\to \mathscr{L}(E,TM)
\end{equation}
are  sections of the appropriate vector bundles and are  assumed to be  continuous and  of $C^1$-class  respectively. The initial datum $\zeta\colon\Omega\to M$ is $\cF_\rho$-measurable.
 
\begin{defn}\label{def:SDE_man_sol}
 Assume that $ \rho$ and $ \tau$    finite  stopping times   satisfy 
Hypothesis \ref{hh-stopping times}.  If   $\zeta$ is an $\mathscr{F}_\rho$-measurable $M$-valued random variable, then 
	an  admissible process $X\colon [[\rho,\tau)) \to M$,
  is called a local solution  to \eqref{eqn-SDE_man}
 if and only if   for every $C^2$ function $h \colon M\to \tilde{H}$ with values in a separable Hilbert space $\tilde{H}$, the following equality holds  a.s.,
	\begin{equation}\begin{split}
			h(X_t) = h(\zeta) +&\int_\rho^t Dh(X_s) \circ b(X_r)\, dr +\int_\rho^t Dh(X_s) \circ \sigma(X_r)\, dW_r \\
   +&\frac12 \int_\rho^t \tr_{\rK}[D(Dh \circ \sigma)(X_r) \circ \sigma(X_r)]\, dr, \quad \forall t\in[\rho,\tau).\end{split} \label{eq:SDE_man_def}
	\end{equation}
We sometimes say that $X$ is a local solution on $[\rho,\tau)$ (sometimes we say also that $X$ is a local solution on $[[\rho,\tau))$). The pair $(\rho,\zeta)$  we will be called  the initial data. The trace of a bilinear map has been defined in \eqref{eqn-tr K}. In the present context it can be expressed as 
	\begin{align}\label{eqn-trace-Q}
		\tr_{\rK}[D(Dh \circ \sigma)(X_r) \circ \sigma(X_r)] = \sum_k \Bigl[ D(Dh\circ \sigma)(X_r) \bigl( \sigma(X_r) (e_k) \bigr) \Bigr] (e_k)
	\end{align}
	with $\{e_k\}_{k}$ being  an   orthonormal basis of the RKHS $\rK$. 
\end{defn}

\begin{remark}\label{rem:time_restriction}
We collect several observations concerning the definitions of (local) solutions to stochastic equations made so far. For this, assume that $X$ is a local solution to the equation \eqref{eqn-SDE_man}. 
\begin{enumerate}
\item If $X$ takes values a.s. in an open subset $U$ of $M$, it is enough that $b$ and $\sigma$ are defined and of class $C^0$, or $C^1$ resp. on $U$. Moreover, by a localization argument as in \cite[Remark 1.6]{MMS19}, we can allow $h$ to be in $C^2(U,\tilde{H})$ (that is, $h$ can be defined only on $U$).
\item Consider the special case that $M=H$ for a separable Hilbert space $H$. Thanks to It\^o formula \eqref{eq:Ito_formula_H}, $X$ is also a solution to the SDE \eqref{eq:SDE_U} on $H$, in the sense of Definition \ref{def:SDE_def}. Conversely, every solution to \eqref{eq:SDE_U} is a solution of \eqref{eqn-SDE_man} in the sense of Definition \ref{def:SDE_man_sol}.
\item If $\rho'$ is a finite stopping time, $\tau'$ is an $\rho'$-accessible stopping time with $\rho\le\rho'<\tau'\le\tau$, then $X$ is also a local solution to \eqref{eqn-SDE_man} on $[[\rho',\tau'))$ with $(X_{\rho'},\rho')$ as initial data.
\end{enumerate}
\end{remark}

\begin{remark}\label{rem-trace-K} Definition \ref{def:SDE_man_sol}, especially the trace term above in \eqref{eqn-trace-Q}, can be presented in an alternative way using the RKHS $\rK$. Since $\sigma\colon M \to \mathscr{L}(E,TM)$ is a section and  $h\colon M \to \tilde{H}$, the function 
\[
\sigma_h\colon M \ni x\mapsto Dh(x) \circ \sigma(x) \in \mathscr{L}(E, \tilde{H}).
\]
Indeed, for every $x \in M$, $\sigma(x) \in \mathscr{L}(E,T_xM)$ and $Dh(x) \in \mathscr{L}(T_x, \tilde{H})$. 
Thus, for every $x \in M$,
\[
D\sigma_h(x) \in  \mathscr{L}\bigl(T_xM, \mathscr{L}( E, \tilde{H}) \bigr)
\]
and so 
\[
D\sigma_h(x) \sigma (x)  \in 
\mathscr{L}\bigl(E, \mathscr{L}(E, \tilde{H})\bigr)\equiv  \mathscr{L}_2(E,E, \tilde{H}).
\]
Therefore, 
\begin{align}\label{eqn-trace-K}
\tr_{\rK} [D\sigma_h(x) \sigma (x)] = 
\tr_{\rK} \bigl[ D(Dh \sigma)(x) \sigma (x)  \bigr]
\in \tilde{H}
\end{align}
is well defined.  
The above formula \eqref{eqn-trace-K} agrees with the expression \eqref{eqn-trace-Q}.

\end{remark}

\begin{defn}\label{def-Stratonovich integral}
    Given a solution $X$ to \eqref{eqn-SDE_man}, a separable Hilbert space  $\tilde{H}$   and a $C^1$-class map $g\colon M\to \mathscr{L}(E,\tilde{H})$,  we define the Stratonovich integral
\begin{align}\label{eqn-Stratonovich integral}
	\int_\rho^t g(X_r) \bullet  dW_r = \int_\rho^t g(X_r)\, dW_r +\frac12 \int_\rho^t \tr_{\rK}[Dg(X_r) \circ \sigma(X_r)]\, dr,\quad t\in[\rho,\tau)
\end{align}
\end{defn}
According to  this definition, the Stratonovich integral depends a priori on the process $X$, $g$ and $\sigma$ separately, though one can show that the definition depends only on $g(X)$. We will not use this fact here.

\begin{rem}
    With the above definition, formula \eqref{eq:SDE_man_def} in the Definition \ref{def:SDE_man_sol} reads as
    \begin{align}
        h(X_t) = h(\zeta) +&\int_\rho^t Dh(X_s) \circ  b(X_r)\, dr +\int_\rho^t Dh(X_s) \circ \sigma(X_r) \bullet dW_r,\  \forall t\in[\rho,\tau).\label{eq:SDE_man_def_Strat}
    \end{align}
\end{rem}

All the above definitions can be extended to the case when $\rho\le \tau$  a.s., $\tau$ is $\rho$-accessible on some $A\in\cF_\rho$ and the equation \eqref{eqn-SDE_man} is satisfied only on $A$:
\begin{defn}\label{def:SDE_man_sol-local}
 Assume that $ \rho$ and $ \tau$    are finite  stopping times   satisfying  
Hypothesis \ref{hh-stopping times}  on an event $A \in \mathscr{F}_\rho$.   Assume also that    $\zeta: A \to M$ is  $\mathscr{F}_\rho$-measurable. Then 
	an  admissible process \[X\colon [\rho,\tau)\times A \to M \]
  is called a local solution  to \eqref{eqn-SDE_man} (on $[\rho,\tau)\times A$)
 if and only if   for every separable Hilbert space $\tilde{H}$ and every $C^2$ function $h \colon M\to \tilde{H}$, the  equality \eqref{eq:SDE_man_def} holds  a.s. on $A$. Here the It\^o integral in \eqref{eq:SDE_man_def} is used in the sense of Definition 
 \ref{eqn-def-general-notation}.
    \end{defn}

The following lemma is an immediate consequence of property \eqref{eq:stoch_int_shift} and the discussion  around  it. It allows to reduce the analysis to the case $\rho=0$. 

\begin{lem}\label{lem:SDE_shift}
 Assume that $ \rho$ and $ \tau$    finite  stopping times   satisfy 
Hypothesis \ref{hh-stopping times}.  Assume that $A\in\cF_\rho$ and    $\zeta: A \to M$ is  $\mathscr{F}_\rho$-measurable and  
	 \[X\colon [\rho,\tau)\times A \to M \]
 is an   admissible process. Then the following two conditions are equivalent.
 \begin{enumerate}
 \item[($\alpha$)]Process  $X$   is a local solution  to problem \eqref{eqn-SDE_man}. 
     \item[($\beta$)] The shifted process  $X^\rho:=X_{\cdot-\rho}$ defined by 
     \begin{equation}\label{eqn-W-shifted}
     X^\rho\colon [0,\tau-\rho)\times A \rightarrow M, \quad (s,\omega) \mapsto X(s+\rho, \omega)  
     \end{equation}
      is a local solution  to \eqref{eqn-SDE_man} with the shifted Wiener process $W^\rho$, i.e. 
	\begin{align}\label{eq:SDE_man_shift}
		\begin{split}
			dX^\rho_t &= b(X^\rho_t)dt +\sigma(X^\rho_t) \bullet dW^\rho_t,\quad t\in[0,\tau-\rho)\\
			X^\rho_0 &= \zeta.
		\end{split}
	\end{align}
 \end{enumerate}
\end{lem}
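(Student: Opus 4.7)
The plan is to apply the characterisation of a local solution through It\^o's formula for arbitrary $C^2$ test functions and to translate the three terms (initial value, Bochner--Lebesgue integral, It\^o integral) via the change of time variable $t = s + \rho$. Since both ($\alpha$) and ($\beta$) are formulated in terms of the same family of identities (one for each separable Hilbert space $\tilde{H}$ and each $h\in C^2(M,\tilde{H})$), it will be enough to show that the integral identity in Definition \ref{def:SDE_man_sol-local} for the filtration $\mathbb{F}$ and $W$ is equivalent, after the substitution $t=s+\rho$, to the corresponding identity for the filtration $\mathbb{F}^\rho$ and $W^\rho$.

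First, I would record admissibility and measurability. Using assertions (a)--(e) in the discussion around \eqref{eq:stoch_int_shift}, the filtration $\mathbb{F}^\rho$ satisfies the usual assumptions, $W^\rho$ is an $\mathbb{F}^\rho$-Wiener process with the same RKHS $\rK$, and $\tau$ is $\rho$-accessible on $A$ if and only if $\tau-\rho$ is accessible on $A$ for $\mathbb{F}^\rho$ (via the announcing sequence $(\tau_n-\rho)_n$). Moreover, the process $X^\rho$ defined by \eqref{eqn-W-shifted} is $\mathbb{F}^\rho$-admissible if and only if $X$ is $\mathbb{F}$-admissible, and the random variable $\zeta$ is $\mathscr{F}_\rho = \mathscr{F}^\rho_0$-measurable.

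Next, fix a separable Hilbert space $\tilde{H}$ and a function $h\in C^2(M,\tilde{H})$, and set
\begin{align*}
B^h(x) &\coloneq Dh(x)\circ b(x), \\
\Sigma^h(x) &\coloneq Dh(x)\circ \sigma(x), \\
T^h(x) &\coloneq \tfrac12 \tr_{\rK}\bigl[D(Dh\circ \sigma)(x)\circ \sigma(x)\bigr].
\end{align*}
Assume ($\alpha$), so that a.s.\ on $A$, for every $t\in [\rho,\tau)$,
\begin{equation*}
h(X_t) = h(\zeta) + \int_\rho^t B^h(X_r)\,dr + \int_\rho^t \Sigma^h(X_r)\,dW_r + \int_\rho^t T^h(X_r)\,dr.
\end{equation*}
Substituting $t = s+\rho$ with $s\in [0,\tau-\rho)$, the left-hand side equals $h(X^\rho_s)$. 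For the two Bochner--Lebesgue integrals, a pathwise change of variable $r = r'+\rho$ turns $\int_\rho^{s+\rho}\!F(X_r)\,dr$ into $\int_0^s F(X^\rho_{r'})\,dr'$ for $F\in\{B^h,T^h\}$. For the It\^o integral, the shift identity \eqref{eq:stoch_int_shift} applies (after extending as in Definition \ref{eqn-def-general-notation} on $A^c$, or by localising through the announcing sequence) and yields, a.s.\ on $A$,
\begin{equation*}
\int_\rho^{s+\rho} \Sigma^h(X_r)\,dW_r \;=\; \int_0^s \Sigma^h(X^\rho_{r'})\,dW^\rho_{r'}, \qquad s\in[0,\tau-\rho).
\end{equation*}
Combining these three transformations gives exactly the integral identity required for $X^\rho$ to be a local solution to \eqref{eq:SDE_man_shift} on $[0,\tau-\rho)\times A$; since $h$ was arbitrary, ($\beta$) holds. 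The converse implication ($\beta$)$\Rightarrow$($\alpha$) is obtained by running the same computation backwards, using that $(W^\rho)^{-\rho} = W$ in the obvious sense and that the substitution $s = t-\rho$ is invertible with $t\in [\rho,\tau)$ iff $s\in [0,\tau-\rho)$.

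The only genuinely non-trivial ingredient is the shift identity \eqref{eq:stoch_int_shift} for the It\^o integral, which is precisely the content of property (g) recalled before the statement. Everything else is a routine pathwise change of variable together with the fact that the test-function formulation of Definition \ref{def:SDE_man_sol-local} is stable under such transformations, so no additional analytic input is required.
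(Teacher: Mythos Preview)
Your proposal is correct and follows exactly the approach the paper indicates: the paper does not give a detailed proof but simply states that the lemma ``is an immediate consequence of property \eqref{eq:stoch_int_shift} and the discussion around it,'' and your argument is precisely the natural unpacking of that remark --- invoking properties (a)--(f) for the measurability and stopping-time bookkeeping, and property (g) for the shift of the It\^o integral, with the Lebesgue integrals handled by a pathwise substitution.
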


The next lemma states the invariance property of SDEs under diffeomorphisms:

\begin{lem}\label{lem-Strat_diff_manifold}
Let $M,\tilde{M}$ be metrizable separable Hilbert manifolds with $U \subseteq M$ and  $\tilde{U}\subseteq \tilde{M}$ open subsets.  Assume also that  $\varphi\colon U\to \tilde{U}$ is  a $C^2$ diffeomorphism and that  a process $X\colon [\rho,\tau)\times \{X_\rho\in U\} \to U$ is a local solution to problem \eqref{eqn-SDE_man}.
 Then, the process  \[Y\coloneq\varphi \circ X\colon [\rho,\tau)\times \{X_\rho\in U\} \to \tilde{U}\] is a solution on $[\rho,\tau)\times \{X_\rho\in U\}$  to
	\begin{align}\label{eqn-Y}
		dY = \tilde{b}(Y)\, dt + \tilde{\sigma}(Y) \bullet dW.
	\end{align}
where $\tilde{b}$ and $\tilde{\sigma}$ are the push-forwards of the functions $b$ and $\sigma$ defined by 
\begin{align}\label{eqn-b-tilde}
  \tilde{b}\colon \tilde{U}&\rightarrow TM,\quad y \mapsto T_{\varphi^{-1}(y)}\varphi( b( \varphi^{-1}(y)))  \in T_y\tilde{M},   
  \\
  \label{eqn-sigma-tilde}
  \tilde{\sigma}\colon  \tilde{U} &\rightarrow \mathscr{L}(E,T\tilde{M}), \quad y \mapsto T_{\varphi^{-1}(y)}[\varphi( \sigma( \varphi^{-1}(y)))] \in  \mathscr{L}(E,T_y\tilde{M}),
\end{align}
i.e. $\tilde{b}= T\varphi \circ b \circ \varphi^{-1}    ,\;\; \tilde{\sigma}=
  T\varphi \circ \sigma \circ \varphi^{-1}.
$
\end{lem}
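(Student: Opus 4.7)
My plan is to verify the defining It\^o-type identity \eqref{eq:SDE_man_def} for $Y=\varphi\circ X$ with respect to the pushforward coefficients $\tilde b$, $\tilde\sigma$, by pulling back test functions from $\tilde M$ to $M$ via $\varphi$ and then applying the hypothesis that $X$ solves the original equation. First I would note that $Y$ is admissible (continuity of $\varphi$ gives continuity of trajectories, and adaptedness is preserved under composition), and that $Y_t \in \tilde U$ for $t\in[\rho,\tau)$ by construction, so Remark \ref{rem:time_restriction}(1) legitimises the use of test functions that are defined only on the relevant open sets.

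Fix a separable Hilbert space $\tilde H$ and any $\tilde h\in C^2(\tilde U,\tilde H)$. Set $h\coloneqq \tilde h\circ\varphi\in C^2(U,\tilde H)$. Since $X$ is a local solution on $[\rho,\tau)\times\{X_\rho\in U\}$ taking values in $U$, the identity \eqref{eq:SDE_man_def} applied to $h$ yields, a.s.\ on that set,
\begin{align*}
h(X_t) \;=\; h(X_\rho) &+ \int_\rho^t Dh(X_r)\,b(X_r)\,dr + \int_\rho^t Dh(X_r)\,\sigma(X_r)\,dW_r\\
&+ \tfrac12\int_\rho^t \tr_{\rK}\!\bigl[D(Dh\circ\sigma)(X_r)\circ\sigma(X_r)\bigr]\,dr.
\end{align*}
Since $h(X_r)=\tilde h(Y_r)$, the left-hand side and the initial value already have the desired form. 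By the chain rule, $Dh(x)=D\tilde h(\varphi(x))\circ T_x\varphi$, so
\begin{align*}
Dh(X_r)\,b(X_r) &= D\tilde h(Y_r)\bigl(T_{X_r}\varphi\,b(X_r)\bigr)=D\tilde h(Y_r)\,\tilde b(Y_r),\\
Dh(X_r)\circ\sigma(X_r) &= D\tilde h(Y_r)\circ T_{X_r}\varphi\circ\sigma(X_r) = D\tilde h(Y_r)\circ\tilde\sigma(Y_r),
\end{align*}
which identifies the drift and It\^o stochastic integrals with those written in terms of $Y,\tilde b,\tilde\sigma$.

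The one calculation to be careful about is the quadratic-variation correction. The key observation is the factorisation
\begin{equation*}
F\coloneqq Dh\cdot\sigma = (D\tilde h\cdot\tilde\sigma)\circ\varphi = G\circ\varphi\qquad\text{on }U,
\end{equation*}
where $G\coloneqq D\tilde h\cdot\tilde\sigma\colon\tilde U\to \mathcal L(E,\tilde H)$. Differentiating once more gives $DF(x)=DG(\varphi(x))\circ T_x\varphi$, whence for each basis vector $e_k$ of $\rK$
\begin{equation*}
DF(x)\bigl(\sigma(x)e_k\bigr) = DG(\varphi(x))\bigl(T_x\varphi\,\sigma(x)e_k\bigr) = DG(\varphi(x))\bigl(\tilde\sigma(\varphi(x))e_k\bigr).
\end{equation*}
Summing over $k$ (which converges by \eqref{eqn-tr K}/\eqref{eqn-Kwapien}) yields
\begin{equation*}
\tr_{\rK}\!\bigl[DF(X_r)\circ\sigma(X_r)\bigr] \;=\; \tr_{\rK}\!\bigl[DG(Y_r)\circ\tilde\sigma(Y_r)\bigr],
\end{equation*}
which is precisely the second-order term required for $Y$ in \eqref{eq:SDE_man_def}.

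Combining these identifications shows that $Y$ satisfies \eqref{eq:SDE_man_def} for every $\tilde h\in C^2(\tilde U,\tilde H)$ with respect to $\tilde b,\tilde\sigma$, which is the definition of $Y$ being a local solution on $[\rho,\tau)\times\{X_\rho\in U\}$ to \eqref{eqn-Y}. The only real obstacle is bookkeeping around Remark \ref{rem:time_restriction}(1) — i.e.\ ensuring that only $C^2$-regularity on the open sets $U,\tilde U$ (rather than on all of $M,\tilde M$) is needed, which is legitimate because the processes $X,Y$ stay in these open sets on the time interval under consideration; every other step is a direct chain-rule computation.
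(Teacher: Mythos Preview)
Your proof is correct and follows essentially the same approach as the paper: apply the defining identity \eqref{eq:SDE_man_def} with the pulled-back test function $h=\tilde h\circ\varphi$ and use the chain rule to identify the coefficients. The paper's own proof is a one-sentence sketch referring to \cite[Lemma 1.7]{MMS19} for details; you have simply written out those details, including the careful handling of the trace term via the factorisation $Dh\circ\sigma=(D\tilde h\circ\tilde\sigma)\circ\varphi$, which is exactly what is needed.
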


\begin{proof}
    By It\^o formula, namely formula \eqref{eq:SDE_man_def} (see also \eqref{eq:SDE_man_def_Strat}), applied to $h=\varphi$, one writes the SDE for $Y$ and, using that $\varphi$ is a diffemorphism, one easily identifies the coefficients of this SDE with \eqref{eqn-b-tilde} and \eqref{eqn-sigma-tilde}, see \cite[Lemma 1.7]{MMS19} for more details.
\end{proof}

\begin{rem}\label{rem:invariance_SDE_map}
	We can relax the assumption that $\varphi$ is a $C^2$ diffeomorphism, requiring instead the following condition: the function $\varphi\colon U\to V$ is of $C^2$-class  and there exist a continuous section $\tilde{b}\colon V\to TN$ of the tangent bundle and a $C^1$ section $\tilde{\sigma}\colon V\to \mathscr{L}(E,TN)$ of the bundle of linear maps such that,
\begin{equation*}
\tilde{b}\circ \varphi(x) = T\varphi \circ b(x) \mbox{ and }\tilde{\sigma}\circ \varphi(x) = T\varphi \circ \sigma(x),\mbox{ for every $x \in U$}.
\end{equation*}
 In this case, the process $Y=\varphi \circ X$ satisfies
	\begin{align*}
		dY = \tilde{b}(Y)\, dt +\tilde{\sigma}(Y) \bullet dW.
	\end{align*}
 \end{rem}

As a consequence of Lemma \ref{lem-Strat_diff_manifold}, we get the expression of the SDE \eqref{eqn-SDE_man} in charts. In the following, given a countable atlas $(U_\alpha,\psi_\alpha)_\alpha$, we introduce, for every  $\alpha$, with $ V_\alpha=\psi_\alpha(U_\alpha) \subset H$,  the following functions 
\begin{align*}
	& b^\alpha\colon  V_\alpha\to H, \quad b^\alpha = D\psi_\alpha \circ b \circ \psi_\alpha^{-1},\\
	& \sigma^\alpha\colon  V_\alpha\to \mathscr{L}(E,H), \quad \sigma^\alpha = D\psi_\alpha \circ \sigma \circ \psi_\alpha^{-1}.
\end{align*}
To be precise, we put, compare with \eqref{eqn-b-tilde} and \eqref{eqn-sigma-tilde}
\begin{align}\label{eqn-b-alpha}
	  b^\alpha\colon   V_\alpha&\rightarrow H, \quad  x \mapsto D\psi_\alpha(x; b(\psi_\alpha^{-1}(x)), \\
	\label{eqn-sigma-alpha}
   \sigma^\alpha\colon    V_\alpha &\rightarrow \mathscr{L}(E,H), \quad x \mapsto 
 \{ E\ni l \mapsto  D\psi_\alpha(x; \sigma(\psi_\alpha^{-1}(x))l ) \in H \}
\end{align}
where the semicolon in the derivative divides the non-linear component of the derivative from the second linear one.

\begin{lem}\label{cor:change_chart}
Assume that     $\rho$ and $ \tau$ are stopping times satisfying  Hypothesis \ref{hh-stopping times}.	Let $X$ be a $M$-valued process on $[[\rho,\tau))$.  Then the  following assertions are equivalent.
	\begin{itemize}
		\item[\textup{(a)}] $X$ solves \eqref{eqn-SDE_man} on $[\rho,\tau)$.
		\item[\textup{(b)}] $X_\rho = \xi$ and, for every chart $(U,\psi)$, for every stopping time $\rho'$ with $\rho\le \rho'\le \tau$, the process
	\[X^\psi = \psi \circ X \colon [\rho',\tau') \times\{X_{\rho'}\in U\} \to H, \]	
     where \[\tau'=\inf\{t>\rho'\mid X_t\notin U\}\wedge \tau,\] 
 is a local solution to  the following SDE
		\begin{align}
			dX^\psi_t = b^\psi(X^\psi_t)\, dt +\sigma^\psi(X^\psi_t) \bullet dW_t.\label{eq:SDE_man_chart-d}
		\end{align}
		\item[\textup{(c)}] $X_\rho = \xi$ and, for every countable  atlas $(U_\alpha,\psi_\alpha)$, for every $\alpha$, for every stopping time $\rho'$ with $\rho\le \rho'\le \tau$,  the process  
  \begin{equation}\label{eqn-X^alpha}
   X^\alpha = \psi_\alpha \circ X\colon  [\rho',\tau_\alpha') \times\{X_{\rho'}\in U_\alpha\} \to H,     
  \end{equation} 
    where $\tau_\alpha'=\inf\{t>\rho'\mid X_t\notin U_\alpha\}\wedge \tau$, 
is a local solution to the following SDE
		\begin{align}
			dX^\alpha_t = b^\alpha(X^\alpha_t)\, dt +\sigma^\alpha(X^\alpha_t) \bullet dW_t.
   \label{eq:SDE_man_chart}
\end{align}
		\item[\textup{(d)}] $X_\rho = \xi$ and there exists a countable atlas $(U_\alpha,\psi_\alpha)$ such that, for every $\alpha$, for every stopping time $\rho'$ with $\rho\le \rho'\le \tau$,  the process $X^\alpha$ defined above in \eqref{eqn-X^alpha}
is a local solution to equation  \eqref{eq:SDE_man_chart}. 
	\end{itemize}
\end{lem}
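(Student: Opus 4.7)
The plan is to establish the cycle (a)$\Rightarrow$(b)$\Rightarrow$(c)$\Rightarrow$(d)$\Rightarrow$(a). The implications (b)$\Rightarrow$(c) and (c)$\Rightarrow$(d) are immediate: (c) is just (b) restricted to the charts of a prescribed countable atlas and (d) is the existential weakening of (c).

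For (a)$\Rightarrow$(b), I fix a chart $(U,\psi)$ and a stopping time $\rho'$ with $\rho\le\rho'\le\tau$, and set $\tau'=\inf\{t>\rho'\mid X_t\notin U\}\wedge\tau$. Remark~\ref{rem:time_restriction}(3) ensures that the restriction of $X$ to $[\rho',\tau')$ is still a local solution of \eqref{eqn-SDE_man} starting from $X_{\rho'}$ at time $\rho'$. On the event $\{X_{\rho'}\in U\}$ this restricted process stays in $U$ by construction of $\tau'$, so Lemma~\ref{lem-Strat_diff_manifold} applied to the diffeomorphism $\psi\colon U\to\psi(U)\subseteq H$ identifies $X^\psi=\psi\circ X$ as a local solution to the chart SDE with the pushed-forward coefficients $b^\psi,\sigma^\psi$ from \eqref{eqn-b-alpha}--\eqref{eqn-sigma-alpha}, which is precisely \eqref{eq:SDE_man_chart-d}.

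The substantive step is (d)$\Rightarrow$(a). Fix a countable atlas $(U_\alpha,\psi_\alpha)_\alpha$ as in (d), a separable Hilbert space $\tilde H$ and a $C^2$ map $h\colon M\to\tilde H$. I will prove that the continuous adapted residual process
\begin{align*}
Z_t \coloneq h(X_t)-h(\zeta)&-\int_\rho^t Dh(X_r)b(X_r)\,dr-\int_\rho^t Dh(X_r)\sigma(X_r)\,dW_r\\
&-\tfrac12\int_\rho^t\tr_{\rK}[D(Dh\,\sigma)(X_r)\sigma(X_r)]\,dr
\end{align*}
vanishes identically on $[\rho,\tau)$. Setting $\tau_\ast\coloneq\inf\{t\in[\rho,\tau)\mid Z_t\neq 0\}\wedge\tau$, suppose for contradiction that $\mathbb{P}(\tau_\ast<\tau)>0$. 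Since $\bigcup_\alpha U_\alpha=M$, some event $A_\alpha\coloneq\{\tau_\ast<\tau\}\cap\{X_{\tau_\ast}\in U_\alpha\}$ has positive probability. Applying hypothesis (d) with initial stopping time $\rho'=\tau_\ast$ and this chart $\alpha$ yields that $X^\alpha=\psi_\alpha\circ X$ solves \eqref{eq:SDE_man_chart} on $[\tau_\ast,\tau'_\alpha)\times A_\alpha$, where $\tau'_\alpha>\tau_\ast$ almost surely on $A_\alpha$ by continuity of $X$ and openness of $U_\alpha$.

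To close the loop, apply the Hilbert-space It\^o formula \eqref{eq:Ito_formula_H} to $g=h\circ\psi_\alpha^{-1}\in C^2(V_\alpha,\tilde H)$ and the semimartingale $X^\alpha$. The chain rule combined with \eqref{eqn-b-alpha}--\eqref{eqn-sigma-alpha} gives $Dg(X^\alpha)b^\alpha(X^\alpha)=Dh(X)b(X)$ and $Dg(X^\alpha)\sigma^\alpha(X^\alpha)=Dh(X)\sigma(X)$; the compatibility identity $D\psi_\alpha^{-1}(y)\sigma^\alpha(y)=\sigma(\psi_\alpha^{-1}(y))$ moreover yields
\begin{align*}
\tr_{\rK}[D(Dg\,\sigma^\alpha)(X^\alpha)\sigma^\alpha(X^\alpha)]=\tr_{\rK}[D(Dh\,\sigma)(X)\sigma(X)]
\end{align*}
for the second-order term. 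Consequently $Z_t-Z_{\tau_\ast}=0$ for $t\in[\tau_\ast,\tau'_\alpha)$ on $A_\alpha$, and continuity of $Z$ with the definition of $\tau_\ast$ forces $Z_{\tau_\ast}=0$, contradicting the minimality of $\tau_\ast$. Hence $\tau_\ast=\tau$ almost surely, establishing (a). The expected principal obstacle is precisely this chart-by-chart gluing: one has to match the intrinsic trace correction on the manifold with the Euclidean It\^o correction produced in each chart, and this requires exactly the compatibility identity $D\psi_\alpha^{-1}\sigma^\alpha=\sigma\circ\psi_\alpha^{-1}$ built into the definitions \eqref{eqn-b-alpha}--\eqref{eqn-sigma-alpha}.
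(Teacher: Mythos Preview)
Your proof is correct and follows the same strategy as the paper: the trivial chain (b)$\Rightarrow$(c)$\Rightarrow$(d), the implication (a)$\Rightarrow$(b) via Lemma~\ref{lem-Strat_diff_manifold} and Remark~\ref{rem:time_restriction}, and the contradiction argument for (d)$\Rightarrow$(a) based on a ``first failure time'' stopping time and the It\^o formula in a chart containing $X$ at that time. The paper carries out the last step after first localizing to an element $\tau_n$ of an announcing sequence for $\tau$ (defining $\tilde\tau\le\tau_n$ rather than your $\tau_\ast\le\tau$) and explicitly invokes the coincidence criterion (Lemma~\ref{lem:coincidence}) to identify the chart-local stochastic integrals with the increments of the global ones appearing in $Z$; your argument uses this identification implicitly, so you may want to cite it.
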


\begin{proof}
The implication (a)$\then$(b) follows applying It\^o formula, precisely Lemma \ref{lem-Strat_diff_manifold}, to the diffeomorphism $\psi$ and the process $X$ solving the SDE on $[[\rho',\tau'))$ (by Remark \ref{rem:time_restriction}). The implications (b)$\then$(c)$\then$(d) are obvious.

It remains to show the implication (d)$\then$(a). Given a $C^2$ map $h\colon M\to \tilde{H}$, with $\tilde{H}$ a separable Hilbert space, we must show\eqref{eq:SDE_man_def}. Let us denote by  $(\tau_n)_n$ an announcing sequence for $\tau$. We fix $n \in \mathbb{N}$ and define the stopping time $\tilde\tau$ as ``the first time before $\tau_n$ where the equality \eqref{eq:SDE_man_def} does not hold'', precisely
\begin{align*}
\tilde{\tau} = \tau_n \wedge &\inf\left\{t\ge \rho\middle| h(X_t) \neq h(X_\rho) +\int_\rho^t Dh \circ b(X_r)\, dr +\int_\rho^t Dh \circ \sigma(X_r)\, dW_r \right.\\
&\left.\quad\quad\quad\quad\quad\quad\quad\quad\quad+\frac12 \int_\rho^t \tr_{\rK}[D(Dh \circ \sigma) \circ \sigma(X_r)]\, dr \right\}
\end{align*}
Note that, by time continuity, the equality \eqref{eq:SDE_man_def} holds up to $t=\tilde\tau$ included, and so
\begin{align}
\begin{split}\label{eq:before_tilde_tau}
h(X_{\tilde\tau}) &= h(X_\rho) +\int_\rho^{\tilde\tau} Dh \circ b(X_r)\, dr +\int_\rho^{\tilde\tau} Dh \circ \sigma(X_r)\, dW_r \\
&\quad +\frac12 \int_\rho^{\tilde\tau} \tr_{\rK}[D(Dh \circ \sigma) \circ \sigma(X_r)]\, dr.
\end{split}
\end{align}

We argue by contradiction assuming that $\tilde{\tau}<\tau_n$ with positive probability. Then there exists a chart $(U_\alpha,\psi_\alpha)$ such that $\tilde{\tau}<\tau_n$ and $X_{\tilde{\tau}}\in U_\alpha$ with positive probability. By the hypothesis (d), $X^\alpha$ satisfies \eqref{eq:SDE_man_chart} on $[\tilde{\tau},\tau')\times \{\tilde{\tau}<\tau_n,X_{\tilde{\tau}}\in U_\alpha\}$, with $\tau'=\inf\{t>\rho'\mid X_t\notin U_\alpha\}\wedge \tau$. Therefore, by It\^o formula applied to $h(X_t)=h\circ \psi_\alpha^{-1}(X^\alpha_t)$ and the relation It\^o-Stratonovich integral (and by the coincidence criterion Lemma \ref{lem:coincidence} for the equality of stochastic integrals), the following identity holds   a.s. on $[\tilde{\tau},\tau')\times \{\tilde{\tau}<\tau_n,X_{\tilde{\tau}}\in U_\alpha\}$, 
\begin{align}
\begin{split}\label{eq:after_tilde_tau}
h(X_t) &= h(X_{\tilde{\tau}}) +\int_{\tilde{\tau}}^t D(h\circ \psi_\alpha^{-1}) \circ b^\alpha(X^\alpha_r)\, dr +\int_{\tilde{\tau}}^t D(h\circ \psi_\alpha^{-1}) \circ \sigma^\alpha(X^\alpha_r)\, dW_r\\
&\quad +\frac12 \int_{\tilde{\tau}}^t \tr_{\rK}[D(D(h\circ \psi_\alpha^{-1}) \circ \sigma^\alpha) \circ \sigma^\alpha(X^\alpha_r)]\, dr\\
& =h(X_{\tilde{\tau}}) +\int_{\tilde{\tau}}^t Dh \circ b(X_r)\, dr +\int_{\tilde{\tau}}^t Dh \circ \sigma(X_r)\, dW_r \\
&\quad +\frac12 \int_{\tilde{\tau}}^t \tr_{\rK}[D(Dh \circ \sigma) \circ \sigma(X_r)]\, dr.
\end{split}
\end{align}
Inserting \eqref{eq:before_tilde_tau} into \eqref{eq:after_tilde_tau}, we infer that   a.s. on $[\tilde{\tau},\tau')\times \{\tilde{\tau}<\tau_n,X_{\tilde{\tau}}\in U_\alpha\}$,
\begin{align*}
h(X_t) &= h(X_\rho) +\int_\rho^t Dh \circ b(X_r)\, dr +\int_\rho^t Dh \circ \sigma(X_r)\, dW_r \\
&\quad +\frac12 \int_\rho^t \tr_{\rK}[D(Dh \circ \sigma) \circ \sigma(X_r)]\, dr.
\end{align*}
But this is in contradiction with the definition of $\tilde{\tau}$. Therefore $\tilde{\tau}=\tau_n$, that is \eqref{eq:SDE_man_def} holds up to $t=\tau_n$. By arbitrariness of $n$, \eqref{eq:SDE_man_def} holds on $[[\rho,\tau))$. The proof is complete.
\end{proof}

We close this section with the situation where the manifold $M$ is embedded into a Hilbert space. For this, let $i\colon M\to \hat{H}$ be an embedding of $M$ as a split submanifold into a separable Hilbert space $\hat{H}$. We can view \eqref{eqn-SDE_man} also as an SDE on $\hat H$:
\begin{align}
\begin{split}\label{eq:SDE_man_emb}
&d X_t = \tilde b^M(X_t)dt +\tilde \sigma^M(X_t)\, dW_t,\\
&X_0 = \zeta.
\end{split}
\end{align}
where $\tilde b^M$ and $\tilde \sigma^M$ are defined on $i(M)$ as
\begin{align*}
\tilde b^M&= Di\circ b(x)+\frac12 \tr_{\rK}[D(Di\circ \sigma) \circ \sigma(X_t) ],\quad x\in i(M),\\
\tilde \sigma^M&= Di \circ \sigma(x),\quad x\in i(M),
\end{align*}
and are set equal to $0$ outside $i(M)$. Note that we could smoothly extend the vector fields onto an open neighborhood of $i(M)$ (for example using Lemma \ref{technical:Lemma}) to avoid the discontinuous vector fields constructed above. However, the values outside $i(M)$ do not matter for our arguments if $X$ takes values in $M$. Let us check now, that solutions to one equation also solve the other and vice versa.

\begin{lem}\label{lem:SDE_embedded}
Let $i \colon M \rightarrow \hat{H}$ be an embedding which identifies $M$ with the split submanifold $i(M)$ of the Hilbert space $\hat{H}$.
\begin{enumerate}
\item If the $M$-valued process $X$ solves \eqref{eqn-SDE_man} on $[\rho,\tau)$, then the $\tilde{H}$-valued process $i\circ X$ solves \eqref{eq:SDE_man_emb} on $[\rho,\tau)$.
\item Conversely assume that $i\circ X$ solves \eqref{eq:SDE_man_emb} on $[\rho,\tau)$ and assume in addition that $\tilde{H}$ is separable. 
Then $X$ solves \eqref{eqn-SDE_man} on $[\rho,\tau)$.
\end{enumerate}
\end{lem}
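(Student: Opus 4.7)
This is a direct application of the characterising It\^o formula from Definition \ref{def:SDE_man_sol}. Applying it with the smooth test map $h=i\colon M\to\hat H$ yields, on $[\rho,\tau)$,
\begin{align*}
 i(X_t) = i(X_\rho) & + \int_\rho^t Di(X_s)b(X_s)\,ds + \int_\rho^t Di(X_s)\sigma(X_s)\,dW_s \\
 & + \tfrac12 \int_\rho^t \tr_\rK [D(Di\circ\sigma)(X_s)\sigma(X_s)]\,ds,
\end{align*}
which is precisely the integral form of the ambient It\^o SDE \eqref{eq:SDE_man_emb} with the stated $\tilde b^M$ and $\tilde \sigma^M$.

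\textbf{Part (2).} Set $Y=i\circ X$ and assume $Y$ solves \eqref{eq:SDE_man_emb}. I must recover identity \eqref{eq:SDE_man_def} for an arbitrary $C^2$ test map $h\colon M\to\tilde H$, with $\tilde H$ a separable Hilbert space. The plan is to reduce this to the standard It\^o formula \eqref{eq:Ito_formula_H} on $\hat H$, applied to a local $C^2$-extension of $h$. Since $i$ identifies $M$ with the split submanifold $i(M)\subseteq\hat H$ and $\hat H$ is $C^\infty$-regular by Lemma \ref{lem:Hilbert_paracompact}, Lemma \ref{technical:Lemma}(1) produces, for every $x_0\in M$, an open neighbourhood $V_{x_0}\subseteq\hat H$ of $i(x_0)$ together with $\hat h_{x_0}\in C^2(\hat H,\tilde H)$ extending $h\circ i^{-1}$ on $V_{x_0}\cap i(M)$. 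Second countability of $M$ then provides a countable family $\{(V_n,\hat h_n)\}_{n\in\N}$ with $i(M)\subseteq\bigcup_n V_n$.

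On any stochastic interval where $Y\in V_n$, I will apply \eqref{eq:Ito_formula_H} in $\hat H$ to $\hat h_n(Y)$, substitute the explicit expressions of $\tilde b^M$ and $\tilde\sigma^M$, and use the chain rule $Dh=D\hat h_n(i(\cdot))\circ Di$ together with the product rule applied to $(Dh\circ\sigma)(x)=D\hat h_n(i(x))\circ (Di\circ\sigma)(x)$. Taking the $\rK$-trace yields the key identity
\begin{align*}
 \tr_\rK[D(Dh\circ\sigma)(x)\sigma(x)] &= \tr_\rK\bigl[D^2\hat h_n(i(x))[Di(x)\sigma(x),Di(x)\sigma(x)]\bigr] \\
 &\quad + D\hat h_n(i(x))\,\tr_\rK[D(Di\circ\sigma)(x)\sigma(x)],
\end{align*}
which, combined with the other terms, reorganises the It\^o identity for $\hat h_n(Y)$ into exactly \eqref{eq:SDE_man_def} for $h(X)$ locally.

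To upgrade the local identity to one on the full interval $[\rho,\tau)$, I will follow the exit-time scheme used in the proof of the implication (d)$\Rightarrow$(a) of Lemma \ref{cor:change_chart}: define $\tilde\tau$ as the first time within an announcing time $\tau_m$ of $\tau$ at which \eqref{eq:SDE_man_def} fails, use continuity of $Y$ to ensure $Y_{\tilde\tau}\in V_n$ for some $n$, and then apply the local argument above together with the coincidence criterion of Lemma \ref{lem:coincidence} to contradict $\tilde\tau<\tau_m$. The main obstacle is thus conceptual rather than technical and lies in the chain-rule/trace bookkeeping above; once that identity is in place, the localisation is a routine stopping-time argument.
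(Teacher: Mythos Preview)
Your proof of Part (1) matches the paper's. For Part (2), your approach is correct but takes a different route. The paper does not extend the test function $h$; instead it verifies condition (d) of Lemma \ref{cor:change_chart}. Using the split-submanifold structure it selects a countable atlas of charts $(\hat U_\beta,\hat\psi_\beta)$ on $\hat H$ that restrict to charts $(\tilde U_\beta,\tilde\psi_\beta)$ on $M$, applies the ambient It\^o formula \eqref{eq:Ito_formula_H} to $\hat\psi_\beta(i\circ X)=(\tilde\psi_\beta(X),0)$ to obtain the chart SDE \eqref{eq:SDE_man_chart}, and then invokes (d)$\Rightarrow$(a) of Lemma \ref{cor:change_chart}. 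Your route is more self-contained---you verify the defining identity \eqref{eq:SDE_man_def} directly for arbitrary $h$ via the Leibniz/trace identity you display---but it requires both the $C^2$ extension of $h$ from Lemma \ref{technical:Lemma}(1) and a bespoke repetition of the stopping-time gluing from the proof of (d)$\Rightarrow$(a). The paper's version is more modular: by reducing to chart maps it avoids any chain-rule bookkeeping for the trace term (the chart projection is linear, so its second derivative vanishes), and the gluing is absorbed into the already-proved Lemma \ref{cor:change_chart}.
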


\begin{proof}
If $X$ solves \eqref{eqn-SDE_man}, we exploit that $i$ is smooth, so in particular $C^2$. Hence the definition of solution, that is equation \eqref{eq:SDE_man_def}, to $h=i$, we get \eqref{eq:SDE_man_emb} for $i\circ X$.

For the converse implication, assume that $i\circ X$ solves \eqref{eq:SDE_man_emb} on $[\rho,\tau)$. We have to show that the equation \eqref{eq:SDE_man_chart} holds for a suitable atlas. We start with an atlas $\{(U_\alpha,\psi_\alpha)\}_{\alpha}$ of $M$. Since $M$ is embedded as split submanifold,for every $\alpha$ and every $x_0 \in U_\alpha$, we may shrink $U_\alpha$ to an open set $\tilde U_{\alpha,x_0}$ which contains $x_0$ such that:
\begin{align}
 \text{there is } \hat U_{\alpha,x_0} \subseteq \hat H \text{ open, and smooth } \hat\psi_{\alpha,x_0}\colon \hat U_{\alpha,x_0}\to \hat{H}=H \times E \notag\\
 \text{ with } i(U_{\alpha,x_0}) \subseteq \hat U_{\alpha,x_0} \text{ and } (\psi_\alpha|_{U_{\alpha,x_0}},0)=\hat \psi_{\alpha,x_0} \circ i|_{U_{\alpha,x_0}}\label{resprop}
 \end{align}
 We can repeat this construction for every point $x_0 \in M$ and create a (possibly uncountable) atlas of submanifold charts $(U_{\alpha,x_0},\psi_\alpha)_{\alpha,x_0}$ for $M$ with the above properties. Now $\hat{H}$ is a separable Hilbert space, thus second countable. Since $M$ can be embedded as a submanifold of $\hat{H}$, also $M$ is second countable and thus a Lindel\"{o}ff space. Hence we can select (out of $(U_{\alpha,x_0},\psi_\alpha)_{\alpha,x_0}$) a countable atlas of manifold charts $(\tilde U_\beta,\tilde \psi_\beta)_\beta$ of $M$ and a corresponding countable atlas of submanifold charts $(\hat U_\beta,\hat \psi_\beta)_\beta$ of $i(M)$ which satisfies \eqref{resprop}.

Fix $\beta$ and a stopping time $\rho'$ with $\rho\le \rho'\le \tau$, call $\tau'=\inf\{t>\rho'\mid X_t\notin \tilde U_\beta\}\wedge \tau$, then $\hat\psi_\beta(i(X))=\tilde \psi_\beta(X)$ on $[\rho',\tau')$ by the previous construction. By It\^o formula applied to the SDE \eqref{eq:SDE_man_emb} for $i\circ X$ and to $\hat\psi_\beta$, $\hat\psi_\beta(i(X))=\tilde \psi_\beta(X)$ satisfies \eqref{eq:SDE_man_chart} on $[\rho',\tau')$. By Lemma \ref{cor:change_chart}, $X$ solves \eqref{eqn-SDE_man}. The proof is complete.
\end{proof}

\subsection*{Local well-posedness and maximal solutions}

In this Subsection, we will show the existence of a maximal solution to the SDE \eqref{eqn-SDE_man} on a Hilbert manifold $M$, under the usual regularity assumptions on the coefficients.

We assume that $\rho$ is a finite stopping time and that the initial condition $\zeta$ is an $\mathscr{F}_\rho$-measurable $M$-valued random variable. 
In order to define a maximal solution to problem \eqref{eqn-SDE_man}, we introduce a partial order in the set $\mathscr{LS}(\zeta)$ of all local solutions to \eqref{eqn-SDE_man}.

\begin{defn}\label{defn:maximal_sol}
Given two local solutions (in the sense of Definition \ref{def:SDE_man_sol-local})  $X^1,X^2$ to \eqref{eqn-SDE_man} resp. on $[[\rho,\tau^1))$, on $[[\rho,\tau^2))$, we say that
\begin{align*}
	(X^1,\tau^1) \preceq (X^2,\tau^2)
\end{align*}
if a.s. the following two conditions are satisfied 
\begin{enumerate}
    \item[(i)] $\tau^1\le \tau^2$, 
    \item[(ii)] $X^1=X^2$ on $[\rho,\tau^1)$. 
\end{enumerate}
We say that $(X^1,\tau^1)\sim(X^2,\tau^2)$ if a.s. $\tau^1=\tau^2$ and $X^1=X^2$.\\
A local  solution to \eqref{eqn-SDE_man} is
 a \emph{local maximal solution}
if and only if it is  a maximal element in the set $\mathscr{LS}(\zeta)$ of all local solutions to \eqref{eqn-SDE_man} with respect to the order $\preceq$.
\end{defn}

\begin{defn}\label{defn:local_uniq}
	We say that local uniqueness holds for the SDE \eqref{eqn-SDE_man} with initial data $(\rho,\zeta)$, if and only if, for every two solutions $X^1$, resp. $X^2$, to \eqref{eqn-SDE_man} on  $[\rho,\tau^1)$, resp.  $[\rho,\tau^2)$,  we have
	\begin{align}\label{eqn:local_uniq}
		X^1=X^2 \text{ on } [\rho,\tau^1\wedge \tau^2) \quad \mbox{ a.s. }.
	\end{align}
 A local solution  $X^1$ to \eqref{eqn-SDE_man} on  $[\rho,\tau^1)$ is said to be a unique local solution if and only if 
 for every local  solutions  $X^2$, to \eqref{eqn-SDE_man} on  $[\rho,\tau^2)$, condition \eqref{eqn:local_uniq} holds.
\end{defn}

We recall that the drift $b\colon M\to TM$ and the diffusion coefficient $\sigma\colon M\to \mathscr{L}(E,TM)$ are given sections assumed continuous and in $C^1$ resp.,  and the initial datum $\zeta\colon\Omega\to M$ is a $\cF_\rho$-measurable random variable.
The following Lemma, which is similar to \cite[Proposition 5.10 and Corollary 5.12]{Brz+H+Raza_2021}, shows that local uniqueness and local existence imply existence of a maximal solution.

\begin{lem}\label{lem:exist_maximal}
Assume that
$\rho$  is a stopping time and $\zeta$ is an $\mathscr{F}_\rho$-measurable $M$-valued random variable. Assume that local uniqueness holds for the SDE \eqref{eqn-SDE_man}.
Then, the following holds.
\begin{enumerate}
\item[(1)] If  $(X,\tau)$ and $(Y,\sigma)$  are  two local solutions to problem
		\eqref{eqn-SDE_man} with initial initial data $(\rho,\zeta)$, then
 a local process $(X\vee Y,\tau \vee \sigma)$ defined by
		\begin{equation}\label{eqn-sum of local solutions}
	(X\vee Y) _{t}(\omega)=
		\begin{cases}
		X_{t}(\omega),  \mbox{ for } t \in [\rho(\omega),\tau(\omega) ), \mbox{ if }  \tau(\omega)  
 \geq \sigma(\omega) ,\\
		Y_{t}(\omega),  \mbox{ for } t \in [\rho(\omega), \sigma(\omega) ) \text{ if }  \tau(\omega) < \sigma (\omega).
		\end{cases}
		\end{equation}
is a local solution to problem 		\eqref{eqn-SDE_man} with initial initial data $(\rho,\zeta)$.
\item[(2)] If additionally,  there exists at least one  local solution $(X^0,\tau^0)$  to problem \eqref{eqn-SDE_man} with initial initial data $(\rho,\zeta)$  and  the local uniqueness holds for \eqref{eqn-SDE_man} with initial initial data $(\rho,\zeta)$, 
 then there exists a unique local maximal solution $(\widehat{X},\hat{\tau})$ to \eqref{eqn-SDE_man} with initial data $(\rho,\zeta)$,
and this maximal solution 
 satisfies  \[(X^0,\tau^0)\preceq (\widehat{X},\hat{\tau}).\]
\end{enumerate}
\end{lem}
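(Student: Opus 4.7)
By the standing local uniqueness hypothesis, the two solutions $X$ and $Y$ agree a.s.\ on $[[\rho, \tau\wedge \sigma))$, so the piecewise definition \eqref{eqn-sum of local solutions} is unambiguous and yields an admissible process $X\vee Y$ on $[[\rho, \tau\vee\sigma))$. The stopping time $\tau\vee\sigma$ is $\rho$-accessible via the announcing sequence $(\tau_n\vee\sigma_n)$, where $(\tau_n)$ and $(\sigma_n)$ announce $\tau$ and $\sigma$ relative to $\rho$. To verify that $X\vee Y$ is a local solution, I would invoke Definition \ref{def:SDE_man_sol-local} separately on $A=\{\sigma\le \tau\}$ and $A^{c}=\{\sigma>\tau\}$: on $A$ one has $\tau\vee\sigma=\tau$ and $X\vee Y\equiv X$, so the SDE reduces to the one for $X$ on $[[\rho,\tau))$, and symmetrically on $A^{c}$. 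The coincidence of the stochastic integrals on these events follows from the Localization Theorem \ref{thm-localization}, or equivalently from the coincidence criterion Lemma \ref{lem:coincidence} applied after a time-shift (since $A$ lies in $\mathscr{F}_{\tau\wedge\sigma}$ rather than in $\mathscr{F}_\rho$ a priori).

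\textbf{Plan for part (2): essential-supremum construction.} Let $\mathcal{S}^{0}\subseteq\mathscr{LS}(\zeta)$ denote the set of equivalence classes of local solutions dominating $(X^{0},\tau^{0})$ in the partial order $\preceq$. Part (1) shows that $\mathcal{S}^{0}$ is closed under binary suprema (via $(X,\tau),(Y,\sigma)\mapsto (X\vee Y,\tau\vee\sigma)$), hence upward-directed. Define
\[
\hat\tau\coloneq\operatorname{ess\,sup}\{\tau:(X,\tau)\in\mathcal{S}^{0}\}.
\]
By a standard property of essential supremum over an upward-directed family, there exists a sequence $(X^{n},\tau^{n})\in\mathcal{S}^{0}$ with $\tau^{n}\nearrow\hat\tau$ a.s., and iteratively replacing $X^{n+1}$ by $X^{n}\vee X^{n+1}$ (via part (1)) we may arrange $(X^{n},\tau^{n})\preceq (X^{n+1},\tau^{n+1})$ for every $n$.

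\textbf{Patching and verifying the SDE.} Define $\hat X$ on $[[\rho,\hat\tau))$ by $\hat X_{t}\coloneq X^{n}_{t}$ whenever $t<\tau^{n}$; this is consistent and admissible by the nesting $(X^{n},\tau^{n})\preceq(X^{n+1},\tau^{n+1})$ and local uniqueness. For any $C^{2}$ test function $h$, the identity \eqref{eq:SDE_man_def} holds on $[[\rho,\tau^{n}))$ because $X^{n}\in\mathscr{LS}(\zeta)$; since the integrals are continuous in the upper limit and $\tau^{n}\nearrow\hat\tau$ a.s., the identity extends to every $t\in[[\rho,\hat\tau))$, so $(\hat X,\hat\tau)\in\mathscr{LS}(\zeta)$ with $(X^{0},\tau^{0})\preceq(\hat X,\hat\tau)$.

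\textbf{Maximality, uniqueness, and the main obstacle.} If $(Y,\sigma)\in\mathscr{LS}(\zeta)$ satisfies $(\hat X,\hat\tau)\preceq(Y,\sigma)$, transitivity of $\preceq$ gives $(X^{0},\tau^{0})\preceq(Y,\sigma)$, so $(Y,\sigma)\in\mathcal{S}^{0}$; hence $\sigma\le\hat\tau$ a.s.\ by the definition of essential supremum, which combined with $\hat\tau\le\sigma$ from $\preceq$ forces $\sigma=\hat\tau$, and local uniqueness then forces $Y=\hat X$; the same argument gives uniqueness of the maximal solution up to $\sim$. The main technical obstacle I anticipate is ensuring that $\hat\tau$ is actually $\rho$-\emph{accessible}, since Definition \ref{def-accessible stopping time} demands the strict inequality $\tau^{n}<\hat\tau$ in the announcing sequence. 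If the essential supremum is attained by some $(X^{N},\tau^{N})\in\mathcal{S}^{0}$, then that element is already the maximum and there is nothing further to do; on $\{\hat\tau=\infty\}$ one may use $\tau^{n}\wedge n$, and on the remaining event $\{\hat\tau<\infty,\ \hat\tau\text{ not attained}\}$ one replaces $\tau^{n}$ by $\tau^{n}\wedge(\hat\tau-1/n)$ to enforce the strict inequality while preserving $\tau^{n}\nearrow\hat\tau$.
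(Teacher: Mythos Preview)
Your outline is correct and follows essentially the same route as the paper: use local uniqueness plus the coincidence criterion for part (1), and an essential-supremum/amalgamation construction over the class of local solutions dominating $(X^0,\tau^0)$ for part (2). Two points of comparison are worth recording.

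For part (1), the paper avoids the measurability issue you flag (that $A=\{\sigma\le\tau\}\in\mathscr{F}_{\tau\wedge\sigma}$ rather than $\mathscr{F}_\rho$) in a cleaner way: it observes directly that $X\vee Y=X$ on \emph{all} of $[[\rho,\tau))$ and $X\vee Y=Y$ on \emph{all} of $[[\rho,\sigma))$ (the case $\tau<\sigma$ still gives $X\vee Y=Y=X$ on $[[\rho,\tau))$ by local uniqueness), and then applies Lemma~\ref{lem:coincidence} with $A=\Omega$ on each of these intervals. This sidesteps the event-splitting entirely and is worth adopting.

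For part (2), the paper packages your essential-supremum and patching argument into a reference to an external ``Amalgamation Lemma'' (\cite[Lemma 5.6]{Brz+H+Raza_2021}), which also delivers the $\rho$-accessibility of $\hat\tau$. Your direct argument is fine, but your proposed announcing sequence is slightly off: $\hat\tau-1/n$ need not be $\ge\rho$, and the ``attained/not attained'' dichotomy is awkward to make measurable. A cleaner fix is to use that each $\tau^n$ is itself $\rho$-accessible with announcing sequence $(\alpha^n_m)_m$, and set $\gamma_n\coloneqq\alpha^1_n\vee\cdots\vee\alpha^n_n$; then $\gamma_n<\tau^n\le\hat\tau$, $(\gamma_n)$ is non-decreasing, and $\liminf_n\gamma_n\ge\tau^k$ for every $k$ forces $\gamma_n\nearrow\hat\tau$.
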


\begin{proof}[Proof of part (1) Lemma \ref{lem:exist_maximal}]
Let us choose and fix  two local solutions $(X,\tau)$ and $(Y,\sigma)$    to problem
    \eqref{eqn-SDE_man} with initial initial data $(\rho,\zeta)$.  By the uniqueness assumption  we infer that
\begin{equation}\label{eqn-equivalence}
(X_{\lvert _{[\rho,\tau\wedge \sigma) }},\tau\wedge \sigma) \sim (Y_{\lvert _{[0,\tau\wedge \sigma) } },\tau\wedge \sigma).
\end{equation}
  We define a process $(Z,\xi):=(X\vee Y,\tau \vee \sigma)$  by  formula \eqref{eqn-sum of local solutions}.  
We  now claim that this process  is a local solution to Problem 
\eqref{eqn-SDE_man} with initial initial data $(\rho,\zeta)$. Let us observe that we can prove   the admissibility of the process  $(Z,\xi)$ is very similar to the proof in \cite[Corollary 2.28]{Brz+Elw_2000}.

Let us first observe that by part (ii) of  \cite[Remark 3.4]{Brz+H+Raza_2021}, 
$\xi$ is a $\rho$-accessible stopping time. 
 Now choose and fix a  $C^2$ function $h \colon N\to\tilde{H}$ with values in a separable Hilbert space $\tilde{H}$.  We need to show that   a.s. 
	\begin{align}\label{eqn-Z+xi}
		\begin{split}
			h\circ f(Z_t) = h\circ f(Z_\rho) +&\int_\rho^t Dh \circ Tf \circ B_r dr +\int_\rho^t Dh \circ Tf \circ \sigma(Z_r)\, dW_r  \\ +&\frac12 \int_\rho^t \tr_{\rK}[D(Dh \circ Tf \circ \sigma) \circ \sigma(Z_r)]\, dr, \mbox{ for all $t\in [\rho,\xi)$.}
		\end{split}
	\end{align}
Since $(Z,\tau)$ and $(X,\tau)$ coincide on $[[\rho,\tau))$, applying the coincidence criterion Lemma \ref{lem:coincidence} we get that formula \eqref{eqn-Z+xi} holds on $[[\rho,\tau))$. By symmetry, formula \eqref{eqn-Z+xi} holds on $[[\rho,\sigma))$. Therefore \eqref{eqn-Z+xi} holds on $[[\rho,\tau\vee\sigma))$.

Hence,  we proved that $(Z,\xi)$  is a local solution to Problem 
\eqref{eqn-SDE_man} with initial initial data $(\rho,\zeta)$.
\end{proof}

\begin{proof}[Proof of part (2) Lemma \ref{lem:exist_maximal}]

		Let us choose and fix a local solution $(X^0,\tau^0)$ to problem
		\eqref{eqn-SDE_man} with initial initial data $(\rho,\zeta)$ and let us consider  the family  $\mathscr{ LS}_0$
of all local solution
		$(X,\tau)$ to  the problem \eqref{eqn-SDE_man} with initial initial data $(\rho,\zeta)$ such that $(X^0,\tau^0)\preceq (X,\tau) $.
\medskip

By assumptions, $\mathscr{ LS}_0$ is a non-empty set. 
Hence, due to our assumption about the local uniqueness and  the already  proved assertion (1), by the Amalgamation Lemma, see \cite[Lemma 5.6]{Brz+H+Raza_2021}, we infer that there exists a $\rho$-accessible stopping time
			\[
		\hat{\tau} := \sup\left\{ \tau: (X,\tau)\in \mathscr{ LS}\right\}
		\]
and an admissible process $\widehat{X}: [0, \hat{\tau} )
\times \Omega \to M $,
such that  for all $(X,\tau)\in \mathscr{LS}_0$ and,  a.s.,  
		\begin{equation} \label{eqn-amalgamation_02-a}
		\widehat{X}= X \; \mbox{on} \; [\rho ,\tau).
		\end{equation}
Moreover there is an  $\mathscr{ LS}_0$-valued  sequence $(X^k,\tau^k)$ such that, for a.e. $\omega \in \Omega$,
\begin{equation}\label{eqn-99}
 (\tau^k(\omega))_k \text{ is non-decreasing and }\hat{\tau}(\omega) = \lim_{k \to \infty} \tau^k(\omega).
\end{equation}
Let us denote by $\tilde{\Omega}$ an event of full $\mathbb{P}$-measure such that for every $\omega\in \tilde{\Omega}$, the 
equalities \eqref{eqn-amalgamation_02-a} and \eqref{eqn-99} are 
satisfied.

In order to complete the proof of the existence of a maximal local solution, we shall prove that $(\widehat{X},\hat{\tau})\in \mathscr{LS}_0$. For this aim,  we closely follow the proof of \cite[Theorem 2.26]{Brz+Elw_2000} and proceed in three steps.\smallskip

\textbf{Step 1} We claim  that $(\widehat{X},\hat{\tau})$ is local solution
		 to  the problem \eqref{eqn-SDE_man} with initial initial data $(\rho,\zeta)$. Let us choose and fix a  $C^2$ function $h \colon M\to \tilde{H}$ with values in a separable Hilbert space $\tilde{H}$. 
We introduce  a useful auxiliary notation.
\begin{equation}\label{eqn-107}
  g(x):=Dh \circ b(x)+\frac12 \tr_{\rK}[D(Dh \circ \sigma) \circ \sigma(x)]  , \;\;x \in M.
\end{equation}

We need to show that a.s., 
	\begin{equation}\begin{split}
			h(\widehat{X}_{t }) = h(\zeta) +&\int_{\rho } ^{t} g(\widehat{X}_r)\, dr +\int_{\rho}^{t } Dh \circ \sigma(\widehat{X}_r)\, dW_r 
   \mbox{ for every }  t\in[\rho,\hat{\tau}).
   \end{split} \label{eqn-101}
	\end{equation}
Equality \eqref{eqn-101} holds for $X_k$ in place of $\hat{X}$ on $[\rho,\tau^k)$ a.s.. By the coincidence criterion Lemma \ref{lem:coincidence}, equality \eqref{eqn-101} holds for $\hat{X}$ on $[\rho,\tau^k)$ a.s.. In particular, taking $(\tau_n)_n$ an announcing sequence for $\tau$, for every $n$, equality \eqref{eqn-101} holds on $[\rho,\tau_n\wedge \tau^k]$ a.s. for every $k$, and so, letting $k\to\infty$, on $[\rho,\tau_n\wedge \tau^k]$. Letting then $n\to\infty$, we get that equality \eqref{eqn-101} holds on $[\rho,\tau)$ a.s..

 This completes the existence of a local maximal solution to problem \eqref{eqn-SDE_man} with initial initial data $(\rho,\zeta)$   and thus of   the claim from \textbf{Step 1}.
 \smallskip
 
\textbf{Step 2}. $(X^0,\tau^0)\preceq (\hat{X},\hat{\tau}) $: this follows from the fact that $(\hat{X},\hat{\tau})$ is a local solution and that $\hat{\tau}\ge \tau^0$ and $\hat{X}=X^0$ on $[\rho,\tau^0)$.
\smallskip

\textbf{Step 3} Local maximal solution to problem \eqref{eqn-SDE_man} with initial initial data $(\rho,\zeta)$ is unique.

For this aim let us suppose that $(X^i,\tau^i)$. $i=1,2$, are  two local maximal solutions to problem \eqref{eqn-SDE_man} with initial initial data $(\rho,\zeta)$.
By parti (1), the process $(\newtilde{X},\newtilde{\tau})\colon =(X^1 \vee X^2, \tau^1\vee\tau^2)$ is a local solution to problem \eqref{eqn-sum of local solutions}, i.e. $(\newtilde{X}, \tilde{\tau})\in \mathscr{LS}(\zeta)$.

Now, by construction we have $(X^i, \tau^i) \preceq (\newtilde{X}, \newtilde{\tau})$, for $i\in \{1,2\}$ and there exists $i_0 \in \{1,2\}$ such that
$ (X^{i_0},\tau^{i_0}) \not \sim (\newtilde{X},\newtilde{\tau} )$. This contradicts the maximality of the solution $(X^{i_0},\tau^{i_0})$ and completes the proof of assertion (2) of the Lemma.
\end{proof}

Theorem \ref{thm:maximal} below shows existence and local uniqueness of local maximal solutions to an SDE. We remind that, for metrizable separable manifolds $M$ (in particular separable Hilbert manifolds), there exist countable atlases.

\begin{thm}\label{thm:maximal}
Let $\rho$ be a finite stopping time and let $\xi$ be $\cF_\rho$-measurable. Assume that $b$ is in $C^{0,1}_{loc}$ and that $\sigma$ is in $C^{1,1}_{loc}$, that is, for every $x$ in $M$, there exists a chart $(U,\psi)$ with $x\in U$, such that $b^\psi$ is Lipschitz in $\psi(U)$ and $\sigma^\psi$ is $C^1$ with Lipschitz derivative in $\psi(U)$. Then existence of a local maximal solution and local uniqueness hold for the SDE \eqref{eqn-SDE_man}.
\end{thm}


\begin{proof}
By assumption, there exists an atlas of charts $(U,\psi)$ where the local representative of $b$ and $\sigma$ are resp. $C^{0,1}$ and $C^{1,1}$. We assumed that the Hilbert manifold is paracompact and separable, whence it is Lindel\"of, cf.\ \cite[Corollary 5.1.26]{Eng89}. Thus there exists a countable subcover of the atlas which gives rise to a countable atlas $(U_\alpha,\psi_\alpha)_\alpha$ of $M$ such that $b^\alpha$ and $\sigma^\alpha$ are resp. $C^{0,1}$ and $C^{1,1}$ on $U_\alpha$, for every $\alpha$.
    
	\textbf{Local uniqueness}: Let $(X^1,\tau^1)$ and $(X^2,\tau^2)$ be two local solutions. We let $\bar\rho$ be the first stopping time where $X^1$ and $X^2$ are different:
	\begin{align*}
		\bar\rho = \inf\{t\in[\rho,\tau^1\wedge \tau^2) \mid X^1_t\neq X^2_t\} \wedge \tau^1 \wedge \tau^2.
	\end{align*}
	We will show that $\bar{\rho} = \tau^1 \wedge \tau^2$  a.s., that is local uniqueness. By contradiction, we assume that $A=\{\bar\rho< \tau^1 \wedge \tau^2\}$ has positive probability. On $A$, by continuity of paths, we have $X^1_{\bar\rho} = X^2_{\bar\rho}$. For $i=1,2$, we call $X^{i,\alpha}=\psi_\alpha(X^i)$. By Lemma \ref{cor:change_chart}, for $i=1,2$, $X^{i,\alpha}$ solves, up to the minimum between $\tau^i$ and the first exit time of $X^i$ from $U_\alpha$, the SDE
	\begin{align}
        \begin{aligned}\label{eq:SDE_local_uniq}
		&dY_t = b^\alpha(Y_t)dt +\sigma^\alpha(Y_t) \bullet dW^{\bar\rho}_t,\\
		&Y_0 = \psi_\alpha(X^1_{\bar\rho}).
        \end{aligned}
	\end{align}
	By the regularity assumptions on $b^\alpha$ and $\sigma^\alpha$, we can apply Proposition \ref{prop:SDE_wellposed_local} $X^{1,\alpha}$ and $X^{2,\alpha}$ to \eqref{eq:SDE_local_uniq} must coincide up to a stopping time $\tau$ which is  a.s. $>\bar\rho$ on $A$. But this is in contradiction with the definition of $\bar\rho$. We have proved local uniqueness.\smallskip
 
\textbf{Existence of a maximal solution}: By Lemma \ref{lem:exist_maximal} and local uniqueness, it is enough to show existence of a local solution. For this, we call, for any $\alpha$, $A_\alpha = \{X_\rho \in U_\alpha\}$ (set in $\mathscr{F}_\rho$). By the regularity assumptions on $b^\alpha$ and $\sigma^\alpha$, by Proposition \ref{prop:SDE_wellposed_local} there exists a solution $\newtilde{X}^\alpha$ on $[\rho,\tau)$ to
	\begin{align}
		d\newtilde{X}^\alpha = b^\alpha(\newtilde{X}^\alpha)dt +\sigma^\alpha(\newtilde{X}^\alpha)\bullet dW\label{eq:SDE_chart_exist}
	\end{align}
	for a stopping time $\tau^\alpha\ge \rho$, with $\tau^\alpha$ $\rho$-accessible on $A_\alpha$; we fix an announcing sequence $(\tau^{\alpha,m})_m$ for $\tau^\alpha$. Now we order the set of indices $\{\alpha\}=(\alpha_k)_k$ and we build a local solution $X$ recursively: on $A_{\alpha_1}$ we define $\tau=\tau^{\alpha_1}$ and $X_t=\psi_{\alpha_1}^{-1}(\newtilde{X}^{\alpha_1}_t)$ for every $t \in [\rho,\tau^{\alpha_1})$; for every $k\ge 2$, on $A_{\alpha_k}\setminus (A_{\alpha_1}\cup \ldots \cup A_{\alpha_{k-1}})$ we define $\tau=\tau^{\alpha_k}$ and $X_t=\psi_{\alpha_k}^{-1}(\newtilde{X}^{\alpha_k}_t)$ for every $t \in [\rho,\tau^{\alpha_k})$.	
		
	First we show that $\tau$ is a $\rho$-accessible stopping time. The random time $\tau$ is a stopping time because, for every $t$,
	\begin{align*}
	\{\tau\le t\} = \cup_k \{\tau^{\alpha_k}\le t\} \cap [A_{\alpha_k}\setminus (A_{\alpha_1}\cup \ldots \cup A_{\alpha_{k-1}})] \cap \{\rho\le t\}
	\end{align*}
	is in $\mathscr{F}_t$. Similarly, we can define a sequence of stopping times $\tau^m$ as $\tau^m=\tau^{\alpha_k,m}$ on $A_{\alpha_k}\setminus (A_{\alpha_1}\cup \ldots \cup A_{\alpha_{k-1}})$. Then $(\tau^m)_m$ is a non-decreasing sequence of stopping times, converging to $\tau$ and satisfying $\tau^m<\tau$  a.s. Therefore $\tau$ is a $\rho$-accessible stopping time.
	
	Finally we show that $X$ is a local solution on $[\rho,\tau)$. By Lemma \ref{cor:change_chart}, it is enough to show that, for every $\alpha$, for every stopping time $\rho$ with $\rho\le \rho'\le \tau$, $X^\alpha=\psi_\alpha(X)$ solves on $[\rho',\tau')$
	\begin{align}
		dX^\alpha= b^\alpha(X^\alpha)dt +\sigma^\alpha(X^\alpha)\bullet dW,\label{eq:SDE_chart_exist_2}
	\end{align}
	where $\tau'=\inf\{t\ge\rho' \mid X_t\notin U_\alpha\}\wedge \tau$. By definition, on each set $A_{\alpha_k}\setminus(A_{\alpha_1}\cup \ldots \cup A_{\alpha_{k-1}})$, $X^\alpha=\psi_\alpha(X)$ coincide with $\psi_\alpha\circ \psi_{\alpha_k}^{-1}(\newtilde{X}^{\alpha_k})$ on $[\rho',\tau')$, moreover $\tau'\le \tau=\tau^{\alpha_k}$  a.s.. The process $X^{\alpha_k}$ satisfies \eqref{eq:SDE_chart_exist} (with $\alpha$ replaced by $\alpha_k$), therefore, by the invariance of Stratonovich SDEs Lemma \ref{lem-Strat_diff_manifold}, $\psi_\alpha\circ \psi_{\alpha_k}^{-1}(\newtilde{X}^{\alpha_k})$ satisfies \eqref{eq:SDE_chart_exist_2} on $[\rho,\tau^{\alpha_k})$; by Lemma \ref{lem:coincidence}, $X^\alpha$, restricted to $A_{\alpha_k}\setminus(A_{\alpha_1}\cup \ldots \cup A_{\alpha_{k-1}})$, satisfies \eqref{eq:SDE_chart_exist_2} on $[\rho',\tau')$. Since the sets $A_{\alpha_k}\setminus(A_{\alpha_1}\cup \ldots \cup A_{\alpha_{k-1}})$ form a partition of $\Omega$, the process $X^\alpha$ satisfies \eqref{eq:SDE_chart_exist_2} on $[\rho',\tau')$. The proof is complete.
\end{proof}

Finally we give a criterion for the blow-up to solutions.

\begin{lem}\label{lem:explosion_maximal_time}
Under the hypotheses of Theorem \ref{thm:maximal}, let $\tau$ be the maximal existence time of the solution $X$. Then,  a.s. on $\{\tau<\infty\}$, there does not exist $\lim_{t\nearrow \tau}X_t$.
\end{lem}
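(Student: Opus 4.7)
The plan is to argue by contradiction. Set
\[
A := \bigl\{\omega \in \{\tau<\infty\} : \textstyle\lim_{t\nearrow \tau(\omega)} X_t(\omega) \text{ exists in } M\bigr\}
\]
and suppose $\mathbb{P}(A)>0$; the idea is to extend $X$ strictly beyond $\tau$ on $A$, which contradicts the maximality of $(X,\tau)$. Since $M$ is metrizable and separable, existence of the limit can be phrased as a Cauchy condition along a countable dense subset of $[\rho(\omega),\tau(\omega))$, and hence $A\in\mathscr{F}_\tau$ and the pointwise limit $X_\tau\colon A\to M$ is $\mathscr{F}_\tau$-measurable.

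Next, I would manufacture a restart. Define the finite stopping time $\tau_A := \tau\,\1_A + \rho\,\1_{A^c}$ and the $\mathscr{F}_{\tau_A}$-measurable initial datum $\zeta_A$, equal to $X_\tau$ on $A$ and to $\zeta$ on $A^c$. Applying Theorem \ref{thm:maximal} (the local existence part, combined with Lemma \ref{lem:exist_maximal}) with initial data $(\tau_A,\zeta_A)$ produces a local solution $\tilde X$ to \eqref{eqn-SDE_man} on $[\tau_A,\tau')$ for some $\tau_A$-accessible stopping time $\tau'>\tau_A$ a.s.. Restricted to $A$, this yields a local solution $\tilde X$ on $[\tau,\tau')\times A$ with $\tilde X_\tau = X_\tau$.

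Now glue the two solutions together: set $\hat\tau := \tau'$ on $A$ and $\hat\tau := \tau$ on $A^c$ (a $\rho$-accessible stopping time, with announcing sequence obtained by combining those of $\tau$ on $A^c$ and of $\tau'$ on $A$), and define $\hat X_t := X_t$ for $t\in[\rho,\tau)$ and $\hat X_t := \tilde X_t$ for $t\in[\tau,\tau')\cap A$; continuity at $\tau$ on $A$ is built into the construction. To verify that $(\hat X,\hat\tau)$ is a local solution to \eqref{eqn-SDE_man} I would fix a $C^2$ test map $h$ and split the defining identity \eqref{eq:SDE_man_def} at $t=\tau$: on $[\rho,\tau)$ it is inherited from $X$; on $[\tau,\tau')\cap A$ the identity for $X$, passed to the limit as $s\nearrow\tau$ by continuity of $X$ at $\tau$ on $A$ and Lemma \ref{lem:convergence_Ito_int} (applied to $Dh\circ\sigma(X)$, which is bounded on the compact trajectory on $A$ up to $\tau$), combines with the identity for $\tilde X$ on $[\tau,t)$; the coincidence criterion Lemma \ref{lem:coincidence} and additivity of the Lebesgue and It\^o integrals at the stopping time $\tau$ then assemble the full identity for $\hat X$ on $[\rho,t)$. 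Since $\hat\tau>\tau$ on $A$ with positive probability we have $(X,\tau)\preceq(\hat X,\hat\tau)$ but $(X,\tau)\not\sim(\hat X,\hat\tau)$, contradicting maximality.

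The main obstacle is precisely this gluing step: showing that the concatenated process is a bona fide local solution across the random time $\tau$ requires additivity of the drift and the stochastic integrals at $\tau$. This is available only because, on $A$, the trajectory $X$ extends continuously to $\tau$, so $b(X)$ and $\sigma(X)$ remain integrable up to $\tau$ and the stochastic integral $\int_\rho^{\cdot}Dh\circ\sigma(X_r)\,dW_r$ extends continuously to $t=\tau$; this is exactly what lets the two It\^o formulas on $[\rho,\tau)$ and on $[\tau,\tau')$ be concatenated into one identity on $[\rho,\tau')$.
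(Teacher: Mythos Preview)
Your strategy --- restart the SDE at $\tau$ on the event $A$ and glue --- differs from the paper's and carries a gap you did not flag. The random variable $\tau_A:=\tau\,\1_A+\rho\,\1_{A^c}$ is in general \emph{not} a stopping time: you only know $A\in\mathscr{F}_\tau$, and since $A$ depends on the entire trajectory up to $\tau$ there is no reason for $A^c\cap\{\rho\le t\}\cap\{\tau>t\}$ to lie in $\mathscr{F}_t$. Consequently Theorem~\ref{thm:maximal} does not apply with initial data $(\tau_A,\zeta_A)$ as written. This is repairable (for instance, replace $\tau_A$ by the bounded stopping time $\tau\wedge N$ for large $N$, define the initial datum arbitrarily off $A\cap\{\tau\le N\}\in\mathscr{F}_{\tau\wedge N}$, and restrict the restarted solution to that event), but it adds another layer of bookkeeping on top of the concatenation-at-$\tau$ issue you already identify as the main obstacle.

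The paper avoids both problems by never restarting at $\tau$. It fixes an announcing time $\tau_{\bar n}<\tau$ and a single chart $(U_{\bar\alpha},\psi_{\bar\alpha})$ containing both $X_{\tau_{\bar n}}$ and the limit $\lim_{t\nearrow\tau}X_t$ on a positive-probability event, then applies the Hilbert-space well-posedness result Proposition~\ref{prop:SDE_wellposed_local} to the chart SDE with initial data $(\psi_{\bar\alpha}(X_{\tau_{\bar n}}),\tau_{\bar n})$. Property~(ii) there --- the chart solution leaves every $V$ with $\bar V\subseteq\psi_{\bar\alpha}(U_{\bar\alpha})$ before its maximal time $\tau'$ --- forces $\tau'>\tau$ on that event, because $X$ converges inside $U_{\bar\alpha}$. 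The extended process is then defined as $X$ on one $\mathscr{F}_{\tau_{\bar n}}$-set and as the chart solution on its complement; their agreement on the overlap follows from local uniqueness on $[\tau_{\bar n},\tau)$, so no limit passage in the It\^o identity at the random time $\tau$ is required.
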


\begin{proof}
Fix a countable atlas $(U_\alpha,\psi_\alpha)_\alpha$ as in Theorem \ref{thm:maximal} and take an announcing sequence $(\tau_n)_n$ for $\tau$. Assume by contradiction that the limit $\lim_{t\nearrow \tau}X_t$ exists with positive probability on $\{\tau<\infty\}$. In particular, there exists $\bar{\alpha}$ such that the event $\{\lim_{t\nearrow \tau}X_t \in U_{\bar{\alpha}},\,\tau<\infty\}$ has positive probability, and so there exists also $\bar{n}$ such that
\begin{align*}
    A:=\{\lim_{t\nearrow \tau}X_t \in U_{\bar{\alpha}},\,\tau<\infty,\,X_{\tau_{\bar{n}}}\in U_{\bar{\alpha}}\}
\end{align*}
has positive probability. By Proposition \ref{prop:SDE_wellposed_local}, there exists a process $Y^{\bar{\alpha}}$ which solves \eqref{eq:SDE_man_chart} (the expression of the SDE \eqref{eqn-SDE_man} in the chart $(U_{\bar{\alpha}},\psi_{\bar{\alpha}})$),  on $[\tau_{\bar{n}},\tau')\times \{X_{\tau_{\bar{n}}}\in U_{\bar{\alpha}}\}$, with initial data $(\psi_{\bar{\alpha}}(X_{\tau_{\bar{n}}}),\tau_{\bar{n}})$, and $Y^{\bar{\alpha}}$ leaves every open set $V$ with $\bar{V}\subseteq \psi_{\bar{\alpha}}(U_{\bar{\alpha}})$. Hence $Y=\psi_{\bar{\alpha}}^{-1}(Y^{\bar{\alpha}})$ solves the SDE \eqref{eqn-SDE_man}, on $[\tau_{\bar{n}},\tau')\times \{X_{\tau_{\bar{n}}}\in U_{\bar{\alpha}}\}$, with initial data $(X_{\tau_{\bar{n}}},\tau_{\bar{n}})$, and $Y$ leaves every open set $V$ with $\bar{V}\subseteq U_{\bar{\alpha}}$.
In particular we have $\tau'>\tau$ a.s. on $A$ and hence on $\{X_{\tau_{\bar{n}}}\in U_{\bar{\alpha}}\}$. We define the stopping time $\sigma=\tau 1_{\tau_{\bar{n}}\notin U_{\bar{\alpha}}} +\tau'1_{\tau_{\bar{n}}\in U_{\bar{\alpha}}}$ and the process $Z$ as
\begin{align*}
Z=X \text{ on }[\rho,\tau)\times \{\tau_{\bar{n}}\notin U_{\bar{\alpha}}\},\quad Z=Y \text{ on }[\rho,\tau')\times \{\tau_{\bar{n}}\in U_{\bar{\alpha}}\}.
\end{align*}
The stopping time $\sigma$ is $\rho$-accessible, having as announcing sequence $\sigma_n=\tau_n1_{\tau_{\bar{n}}\notin U_{\bar{\alpha}}}+\tau_n'1_{\tau_{\bar{n}}\in U_{\bar{\alpha}}}$ for $n\ge \bar{n}$, where $(\tau_n')_n$ is announcing sequence for $\tau'$. The process $Z$ solves \eqref{eqn-SDE_man} on $[\rho,\sigma)$ and $\sigma>\tau$ with positive probability. But this is in contradiction with the uniqueness and maximality of $X$. The proof is complete.
\end{proof}

\section{Differentiability of SDEs with respect to initial conditions}
\label{sec-Strat_diff_manifold}

In this section we consider the problem of differentiability of SDEs with respect to the initial condition. By this we mean the following: if $X^x$ is the unique, global-in-time solution to a certain SDE on a Hilbert space $H$, we ask about differentiability of the map
\begin{align*}
H\ni x\mapsto X^x\in C([0,T],L^2(\Omega,H)),
\end{align*}
where the space of continuous mappings is endowed with the usual compact open topology, \cite[Appendix B]{schmeding_2022}, turning it into a Banach space.
For SDEs on manifold the question is similar, though we will consider continuous mappings with values in the space $L^0(\Omega,H)$ (see Definition \ref{defn:L0} below) rather than $L^2$ and we will stop the solution before the maximal existence time. Other stronger concepts of differentiability could be investigated, like differentiability with values in $L^2(\Omega,C([0,T],H))$, or differentiability for a.e. $\omega$ fixed (the latter requires the notion of stochastic flows). However such stronger concepts may require substantial more work or more restrictive assumptions and will not be considered here. The approach we take here come from \cite[Section 9.1]{daPaZ} and \cite[Chapter VII]{Elw82}, though with slight technical differences. In particular, we have taken from \cite[Chapter VII]{Elw82} the localization argument around a compact set.

\subsection*{The case of SDE on separable Hilbert spaces}
\addcontentsline{toc}{subsection}{The case of SDE on separable Hilbert spaces}
We start with the case of an SDE on a separable Hilbert space $H$. Fix $(\Omega,\mathscr{A},(\mathscr{F}_t)_t,\mathbb{P})$ a filtered probability space satisfying the standard assumption, let $W$ be a Wiener process with values in a separable Hilbert space $E$. For technical reasons due to the use of stopping times, we need to work with possibly random drift and random diffusion coefficient. To use the next result from \cite{daPaZ}, we have to introduce a slightly different version of progressive measurability, which is predictability: the predictable $\sigma$-algebra $\cP$ on $[0,\infty)\times \Omega$ is generated by the sets of the form
\begin{align*}
&(a,b]\times A, \quad 0\le a<b,\quad A\in\cF_a,\\
&\{0\}\times A, \quad A\in\cF_0.
\end{align*}
We say that a process is predictable if it is measurable with respect to $\cP$. Since the indicator functions of all the above sets are progressively measurable, every predictable process is also progressively measurable. Not every progressively measurable is predictable, every predictable process has a version (coinciding $dt\otimes \mathbb{P}$a.s.) which is progressively measurable, but we will not use this fact in this section. Any adapted and left-continuous process is predictable. Let $b:[0,T]\times\Omega\times H \to H$ and $\sigma:[0,T]\times\Omega\times H \to \mathscr{L}(E,H)$ be given maps. Consider the It\^o SDE on $H$
\begin{align*}
dX_t &= b(t,\omega,X_t)dt +\sigma(t,\omega,X_t)dW_t,\\
X_0&=x_0.
\end{align*}
To highlight the dependence on the initial condition, we will write $X^{x_0}$ for the solution to the above SDE. The following result follows from \cite[Theorem 9.8]{daPaZ}. Here we denote by $\mathcal{B}(\gamma(\rK,H))\mid_{\mathscr{L}(E,H)}$ the restriction of the Borel $\sigma$-algebra on $\gamma(\rK,H)$ to the space $\mathscr{L}(E,H)$, see \cite[Proposition 2.10]{Brz+Ran22} for details on measurability of $\gamma(\rK,H)$-valued maps.

\begin{thm}\label{thm:deriv_IC_H}
Fix $T>0$ and assume the following conditions:
\begin{itemize}
\item $b$ and $\sigma$ are predictable, that is, they are measurable from $\mathscr{P}\otimes \mathscr{B}(H)$ to respectively $\mathscr{B}(H)$ and $\mathcal{B}(\gamma(\rK,H))\mid_{\mathscr{L}(E,H)}$;
\item uniformly in $t$ (on $[0,T]$) and $\omega$, $b(t,\omega,\cdot)$ is bounded and globally Lipschitz and $\sigma(t,\omega,\cdot)$ is $\gamma(\rK,H)$-bounded and $\gamma(\rK,H)$-globally Lipschitz, that is,
\begin{align*}
&\|b(t,\omega,x)\|_H +|\sigma(t,\omega,x)\|_{\gamma(\rK,H)}\le C, \quad \forall x\in H,\,\forall (t,\omega)\in [0,T]\times\Omega,\\
&\|b(t,\omega,x)-b(t,\omega,y)\|_H +\|\sigma(t,\omega,x)-\sigma(t,\omega,y)\|_{\gamma(\rK,H)}\\
&\quad \le C\|x-y\|_H,\quad \forall x,y\in H,\,\forall (t,\omega)\in [0,T]\times\Omega;
\end{align*}
\item for every $t$ and $\omega$, $b(t,\omega,\cdot)$ is Gateaux-differentiable and its Gateaux derivative is continuous and bounded, uniformly in $t$ and $\omega$, that is,
\begin{align*}
&H\times H\ni (x,h)\mapsto D_xb(t,\omega,x)h \in H \text{ continuous},\,\forall (t,\omega)\in [0,T]\times\Omega\\
&\|D_xb(t,\omega,x)h\|_H\le C\|h\|_H,\quad \forall x,h\in H,\,\forall (t,\omega)\in [0,T]\times\Omega;
\end{align*}
\item for every $t$ and $\omega$, $\sigma(t,\omega,\cdot)$ is Gateaux-differentiable (with respect to the $\mathscr{L}(E,H)$ norm) and its Gateaux derivative is continuous and $\gamma(\rK,H)$-bounded, uniformly in $t$ and $\omega$, that is,
\begin{align*}
&H\times H\ni (x,h)\mapsto D_x\sigma(t,\omega,x)h \in \mathscr{L}(E,H) \text{ continuous},\,\forall (t,\omega)\in [0,T]\times\Omega\\
&\|D_x\sigma(t,\omega,x)h\|_{\gamma(\rK,H)}\le C\|h\|_H,\quad \forall x,h\in H,\,\forall (t,\omega)\in [0,T]\times\Omega.
\end{align*}
\end{itemize}
Then, for every finite $p\ge 2$, the map
\begin{align*}
H\ni x\mapsto X^x \in C([0,T],L^p(\Omega,H))
\end{align*}
is Gateaux-differentiable at every point $x$, with continuous derivative $D_xX$. Moreover, for every $x$ and $h \in H$, there exists a time-continuous version of $A^{x,h}:=D_xX(x)h$, still denoted by $A^{x,h}$, satisfying the SDE
\begin{align}
\begin{split}\label{eq:SDE_derivative}
&dA^{x,h}_t= D_xb(X^x_t)A^{x,h}_tdt + D_x\sigma(X^x_t)A^{x,h}_tdW_t,\\
&A^{x,h}_0=h.
\end{split}
\end{align}
\end{thm}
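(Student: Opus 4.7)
The plan is to follow the classical approach from \cite[Section 9.1]{daPaZ}, which proceeds by direct analysis of the difference quotient and a Gronwall argument. First I would collect the standard a priori estimates for $X^x$: existence and uniqueness of a global solution follow from the uniform Lipschitz assumptions on $b$ and $\sigma$ (with $\sigma$ Lipschitz in the $\gamma(\rK,H)$-norm), and a combined Burkholder \eqref{eqn-Burkholder}/Gronwall argument yields, for every $p\ge 2$,
\begin{equation*}
\sup_{t\in[0,T]}\mathbb{E}\|X^x_t-X^y_t\|_H^p \leq C_{T,p}\|x-y\|_H^p,
\end{equation*}
which in particular gives continuity of $x\mapsto X^x$ in $C([0,T],L^p(\Omega,H))$.

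Second, I would establish existence, uniqueness and $L^p$-estimates for the variational SDE \eqref{eq:SDE_derivative}. The coefficients $t\mapsto D_xb(t,\omega,X^x_t)$ and $t\mapsto D_x\sigma(t,\omega,X^x_t)$ are progressively measurable (by Gateaux-continuity together with predictability of the original coefficients and of $X^x$) and bounded, uniformly in $(t,\omega)$, in $\mathscr{L}(H,H)$ and in $\mathscr{L}(H,\gamma(\rK,H))$ respectively. Hence \eqref{eq:SDE_derivative} is a linear SDE with bounded predictable coefficients and admits a unique solution $A^{x,h}$ satisfying $\sup_{t\in[0,T]}\mathbb{E}\|A^{x,h}_t\|_H^p\leq C\|h\|_H^p$ and linear in $h$.

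The core of the proof is to show that
\begin{equation*}
Y^{x,h}_\varepsilon(t) := \varepsilon^{-1}(X^{x+\varepsilon h}_t - X^x_t) \longrightarrow A^{x,h}_t \quad \text{in } C([0,T],L^p(\Omega,H))
\end{equation*}
as $\varepsilon\to 0$. Using the Gateaux fundamental theorem of calculus $b(t,\omega,y)-b(t,\omega,x)=\int_0^1 D_xb(t,\omega,x+\lambda(y-x))(y-x)\,d\lambda$ (valid for continuously Gateaux-differentiable maps) and the analogous identity for $\sigma$, the difference $Z^\varepsilon_t := Y^{x,h}_\varepsilon(t)-A^{x,h}_t$ satisfies an SDE of the form
\begin{equation*}
dZ^\varepsilon_t = D_xb(X^x_t)Z^\varepsilon_t\,dt + D_x\sigma(X^x_t)Z^\varepsilon_t\,dW_t + R^\varepsilon_t\,dt + S^\varepsilon_t\,dW_t,\quad Z^\varepsilon_0=0,
\end{equation*}
where the remainders involve terms like $\int_0^1[D_xb(X^x_t+\lambda\varepsilon Y^{x,h}_\varepsilon(t))-D_xb(X^x_t)]Y^{x,h}_\varepsilon(t)\,d\lambda$. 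Applying Burkholder and Gronwall to this linear SDE with bounded coefficients reduces the claim to showing $\sup_{t}\mathbb{E}\|R^\varepsilon_t\|_H^p\to 0$ and $\mathbb{E}\int_0^T\|S^\varepsilon_t\|_{\gamma(\rK,H)}^p\,dt\to 0$. This is obtained by the dominated convergence theorem: the integrand converges to zero pointwise in $(t,\omega,\lambda)$ by continuity of $D_xb,D_x\sigma$ (together with $X^{x+\varepsilon h}\to X^x$ in $L^p$, hence in probability), and is bounded by $C\|Y^{x,h}_\varepsilon(t)\|_H$, which itself is bounded in $L^p$ uniformly in $\varepsilon$ by the $L^p$-Lipschitz estimate of the first step.

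The main technical obstacle is the $\sigma$-term, since convergence is required in the $\gamma(\rK,H)$-norm whereas the Gateaux continuity of $\sigma$ is stated with values in $\mathscr{L}(E,H)$; however, the uniform $\gamma(\rK,H)$-bound on $D_x\sigma(t,\omega,x)h$ provides a $\gamma$-radonifying dominating function which, combined with the pointwise continuity, allows dominated convergence in the $\gamma(\rK,H)$-norm (this is exactly where the assumption must be formulated with $\gamma$-bounds rather than operator-norm bounds). Finally, continuity of $x\mapsto A^{x,h}=D_xX(x)h$ in $C([0,T],L^p(\Omega,H))$ follows from an analogous perturbation argument applied to \eqref{eq:SDE_derivative}: for $x_n\to x$ in $H$, one compares $A^{x_n,h}$ and $A^{x,h}$ via Gronwall, using that $D_xb(X^{x_n})\to D_xb(X^x)$ and $D_x\sigma(X^{x_n})\to D_x\sigma(X^x)$ in the appropriate senses because $X^{x_n}\to X^x$ in $L^p$ and the derivatives are continuous.
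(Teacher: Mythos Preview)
Your proposal is correct and matches the approach the paper intends: the paper does not give its own proof of this theorem but simply states that it ``follows from \cite[Theorem 9.8]{daPaZ}'', and your argument is precisely the standard Da~Prato--Zabczyk proof from that reference.

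One small clarification regarding what you call the ``main technical obstacle'' for the $\sigma$-term: there is in fact no obstacle here. By Theorem~\ref{Th:3.1}(ii), the map $\mathscr{L}(E,H)\ni A\mapsto A\circ i\in\gamma(\rK,H)$ is bounded, so continuity of $(x,h)\mapsto D_x\sigma(t,\omega,x)h$ in the $\mathscr{L}(E,H)$-norm \emph{directly} implies continuity in the $\gamma(\rK,H)$-norm; no separate dominated-convergence trick is needed to bridge the two norms. This is also why the paper remarks after the theorem that the hypothesis could be weakened to continuity in $\gamma(\rK,H)$ only (as in \cite{daPaZ}), the $\mathscr{L}(E,H)$ assumption being strictly stronger.
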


\begin{rem}
According to \cite[Theorem 9.8]{daPaZ}, we can relax the assumption on $\sigma$, asking Gateaux-differentiability of $\sigma(t,\omega,\cdot)$, as well as continuity of its derivative, with respect to the $\gamma(\rK,H)$ norm rather than the $\mathscr{L}(E,H)$ norm. However, in view of the application to Stratonovich SDEs, we will not need this extension.
\end{rem}

\begin{cor}\label{cor:strong_Gateaux}
Under the assumptions of Theorem \ref{thm:deriv_IC_H}, for every finite $p\ge 2$, the map
\begin{align*}
H\ni x\mapsto X^x \in C([0,T],L^p(\Omega,H))
\end{align*}
is strongly Gateaux-differentiable.
\end{cor}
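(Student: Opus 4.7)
The strategy is to combine the Gateaux differentiability already supplied by Theorem \ref{thm:deriv_IC_H} with a Lipschitz bound on $x\mapsto X^x$, and then invoke Lemma \ref{la:Gateaux}. Recall that Lemma \ref{la:Gateaux} asserts that a Gateaux-differentiable map between normed spaces that is (globally) Lipschitz is automatically strongly Gateaux-differentiable. Since $H$ is a Hilbert space and $C([0,T],L^p(\Omega,H))$ is a Banach space with the sup-in-time norm $\|Y\|_{C_T L^p} = \sup_{t\in[0,T]}\|Y_t\|_{L^p(\Omega,H)}$, the hypotheses of Lemma \ref{la:Gateaux} are available once Lipschitz continuity is established.

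The only nontrivial point is therefore the Lipschitz estimate: for all $x,y\in H$,
\begin{align*}
\sup_{t\in[0,T]}\mathbb{E}\bigl[\|X^x_t-X^y_t\|_H^p\bigr]^{1/p}\;\le\; C_{T,p}\,\|x-y\|_H.
\end{align*}
This is a routine consequence of the standing assumptions in Theorem \ref{thm:deriv_IC_H}. First I would write the difference as
\begin{align*}
X^x_t-X^y_t \;=\; (x-y) + \int_0^t \bigl(b(s,\omega,X^x_s)-b(s,\omega,X^y_s)\bigr)\,ds + \int_0^t \bigl(\sigma(s,\omega,X^x_s)-\sigma(s,\omega,X^y_s)\bigr)\,dW_s,
\end{align*}
then take the $p$-th power of the $H$-norm, apply the triangle inequality, Jensen for the Lebesgue integral, and the Burkholder inequality \eqref{eqn-Burkholder} for the stochastic integral. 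Using the global (uniform in $t,\omega$) Lipschitz bounds on $b$ and $\sigma$ (with respect to the $\gamma(\rK,H)$-norm for $\sigma$) one obtains
\begin{align*}
\mathbb{E}\bigl[\|X^x_t-X^y_t\|_H^p\bigr] \;\le\; C_p\,\|x-y\|_H^p + C_p\int_0^t \mathbb{E}\bigl[\|X^x_s-X^y_s\|_H^p\bigr]\,ds,
\end{align*}
and Gronwall's lemma yields the desired Lipschitz estimate with constant $C_{T,p}=C_p^{1/p}e^{C_p T/p}$.

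Once Lipschitz continuity is in hand, the conclusion follows immediately: Theorem \ref{thm:deriv_IC_H} provides the Gateaux derivative at every $x\in H$, and Lemma \ref{la:Gateaux} then upgrades this to strong Gateaux differentiability. I expect no substantial obstacle; the only delicate point is to verify that the Burkholder constant involves the $\gamma(\rK,H)$-norm of $\sigma(s,\omega,X^x_s)-\sigma(s,\omega,X^y_s)$ rather than the operator norm, which is exactly the hypothesis imposed in Theorem \ref{thm:deriv_IC_H}. Everything else is an entirely standard application of the moment estimates for It\^o SDEs with globally Lipschitz and bounded coefficients.
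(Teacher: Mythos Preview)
Your proposal is correct and follows essentially the same approach as the paper: establish a Lipschitz estimate for $x\mapsto X^x$ via the SDE, Burkholder and Gronwall, then apply Lemma \ref{la:Gateaux} together with the Gateaux differentiability from Theorem \ref{thm:deriv_IC_H}. The only cosmetic difference is that the paper proves the slightly stronger bound $\mathbb{E}\sup_{t\le T}\|X^x_t-X^y_t\|_H^p \le C\|x-y\|_H^p$ (sup inside the expectation) and then observes this implies Lipschitz continuity in $C([0,T],L^p(\Omega,H))$, whereas you directly estimate $\sup_{t\le T}\mathbb{E}\|X^x_t-X^y_t\|_H^p$; both suffice.
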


\begin{proof}
We use an argument close to \cite[Lemma 8D]{Elw82}. We first get a standard Lipschitz bound on the solution $X$ with respect to the initial condition. We fix $x,y \in H$. Using the Lipschitz condition on $b$ and $\sigma$ and the Burkholder-Davis-Gundi inequality \eqref{eqn-Burkholder-2}, we have
\begin{align*}
\mathbb{E}\sup_{t\le T}\|X^x_t-X^y_t\|_H^p &\le C\|x-y\|_H^p + C \mathbb{E} \sup_{t\le T}\left\| \int_0^t (b(r,\omega,X^x_r) -b(r,\omega,X^y_r))\, dr \right\|_H^p\\
&\quad + C \mathbb{E} \sup_{t\le T}\left\| \int_0^t (\sigma(r,\omega,X^x_r) -\sigma(r,\omega,X^y_r))\, dW_r \right\|_H^p\\
&\le C\|x-y\|_H^p + C \int_0^T \mathbb{E}\|b(r,\omega,X^x_r) -b(r,\omega,X^y_r)\|_H^p dr \\
&\quad+ C \int_0^T \mathbb{E}\|\sigma(r,\omega,X^x_r) -\sigma(r,\omega,X^y_r)\|_{\gamma(\rK,H)}^p dr\\
&\le C\|x-y\|_H^p + C \int_0^T \mathbb{E}\sup_{t\le r}\|X^x_t-X^y_t\|_H^p dr.
\end{align*}
Applying Gronwall`s inequality, cf.\ \cite[30.8]{Sche97}, we get
\begin{align*}
\mathbb{E}\sup_{t\le T}\|X^x_t-X^y_t\|_H^p &\le C\|x-y\|_H^p.
\end{align*}
Hence $x\mapsto X^x$ is Lipschitz continuous as a $L^p(C([0,T],H))$-valued map and so in particular as a $C([0,T],L^p(\Omega,H))$-valued map. From Lemma \ref{la:Gateaux} we thus conclude that $x\mapsto X^x$ is strongly Gateaux differentiable. The proof is complete.
\end{proof}

\begin{cor}\label{cor:strong_Gateaux_stopped}
Let $\rho$ be a stopping time. Under the assumptions of Theorem \ref{thm:deriv_IC_H}, for every finite $p\ge 2$, the map
\begin{align*}
H\ni x\mapsto X^{x,\rho}\colonequals X^x_{\cdot\wedge \rho} \in C([0,T],L^p(\Omega,H))
\end{align*}
is Gateaux-differentiable at every point $x$, with continuous derivative $D_xX^{\cdot,\rho}$. The above map is also strongly Gateaux-differentiable,
Finally, for every $x$ and $h \in H$, there exists a time-continuous version of $A^{x,h,\rho}:=D_xX^{\cdot,\rho}h$ such that $A^{x,h,\rho}=A^{x,h}_{\cdot\wedge \rho}$, where $A^{x,h}$ is the time-continuous version of $D_xX(x)h$ as from Theorem \ref{thm:deriv_IC_H}.
\end{cor}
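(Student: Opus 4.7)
The plan is to reduce to Theorem \ref{thm:deriv_IC_H} and Corollary \ref{cor:strong_Gateaux} by rewriting $X^{x,\rho}$ as the unique solution to an SDE whose coefficients are ``switched off'' after $\rho$. First I would observe that, since $X^x$ has continuous paths and $\int_0^{t\wedge\rho}\psi(r)\,dr=\int_0^t \mathds{1}_{[0,\rho)}(r)\psi(r)\,dr$ both in the Lebesgue and in the It\^o sense (the latter by the Localization Theorem \ref{thm-localization} or Proposition \ref{prop-def-Ito integral for stopping time-3}), the stopped process $X^{x,\rho}=X^x_{\cdot\wedge\rho}$ satisfies
\begin{align*}
X^{x,\rho}_t=x+\int_0^t \tilde{b}(r,\omega,X^{x,\rho}_r)\,dr+\int_0^t \tilde{\sigma}(r,\omega,X^{x,\rho}_r)\,dW_r,
\end{align*}
where $\tilde{b}(r,\omega,y):=\mathds{1}_{[0,\rho(\omega))}(r)b(r,\omega,y)$ and $\tilde{\sigma}(r,\omega,y):=\mathds{1}_{[0,\rho(\omega))}(r)\sigma(r,\omega,y)$, using that $X^x_r=X^{x,\rho}_r$ for $r<\rho$. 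For measurability purposes I would replace $\mathds{1}_{[0,\rho)}$ by the left-continuous adapted (hence predictable) indicator $\mathds{1}_{[0,\rho]}$, which differs from it only on $\{r=\rho\}$, negligible in both integrals.

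Next I would verify that $\tilde{b}$ and $\tilde{\sigma}$ satisfy the hypotheses of Theorem \ref{thm:deriv_IC_H}: the boundedness and global Lipschitz conditions in the state variable are preserved (the indicator takes values in $\{0,1\}$), and the Gateaux derivatives in the state variable are just $\mathds{1}_{[0,\rho)}D_yb$ and $\mathds{1}_{[0,\rho)}D_y\sigma$, which remain continuous and bounded uniformly in $(r,\omega)$. Applying Theorem \ref{thm:deriv_IC_H} to this SDE then yields Gateaux-differentiability of $x\mapsto X^{x,\rho}$ in $C([0,T],L^p(\Omega,H))$ for every finite $p\ge 2$, with continuous derivative $\tilde{A}^{x,h}:=D_xX^{\cdot,\rho}h$ admitting a time-continuous version solving
\begin{align*}
d\tilde{A}^{x,h}_t=\mathds{1}_{[0,\rho)}(t)D_yb(X^{x,\rho}_t)\tilde{A}^{x,h}_t\,dt+\mathds{1}_{[0,\rho)}(t)D_y\sigma(X^{x,\rho}_t)\tilde{A}^{x,h}_t\,dW_t,\quad \tilde{A}^{x,h}_0=h.
\end{align*}
Corollary \ref{cor:strong_Gateaux} applied to the same modified coefficients then upgrades this to strong Gateaux-differentiability.

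Finally, to identify $\tilde{A}^{x,h}$ with $A^{x,h}_{\cdot\wedge\rho}$, I would stop the SDE \eqref{eq:SDE_derivative} at $\rho$ and use that $X^x_r=X^{x,\rho}_r$ and $A^{x,h}_r=A^{x,h}_{r\wedge\rho}$ for $r<\rho$; hence $A^{x,h}_{\cdot\wedge\rho}$ solves exactly the same linear SDE as $\tilde{A}^{x,h}$. Uniqueness for this linear SDE with bounded Lipschitz (random) coefficients, which is a standard consequence of Proposition \ref{prop:wellposed_SDE_Ito} or of the framework behind Theorem \ref{thm:deriv_IC_H}, then gives $\tilde{A}^{x,h}=A^{x,h}_{\cdot\wedge\rho}$. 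The only genuine technical point in this scheme is the measurability check ensuring that $\tilde{b},\tilde{\sigma}$ are predictable in the sense required by \cite[Theorem 9.8]{daPaZ}, which is precisely why using the left-continuous indicator $\mathds{1}_{[0,\rho]}$ is convenient; everything else is a routine transcription of results already at our disposal.
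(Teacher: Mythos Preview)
Your proposal is correct and follows essentially the same route as the paper: introduce the stopped coefficients $1_{[0,\rho]}b$, $1_{[0,\rho]}\sigma$ (predictable because left-continuous and adapted), apply Theorem~\ref{thm:deriv_IC_H} and Corollary~\ref{cor:strong_Gateaux} to the resulting SDE, and then identify the derivative with $A^{x,h}_{\cdot\wedge\rho}$ via uniqueness for the linear SDE. The paper carries out the last step by an explicit BDG/Gronwall estimate on the difference $\Delta A=A^{x,h}_{\cdot\wedge\rho}-A^{x,h,\rho}$, which amounts to exactly the uniqueness argument you invoke.
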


\begin{proof}
The statements about Gateaux-differentiability with continuous derivative and strong Gateaux-differentiability follow from Theorem \ref{thm:deriv_IC_H} and Corollary \ref{cor:strong_Gateaux} applied to the SDE for $X^{x,\rho}$
\begin{align*}
dX^{x,\rho} = b^\rho(t,\omega,X^{x,\rho}_t)dt + \sigma^\rho(t,\omega,X^{x,\rho}_t)dW_t,
\end{align*}
where $b^\rho = 1_{t\le \rho}b$ and $\sigma^\rho = 1_{t\le \rho}\sigma$ (note that $1_{t\le \rho}$ is adapted and left-continuous, hence predictable).
To show the identity $A^{x,h,\rho}=A^{x,h}_{\cdot\wedge \rho}$, we note that, again by Theorem \ref{thm:deriv_IC_H}, $A^{x,h,\rho}$ (precisely its time-continuous version) satisfies the SDE \eqref{eq:SDE_derivative} on $t\le \rho$, hence we have, for $\Delta A:= A^{x,h}_{t\wedge \rho}-A^{x,h,\rho}_t$,
\begin{align*}
&d\Delta A_t= 1_{t\le \rho}D_xb(X^x_t)\Delta A_tdt +1_{t\le \rho} D_x\sigma(X^x_t)\Delta A_tdW_t,\\
&\Delta A_0=0.
\end{align*}
Using the boundedness assumptions on $D_xb$ and $D_x\sigma$ and the Burkholder-Davis-Gundi inequality \eqref{eqn-Burkholder-2}, we have
\begin{align*}
E\sup_{t\le T}\|\Delta A_t\|_H^2 &
\le C \int_0^T 1_{t\le \rho} (E\|D_xb(r,\omega,X^x_r)\Delta A_r\|_H^2 +E\|D_x\sigma(r,\omega,X^x_r)\Delta A_r\|_{\gamma(\rK,H)}^2)\, dr\\
&\le C \int_0^T E\sup_{t\le r}\|\Delta A_t\|_H^2 dr.
\end{align*}
Applying Gronwall inequality, we get that $\Delta A=0$, that is $A^{x,h,\rho}=A^{x,h}_{\cdot\wedge \rho}$. The proof is complete.
\end{proof}

\subsection*{The case of SDE on embedded Hilbert manifolds}
\addcontentsline{toc}{subsection}{The case of SDE on embedded Hilbert manifolds}
Now we consider an SDE on a (separable) Hilbert manifold $M$. Let $W$ be a Wiener process with values in a separable Hilbert space $E$, let $b\colon M\to TM$ be a continuous section and let $\sigma \colon M\to \mathscr{L}(E,TM)$ be a $C^1$ section. We consider the (Stratonovich) SDE on $M$
\begin{align}
\begin{split}\label{eq:SDE_M_x0}
&dX_t = b(X_t)dt +\sigma(X_t)\bullet dW_t,\\
&X_0=x_0.
\end{split}
\end{align}
Also here, to highlight the dependence on $x_0$, we will write $X^{x_0}$ for the solution to the above SDE. We call $\tau=\tau^{x_0}$ the maximal existence time of the above SDE and $\tau_n=\tau^{x_0}_n$ an announcing sequence for $\tau^{x_0}$; by possibly replacing $\tau_n$ with $\tau_n\wedge n$, we can assume without loss of generality that each $\tau_n$ is bounded.

\begin{setup}\label{setting:Hilbert}
From now on, we assume that $M$ is embedded in a separable Hilbert space $\hat{H}$ as split submanifold. For later use we remark that for any compact interval, the space $C([a,b],\hat{H})$ endowed with the compact open topology is separable since $\hat{H}$ is separable, \cite[Theorem 3.4.16]{Eng89} the maximal solution $X=(X_t)_{t\in [0,\tau)}$ to the SDE \eqref{eq:SDE_M_x0}, as an $\hat{H}$-valued process, solves on $[0,\tau)$
\begin{align*}
dX_t = \newtilde{b}(X_t)dt +\tilde\sigma(X_t)dW_t,
\end{align*}
where, for $x \in M$,
\begin{align*}
&\newtilde{b}(x)=b(x)+\frac12\tr_{\rK}[D\sigma\circ \sigma(x) ],\\
&\newtilde{\sigma}(x) = \sigma(x),
\end{align*}
and, with a little abuse of notation, we have identified $M$ with its embedding in $\hat{H}$.
\end{setup}
Hence we can now consider the SDE \eqref{eq:SDE_M_x0} as an It\^o SDE on a Hilbert space $\hat{H}$, though the drift and diffusion coefficients are only defined on $M$.

The strategy of showing differentiability, inspired by \cite[Chapter VII, Sections 7 and 8]{Elw82}, is to extend the coefficients to an open set in $\hat{H}$ and then apply the differentiability result Theorem \ref{thm:deriv_IC_H} on Hilbert spaces. For this, a key trick is that, up to small $\mathbb{P}$-measure sets in $\Omega$, the solution lives on a compact set in $M$, hence it is enough to extend the coefficients in a neighborhood of this compact set.

For $x_0 \in M$, recall that $\tau=\tau^{x_0}$ is the maximal existence time of the SDE \eqref{eq:SDE_M_x0} and $\tau_n=\tau^{x_0}_n<\tau^{x_0}$ is a corresponding announcing sequence of bounded stopping times; take $T_n$ a (deterministic) upper bound for $\tau_n$. We start with the following:

\begin{lem}\label{lem:cpt_1}
Fix $x_0 \in M$, $n$ positive integer. For every $\epsilon>0$, there exists a compact set $K=K_\epsilon$ such that
\begin{align}\label{eq:path_cpt}
\mathbb{P}\{X^{x_0}_t\in K\,\,\forall t\in[0,\tau^{x_0}_n]\} \ge 1-\epsilon.
\end{align}
\end{lem}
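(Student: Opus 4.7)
The plan is to prove the lemma as a tightness statement on a Polish path space. Since the stopping time $\tau^{x_0}_n$ is dominated by $T_n$ and is a.s. strictly less than the maximal existence time $\tau^{x_0}$, the stopped process
\begin{equation*}
Y_t \coloneq X^{x_0}_{t \wedge \tau^{x_0}_n}, \quad t \in [0, T_n],
\end{equation*}
is a.s. a continuous $M$-valued path on the compact interval $[0,T_n]$, and $X^{x_0}_t = Y_t$ for $t \in [0, \tau^{x_0}_n]$. So the problem reduces to finding a compact set of paths $\mathcal{K}$ carrying the law of $Y$ up to mass $\epsilon$, then taking the trace of those paths as the required compact $K \subseteq M$.

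The key structural input is that $M$ is Polish. Via the embedding $i\colon M \hookrightarrow H$ as a split submanifold (Setting \ref{setting:Hilbert}), $i(M)$ is locally closed in $H$: in each submanifold chart $(U_\varphi,\varphi)$ one has $i(M) \cap U_\varphi = \varphi^{-1}(V_\varphi \cap F_\varphi)$ for some (sequentially) closed subspace $F_\varphi \subseteq H$, so $i(M) \cap U_\varphi$ is closed in $U_\varphi$. Locally closed subsets of a Polish space are $G_\delta$, hence Polish by Alexandrov's theorem. Because $i$ is a topological embedding, the manifold topology on $M$ coincides with the subspace topology on $i(M) \subseteq H$, so $M$ itself is Polish, and consequently $C([0,T_n],M)$ endowed with the uniform topology is Polish as well.

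The law of $Y$ is then a Borel probability measure on this Polish space, and Ulam's theorem yields tightness. Given $\epsilon > 0$, pick a compact set $\mathcal{K} \subseteq C([0,T_n],M)$ with $\mathbb{P}\{Y \in \mathcal{K}\} \ge 1 - \epsilon$, and set
\begin{equation*}
K \coloneq \{\, f(t) : f \in \mathcal{K},\ t \in [0,T_n] \,\}.
\end{equation*}
As the image of the compact product $\mathcal{K} \times [0,T_n]$ under the jointly continuous evaluation map $C([0,T_n],M) \times [0,T_n] \to M$, the set $K$ is compact in $M$. On the event $\{Y \in \mathcal{K}\}$, which has probability at least $1-\epsilon$, one has $X^{x_0}_t = Y_t \in K$ for every $t \in [0,\tau^{x_0}_n]$, giving the inequality \eqref{eq:path_cpt}.

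The only step that is not purely formal is establishing the Polish structure on $M$, which relies on the local-closedness of split submanifolds in $H$ together with the compatibility between the submanifold and subspace topologies; once this is in hand, the rest is a routine combination of Ulam tightness with the Arzelà-type continuity of evaluation on compact-open path spaces.
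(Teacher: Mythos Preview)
Your proof is correct and follows the same strategy as the paper: apply inner regularity (Ulam/Prokhorov) to the law of the stopped path $X^{x_0}_{\cdot\wedge\tau_n}$ on a Polish path space, then take the image of the resulting compact set of paths under the continuous evaluation map. The only difference is that the paper works directly in $C([0,T_n],H)$, where Polishness is immediate from separability of $H$, whereas you work in $C([0,T_n],M)$ and therefore need the extra (correct) argument that $i(M)$ is locally closed in $H$; this buys you $K\subseteq M$ rather than merely $K\subseteq H$.
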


\begin{proof}
We omit the superscript $x_0$ from the notation. We consider the probability measure $\text{Law}(X_{\cdot\wedge \tau_n})$ on $C([0,T_n],\hat{H})$. Due to \ref{setting:Hilbert} we can apply the Prokhorov Theorem, \cite[Theorem 11.5.4]{Dud02}, to this single probability measure. For every $\epsilon>0$, there exists a compact set $\newtilde{K}$ in $C([0,T_n],\hat{H})$ such that
\begin{align*}
\mathbb{P}\{X_{\cdot\wedge \tau_n} \in \newtilde{K}\}\ge 1-\epsilon.
\end{align*}
Now it is enough to take $K$ as the image of the compact set $[0,T_n]\times \tilde K$ through the continuous evaluation map $[0,T_n] \times C([0,T_n],\hat{H})\rightarrow \hat{H}, (t,\gamma)\mapsto \gamma(t)$. The proof is complete.
\end{proof}

In the next lemma, we show a continuity property of $X$ with respect the initial condition, and a lower semi-continuity property for the maximal existence time.

\begin{lem}\label{lem:SDE_Lip}
Assume that $\tilde b\colon M\to \hat{H}$ and $\tilde \sigma\colon M\to \mathscr{L}(E,\hat{H})$ are locally Lipschitz maps with respect to the $\hat{H}$ norm and the operator norm, respectively. Fix $x_0 \in M$, $n\in \N$. Let $x^m$, $m\in\mathbb{N}$, be a sequence in $M$ converging to $x_0$, let $X^{x_m}$ be the corresponding solutions, with $\tau^{x_m}$ the associated maximal existence times. Then we have, as $m\to \infty$,
\begin{align*}
&\mathbb{P}\{\tau^{x_m}\le \tau^{x_0}_n\}\to 0,\\
&X^{x_m}_{\cdot\wedge \tau^{x_0}_n}1_{\tau^{x_m}> \tau^{x_0}_n}\to X^{x_0}_{\cdot\wedge \tau^{x_0}_n}\quad \text{in probability on }C([0,T_n],\hat{H}).
\end{align*}
\end{lem}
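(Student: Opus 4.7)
My plan follows the localization strategy used in \cite[Chapter VII]{Elw82}: reduce the problem to an SDE with globally Lipschitz coefficients on the ambient Hilbert space $H$ and then apply standard stability estimates. Fix $\epsilon>0$. By Lemma~\ref{lem:cpt_1} I choose a compact set $K\subseteq M$ with $\mathbb{P}(A_\epsilon)\ge 1-\epsilon$, where $A_\epsilon:=\{X^{x_0}_t\in K\text{ for all }t\in[0,\tau^{x_0}_n]\}$. I then pick a bounded open neighborhood $U\subseteq H$ of $K$, chosen small enough that $U\cap M$ is closed in $U$; such a $U$ exists by covering the compact set $K$ with finitely many split-submanifold charts of $M$. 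Using the local Lipschitz hypothesis on $\tilde b,\tilde\sigma$, a Kirszbraun/McShane-type extension of their restrictions to $\bar U\cap M$, and a bounded smooth cut-off on $H$ supported near $U$ (whose existence follows from the $C^1_b$-paracompactness of $H$ in Lemma~\ref{lem:Hilbert_paracompact}), I construct globally bounded, globally Lipschitz extensions $\bar b\colon H\to H$ and $\bar\sigma\colon H\to\gamma(\rK,H)$ such that $\bar b=\tilde b$ and $\bar\sigma=\tilde\sigma$ on $U\cap M$.

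By Proposition~\ref{prop:wellposed_SDE_Ito} the auxiliary It\^o SDE
\[
dY=\bar b(Y)\,dt+\bar\sigma(Y)\,dW,\qquad Y_0=x,
\]
admits a unique global solution $Y^x$ for each $x\in H$, and a classical Burkholder--Gronwall estimate based on \eqref{eqn-Burkholder-2} yields
\[
\mathbb{E}\sup_{t\in[0,T_n]}\|Y^{x_m}_t-Y^{x_0}_t\|_H^2\le C\|x_m-x_0\|_H^2,
\]
with $C$ depending only on $T_n$, $\bar b$ and $\bar\sigma$. Choose $\delta>0$ so small that the closed $\delta$-neighborhood $K_\delta$ of $K$ in $H$ is still contained in $U$; by Markov's inequality, $\mathbb{P}(B_{\epsilon,m}^c)\to 0$ as $m\to\infty$, where $B_{\epsilon,m}:=\{\sup_{t\le T_n}\|Y^{x_m}_t-Y^{x_0}_t\|_H\le\delta\}$.

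Since $\bar b,\bar\sigma$ agree with $\tilde b,\tilde\sigma$ on $U\cap M$, the coincidence criterion Lemma~\ref{lem:coincidence} combined with Lemma~\ref{cor:change_chart} shows that $X^{x_0}=Y^{x_0}$ on $[0,\tau^{x_0}_n]$ on the event $A_\epsilon$, and that $X^{x_m}$ agrees with $Y^{x_m}$ as long as $X^{x_m}$ stays in $U$. On $A_\epsilon\cap B_{\epsilon,m}$ one has $Y^{x_0}\in K$ and $\|Y^{x_m}-Y^{x_0}\|_H\le\delta$ on $[0,\tau^{x_0}_n]$, hence $Y^{x_m}\in K_\delta\subseteq U$ there. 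If $\tau^{x_m}\le\tau^{x_0}_n$ on this event, then $X^{x_m}=Y^{x_m}\in K_\delta$ on $[0,\tau^{x_m})$, and by the choice of $U$ the limit $\lim_{t\uparrow\tau^{x_m}}X^{x_m}_t=Y^{x_m}_{\tau^{x_m}}$ exists in $U\cap M\subseteq M$, contradicting the blow-up criterion Lemma~\ref{lem:explosion_maximal_time}. Therefore $\tau^{x_m}>\tau^{x_0}_n$ and $X^{x_m}_{\cdot\wedge\tau^{x_0}_n}=Y^{x_m}_{\cdot\wedge\tau^{x_0}_n}$ on $A_\epsilon\cap B_{\epsilon,m}$.

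Combining the preceding steps,
\[
\mathbb{P}(\tau^{x_m}\le\tau^{x_0}_n)\le\mathbb{P}(A_\epsilon^c)+\mathbb{P}(B_{\epsilon,m}^c)\le\epsilon+o(1),
\]
so sending first $m\to\infty$ and then $\epsilon\to 0$ yields the first convergence. The second convergence is immediate from the uniform bound $\sup_{t\le T_n}\|Y^{x_m}_t-Y^{x_0}_t\|_H\le\delta$ on $A_\epsilon\cap B_{\epsilon,m}$, since $X^{x_m}_{\cdot\wedge\tau^{x_0}_n}1_{\tau^{x_m}>\tau^{x_0}_n}-X^{x_0}_{\cdot\wedge\tau^{x_0}_n}=Y^{x_m}_{\cdot\wedge\tau^{x_0}_n}-Y^{x_0}_{\cdot\wedge\tau^{x_0}_n}$ on that event. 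I expect the main technical obstacle to be the careful construction of the globally Lipschitz bounded extensions $\bar b,\bar\sigma$, especially the operator-valued $\bar\sigma$, together with the verification that $U$ can be arranged so that $U\cap M$ is closed in $U$; once these geometric/analytic pieces are in place, the probabilistic estimates and the coincidence arguments are routine.
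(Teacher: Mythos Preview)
Your argument is correct but takes a different technical route from the paper. The paper does \emph{not} build global Lipschitz extensions $\bar b,\bar\sigma$ on $H$; instead it works directly with the original processes stopped at the exit times $\sigma^{x_0}_U,\sigma^y_U$ from a neighbourhood $U$ of the compact $K$ on which $\tilde b,\tilde\sigma$ are bounded and Lipschitz. A single BDG/Gronwall estimate on the doubly stopped processes gives
\[
\mathbb{E}\sup_{t\le T_n}\bigl\|X^{x_0}_{t\wedge\sigma^{x_0}_U\wedge\sigma^y_U}-X^y_{t\wedge\sigma^{x_0}_U\wedge\sigma^y_U}\bigr\|_H^p\le C\|x_0-y\|_H^p,
\]
and the paper then observes that on $\{X^{x_0}_{\cdot\wedge\tau_n}\subseteq K,\ \sigma^y_U\le\tau_n\}$ the two processes are at least $\delta_0=\mathrm{dist}(K,U^c)$ apart at time $\sigma^y_U$; Markov's inequality makes this event small, and since the blow-up criterion gives $\tau^y>\sigma^y_U$ a.s., the bound on $\mathbb{P}\{\tau^{x_m}\le\tau_n\}$ follows.

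Your extension-plus-auxiliary-SDE device is precisely the one the paper employs later in the proof of Theorem~\ref{thm-Strat_diff_manifold} (there via Lemma~\ref{lem:bounded_extension}), so your approach is in the spirit of the article and makes the two arguments more uniform. Its cost is exactly the technical work you flag: the operator-valued Lipschitz extension (feasible since $\gamma(\rK,H)$ is a Hilbert space, so Kirszbraun applies after composing $\tilde\sigma$ with $\rK\hookrightarrow E$) and the requirement that $U\cap M$ be closed in $U$ (available because a split submanifold is locally closed). The paper's exit-time argument bypasses the extension step entirely and is shorter for this particular lemma; both approaches ultimately rely on the same local closedness of $M$ in $H$ when invoking Lemma~\ref{lem:explosion_maximal_time}, which you make explicit and the paper leaves implicit.
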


\begin{proof}
We fix $\epsilon>0$ and we take a compact set $K=K_{\epsilon}$ such that \eqref{eq:path_cpt} holds. Since $K$ is compact, there exists an open neighborhood $U$ of $K$ such that (using $\|\cdot\|_{C^{0,1}(B,B')}$ for the Lipschitz norm from set $B$ to set $B'$)
\begin{align}\label{eq:Lip_U}
\|\tilde b\|_{C^{0,1}(\bar U\cap M,\hat{H})} + \|\tilde \sigma\|_{C^{0,1}(\bar U\cap M,\mathscr{L}(E,\hat{H}))} <\infty.
\end{align}

We take $y \in U$ and show first that, for the solution $X^y$ to the SDE \eqref{eq:SDE_M_x0} (with $y$ as initial condition), the maximal existence time $\tau^y$ satisfies $\tau^y>\sigma^y_U$  a.s., where
\begin{align*}
\sigma^y_U= \inf\{t\ge 0\mid X^y_t\notin U\}\wedge \tau^y
\end{align*}
is the exit time of $X^y$ from $U$. Indeed, for every $\omega$ in the set $\{\tau^y= \sigma^y_U\}$, we have
\begin{align*}
\sup_{t<\tau^y(\omega)} \|\tilde b(X^y_t(\omega))\|_{\hat{H}} + \|\tilde \sigma(X^y_t(\omega))\|_{\mathscr{L}(E,\hat{H})} <\infty;
\end{align*}
therefore we can extend by continuity $X^y$ to $t=\tau(\omega)$, but, by Lemma \ref{lem:explosion_maximal_time}, this is only possible on a $\mathbb{P}$-zero measure set. We conclude that $\tau^y>\sigma^y_U$  a.s. and so the solution $X^y$ is defined  a.s. up to $\sigma^y_U$.

Now we give a Lipschitz property of the solution with respect to the initial condition, when in $U$. Precisely, calling $\sigma^{x_0}_U$ the exit time of $X^{x_0}$ from $U$, we exploit the Lipschitz bound \eqref{eq:Lip_U} on $b$ and $\sigma$, the Burkholder-Davis-Gundi inequality \eqref{eqn-Burkholder-2} and the Gronwall lemma \cite[30.8]{Sche97} as in the proof of Corollary \ref{cor:strong_Gateaux}, to get, for any $p\ge 2$,
\begin{align*}
\mathbb{E}\sup_{t\le T_n} \|X^{x_0}_{t\wedge \sigma^{x_0}_U\wedge \sigma^y_U}-X^y_{t\wedge \sigma^{x_0}_U\wedge \sigma^y_U}\|_{\hat{H}}^p \le C\|x_0-y\|_{\hat{H}}^p.
\end{align*}
In particular, we get, for every $y \in U$,
\begin{align*}
\mathbb{P}\{\sup_{t\le T_n} \|X^{x_0}_{t\wedge \sigma^{x_0}_U\wedge \sigma^y_U}-X^y_{t\wedge \sigma^{x_0}_U\wedge \sigma^y_U}\|_{\hat{H}} \ge \delta \} \le C\frac{\|x_0-y\|_{\hat{H}}^p}{\delta^p}.
\end{align*}
We take $\delta_0=\inf\{\|x-x'\|_{\hat{H}} \mid x\in K,y\in U^c\}$ the distance between $K$ and $U^c$. We have then (we write $\tau_n$ for $\tau^{x_0}_n$)
\begin{align*}
\{X^{x_0}_{t\wedge \tau_n}\in K\,\forall t,\,\, \tau_n\ge \sigma^y_U\}
&\subseteq \{\sup_{t\le T_n} \|X^{x_0}_{t\wedge \sigma^{x_0}_U\wedge \sigma^y_U}-X^y_{t\wedge \sigma^{x_0}_U\wedge \sigma^y_U}\|_{\hat{H}} \ge \delta_0\}
\end{align*}
We choose $R>0$ such that $(R/\delta_0)^p<\epsilon$. Then, for all $y \in B_R(x_0)$, we get
\begin{align}
\begin{split}\label{eq:lsc_time}
\mathbb{P}\{\tau^{x_0}_n\ge \tau^y\} &\le \mathbb{P}\{\tau^{x_0}_n\ge \sigma^y_U\}\\
&\le \mathbb{P}\{X^{x_0}_{t\wedge \tau^n}\notin K\text{ for some } t\} + \mathbb{P}\{X^{x_0}_{t\wedge \tau^n}\in K\,\forall t,\,\, \tau_n\ge \sigma^y_U\}\\
&\le \epsilon +C\frac{\|x_0-y\|^p}{\delta_0^p} \le (1+C)\epsilon.
\end{split}
\end{align}
Moreover we have
\begin{align}
\begin{split}\label{eq:cont_prob}
&\mathbb{P}\{\tau^{x_0}_n\ge \tau^y \text{ or }\sup_{t\le T_n} \|X^{x_0}_{t\wedge \tau^{x_0}_n}-X^y_{t\wedge \tau^{x_0}_n}\|_{\hat{H}} \ge \delta \}\\
&\le \mathbb{P}\{\tau^{x_0}_n\ge \sigma^y_U \text{ or }\sup_{t\le T_n} \|X^{x_0}_{t\wedge \tau^{x_0}_n}-X^y_{t\wedge \tau^{x_0}_n}\|_{\hat{H}} \ge \delta \}\\
&\le \mathbb{P}\{\tau^{x_0}_n\ge \sigma^y_U\} + \mathbb{P}\{X^{x_0}_{t\wedge \tau^n}\notin K\text{ for some } t\}\\
&\quad +\mathbb{P}\{\sup_{t\le T_n} \|X^{x_0}_{t\wedge \sigma^{x_0}_U\wedge \sigma^y_U}-X^y_{t\wedge \sigma^{x_0}_U\wedge \sigma^y_U}\|_{\hat{H}} \ge \delta_0\}\\
&\le (1+C)\epsilon + \epsilon +C\frac{\|x_0-y\|^p}{\delta^p} \le 2(1+C)\epsilon.
\end{split}
\end{align}
Choosing $m$ high enough (such that $y_m$ belongs to $B_R(x_0)$), the inequalities \eqref{eq:lsc_time} and \eqref{eq:cont_prob} imply the desired result.
\end{proof}

\begin{lem}\label{lem:cpt_2}
In the setting of Lemma \ref{lem:SDE_Lip}, for every $\epsilon>0$, there exists a compact set $K=K_\epsilon$ such that
\begin{align*}
\mathbb{P}\{X^{x_m}_t 1_{\tau^{x_m}>\tau^{x_0}_n} \in K,\,\,\forall t\in[0,\tau^{x_0}_n]\} \ge 1-\epsilon,\quad \text{for every }m\in\mathbb{N}_0.
\end{align*}
\end{lem}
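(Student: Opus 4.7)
The plan is to upgrade the single-law Prokhorov argument used in Lemma \ref{lem:cpt_1} to a uniform tightness statement for the whole family $\{X^{x_m}_{\cdot\wedge\tau^{x_0}_n}1_{\tau^{x_m}>\tau^{x_0}_n}\}_{m\in\mathbb{N}_0}$, and then push forward through the evaluation map, exactly as in the proof of Lemma \ref{lem:cpt_1}.

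For every $m\in\mathbb{N}_0$, define the $C([0,T_n],H)$-valued random variable
\begin{align*}
Y^m_t \coloneq X^{x_m}_{t\wedge \tau^{x_0}_n}\,1_{\{\tau^{x_m}>\tau^{x_0}_n\}},\qquad t\in[0,T_n].
\end{align*}
Thus, on $\{\tau^{x_m}\le\tau^{x_0}_n\}$ the trajectory $Y^m$ is the constant zero path, while on $\{\tau^{x_m}>\tau^{x_0}_n\}$ it agrees with $X^{x_m}$ up to time $\tau^{x_0}_n$ and is constant afterwards. For $t\in[0,\tau^{x_0}_n]$ we have $Y^m_t=X^{x_m}_t 1_{\{\tau^{x_m}>\tau^{x_0}_n\}}$, so the left-hand side of the statement of the lemma equals $\mathbb{P}\{Y^m_t\in K,\ \forall t\in[0,\tau^{x_0}_n]\}$.

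Next, I would invoke Lemma \ref{lem:SDE_Lip}: since $\mathbb{P}\{\tau^{x_m}\le \tau^{x_0}_n\}\to 0$ and $X^{x_m}_{\cdot\wedge\tau^{x_0}_n}1_{\tau^{x_m}>\tau^{x_0}_n}\to X^{x_0}_{\cdot\wedge\tau^{x_0}_n}$ in probability on $C([0,T_n],H)$, the sequence $Y^m$ converges in probability (and hence in distribution) to $Y^0=X^{x_0}_{\cdot\wedge\tau^{x_0}_n}$ in the Polish space $C([0,T_n],H)$ (separability of $C([0,T_n],H)$ was noted in Setup \ref{setting:Hilbert}). A weakly convergent sequence of Borel probability measures on a Polish space is tight, so by Prokhorov's theorem there exists a compact set $\widetilde{K}\subseteq C([0,T_n],H)$ with $\mathbb{P}\{Y^m\in\widetilde{K}\}\ge 1-\epsilon$ for every $m\in\mathbb{N}_0$. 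Replacing $\widetilde{K}$ by $\widetilde{K}\cup\{0\}$ (still compact), I may further assume that the constant zero path belongs to $\widetilde{K}$.

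Finally, define
\begin{align*}
K \coloneq \{\gamma(t)\mid t\in[0,T_n],\ \gamma\in\widetilde{K}\} = \mathrm{ev}\bigl([0,T_n]\times \widetilde{K}\bigr),
\end{align*}
where $\mathrm{ev}\colon[0,T_n]\times C([0,T_n],H)\to H$ is the (jointly continuous) evaluation. As the continuous image of a compact set, $K$ is compact in $H$. On the event $\{Y^m\in\widetilde{K}\}$, for every $t\in[0,\tau^{x_0}_n]\subseteq[0,T_n]$ we have $X^{x_m}_t 1_{\{\tau^{x_m}>\tau^{x_0}_n\}}=Y^m_t\in K$ by construction, which yields the claimed inequality.

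No single step is technically hard: the only point requiring care is tightness uniformly in $m$, and it is precisely this that Lemma \ref{lem:SDE_Lip} was designed to supply (via convergence in probability, then in distribution, then Prokhorov). The inclusion of the zero path in $\widetilde{K}$ is the small bookkeeping device that handles the event where the solution $X^{x_m}$ blows up before $\tau^{x_0}_n$.
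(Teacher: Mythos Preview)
Your proposal is correct and follows essentially the same approach as the paper: use Lemma \ref{lem:SDE_Lip} to get convergence in probability (hence in law) of the stopped processes in $C([0,T_n],H)$, apply Prokhorov to obtain a uniform compact $\widetilde{K}$ in path space, and then push forward via the evaluation map to get $K\subseteq H$. Your explicit inclusion of the zero path in $\widetilde{K}$ is a harmless extra precaution not present in the paper's version, but otherwise the arguments coincide.
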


\begin{proof}
The proof is similar to that of Lemma \ref{lem:cpt_1}, but using Lemma \ref{lem:SDE_Lip} to get uniformity with respect to $m$. We claim that the family of probability measures $\text{Law}(X^{x_m}_{\cdot\wedge \tau_n})$, $m\in\N_0$, is tight on $C([0,T_n],\hat{H})$: indeed, as $m\to \infty$, by Lemma \ref{lem:SDE_Lip} the sequence
\begin{align*}
X^{x_m}_{\cdot\wedge \tau^{x_0}_n} 1_{\tau^{x_m}>\tau^{x_0}_n}
\end{align*}
converges in probability, and so in law, on the space $C([0,T_n],\hat{H})$. Therefore by Prokhorov`s theorem \cite[Theorem 11.5.4]{Dud02}, for every $\epsilon>0$, there exists a compact set $\newtilde{K}$ on $C([0,T_n],\hat{H})$ such that
\begin{align*}
\mathbb{P}\{X^{x_m}_{\cdot\wedge \tau_n}1_{\tau^{x_m}>\tau^{x_0}_n} \in \newtilde{K}\}\ge 1-\epsilon,\quad \text{for every }m\in\mathbb{N}^+\cup\{0\}.
\end{align*}
It is enough to take $K$ as the image of the compact set $[0,T_n]\times \tilde K$ through the continuous evaluation map $(t,\gamma)\mapsto \gamma(t)$. The proof is complete.
\end{proof}

\begin{defn}\label{defn:L0}
Let $(\Omega,\mathscr{A},P)$ be a probability space and $(E,\lVert \cdot \rVert_E)$ a separable Banach space. Two random variables $X,Y \colon \Omega \rightarrow E$ are equivalent if they agree almost everywhere. We denote by $L^0(\Omega,E)$ the \emph{space of all (equivalence classes of) $E$-valued random variables}. As usual we suppress equivalence classes in our notation. Endow $L^0(\Omega,E)$ with the metric distance
\begin{align*}
d_{L^0}(X,Y):= \E \left[\frac{\|X-Y\|_E}{1+\|X-Y\|_E}\right],
\end{align*}
\end{defn}

Classically, thanks to the inequalities (for every $0<\epsilon<1$)
\begin{align}\label{eq:conv_probab}
\mathbb{P}\{\|X-Y\|_E\ge \epsilon\} \le \frac{2}{\epsilon}d_{L^0}(X,Y),\quad d_{L^0}(X,Y) \le \epsilon + \mathbb{P}\{\|X-Y\|_E\ge \epsilon\},
\end{align}	
the convergence with respect to $d_{L^0}$ is the convergence in probability (see e.g. \cite[4. on p.\ 291]{Dud02} also cf.\ \cite[VII \S 7]{Elw82}). As a result, The space $L^0(\Omega,E)$ is a complete metric space (cf.\ \cite{Fre37} or \cite[Section 9.2]{Dud02} for the proofs with respect to the equivalent Ky Fan metric, \eqref{Ky-Fan}) and a topological vector space with respect to the pointwise vector space operations. It is however in general not a locally convex space (indeed one can show that if $(\Omega, \cA, P)$ has no atoms, then the dual of $L^0(\Omega, E)$ is $\{0\}$). 

For fixed $x_0 \in M$ and fixed $n$, we can view $X^x_{\cdot\wedge \tau^{x_0}_n}$ as an element in $C([0,T_n],L^0(\Omega,\hat{H}))$, which is also a complete metric space (with respect to the compact open topology, cf. \cite[Theorem 4.2.17 and Theorem 4.3.13]{Eng89}).

\begin{rem}\label{rem:conv_sup_P}
Consider an $C([0,T_n],L^0(\Omega,\hat{H}))$-valued  sequence $(Y^m)_{m=1}^\infty$.  If the sequence converges to $Y \in C([0,T_n], L^0(\Omega,\hat{H}))$, then there exists a subsequence $(Y^{m_k})_k$ such that, for every $t \in [0,T_n]$, $(Y^{m_k}_t)_k$ converges to $Y_t$  a.s.. Indeed, we can take $(m_k)_k$ such that
\begin{align*}
\sup_{t\in[0,T_n]} \mathbb{P}\left\{\|Y^{m_k}_t-Y_t\|_{\hat{H}} >\frac{1}{k}\right\} \le 2^{-k}
\end{align*}
and use then the Borel-Cantelli lemma, \cite[8.3.4]{Dud02}.
\end{rem}

In the main result of this section, we want to study the strong Gateaux differentiability of $X^x_{\cdot\wedge \tau^{x_0}_n}$ as $C([0,T_n],L^0(\Omega,\hat{H}))$-valued map. A technical point is that Definition \ref{Gateaux:diff} does not apply here directly to $X^x_{\cdot\wedge \tau^{x_0}_n}$, since, for $x\neq x_0$, the process $X^x_{\cdot\wedge \tau^{x_0}_n}$ may not be defined for all times up to $\tau^{x_0}_n$. Hence we slightly extend the definition of strong Gateaux differentiability as follows: 
\begin{defn}
Let $M$ be a smooth manifold and $x_0 \in M$. Given $h \in T_{x_0}M$, we say that the map
\begin{align*}
M\rightarrow C([0,T_n],L^0(\Omega,\hat{H})), \quad  x\mapsto X^x_{\cdot\wedge \tau^{x_0}_n} 
\end{align*}
is \emph{strongly Gateaux-differentiable in $x_0$ in the direction $h$} if the following two conditions are satisfied:
\begin{enumerate}
\item[(i)] for every sequence $(x_m)_m$ tending to $x_0$,
\begin{align}
\mathbb{P}\{\tau^{x_m}\le \tau^{x_0}_n\}\to 0 \text{ as } m\to \infty;\label{eq:max_time_lsc}
\end{align}
\item[(ii)] the map $x\mapsto X^x_{\cdot\wedge \tau^{x_0}_n}1_{\tau^x>\tau^{x_0}_n}$ is strongly Gateaux differentiable, in the sense of Definition \ref{Gateaux:diff}.
\end{enumerate}
In this case, the Gateaux derivative $A^{x_0,h}$ of $x\mapsto X^x_{\cdot\wedge \tau^{x_0}_n}1_{\tau^x>\tau^{x_0}_n}$ is also called the Gateaux derivative of $x\mapsto X^x_{\cdot\wedge \tau^{x_0}_n}$.
\end{defn}
We are now ready to state and show the main result on differentiability with respect to initial condition.

\begin{thm}\label{thm-Strat_diff_manifold}
Assume that $\tilde b:M\to \hat{H}$ and $\tilde \sigma\colon M\to \mathscr{L}(E,\hat{H})$ are $C^1$. Fix $x_0 \in M$, $n \in \N$. Then the map
\begin{align*}
M\rightarrow C([0,T_n],L^0(\Omega,\hat{H})), \quad  x\mapsto X^x_{\cdot\wedge \tau^{x_0}_n}
\end{align*}
is strongly Gateaux-differentiable at $x_0$ in every direction $h \in T_{x_0}M$. Moreover there exists a time-continuous version of the Gateaux derivative $\frac{d}{dh}X^{x_0}_{\cdot\wedge \tau^{x_0}_n}$ (that is, there exists an $\hat{H}$-valued, progressively measurable process $\tilde{A}^{x_0,h}$, with continuous paths, such that, for every $t\in [0,T_n]$, $\tilde{A}^{x_0,h}=\frac{d}{dh}X^{x_0}_{t\wedge \tau^{x_0}_n}$).
\end{thm}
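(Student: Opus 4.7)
The plan is to globalize the coefficients by Lemma~\ref{lem:bounded_extension}, apply the Hilbert-space differentiability result Corollary~\ref{cor:strong_Gateaux_stopped} to the resulting It\^o SDE on $H$, and transfer the conclusion back to the manifold SDE via a high-probability coincidence argument based on Lemmas~\ref{lem:SDE_Lip} and~\ref{lem:cpt_2}. Fix a $C^1$ curve $\gamma\colon(-a,a)\to M$ with $\gamma(0)=x_0$, $\dot\gamma(0)=h$, and an arbitrary sequence $s_m\to 0$, $s_m\neq 0$; set $x_m:=\gamma(s_m)$. Condition~(i) of the definition preceding the theorem, i.e.\ $\mathbb{P}\{\tau^{x_m}\le \tau^{x_0}_n\}\to 0$, is the first assertion of Lemma~\ref{lem:SDE_Lip}. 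By Lemma~\ref{lem:cpt_2}, for each $\epsilon>0$ there is a compact $K_\epsilon\subseteq M$ with
\begin{equation*}
\mathbb{P}(A_\epsilon^m)\ge 1-\epsilon,\qquad A_\epsilon^m:=\bigl\{X^{x_m}_t\mathbf{1}_{\{\tau^{x_m}>\tau^{x_0}_n\}}\in K_\epsilon\;\forall\,t\in[0,\tau^{x_0}_n]\bigr\},
\end{equation*}
uniformly in $m\in\mathbb{N}_0$. Since $H$ is $C^1_b$-paracompact (Lemma~\ref{lem:Hilbert_paracompact}) and $M\subseteq H$ is a split submanifold, Lemma~\ref{lem:bounded_extension} furnishes an open $O_\epsilon\subseteq M$ containing $K_\epsilon$ and globally $C^1_{\text{glob}}$ maps $b^\epsilon\colon H\to H$, $\sigma^\epsilon\colon H\to\mathscr{L}(E,H)$ coinciding with $\tilde b,\tilde\sigma$ on $O_\epsilon$. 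Post-composition of $\sigma^\epsilon$ with the $\gamma$-radonifying inclusion $\rK\hookrightarrow E$ and Theorem~\ref{Th:3.1}(ii) upgrade the operator-norm bounds into the $\gamma(\rK,H)$-bounds required by Theorem~\ref{thm:deriv_IC_H}. Let $Y^{x,\epsilon}$ denote the unique global-in-time solution on $H$ of the corresponding It\^o SDE with initial datum $x$.

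Two facts are then combined. First, Corollary~\ref{cor:strong_Gateaux_stopped} applied with $\rho=\tau^{x_0}_n$ yields strong Gateaux differentiability of $x\mapsto Y^{x,\epsilon}_{\cdot\wedge\tau^{x_0}_n}$ as a $C([0,T_n],L^2(\Omega,H))$-valued map at $x_0$, with continuous-in-time derivative $A^{x_0,\epsilon,h}$ solving the associated linearized It\^o SDE. Second, on the event $A_\epsilon^m\cap\{\tau^{x_m}>\tau^{x_0}_n\}$, which has probability at least $1-2\epsilon$ for $m$ large by Lemma~\ref{lem:SDE_Lip}, the path $X^{x_m}$ stays in $K_\epsilon\subseteq O_\epsilon$, hence solves the extended SDE as well; pathwise uniqueness on $H$ (Proposition~\ref{prop:wellposed_SDE_Ito}) forces $X^{x_m}_{t\wedge\tau^{x_0}_n}=Y^{x_m,\epsilon}_{t\wedge\tau^{x_0}_n}$ for every $t$ on this event. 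The analogous identity holds for $m=0$ on $A_\epsilon^0$.

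The hardest step is converting these $L^p$-level statements into $C([0,T_n],L^0(\Omega,H))$-differentiability, since $L^0$ is not locally convex. Writing $Q^t_m$ and $Q^{\epsilon,t}_m$ for the difference quotients at $x_0$ along $\gamma(s_m)$ of $X^{\cdot}$ and $Y^{\cdot,\epsilon}$ respectively, and setting $B_\epsilon^m:=A_\epsilon^m\cap A_\epsilon^0\cap\{\tau^{x_m}>\tau^{x_0}_n\}$, the coincidence above combined with the elementary bound $d_{L^0}(U,V)\le \mathbb{P}\{U\neq V\}$ gives $\sup_t d_{L^0}(Q^t_m,Q^{\epsilon,t}_m)\le 3\epsilon$ for $m$ large, while $L^p$-convergence at fixed $t$ (uniformly in $t$) yields $\sup_t d_{L^0}(Q^{\epsilon,t}_m,A^{x_0,\epsilon,h}_t)\to 0$. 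The triangle inequality then gives $\limsup_m\sup_t d_{L^0}(Q^t_m,A^{x_0,\epsilon,h}_t)\le 3\epsilon$, and comparing parameters $\epsilon>\epsilon'>0$ shows that $(A^{x_0,\epsilon,h})_{\epsilon}$ is Cauchy in the complete metric space $C([0,T_n],L^0(\Omega,H))$. Its limit $A^{x_0,h}$ is the sought Gateaux derivative. Finally, to exhibit a continuous-path version, localize the linearized SDE at $\rho_\epsilon:=\inf\{t:X^{x_0}_t\notin K_\epsilon\}\wedge\tau^{x_0}_n$: on $[0,\rho_\epsilon]$ the extended and the intrinsic linearizations coincide along $X^{x_0}$, so pathwise uniqueness of the linearized SDE forces $A^{x_0,\epsilon,h}=A^{x_0,\epsilon',h}$ on $[0,\rho_\epsilon\wedge\rho_{\epsilon'}]$; since $\rho_\epsilon\nearrow\tau^{x_0}_n$ almost surely as $\epsilon\downarrow 0$, gluing the continuous-path versions along a sequence $\epsilon_k\downarrow 0$ produces a continuous-path, adapted process $\tilde A^{x_0,h}$ agreeing in $L^0$ with $A^{x_0,h}_t$ at every $t$, as required.
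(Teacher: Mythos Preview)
Your overall strategy coincides with the paper's: extend $\tilde b,\tilde\sigma$ to $C^1_{\text{glob}}$ maps on $H$ via Lemma~\ref{lem:bounded_extension}, apply Corollary~\ref{cor:strong_Gateaux_stopped} to the extended It\^o SDE, and control the discrepancy with $X^x$ on events of high probability using Lemmas~\ref{lem:SDE_Lip} and~\ref{lem:cpt_2}. The paper organizes the $L^0$-convergence slightly differently---it compares difference quotients along two arbitrary curves and sequences simultaneously, establishing the Cauchy property~\eqref{eq:incr_Cauchy} directly and thereby obtaining independence of the curve for free---whereas you fix one curve and deduce Cauchyness of the auxiliary family $(A^{x_0,\epsilon,h})_\epsilon$. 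Your variant works, but note that $K_\epsilon$ (hence the extension and $A^{x_0,\epsilon,h}$) depends on the sequence $(x_m)$; to conclude that the limit does not depend on the curve you must take a compact set that works for two sequences at once (e.g.\ interleave them in Lemma~\ref{lem:cpt_2}), which you do not mention.

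There is, however, a genuine gap in your argument for the time-continuous version. You assert that pathwise uniqueness of the linearized SDE forces $A^{x_0,\epsilon,h}=A^{x_0,\epsilon',h}$ on $[0,\rho_\epsilon\wedge\rho_{\epsilon'}]$. But the linearized equation for $A^{x_0,\epsilon,h}$ has coefficients $Db^\epsilon(X^{x_0}_t),\,D\sigma^\epsilon(X^{x_0}_t)\in\mathscr{L}(H,\cdot)$, while for $\epsilon'$ one has $Db^{\epsilon'}(X^{x_0}_t)$. The extensions $b^\epsilon$ and $b^{\epsilon'}$ agree with $\tilde b$ only on an open subset of $M$, not on an open subset of $H$; consequently their Fr\'echet derivatives at $y\in M$ are linear maps $H\to H$ that are only guaranteed to coincide on $T_yM$. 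Since the processes $A^{x_0,\epsilon,h}$ take values in $H$ and you have not shown they remain in $T_{X^{x_0}_t}M$, the two linearized SDEs have \emph{different} coefficients on $[0,\rho_\epsilon\wedge\rho_{\epsilon'}]$, and uniqueness gives nothing. The paper circumvents this entirely: rather than comparing the $A^{x_0,\epsilon,h}$'s through the linearized SDE, it extracts a subsequence along which the difference quotients $\delta X^{m_k}$ and $\delta Z^{m_k}$ converge a.s.\ for every $t$ (Remark~\ref{rem:conv_sup_P}), and shows that these quotients coincide on a time-independent event $B_\epsilon$ of probability at least $1-\epsilon$. Hence $\tfrac{d}{dh}X^{x_0}_{t\wedge\tau_n}=A^{x_0,\epsilon,h}_t$ for every $t$ on $B_\epsilon$, so the continuous-path version of $A^{x_0,\epsilon,h}$ (from Corollary~\ref{cor:strong_Gateaux_stopped}) serves as a continuous version of $\tfrac{d}{dh}X^{x_0}$ on $B_\epsilon$; letting $\epsilon\downarrow 0$ then yields the global continuous version.
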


\begin{proof}
For notational convenience, we omit the superscript $x_0$ on $\tau_n$. We fix $h \in T_{x_0}M$. The property \eqref{eq:max_time_lsc} follows from Lemma \ref{lem:SDE_Lip}. Consider given $C^1$-curves $\gamma, \newtilde{\gamma} \colon (-a,a)\to M$ given sequences $\delta_m\to 0$, $\tilde\delta_m\to 0$, and call $x_m=\gamma(\delta_m)$, $\tilde x_m=\tilde\gamma(\tilde\delta_m)$. Assume that we have
\begin{align}
\lim_m\sup_{k\ge m} \sup_{t\in [0,T_n]}d_{L^0} \left(\frac{X^{x_m}_{\cdot\wedge \tau_n}1_{\tau^{x_m}>\tau_n} -X^{x_0}_{\cdot\wedge \tau_n}}{\delta_m}, \frac{X^{\tilde x_k}_{\cdot\wedge \tau_n}1_{\tau^{\tilde{x}_m}>\tau_n} -X^{x_0}_{\cdot\wedge \tau_n}}{\tilde \delta_k}\right) =0. \label{eq:incr_Cauchy}
\end{align}
Then, taking $\gamma=\tilde\gamma$ and $\delta_m=\tilde\delta_m$, the sequence $(X^{x_m}_{\cdot\wedge \tau_n}1_{\tau^{x_m}>\tau_n} -X^{x_0}_{\cdot\wedge \tau_n})/\delta_m$ is Cauchy and therefore it has a limit $A \in C([0,T_n],L^0(\Omega,\hat{H}))$. By \eqref{eq:incr_Cauchy} the limit $A$ does not depend on the curve $\gamma$ nor on the sequence $\delta_m$. Hence we get the desired strong Gateaux-differentiability property.

We have to show \eqref{eq:incr_Cauchy}. For simplicity of notation, we call
\begin{align*}
\delta X^{m}_t = \frac{X^{x_m}_{\cdot\wedge \tau_n}1_{\tau^{x_m}>\tau_n} -X^{x_0}_{\cdot\wedge \tau_n}}{\delta_m},\quad \delta \tilde X^{m}_t = \frac{X^{\tilde x_m}_{\cdot\wedge \tau_n}1_{\tau^{\newtilde{x}_m}>\tau_n} -X^{x_0}_{\cdot\wedge \tau_n}}{\tilde \delta_m}.
\end{align*}
We fix $\epsilon>0$ and, thanks to Lemma \ref{lem:cpt_2}, we take a compact set $K=K_{\epsilon}$ in $\hat{H}$ such that, for every $m \in \mathbb{N}_0$,
\begin{align*}
\mathbb{P}\{X^{x_m}_t 1_{\tau^{x_m}>\tau^{x_0}_n} \in K\,\,\forall t\in[0,\tau^{x_0}_n]\}, \, \mathbb{P}\{X^{\tilde x_m}_t 1_{\tau^{\tilde x_m}>\tau^{x_0}_n} \in K\,\,\forall t\in[0,\tau^{x_0}_n]\} \ge 1-\epsilon.
\end{align*}
By Lemmas \ref{lem:bounded_extension} and \ref{lem:Hilbert_paracompact}, there exist an open (in $\hat{H}$) neighborhood $U$ of $K$ and two maps $B\colon \hat{H}\to \hat{H}$, $\Sigma \colon \hat{H}\to \mathscr{L}(E,\hat{H})$ such that $B$ and $\Sigma$ coincide with respectively $\newtilde{b}$ and $\newtilde{\sigma}$ on $M\cap U$ and $B$ and $\Sigma$ are globally $C^1$ and bounded with bounded derivative. For $x \in \hat{H}$, we call $Z^x$ the solution of the SDE
\begin{align}
\begin{split}\label{eq:SDE_extended}
dZ^x &= B(Z^x)dt +\Sigma(Z^x)dW,\\
Z^x_0&=x.
\end{split}
\end{align}
By Corollary \ref{cor:strong_Gateaux_stopped}, the map $\hat{H}\ni x\mapsto Z^x_{\cdot\wedge \tau_n}\in C([0,T_n],L^2(\Omega,\hat{H}))$ is strongly Gateaux-differentiable, with derivative $A^\epsilon=A^{\epsilon,x_0,h}$. In particular, for every $\epsilon>0$, we have
\begin{align*}
\lim_m \sup_{t\in[0,T_n]}\mathbb{P}\{\|\delta Z^{m}_t- A^\epsilon_t\|_{\hat{H}}\ge \epsilon\} = \lim_m \sup_{t\in[0,T_n]}\mathbb{P}\{\|\delta \tilde Z^{m}_t- A^\epsilon_t\|_{\hat{H}}\ge \epsilon\} = 0,
\end{align*}
where we have called
\begin{align*}
\delta Z^{m}_t = \frac{Z^{x_m}_{\cdot\wedge \tau_n} -Z^{x_0}_{\cdot\wedge \tau_n}}{\delta_m},\quad \delta \tilde Z^{m}_t = \frac{Z^{\tilde x_m}_{\cdot\wedge \tau_n} -Z^{x_0}_{\cdot\wedge \tau_n}}{\tilde \delta_m},
\end{align*}
and so
\begin{align}
\lim_m\sup_{k\ge m} \sup_{t\in[0,T_n]}\mathbb{P}\{\|\delta Z^{m}_t- \delta \tilde Z^{k}_t\|_{\hat{H}}\ge \epsilon\} = 0\label{eq:diff_Z}
\end{align}
For any $x \in K$, $X^x$ lives in $U\cap M$ and so it solves \eqref{eq:SDE_extended} up to the first exit time from $U$. Therefore, by the uniqueness result Proposition \ref{prop:wellposed_SDE_Ito} for the SDE \eqref{eq:SDE_extended}, for $x \in K$, $X^x_{\cdot\wedge \tau_n}$ and $Z^x_{\cdot\wedge \tau_n}$ coincide  a.s. on the set $\{\tau^x>\tau_n,\,\,X^x_{t\wedge \tau_n}\in U\,\,\forall t\}$. Hence Lemmas \ref{lem:SDE_Lip} and \ref{lem:cpt_2} and formula \eqref{eq:diff_Z} imply
\begin{align*}
&\lim_m\sup_{k\ge m}\sup_{t\in [0,T_n]} \mathbb{P}\{\|\delta X^{m}_t -\delta \tilde X^{m}_t\|_{\hat{H}}\ge \epsilon \}\\
&\le \limsup_m \mathbb{P}\{\tau^{x_m}\le \tau_n\} + \limsup_k \mathbb{P}\{\tau^{\tilde x_k}\le \tau_n\}\\
&\quad +\limsup_m \mathbb{P}\{\tau^{x_m}> \tau_n,\,X^{x_m}_{t\wedge \tau_n}\notin K \text{ for some }t\}\\
&\quad +\limsup_k \mathbb{P}\{\tau^{\tilde x_k}> \tau_n,\,X^{\tilde x_k}_{t\wedge \tau_n}\notin K \text{ for some }t\}\\
&\quad +\lim_m\sup_{k\ge m} \sup_{t\in[0,T_n]}\mathbb{P}\{\|\delta Z^{m}_t- \delta \tilde Z^{m}_t\|_{\hat{H}}\ge \epsilon\}\\
&\le 2\epsilon.
\end{align*}
By formula \eqref{eq:conv_probab}, we get
\begin{align*}
\lim_m\sup_{k\ge m}\sup_{t\in [0,T_n]} d_{L^0}\left(\delta X^{m}_t, \delta \tilde X^{m}_t\right)\le 3\epsilon.
\end{align*}
Since $\epsilon>0$ was arbitrary, we obtain \eqref{eq:incr_Cauchy}. The proof of the strong Gateaux-differentiability is complete.

It remains to show the existence of a time-continuous version of the Gateaux-derivative. Again we fix $h \in \hat{H}$, $\epsilon>0$ and the compact set $K$ and its open neighborhood $U$ defined above. We note that, as a consequence of Lemma \ref{lem:SDE_Lip}, there exists a subsequence $(m_k)_k$ such that
\begin{align*}
X^{x_{m_k}}_{\cdot\wedge \tau^n}1_{\tau^{x_{m_k}}>\tau_n}\to X^{x_0}_{\cdot\wedge \tau^n}\text{ in }C([0,T_n],\hat{H})\quad \text{a.s.}
\end{align*}
Therefore, since $U$ is open, we have
\begin{align*}
\liminf_k\{X^{x_{m_k}}_{t\wedge \tau_n}1_{\tau^{x_{m_k}}>\tau_n} \in U\,\forall t\}\supseteq \{X^{x_0}_{t\wedge \tau_n} \in U\,\forall t\}.
\end{align*}
Moreover, again by Lemma \ref{lem:SDE_Lip} and Borel-Cantelli lemma, up to taking a subsequence of $(m_k)_k$, we have
\begin{align*}
\mathbb{P}(\liminf_k\{\tau^{x_{m_k}}> \tau_n\})=1.
\end{align*}
Recall that $X^x_{\cdot\wedge \tau_n}$ and $Z^x_{\cdot\wedge\tau_n}$ coincide  a.s. on $\{\tau^x>\tau_n,\,X^x_{t\wedge\tau_n}\in U\,\forall t\}$. Hence we get
\begin{align}
\begin{split}\label{eq:Z_X}
&\mathbb{P}(\liminf_k\{\delta X^{m_k}_{\cdot\wedge \tau_n} = \delta Z^{m_k}_{\cdot\wedge \tau_n}\})\\
&\ge \mathbb{P}(\liminf_k\{X^{x_{m_k}}_{\cdot\wedge \tau_n}1_{\tau^{x_{m_k}}>\tau_n} = Z^{x_{m_k}}_{\cdot\wedge \tau_n}\} \cap \{X^{x_0}_{\cdot\wedge \tau_n} = Z^{x_0}_{\cdot\wedge \tau_n}\})\\
&\ge \mathbb{P}(\liminf_k\{\tau^{x_{m_k}}>\tau_n,\,X^{x_{m_k}}_{t\wedge \tau_n} \in U\,\forall t\} \cap \{X^{x_0}_{t\wedge \tau_n} \in U\,\forall t\})\\
&= \mathbb{P}(\liminf_k\{\tau^{x_{m_k}}>\tau_n\} \cap \liminf_k\{X^{x_{m_k}}_{t\wedge \tau_n}1_{\tau^{x_{m_k}}>\tau_n} \in U\,\forall t\} \cap \{X^{x_0}_{t\wedge \tau_n} \in U\,\forall t\})\\
&= \mathbb{P}\{X^{x_0}_{t\wedge \tau_n} \in U\,\forall t\} \ge 1-\epsilon.
\end{split}
\end{align}
On the other side, by Corollary \ref{cor:strong_Gateaux_stopped} and the convergence in $C([0,T_n],L^0(\Omega,\hat{H}))$ of $\delta X^{m}$, up to taking a subsequence of $(m_k)_k$, for every $t \in [0,T_n]$ we have:
\begin{align*}
&\delta X^{m_k}_{t\wedge \tau_n}\to \frac{d}{dh}X^{x_0}_{t\wedge \tau_n}\quad \text{a.s.},\\
&\delta Z^{m_k}_{t\wedge \tau_n}\to \frac{d}{dh}Z^{x_0}_{t\wedge \tau_n}\quad \text{a.s.}.
\end{align*}
Therefore, for every $t$, $\frac{d}{dh}X^{x_0}_{t\wedge \tau_n}$ and $\frac{d}{dh}Z^{x_0}_{t\wedge \tau_n}$ coincide  a.s. on the (time-independent) set
\begin{align*}
B_\epsilon:=\liminf_k\{\delta X^{m_k}_{\cdot\wedge \tau_n} = \delta Z^{m_k}_{\cdot\wedge \tau_n}\}.
\end{align*}
In particular, on the set $B_\epsilon$, $\frac{d}{dh}X^{x_0}_{t\wedge \tau_n}$ has a time-continuous version. By \eqref{eq:Z_X}, the set $B_\epsilon$ has probability $\ge 1-\epsilon$. By arbitrariness of $\epsilon$, $\frac{d}{dh}X^{x_0}_{t\wedge \tau_n}$ has a time-continuous version on $\Omega$. The proof is complete.
\end{proof}

\section{Local well-posedness and no-loss-no-gain theorem for stochastic Euler equations}\label{sec:noloss_nogain}

The main result of the present section will be to establish a stochastic version of the no-loss-no-gain result. For concreteness, we focus on the Euler equations as an example, but the method generalises to other equations amenable to the Ebin-Marsden approach with obvious modifications. 

\subsection*{Local well-posedness of maximal solutions for the Lagrangian Euler equations}
\addcontentsline{toc}{subsection}{Local well-posedness of maximal solutions for the Lagrangian Euler equations}

We are given a filtered probability space $(\Omega,\mathcal{A},\mathbb{F},\mathbb{P})$, where $\mathbb{F}=(\mathcal{F}_t)_t$ satisfies the standard assumption (see section \ref{sec:maximal_existence}). We consider the stochastic Euler equation on $K$, in Lagrangian form on the infinite-dimensional manifold  $T\Diff^s_\mu(K)$:
\begin{equation}
\begin{aligned}
\label{eq:stochEuler_Lagr}
&d\eta_t = B(\eta_t)dt +\Sigma(\eta_t)\bullet dW_t,\\
&\eta_0= \etazero, 
\end{aligned}
\end{equation}
with a given $\etazero \in T\Diff^s_\mu(K)$. 
In the following, we write $\text{vl} \colon TK \rightarrow T^2K$ for the vertical lift on $TK$ and $\Pi$ for the Leray projection on divergence-free vector fields. The Ebin-Marsden drift $B$ is given by 
\begin{align}
B\colon T\Diff^s_\mu(K)&\to T^2\Diff^s_\mu(K), \notag\\ \label{Spray_form} T_\gamma \Diff^s_{\mu}(K) \ni X &\mapsto T(\eta \circ \gamma^{-1})\circ \eta - \text{vl}(\eta, \Pi [\nabla_{\eta \circ \gamma^{-1}}\eta \circ \gamma^{-1}]),
\end{align}
i.e. it is the geodesic spray associated with the right invariant $L^2$ metric on $\Diff^s_\mu(K)$. By \cite[Theorem 11.1]{EM70} it is of $C^\infty$-class. The diffusion coefficient is defined by    
\begin{align}
\Sigma\colon T\Diff^s_\mu(K) \to \mathscr{L}(\VFsmu[s],T^2\Diff^s_\mu(K))   \\
\label{eqn-Sigma-02}
\Sigma(\eta)w = \text{vl}(\eta,w\circ \pi_K(\eta)),\quad \eta\in T\Diff^s_\mu(K),\,w\in \VFsmu[s],
\end{align}
 where $\pi_K\colon TK\to K$ is the bundle projection. In general $\Sigma$ will only be continuous. However, as is established below in Lemma \ref{lem:drift_is_C11}, $\Sigma$ gains differentiability and local Lipschitz properties if restricted to a subspace of more regular vector fields, We will thus require (see Hypothesis \ref{hp:noise} below) that the noise takes values in a Hilbert space of more regular vector fields.

Using the embedding $i\colon K \rightarrow \R^m$ of $K$, we obtain the following identity in $H^s (K,\R^{2m})$ (cf.\ \cite[2.8]{MMS19})
\begin{align}
DTi \circ \Sigma(\eta)w = (0_{\R^{m}}, D i \circ w \circ \pi_K \circ \eta). \label{eq:Sigma_vertical}
\end{align}
Note that for every $r \geq s$ we shall also denote the canonical restriction of $\Sigma$ by
\begin{equation}
\label{eqn-Sigma-03}
\Sigma\colon T\Diff^r_\mu(K) \to \mathscr{L}(\VFsmu[s],T^2\Diff^s_\mu(K)).
\end{equation}
Finally, we make the following hypothesis about the Wiener process $W$. 
\begin{hh}\label{hp:noise}
The process $W$ is an $\VFsmu[r]$-valued  $\mathbb{F}$-Wiener process with $r \geq s$. The RKHS of the law of $W_1$ will be denoted by $\rK$.
\end{hh}
In most of our results, we will assume a stronger version of Hypothesis \ref{hp:noise} by requiring $r$ to be strictly larger. The reason for this is that spacial regularity can be traded for differentiability of the diffusion coefficient as the following technical result shows (see also \cite[Chapter VIII Section 1]{Elw82} for a similar result):

\begin{lem}\label{lem:drift_is_C11}
Let $s > d/2+1$ and assume that the noise satisfies Hypothesis \ref{hp:noise} with $r = s+k$. Then the diffusion coefficient $\Sigma$ in \eqref{eq:stochEuler_Lagr} is of class $C^{k-1,1}_{loc}$.
\end{lem}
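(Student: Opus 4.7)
The plan is to reduce the stated regularity of $\Sigma$ to that of a left-composition map between Sobolev spaces by passing to the ambient Hilbert space provided by Corollary~\ref{cor:embedding_TDiff}. Via the smooth embedding $Ti \colon T\Diff^s_\mu(K) \hookrightarrow H^s(K, \R^{2m})$ and the identity~\eqref{eq:Sigma_vertical}, the only non-trivial content of $\Sigma$, after applying $DTi$, is the map
\begin{equation*}
T\Diff^s_\mu(K) \to \mathscr{L}(\VFsmu[r], H^s(K, \R^m)), \quad \eta \mapsto \bigl(w \mapsto (Di \circ w) \circ (\pi_K \circ \eta)\bigr),
\end{equation*}
since the vertical lift, $DTi$, and the bounded linear operator $w \mapsto Di \circ w$ are smooth. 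Moreover, $\eta \mapsto \pi_K \circ \eta$ is smooth from $T\Diff^s_\mu(K)$ to $\Diff^s_\mu(K)$, as post-composition with the smooth map $\pi_K$ (an instance of the $\omega$-lemma). Consequently the regularity of $\Sigma$ is dictated by the regularity of the left-composition map
\begin{equation*}
\omega \colon \Diff^s_\mu(K) \to \mathscr{L}(H^r(K, \R^m), H^s(K, \R^m)), \quad \phi \mapsto \bigl(f \mapsto f \circ \phi\bigr),
\end{equation*}
with $r = s+k$.

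Next, I would exploit that $\omega$ is linear in $f$, so its operator-valued differentiability is equivalent to joint differentiability of $(f,\phi) \mapsto f \circ \phi$ uniformly on bounded subsets of $H^r$. The classical fact recalled in Setup~\ref{setup:diffgp} states that for each fixed $f \in H^{s+k}$, the map $L_f(\phi) = f \circ \phi$ is $C^k$ from $\Diff^s_\mu(K)$ to $H^s$, with iterated derivatives
\begin{equation*}
D^j L_f(\phi)[X_1, \ldots, X_j] = (D^j f \circ \phi)[X_1, \ldots, X_j],
\end{equation*}
where $D^j f \in H^{s+k-j}$ for $j \leq k$. Combined with standard Sobolev composition estimates (as in \cite{IKT13}), one obtains a bound
\begin{equation*}
\|D^j L_f(\phi)[X_1, \ldots, X_j]\|_{H^s} \leq C_V \|f\|_{H^{s+j}} \prod_{i=1}^{j} \|X_i\|_{H^s}
\end{equation*}
uniformly for $\phi$ in a bounded open subset $V$ of $\Diff^s_\mu(K)$. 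The linear dependence in $f$ then promotes pointwise-in-$f$ differentiability to operator-valued differentiability, showing that $\omega$ is $C^k$ as a map into $\mathscr{L}(H^r, H^s)$.

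Finally, I would note that a $C^k$ map between open subsets of Banach spaces is automatically $C^{k-1,1}_{loc}$: its $(k-1)$-th derivative is $C^1$, and hence locally Lipschitz by the mean-value inequality. Reassembling with the smooth factors extracted above gives the claimed regularity of $\Sigma$.

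The main obstacle is the second step: translating pointwise-in-$f$ $C^k$ regularity of $L_f$ into $C^k$ regularity of $\omega$ with values in the operator space $\mathscr{L}(H^r, H^s)$. This requires careful use of the explicit form of the iterated derivatives of $L_f$ together with the Sobolev composition estimates, with particular attention to the balance $r-s=k$ that ensures that after $k$ differentiations the leading term $D^k f \circ \phi$ still lives in $H^s$.
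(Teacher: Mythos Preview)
Your reduction is sound and close in spirit to the paper's argument: both isolate the composition map as the only non-smooth ingredient. The paper localizes in charts $\psi_\gamma = \psi \circ R_{\gamma^{-1}}$ (translating the identity-based chart of \cite{MMS19} to an arbitrary base point via right multiplication), computes the local representative of $\Sigma$ as $(\theta,\xi,0, T_{\psi^{-1}(\theta)}\psi(\mathrm{comp}(V,\psi^{-1}(\theta))))$, and invokes \cite[Proposition~2.11]{MMS19} for the regularity of this expression. You instead work globally via the embedding into $H^s(K,\R^{2m})$; the analytic core is the same.

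There is, however, a genuine gap in the step where you promote pointwise-in-$f$ $C^k$ regularity to operator-valued $C^k$ regularity. The map
\[
\omega\colon \Diff^s_\mu(K)\to \mathscr{L}(H^{s+k},H^s),\qquad \phi\mapsto (f\mapsto f\circ\phi),
\]
is \emph{not} $C^k$ in general; it is exactly $C^{k-1,1}_{\loc}$, no better. The obstruction is at the top order: the candidate $k$-th derivative $D^k\omega(\phi)[X_1,\ldots,X_k]\colon f\mapsto (D^kf\circ\phi)[X_1,\ldots,X_k]$ involves $D^kf\in H^s$, and verifying that it is the Fr\'echet derivative of $D^{k-1}\omega$ in operator norm would require $\sup_{\|g\|_{H^s}\le 1}\|g\circ\phi'-g\circ\phi\|_{H^s}\to 0$ as $\phi'\to\phi$. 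This fails already on the torus (take $g_n=e^{in\cdot}/n^s$ and translate). Your own bounds, however, deliver precisely what is needed: since $D^{k-1}f\in H^{s+1}$, the mean-value inequality gives
\[
\|D^{k-1}\omega(\phi')-D^{k-1}\omega(\phi)\|_{\mathscr{L}^{k-1}(H^s;\mathscr{L}(H^{s+k},H^s))} \lesssim_V \|\phi'-\phi\|_{H^s},
\]
which is local Lipschitz continuity of the $(k-1)$-th derivative. So drop the detour through $C^k$ and argue directly for $C^{k-1,1}_{\loc}$; the estimates you already wrote down suffice.
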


\begin{proof}
The regularity of $\Sigma$ can be established localising $\Sigma$ in manifold charts and applying \cite[Proposition 2.11]{MMS19}. Unfortunately, we were lazy in the first paper, and stated the result only for a $\psi$ centered at the identity. So for the cited result, we needed a chart such that $\psi (\id) = 0 \in \VFsmu$ and $T_{\id} \psi = \id_{\VFsmu}$. The latter identity uses (cf.\ \eqref{tangent_ident} and \ref{setup:diffgp})
$T_\gamma \Diff_\mu^s (K) = \{X \circ \gamma \mid X \in \VFsmu\} =TR_\gamma (\VFsmu )$,
where $R_\gamma \colon \Diff_\mu^s (K) \rightarrow \Diff_\mu^s (K)$ is the right shift by composition with $\gamma$. Both requirements are inessential and we can replace $\psi$ with the chart $\psi_\gamma \coloneq \psi \circ R_{\gamma^{-1}} \colon R_{\gamma }(O) \rightarrow \VFsmu$, Then  $\psi_\gamma(\gamma)=0$ and $T_\phi \psi_\gamma =T_{\phi \circ \gamma^{-1}}\psi \circ T_{\phi} R_{\gamma^{-1}}$. We compute now in a localisation for the chart $\psi_\gamma$ with respect to $\gamma \coloneq \pi_K \circ \eta$ for some fixed (but arbitrary) $\eta \in T\Diff_\mu^s (K)$.  Inspecting the arguments leading to the proof of \cite[Proposition 2.11]{MMS19}, we see that they carry over. In particular \cite[2.8]{MMS19} yields verbatim for $V \in \VFsmu[s]$ and $\psi_\gamma$ as above:
\begin{align*}
T^2\psi_\gamma \circ (\Sigma \circ T\psi_\gamma^{-1} (\theta, \xi)V) &= (\theta,\xi,0,T_{\psi_\gamma^{-1}(\theta)} \psi_\gamma (\text{comp}(V, \psi^{-1}_\gamma(\theta))) \\ &=(\theta,\xi,0,T_{\psi^{-1}(\theta)} \psi (\text{comp}(V,\psi^{-1}(\theta)))) 
\end{align*}
This is exactly the formula used in the proof of \cite[Proposition 2.11]{MMS19} to establish the regularity of $\Sigma$ in the chart $\psi$. Hence the argument works also for $\psi_\gamma$ and thus yields the claimed regularity for $\Sigma$ on all of $\Diff_\mu^s (K)$.
\end{proof}

We shall now establish the existence and uniqueness of a maximal solution for the Lagrangian formulation \eqref{eq:stochEuler_Lagr} of the stochastic Euler equation.  This result is an extension of \cite[Theorem 3.5]{MMS19} as we here also show maximality of the solution.

\begin{thm}\label{thm:wellposed_Lagr}
Assume that  $s>d/2+1$ and that Hypothesis \ref{hp:noise} holds with $r\ge s+2$. Then, for every $\etazero$ belonging to the connected component of the identity in $T\Diff^s_\mu(K)$, existence of a local maximal solution and local uniqueness hold for the stochastic Euler equation in the Lagrangian form \eqref{eq:stochEuler_Lagr}.
\end{thm}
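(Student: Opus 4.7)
The plan is to reduce the claim to the general maximal-existence result, namely Theorem \ref{thm:maximal}, applied on the Hilbert manifold $M = T(\Diff^s_\mu)_0(K)$. To invoke that theorem I must exhibit a countable atlas in which the Stratonovich coefficients $B$ and $\Sigma$ satisfy the pointwise regularity assumptions, i.e. $b^\alpha \in C^{0,1}$ and $\sigma^\alpha \in C^{1,1}$ on every closed bounded $\bar V \subseteq \psi_\alpha(U_\alpha)$. Countability of the atlas is the first step: by Corollary \ref{cor:embedding_TDiff} the manifold $M$ embeds as a split submanifold of the separable Hilbert space $H^s(K,\R^{2m})$, so $M$ is second countable and metrisable. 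Taking the canonical charts of the manifold-of-mappings structure as in \eqref{defn:can-charts} and applying Lindelöfness, I extract a countable subatlas $(U_\alpha,\psi_\alpha)_\alpha$ of $M$.

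Next I verify the regularity of the drift. The Ebin--Marsden drift $B$ of \eqref{Spray_form} is the geodesic spray of the right-invariant $L^2$-metric on $(\Diff_\mu^s)_0(K)$, which by \cite[Theorem 11.1]{EM70} is smooth as a vector field on $M$. Consequently the local representative
\[
b^\alpha = D\psi_\alpha \circ B \circ \psi_\alpha^{-1}\colon \psi_\alpha(U_\alpha) \to H^s(K,\R^{2m})
\]
is smooth, hence $C^{0,1}$ on every closed bounded set inside its domain.

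For the diffusion coefficient I invoke Lemma \ref{lem:drift_is_C11}: under Hypothesis \ref{hp:noise} with $r \geq s+2$, writing $r = s+k$ with $k\ge 2$, the map $\Sigma\colon T\Diff^s_\mu(K) \to \mathscr{L}(\VFsmu[r], T^2\Diff^s_\mu(K))$ is of class $C^{k-1,1}_{\mathrm{loc}} \subseteq C^{1,1}_{\mathrm{loc}}$. Since the chart diffeomorphisms $\psi_\alpha$ are smooth, the pushforward $\sigma^\alpha = D\psi_\alpha \circ \Sigma \circ \psi_\alpha^{-1}$ inherits the same local regularity, i.e. $\sigma^\alpha \in C^{1,1}$ on every closed bounded $\bar V \subseteq \psi_\alpha(U_\alpha)$. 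I should recall that the RKHS $\rK$ of the law of $W_1$ embeds continuously into $\VFsmu[r]$, so all the estimates needed for the Stratonovich correction $\tr_{\rK}[D\sigma^\alpha\circ \sigma^\alpha]$ make sense with the $C^{1,1}$ regularity above; this is exactly the setting of Proposition \ref{prop:SDE_wellposed_local}, from which Theorem \ref{thm:maximal} is built.

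With both regularity assumptions verified, Theorem \ref{thm:maximal} applies with initial datum $(\rho,\zeta) = (0,\etazero)$ and yields both local uniqueness and existence of a unique local maximal solution $\eta = (\eta_t)_{t \in [0,\tau)}$ of \eqref{eq:stochEuler_Lagr}. I expect the only genuine subtlety to be the verification of the $C^{1,1}$ regularity of $\sigma^\alpha$ in every chart (not just one centred at the identity), but this is exactly what the transport-to-arbitrary-chart argument in the proof of Lemma \ref{lem:drift_is_C11} handles, so no additional work is needed.
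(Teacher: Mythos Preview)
Your argument is correct and follows essentially the same route as the paper: verify via \cite[Theorem 11.1]{EM70} that $B$ is smooth and via Lemma~\ref{lem:drift_is_C11} that $\Sigma$ is $C^{1,1}_{\mathrm{loc}}$, then pass to a countable atlas by Lindel\"ofness (the paper cites \cite[B.11]{MMS19} rather than Corollary~\ref{cor:embedding_TDiff} for second countability, but this is cosmetic) and invoke Theorem~\ref{thm:maximal}. The only ordering difference is that the paper first selects charts adapted to the local regularity of $B,\Sigma$ and then extracts a countable subcover, whereas you fix the canonical charts upfront; both work.
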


\begin{proof}
Recall from \cite[B.11]{MMS19} that the connected component of the identity in $T\Diff_\mu^s (K)$ is a separable Hilbert manifold. By the Ebin-Marsden theory, the drift term $B$ is  of $C^\infty$-class so that in suitable charts, it is locally Lipschitz. Due to Lemma \ref{lem:drift_is_C11} the diffusion coefficient $\Sigma$ is locally of $C^{1,1}$ class. Hence the conclusion follows from Theorem \ref{thm:maximal} applied to the SDE \eqref{eq:stochEuler_Lagr}.
\end{proof}


\subsection*{The no-loss-no-gain theorem for stochastic equations}
\addcontentsline{toc}{subsection}{The no-loss-no-gain theorem for stochastic equations}
We are now able to prove the main result in this section: the no-loss-no-gain theorem for the stochastic Euler equation in Lagrangian form \eqref{eq:stochEuler_Lagr}.

\begin{thm}\label{thm:noloss_nogain}
Take $s>d/2+1$ and let Hypothesis \ref{hp:noise} be satisfied with $r\ge s+3$; take $\etazero$ in the connected component of the identity in $T\Diff^s_\mu(K)$ and let $\eta=(\eta_t)_{t\in [0,\tau)}$ be the corresponding maximal solution to \eqref{eq:stochEuler_Lagr}. If $\etazero$ belongs to $T\Diff^{s+1}_\mu(K)$, then,  a.s., the map $t\mapsto \eta_t$, $t\in [0,\tau)$, is well-defined and continuous with values in $T\Diff^{s+1}_\mu(K)$ and it coincides with the (unique) maximal solution to \eqref{eq:stochEuler_Lagr} in $T\Diff^{s+1}_\mu(K)$.
\end{thm}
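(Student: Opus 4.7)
The plan has three steps: invoke well-posedness at level $s+1$, identify the two solutions on the shorter of their lifetimes, and then prove that the two maximal times coincide. Steps one and two are essentially free. Since $r\ge s+3=(s+1)+2$, Theorem \ref{thm:wellposed_Lagr} at level $s+1$ produces a unique maximal solution $\eta^{s+1}$ on $[0,\tau^{s+1})$ in $T\Diff^{s+1}_\mu(K)$ with initial value $\etazero$. Because $B$ and $\Sigma$ are intrinsic geometric objects that restrict consistently from one Sobolev level to the next and the inclusion $T\Diff^{s+1}_\mu(K)\hookrightarrow T\Diff^s_\mu(K)$ is smooth, $\eta^{s+1}$ viewed in $T\Diff^s_\mu(K)$ still solves \eqref{eq:stochEuler_Lagr}; local uniqueness at level $s$ gives $\eta^{s+1}=\eta$ on $[0,\tau^{s+1})$ a.s., and maximality of $\eta$ yields $\tau^{s+1}\le\tau$.

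The main obstacle is the third step: ruling out $\{\tau^{s+1}<\tau\}$ having positive probability. The strategy is to combine the right-invariance of \eqref{eq:stochEuler_Lagr} with Theorem \ref{thm-Strat_diff_manifold} in order to propagate the extra regularity of $\etazero$ along the $H^s$-flow. A direct inspection of \eqref{Spray_form} and \eqref{eqn-Sigma-02} shows that for every $\phi\in\Diff^\infty_\mu(K)$ the coefficients satisfy $B\circ R_\phi=TR_\phi\circ B$ and $\Sigma\circ R_\phi=TR_\phi\circ\Sigma$, where $R_\phi$ denotes right-composition by $\phi$; by uniqueness at level $s$ this yields the right-invariance identity
\begin{equation*}
\eta_t(\etazero\circ\phi)=\eta_t(\etazero)\circ\phi\quad\text{a.s.\ on }[0,\tau).
\end{equation*}

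Fix now a smooth divergence-free vector field $X$ on $K$ with flow $\phi^X_\epsilon\in\Diff^\infty_\mu(K)$ for small $\epsilon$. The curve $\epsilon\mapsto\etazero\circ\phi^X_\epsilon$ is $C^1$ into $T\Diff^s_\mu(K)$, with derivative $T\etazero\circ X$ at $\epsilon=0$; this derivative lies in the tangent space $T_{\etazero}(T\Diff^s_\mu(K))$ \emph{precisely because} $\etazero\in T\Diff^{s+1}_\mu(K)$ (differentiating $\etazero$ once costs exactly one order of Sobolev regularity). Embedding $T\Diff^s_\mu(K)$ as a split submanifold of a Hilbert space $H$ via Corollary \ref{cor:embedding_TDiff}, applying Theorem \ref{thm-Strat_diff_manifold} to the stopped flow along an announcing sequence $\tau_n\nearrow\tau$ with $\tau_n\le T_n$ deterministic, and differentiating the right-invariance identity in $\epsilon$ at $0$, I obtain
\begin{equation*}
T\eta_{t\wedge\tau_n}\circ X\;=\;D_{\etazero}\eta_{t\wedge\tau_n}\!\bigl(T\etazero\circ X\bigr)\quad\text{in }C([0,T_n],L^0(\Omega,H)).
\end{equation*}
Using the time-continuous version statement in Theorem \ref{thm-Strat_diff_manifold}, this gives that for each $t<\tau$ the object $T\eta_t\circ X$ is a.s.\ a well-defined element of $H^s$.

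Choosing finitely many smooth divergence-free vector fields $X_1,\ldots,X_N$ that together span $T_pK$ at every $p\in K$ (available by combining local volume-preserving coordinates, in which coordinate vector fields are divergence-free, with a partition-of-unity argument on the compact $K$) and intersecting the corresponding full-measure events, the standard Sobolev characterization of $H^{s+1}$ via directional derivatives gives that a.s.\ $\eta_t\in T\Diff^{s+1}_\mu(K)$ for every $t<\tau$, with $H^{s+1}$-norm locally bounded on $[0,\tau)$. The blow-up criterion Lemma \ref{lem:explosion_maximal_time} applied at level $s+1$ then forces $\tau^{s+1}=\tau$ a.s., and the continuity claim follows from the identification $\eta=\eta^{s+1}$ on $[0,\tau)$. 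The hard part is the combination of right-invariance with Theorem \ref{thm-Strat_diff_manifold}: one must work with $L^0$-valued differentiability rather than $L^2$, keep careful track of which ambient Hilbert space the derivatives live in, and ensure that divergence-free vector fields alone are sufficient to detect $H^{s+1}$-regularity.
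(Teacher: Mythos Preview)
Your overall strategy is the paper's: right-invariance of the coefficients plus Theorem \ref{thm-Strat_diff_manifold}, differentiated along the curve $\epsilon\mapsto\etazero\circ\phi^X_\epsilon$ for finitely many smooth divergence-free test fields, to identify $T\eta_t\circ X$ with the Gateaux derivative of the flow map. You are also correct that $C^1$-ness of $\epsilon\mapsto\etazero\circ\phi^X_\epsilon$ into $T\Diff^s_\mu(K)$ is exactly where $\etazero\in T\Diff^{s+1}_\mu(K)$ enters (this is left composition by $\etazero$, which is $C^1$ only with one extra order of regularity, cf.\ \ref{setup:diffgp}).

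The gap is the endgame. From the derivative identity you extract only that $T\eta_t\circ X\in H^s$ for each $t$, hence $\eta_t\in H^{s+1}$ with locally bounded $H^{s+1}$-norm, and then invoke Lemma \ref{lem:explosion_maximal_time} at level $s+1$. This does not close: on $\{\tau^{s+1}<\tau\}$ you have $\eta_t\to\eta_{\tau^{s+1}}$ in $H^s$ and $\sup_{t<\tau^{s+1}}\|\eta_t\|_{H^{s+1}}<\infty$, but in an infinite-dimensional Hilbert space this yields only weak $H^{s+1}$-convergence, not strong, so you cannot contradict the non-existence of the $H^{s+1}$-limit asserted by Lemma \ref{lem:explosion_maximal_time}. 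Your final sentence, recovering continuity from the identification $\eta=\eta^{s+1}$, is then circular.

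The missing step is that the time-continuous version in Theorem \ref{thm-Strat_diff_manifold} gives more than pointwise membership: it gives that $t\mapsto T\eta_{t\wedge\tau_n}\circ X$ is \emph{continuous} as an $H^s$-valued map. The paper isolates the passage from this to $H^{s+1}$-continuity of $t\mapsto\eta_t$ as Lemma \ref{lem:testing_time_cont}: if $f\colon P\to H^s(K,\R^m)$ is continuous, takes values in $H^{s+1}$, and each $p\mapsto Df(p)\circ X_j$ is continuous into $H^s$ for the finite test family $\{X_j\}$, then $f$ is continuous into $H^{s+1}$. With $H^{s+1}$-continuity of $t\mapsto\eta_t$ on all of $[0,\tau)$ in hand, your blow-up argument now works (the $H^{s+1}$-limit exists at $\tau^{s+1}$ on $\{\tau^{s+1}<\tau\}$); alternatively, as the paper does, one checks that the integrands $\tilde B(\eta_t)$, $\tilde\Sigma(\eta_t)$ in the embedded equation are $H^{s+1}$-continuous in $t$, so the level-$s$ integral identity upgrades verbatim to level $s+1$, and maximality of $\eta$ at level $s$ forces any level-$(s+1)$ solution to be dominated by $(\eta,\tau)$.
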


Note that the statement of Theorem \ref{thm:noloss_nogain} holds for arbitrary components of the manifold $T\Diff^{s}_\mu (K)$ (in the proof we only need that the initial condition does not randomly jump components). Since we only need the statement later for the unit component we chose to formulate the theorem only for this case. 
The idea of the proof is as follows:
\begin{itemize}
\item First, we use that a function $\varphi \in H^s(K)$ is in $H^{s+1}$ if and only if $T\varphi\circ X$ is in $H^s$ for a finite number of smooth divergence free vector fields $X$ (these vector fields play the role of local coordinates), Lemma \ref{lem: testing}.
\item Calling $\eta^{\etazero}$ the solution of the SDE starting at $\etazero$, the right-invariance of the coefficients of the SDE in (the tangent of) the half-Lie group $\Diff^s_\mu(K)$, implies that 
\begin{equation}\label{eqn-nvariance}
 \eta_t^{\etazero}\circ \phi = \eta_t^{\etazero\circ \phi}, \;\; \mbox{ for every $\phi \in \Diff^s_\mu(K)$}.
\end{equation}
\item Using the above relation, we write the finite-dimensional derivative $T\eta_t^{\etazero}\circ X$ in terms of the infinite-dimensional Gateaux-derivative $\frac{d}{d(\etazero\circ X)}\eta_t^{\etazero}$. The latter, by the differentiability with respect to the initial condition, Theorem \ref{thm-Strat_diff_manifold}, there exists in $H^s$, therefore also $T\eta_t^{\etazero}\circ X$ is in $H^s$, and we conclude that $\eta_t$ is in $H^{s+1}$.
\end{itemize}

For the first step, we need the following:
\begin{lem}\label{VF:aux1}
 Let $(U,\varphi)$ is a manifold chart around $x\in K$ and fix an open $x$-neighborhood $V \subseteq \overline{V} \subseteq U$. Then there exist divergence free vector fields $X_{\varphi,V,i} \in \mathcal, i\in \{1,\ldots,d\}$ such that the vector fields act locally on functions as partial derivatives, i.e.~for every $u \in V$ and $f \in C^1 (K,\R^m)$ we have
 $$X_{\varphi,i}. f (u) = Df X_{\varphi, V, i}(u) = \partial^i (f\circ \varphi^{-1})(\varphi(u)), i\in \{1,\ldots, d\}.$$
\end{lem}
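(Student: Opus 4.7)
My plan is to reduce to a volume-preserving chart via Moser's lemma, truncate the Euclidean coordinate vector fields with a bump supported in $U$, and correct the resulting divergence error by a smooth vector field supported in the shell $U\setminus \overline V$. The required identity $X_{\varphi,V,i}|_V = \partial/\partial\varphi^i|_V$ forces $\mathrm{div}_\mu(\partial/\partial\varphi^i)$ to vanish on $V$, which is not automatic for an arbitrary chart; hence I first adapt $\varphi$. By Moser's theorem, after possibly shrinking $U$ around $x$, one may assume $\varphi_* \mu = \mathrm{d}x^1 \wedge \cdots \wedge \mathrm{d}x^d$ on $\varphi(U)$, in which case the coordinate fields $\partial_i \coloneq \partial/\partial \varphi^i$ are locally $\mu$-divergence-free on $U$.

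Fix nested open sets $V \subset \overline V \subset V' \subset \overline{V'} \subset U$ and $\chi \in C^\infty_c(U)$ with $\chi \equiv 1$ on a neighborhood of $\overline{V'}$. Set $Y_i \coloneq \chi\,\partial_i$, extended by zero outside $U$ to a smooth vector field on $K$. Then $Y_i$ agrees with $\partial_i$ on $V$, but a direct coordinate computation gives $\mathrm{div}_\mu(Y_i) = \partial_i \chi$: a smooth function supported in the shell $W \coloneq U \setminus \overline{V'}$, with vanishing total $\mu$-integral over $K$ by compact support of $\chi$ and integration by parts.

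It remains to produce a smooth $Z_i$ on $K$, supported in $W$ (hence vanishing on a neighborhood of $\overline V$), with $\mathrm{div}_\mu(Z_i) = \partial_i \chi$. For $d \geq 2$ the shell $W$ is a bounded, connected, Lipschitz-boundary open subset of $\R^d$ in coordinates, and $\partial_i \chi$ is a smooth compactly supported function on $W$ with vanishing mean; the classical Bogovskii right inverse of $\mathrm{div}$ on such domains supplies such a $Z_i$. Equivalently, the top-form $\partial_i \chi \cdot \mu$ is exact in the compactly supported de Rham cohomology of $W$ since $H^d_c(W) \cong \R$ is detected by the total integral, which vanishes by construction. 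The case $d=1$ is immediate: in a Moser chart the only divergence-free direction is $\partial/\partial \varphi$ itself, which already does the job. Setting $X_{\varphi,V,i} \coloneq Y_i - Z_i$ yields a smooth, $\mu$-divergence-free vector field on $K$ with $X_{\varphi,V,i}|_V = \partial_i|_V$; the claimed identity $X_{\varphi,V,i}.f(u) = \partial^i(f \circ \varphi^{-1})(\varphi(u))$ for $u \in V$ and $f \in C^1(K,\R^m)$ follows at once by the chain rule.

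The principal obstacle is producing $Z_i$ with both the prescribed divergence and support strictly away from $\overline V$. The key observation that makes this work is that the divergence error $\partial_i \chi$ already vanishes on the whole neighborhood $V'$ of $\overline V$, so the divergence equation is solved purely on the shell $W$ and $Z_i$ is then extended by zero, automatically keeping $Z_i$ off $\overline V$.
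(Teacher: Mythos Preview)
Your construction is correct and considerably more explicit than the paper's proof, which simply cites \cite[p.21, Claim A]{BaM18} without further detail. You have moreover identified a point the paper's statement glosses over: the identity $X_{\varphi,V,i}|_V=\partial/\partial\varphi^i|_V$ together with $\mathrm{div}_\mu X_{\varphi,V,i}=0$ forces $\partial/\partial\varphi^i$ itself to be divergence-free on $V$, so the lemma as literally stated can only hold after passing to a Moser chart. This is harmless for the application (the subsequent testing lemma only needs \emph{some} chart in which all first partials are realized by divergence-free fields), and your use of Moser is presumably exactly what the cited reference does.

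The cutoff-and-correct argument is sound. One small technical point: the shell $W=U\setminus\overline{V'}$ is not star-shaped, so the Bogovski\u{\i} operator does not apply directly; your de Rham alternative via $H^d_c(W)\cong\mathbb{R}$ (for connected $W$, the isomorphism given by integration) is the cleaner route and produces a smooth compactly supported primitive $Z_i$ in one step. Your $d=1$ remark is slightly imprecise since $\partial/\partial\varphi$ is only locally defined, but is easily repaired: on a compact connected boundaryless $1$-manifold the unique-up-to-scale global $\mu$-preserving vector field restricts in a Moser chart to $\partial/\partial\varphi$.
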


\begin{proof}
 Recall from the proof of \cite[p.21, Claim A]{BaM18} that for any fixed open set $V$ (with positive distance between the boundaries of $V$ and $U$), there exists a smooth divergence free vector field $X_{V,j} \in \mathfrak{X}_\mu (K)$ such that $X_{V,j}$ acts on functions on $V$ as the $j$th-partial derivative, i.e.\ in the chart $\varphi$ we have for every function $f \in C^1(K,\R^m)$:
	\begin{equation}
		Df \circ X_{V,j} (y) = \partial^j (f \circ \varphi^{-1}) \varphi (y) \text{ for all  }y \in V.\label{eq:test}
	\end{equation}
	If $\text{dim } K =d$ we simply repeat the construction for every $1\leq j \leq d$.
\end{proof}

The first step is given morally by the following:
\begin{lem}\label{lem: testing}
	Let $\mathscr{U}_I = \{(U_i,\kappa_i)\}_{i \in I}$ be a finite atlas for the compact manifold $K$ and $\{V_i \subseteq \overline{V}_i \subseteq U_i\}_{ i \in I}$ be a family of open sets such that $K = \bigcup_{i\in I} V_i$. Then the finite subset $\mathscr{T} =\{X_{V_i,j} \mid i\in I, 1\leq j\leq d\} \subseteq \mathfrak{X}_\mu (K)$ of divergence free vector fields constructed via \eqref{VF:aux1} has the following property:  If for $f \in H^s (K,N)$, if $Tf \circ X \colon K \to TN$ is an $H^s$-map for all $X \in \mathscr{T}$, then $f \in H^{s+1}(K,N)$.
    Further, if $f \in T^\ell\Diff^s (K) \subseteq H^s (K,T^\ell K)$ for some $\ell \in \N_0$ satisfies the condition, then $f \in T^\ell \Diff^{s+1}(K)$.
\end{lem}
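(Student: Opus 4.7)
The plan is to reduce the statement to a local, finite-dimensional assertion about partial derivatives in charts, and then to use Lemma \ref{VF:aux1} to interpret the hypothesis on $Tf\circ X$ as membership of all first partial derivatives of a local representative of $f$ in $H^s$. Concretely, I would pick a smooth embedding $\iota\colon N\hookrightarrow \R^m$ (Whitney) and a chart $\kappa_i$ with domain containing $\overline{V}_i$. Then $f$ is of class $H^{s+1}$ iff $\iota\circ f$ is in $H^{s+1}(K,\R^m)$ (the pushforward with a smooth embedding preserves Sobolev classes by the invariance of Sobolev type morphisms under coordinate changes, cf.\ the discussion after Definition \ref{defn:Sobolevmaps}), and the same reduction applies to each $V_i$. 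Thus it suffices to show that for every $i\in I$, $\iota\circ f\circ \kappa_i^{-1}\in H^{s+1}(\kappa_i(V_i),\R^m)$.

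The second step is the chain rule in charts. By construction, $T\iota\circ Tf\circ X = T(\iota\circ f)\circ X$, and in the canonical trivialisation $T\R^m\cong \R^m\times\R^m$ this takes the form $(\iota\circ f,D(\iota\circ f)\cdot X)$. Inserting $X=X_{V_i,j}$ and restricting to $V_i$, Lemma \ref{VF:aux1} gives
\begin{equation*}
D(\iota\circ f)(X_{V_i,j})(u) = \partial^j(\iota\circ f\circ\kappa_i^{-1})(\kappa_i(u)),\quad u\in V_i.
\end{equation*}
Hence the assumption that $Tf\circ X_{V_i,j}\in H^s(K,TN)$ translates via the embedding into $\partial^j(\iota\circ f\circ\kappa_i^{-1})\in H^s(\kappa_i(V_i),\R^m)$ for all $1\le j\le d$. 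Together with $\iota\circ f\circ\kappa_i^{-1}\in H^s(\kappa_i(V_i),\R^m)$, which is part of the hypothesis $f\in H^s(K,N)$, and the characterisation of $H^{s+1}$ as the space of $H^s$ functions with all first weak partials in $H^s$ (Definition \ref{defn:Sobolev_fun}), we conclude $\iota\circ f\circ\kappa_i^{-1}\in H^{s+1}(\kappa_i(V_i),\R^m)$. Since the $V_i$ cover $K$ and $I$ is finite, a partition of unity argument (using e.g.\ Calder\'on's extension when $K$ has boundary so that the local pieces glue into an $H^{s+1}$ section) yields $f\in H^{s+1}(K,N)$.

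For the second assertion, assume $f\in T^\ell\Diff^s(K)\subseteq H^s(K,T^\ell K)$ satisfies the hypothesis. Applying the first part with $N=T^\ell K$ we obtain $f\in H^{s+1}(K,T^\ell K)$. The defining property of $T^\ell\Diff^s(K)$ is that the base projection $\pi_\ell\circ f\in \Diff^s(K)$ is a diffeomorphism; since $\pi_\ell$ is smooth, composing with it preserves the $H^{s+1}$ regularity, so $\pi_\ell\circ f$ is now an $H^{s+1}$ diffeomorphism of $K$, that is, $\pi_\ell\circ f\in\Diff^{s+1}(K)$. Therefore $f\in T^\ell\Diff^{s+1}(K)$.

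The main technical obstacle is the bookkeeping in the identification $Tf\circ X = (f, Df\cdot X)$ in manifold charts on both $K$ and $N$, and making sure that the vector-field-as-partial-derivative identity of Lemma \ref{VF:aux1} is applied consistently with the chart convention used to define $H^s(K,N)$ in Definition \ref{defn:Sobolevmaps}; the embedding $\iota\colon N\hookrightarrow\R^m$ is the cleanest way to avoid juggling atlases on the target. If $K$ has non-empty smooth boundary, one also has to appeal to the extension results (Calder\'on's theorem, cf.\ Lemma \ref{lem-regularization}) to pass freely between local and global Sobolev statements, but this is routine once the interior argument is in place.
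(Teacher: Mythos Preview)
Your approach to the first part is essentially the paper's: localise in the charts $\kappa_i$, use Lemma~\ref{VF:aux1} to identify $Df\cdot X_{V_i,j}$ with the $j$th coordinate partial derivative of the local representative on $V_i$, and conclude that all first partials of the local representative lie in $H^s$. The paper is terser and does not explicitly route through a Whitney embedding of $N$, but the substance is identical.

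There is, however, a gap in your argument for the second assertion. You correctly obtain $f\in H^{s+1}(K,T^\ell K)$ and hence $\pi_\ell\circ f\in H^{s+1}(K,K)$, but then you pass from ``$\pi_\ell\circ f$ is an $H^{s+1}$ map which happens to be a diffeomorphism'' to ``$\pi_\ell\circ f\in\Diff^{s+1}(K)$'' without justification. Membership in $\Diff^{s+1}(K)$ requires that the \emph{inverse} $(\pi_\ell\circ f)^{-1}$ also lie in $H^{s+1}(K,K)$, and this is not automatic from the smoothness of $\pi_\ell$ or from the regularity of $\pi_\ell\circ f$ alone. The paper closes exactly this gap by invoking \cite[Lemma~3.8]{IKT13}, which states that an $H^s$-diffeomorphism of a compact manifold that is itself $H^{s+1}$ automatically has $H^{s+1}$ inverse. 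You need to cite this (or supply a direct argument, e.g.\ bootstrapping the identity $D(\pi_\ell\circ f)^{-1}=[D(\pi_\ell\circ f)\circ(\pi_\ell\circ f)^{-1}]^{-1}$ through the Sobolev calculus) before concluding.
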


\begin{proof}
 We construct the family $\mathscr{T}$ by applying \eqref{VF:aux1} to every pair $(U_i,\kappa_i),V_i)$. Since $I$ is finite, the set $\mathscr{T}$ is finite. Since $K = \bigcup_{i\in I} V_i$, the construction yields a set of vector fields such that for every $x \in K$ and $1\leq i \leq d$ there is a smooth vector field $X \in \mathscr{T}$ such that $X$ acts as the $i$th partial derivative on functions, i.e. \eqref{eq:test} holds (with respect to the charts $\kappa_i$ on $V_i$).
 Assume now that $f \in H^s(K,N)$ such that $Tf \circ X$ is an $H^s$-map for all $X \in \mathscr{T}$. Due to \eqref{eq:test} we see that (locally) every partial derivative of $f$ is of class $H^s$. This shows that $f \in H^{s+1}(K,N)$.
	
 To prove the final assertion, let us assume first that $f \in  \Diff^s (K)$. Assuming that the above condition holds for $f$, we deduce as above that $f \in H^{s+1} (K,K)$. Recall from \cite[Lemma 3.8]{IKT13} that this already entails that $f^{-1} \in H^{s+1}(K,K)$. We deduce that also $f \in \Diff^{s+1}(K)$.
 Now we continue with $\ell \in \N$ and $f \in T^\ell \Diff^s (K)$. Applying \eqref{tangent_ident} iteratively, we identify $T^\ell \Diff^s (K)$ with an open subset of $H^s (K,T^\ell K)$. Arguing as before, we see that then $f$ is an $H^{s+1}$ map. Projecting $f $ onto its base point in $T^{\ell}\Diff^s (K)$, the above argument shows that this basepoint is again a $H^{s+1}$-diffeomorphism. We conclude that $f \in T^\ell  \Diff^{s+1} (K)$.
\end{proof}

\begin{rem}
 Lemma \ref{lem: testing} is a version of \cite[Lemma 12.2]{EM70} or \cite[p.21]{BaM18}. However, these results have stronger prerequesits. They asserts that $\varphi$ is of class $H^{s+1}$ if the composition $T\varphi \circ X$ is an $H^s$-map for all $X \in \VFsmu$.
 Note that we obtain the stronger condition by testing against the family $\mathscr{T}$ constructed in the proof of Lemma \ref{lem: testing}.
 An essential ingredient for the proof of Lemma \ref{lem: testing} was compactness of $K$. If the manifold $K$ would only be paracompact, a similar statement holds if we instead ask the family $\mathscr{T}$ to be only countable.
\end{rem}

To conclude the first step, we need a final technical lemma dealing with continuity of mappings into spaces of Sobolev functions. It is convenient to define the notion of a nice cover of a manifold (similar to the notions of covers introduced in \cite{IKT13}). 

\begin{defn}
Let $K$ be a compact manifold (possibly with smooth boundary). An open cover $(U_i,\kappa_i)_{i \in I}$ of $K$, is called a \emph{cover of bounded type}, if for any $i,j \in I$ with $U_i \cap U_j \neq \emptyset$, the change of charts $\kappa_j \circ \kappa_i^{-1}$ extends to a mapping in $C^\infty_{\text{glob}} (\overline{\kappa_i(U_i \cap U_j)},\R^d)$.

Further, we say that a cover $(U_i,\kappa_i)_{i \in I}$ of bounded type is a \emph{nice cover} if \begin{itemize}
\item $I$ is finite and for every $i \in I$, $\kappa_i (U_i)$ is bounded with Lipschitz boundary
\item for every pair $i,j \in I$ with $U_i \cap U_j \neq \emptyset$, the boundary of $\kappa_i (U_i \cap U_j)$ is piecewise $C^\infty$-smooth, i.e.~it is given by a finite (possibly empty) union of transversally intersecting $C^\infty$-embedded hypersurfaces, so in particular $\kappa_i (U_i\cap U_j)$ has a Lipschitz boundary.
\end{itemize}
\end{defn}
Nice covers are used to construct fine covers for mappings as in \cite[Definition 3.2]{IKT13}). Moreover, if $K$ has no boundary, they can always be constructed as in \cite[Lemma 3.1]{IKT13}. As noticed in \cite[Appendix B]{MMS19}, it is straight forward to adapt the construction of \cite[Lemma 3.1]{IKT13} to obtain nice covers for the case when $K$ has non-empty boundary.

\begin{lem}\label{lem:testing_time_cont}
 Let $P$ be a topological space and $f \colon P \to H^s (K,\R^m)$ be a continuous map. Assume that $\mathscr{U}_I \coloneq \{(U_i,\kappa_i) \mid i \in I\}$ is a finite  manifold atlas, $V_i \subseteq \overline{V}_i \subseteq U_i$ is open such that the $(V_i,\kappa_i)_{i \in I}$ form a nice cover.
 
 Let now $\mathscr{T} = \{X_{V_i,j}, i\in I, 1\leq j\leq d\}$ be the finite family of divergence free vector fields constructed via Lemma \ref{lem: testing} for $\mathscr{U}_I$ (and the $V_i$). If $f(E) \subseteq H^{s+1}(K,\R^m)$ and for every $i \in I, 1\leq j \leq d$ the map $F_{ij} \colon P \to H^s (K,\R^{m}), p \mapsto D(f(p))\circ X_{V_i,j}$ is continuous. Then $f \colon P\to H^{s+1}(K,\R^m)$ is continuous.
\end{lem}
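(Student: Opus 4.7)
The plan is to reduce the desired global $H^{s+1}(K,\R^m)$-continuity of $f$ to local $H^{s+1}$-continuity of the chart representatives $f(\cdot)\circ \kappa_i^{-1}$ on $\kappa_i(V_i)\subseteq \R^d$. Since $(V_i,\kappa_i)_{i\in I}$ is a nice cover, the standard theory of Sobolev manifolds (cf.\ \cite[Section 3]{IKT13} and \cite[Appendix B]{MMS19}) says that, picking a smooth partition of unity $\{\chi_i\}_{i\in I}$ subordinate to $\{V_i\}_{i\in I}$, convergence of a family $(g_\lambda)_\lambda$ to $g$ in $H^{s+1}(K,\R^m)$ is equivalent to convergence of each $\chi_i\cdot (g_\lambda\circ \kappa_i^{-1}) \to \chi_i\cdot (g\circ \kappa_i^{-1})$ in $H^{s+1}(\kappa_i(V_i),\R^m)$; analogous statements hold for $H^s$.

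On each chart image $\kappa_i(V_i)\subseteq \R^d$, which by the nice cover property has Lipschitz boundary, the classical norm equivalence
\[
\|u\|_{H^{s+1}(\kappa_i(V_i),\R^m)}^2 \sim \|u\|_{H^s(\kappa_i(V_i),\R^m)}^2 + \sum_{j=1}^d \|\partial^j u\|_{H^s(\kappa_i(V_i),\R^m)}^2
\]
reduces $H^{s+1}$-continuity to the simultaneous $H^s$-continuity of the function and of each of its first-order partial derivatives. The $H^s$-continuity of $\chi_i\cdot (f(\cdot)\circ \kappa_i^{-1})$ follows directly from the assumed $H^s$-continuity of $f$, since multiplication by the fixed smooth cutoff $\chi_i$ is continuous on $H^s$.

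The crucial input for the partial derivatives is the defining property of the vector field $X_{V_i,j}$ supplied by Lemma \ref{VF:aux1}: on $V_i$, for any $C^1$ map $\varphi$,
\[
D\varphi\cdot X_{V_i,j}(x) = \partial^j(\varphi\circ \kappa_i^{-1})(\kappa_i(x)),\quad x\in V_i.
\]
Applied pointwise to $\varphi = f(p)$ and read through the chart, this yields the key identity
\[
F_{ij}(p)\circ \kappa_i^{-1}(y) = \partial^j(f(p)\circ \kappa_i^{-1})(y),\quad y\in \kappa_i(V_i).
\]
Thus the assumed continuity of $F_{ij}\colon P\to H^s(K,\R^m)$ translates, via restriction to $\kappa_i(V_i)$ and multiplication by $\chi_i$, into continuity of $\chi_i\cdot \partial^j(f(\cdot)\circ \kappa_i^{-1})$ into $H^s(\kappa_i(V_i),\R^m)$ for each $i\in I$ and $1\le j\le d$.

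Combining the last two paragraphs yields continuity of $\chi_i\cdot (f(\cdot)\circ \kappa_i^{-1})$ into $H^{s+1}(\kappa_i(V_i),\R^m)$ for every $i\in I$, whence the first reduction gives the desired continuity of $f\colon P\to H^{s+1}(K,\R^m)$. The main (modest) technical point is ensuring that the dictionary between the intrinsic $H^{s+1}$-topology on $K$ and the local Sobolev norms on the chart images $\kappa_i(V_i)$ is clean; the nice cover hypothesis, used exactly in the same spirit as for $H^s$, is precisely what guarantees this.
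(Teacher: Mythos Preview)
Your argument is correct and follows essentially the same route as the paper's proof: both reduce global $H^{s+1}$-continuity to continuity of local chart representatives $f(\cdot)\circ\kappa_i^{-1}$ in $H^{s+1}(\kappa_i(V_i),\R^m)$, and both use the defining identity $F_{ij}(p)\circ\kappa_i^{-1}=\partial^j(f(p)\circ\kappa_i^{-1})$ on $\kappa_i(V_i)$ as the crucial input. The only cosmetic difference is in the localization step: the paper uses the initial-topology description of $H^{s+1}(K,\R^m)$ via $h\mapsto (h\circ\kappa_i^{-1}|_{\kappa_i(V_i)})_i$ from \cite{IKT13} (first restricting to subsets $\Omega_R$ where the images lie in a fixed ball), and then splits the full $H^{s+1}$-norm into multiindices $|\alpha|\le s$ and $|\alpha|=s+1$; you instead use a partition of unity $\{\chi_i\}$ and the recursive norm equivalence $\|u\|_{H^{s+1}}^2\sim\|u\|_{H^s}^2+\sum_j\|\partial^j u\|_{H^s}^2$. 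One small omission: to conclude $H^{s+1}$-continuity of $\chi_i\cdot(f(\cdot)\circ\kappa_i^{-1})$ you need $H^s$-continuity of $\partial^j(\chi_i\cdot(f(\cdot)\circ\kappa_i^{-1}))$, not of $\chi_i\cdot\partial^j(f(\cdot)\circ\kappa_i^{-1})$; the Leibniz term $(\partial^j\chi_i)\cdot(f(\cdot)\circ\kappa_i^{-1})$ must be added, which is continuous in $H^s$ by your own first step, so this is immediately fixed.
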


\begin{proof}
We will first treat the case that $K$ has \textbf{no boundary}. Recall first from \cite[Proposition 3.7 and Proposition 3.5]{IKT13} that a base of the topology of $H^{s+1}(K,\R^m)$ is given by certain sets $\mathscr{O}^{s+1}(\mathscr{U}_I,\mathscr{V}_I)$, where $(\mathscr{U}_i , \mathscr{V}_i)$ runs through all fine covers of the pair $(K,\R^m)$. Since $\R^m$ is a vector space, the euclidean ball $B_R(0) \subseteq \R^m$ of radius $R>0$ around $0$ is bounded with smooth boundary. Hence our assumptions 1.-3. on the $V_i$ show that the pair $\mathscr{U}_I = \{(V_i,\kappa_i) , i \in I\}, \mathscr{V}_I = \{(B_R (0), \id_{B_R^{\R^m} (0)}), i\in I\}$ forms a fine cover for every $R>0$ (see \cite[Definition 3.2]{IKT13} for the definition.\footnote{Indeed $\mathscr{V}_I$ is completely superfluous as the target manifold is the vector space $\R^m$. We chose to provide it anyway as they make the results of \cite{IKT13} verbatim applicable. However, the reader can convince herself that in all of the arguments the family $\mathscr{V}_I$ could just be replaced by $(\R^m,\id_{\R^m})$.} Now a function $g \in H^{s+1}(K,\R^m)$ is contained in $\mathscr{O}^{s+1}(\mathscr{U}_I,\mathscr{V}_I)$ if and only if for every $i \in I$ $f(\overline{V_i}) \subseteq B_R(0)$ holds. Thus every $g \in H^s (K,\R^m)$ is contained in $\Omega_R \coloneq\mathscr{O}^{s+1}(\mathscr{U}_I,\{(B_R (0), \id_{B_R^{\R^m} (0)}), i\in I\})$ for some $R>0$ (as $g(K)$ is compact whence bounded). In view of \cite[Proposition 3.5]{IKT13} it suffices to check that the mappings $f|_{f^{-1}(\Omega_R)}^{\Omega_R}$ are continuous.

To ease notation let us assume that $f$ takes its image in $\Omega_R$ for some $R>0$. This also entails that $D(f(p)) \in T\Omega_R$ for every $p\in P$. Combining \ref{setup:top_struct_sect} and \cite[Proposition 3.3]{IKT13}, the topology of $\Omega_R$ is initial with respect to the mapping
\begin{align}\label{loc.descr}
\Phi \colon \Omega_R \to \prod_{i\in I} H^{s+1}(\kappa_i (V_i),\R^m),\quad h \mapsto (h\circ \kappa^{-1}_i|_{\kappa_i (V_i)})_i,
\end{align}
where the spaces on the right hand side are the Hilbert spaces from Definition \ref{defn:Sobolev_funII}. So froom \eqref{loc.descr} we write $\Phi \circ f=(f_i)_i$ and thus it suffices to check that the components $f_i$ are continuous. We can test this with help of  the $H^{s+1}$-norm associated to the $H^{s+1}$-inner product \eqref{Hell_product} 
\begin{align}\label{Sobolevnormpart}
\lVert f_i(p)\rVert_{H^{s+1}} = \left(\sum_{|\alpha| \leq s+1} \int_{\varphi_i (V_i)} \langle D^\alpha f_i(p), D^\alpha f_i(p)\rangle \mathrm{d}x\right)^{1/2}, \quad p\in P
\end{align}
where $D^\alpha$ denotes the distributional derivatives. However, we already know that the $f_i$ are continuous as maps $P \to H^s(\kappa_i (V_i),\R^m)$. In particular, all summands in \eqref{Sobolevnormpart} with $|\alpha|<s+1$ are continuous as functions of $p$. Consider now $D^\alpha f_i$ with $|\alpha|=s+1$. We rewrite $D^\alpha = D^\beta D^j$ for some $j \in \{1,\ldots,d\}$ and $\beta$ a multiindex with $|\beta|=s$. Now by choice of the charts $\varphi_i$, \eqref{eq:test} yields on $\varphi_i(V_i)$ for every $p \in P$ the identity
$$D^j f_i(p) = D^j (f(p) \circ \kappa_i^{-1}|_{\kappa_i (V_i)})= D (f(p))\circ X_{V_i,j} \circ \kappa^{-1}_i = F_{i,j}(p)\circ \kappa_i^{-1}.$$
Thus $D^j f_i(p)$ coincides with the $i$th component of $\Phi\circ F_{i,j}(p)$ and by assumption this mapping is continuous in $p$ as a map to $H^s (K,\R^m)$. As $i,j$ were arbitrary, we deduce that also the summands with $|\alpha|=s+1$ in \eqref{Sobolevnormpart} depend continuously on $p$ and this concludes the proof if $K$ has no boundary.

Assume now that $K$ has non-empty smooth boundary $\partial K$, whence \cite{IKT13} is not directly applicable. However, the proof carries over exactly as presented in case without boundary since the Sobolev manifolds of mappings $H^s (K,\R^m)$ are constructed analogously. Indeed the proof used only the existence of nice covers (actually of fine covers but for the vector space valued case, nice covers are sufficient as pointed out above). As a consequence the statement also holds if $K$ is a manifold with smooth boundary.
\end{proof}

In the following, for given $\etazero \in T\Diff^s_\mu(K)$, we call $\eta=\eta^{\etazero}$ the solution to the stochastic Euler equation in Lagrangian form \eqref{eq:stochEuler_Lagr} with $\etazero$ as initial condition; we also call $\tau^{\etazero}$ the maximal existence time of the solution. The second step is given by the following:
\begin{lem}\label{lem:invariance_SDE} 
For every $\etazero \in T\Diff^s_\mu(K)$ and $\phi \in \Diff^s_\mu(K)$, we have  a.s.: $\tau^{\etazero}=\tau^{\etazero\circ \phi}$ and
\begin{align*}
\eta_t^{\etazero} \circ \phi = \eta_t^{\etazero\circ \phi},\quad \forall t\in [0,\tau^{\etazero}).
\end{align*}
\end{lem}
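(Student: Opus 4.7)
My plan is to exploit the \emph{right-invariance} of the SDE \eqref{eq:stochEuler_Lagr} under postcomposition with volume-preserving diffeomorphisms, and then to appeal to the invariance of Stratonovich SDEs under $C^2$ diffeomorphisms (Lemma \ref{lem-Strat_diff_manifold}) together with the uniqueness of maximal solutions (Theorem \ref{thm:wellposed_Lagr}).

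First I would verify right-invariance of the coefficients at the pointwise level. For $\eta \in T\Diff^s_\mu(K)$ with $\gamma=\pi_K\circ\eta$, the Eulerian velocity $u\coloneq \eta\circ\gamma^{-1}\in\VFsmu$ is \emph{unchanged} by the substitution $\eta\mapsto\eta\circ\phi$, since $(\eta\circ\phi)\circ(\gamma\circ\phi)^{-1}=\eta\circ\gamma^{-1}=u$. Inserting this into \eqref{Spray_form} gives at every $x\in K$
\[
B(\eta\circ\phi)(x) = Tu(\eta(\phi(x))) - \text{vl}_{\eta(\phi(x))}\bigl(\Pi[\nabla_u u](\gamma(\phi(x)))\bigr) = B(\eta)(\phi(x)),
\]
so that $B(\eta\circ\phi)=B(\eta)\circ\phi$ as elements of $H^s(K,T^2K)$. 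An analogous computation for \eqref{eqn-Sigma-02} yields $\Sigma(\eta\circ\phi)w=\Sigma(\eta)w\circ\phi$ for every $w\in\VFsmu[r]$.

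Next I would reformulate these identities via the smooth diffeomorphism
\[
TR_\phi\colon T\Diff^s_\mu(K)\to T\Diff^s_\mu(K),\qquad \eta\mapsto \eta\circ\phi,
\]
which is smooth by \ref{setup:diffgp} (right multiplication $R_\phi$ is smooth on $\Diff^s_\mu(K)$, and the tangent functor preserves smoothness). Under the iterated identification \eqref{tangent_ident}, $T(TR_\phi)$ acts on $T^2\Diff^s_\mu(K)\subseteq H^s(K,T^2K)$ again by postcomposition with $\phi$. Hence the push-forward coefficients $\tilde B$, $\tilde\Sigma$ from \eqref{eqn-b-tilde}--\eqref{eqn-sigma-tilde} of Lemma \ref{lem-Strat_diff_manifold}, computed with $\varphi=TR_\phi$, coincide with $B$ and $\Sigma$ themselves. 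Applying Lemma \ref{lem-Strat_diff_manifold} to the local solution $\eta^{\etazero}$ therefore shows that $Y_t\coloneq \eta^{\etazero}_t\circ\phi$ is itself a solution of \eqref{eq:stochEuler_Lagr} on $[0,\tau^{\etazero})$ with initial datum $\etazero\circ\phi$.

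Finally I would conclude by maximality and symmetry. By Theorem \ref{thm:wellposed_Lagr}, the maximal solution $\eta^{\etazero\circ\phi}$ extends $Y$ in the partial order of Definition \ref{defn:maximal_sol}, so a.s.\ $\tau^{\etazero\circ\phi}\ge \tau^{\etazero}$ and $\eta^{\etazero\circ\phi}_t=\eta^{\etazero}_t\circ\phi$ for every $t\in[0,\tau^{\etazero})$. Repeating the argument with $\phi$ replaced by $\phi^{-1}$ and initial datum $\etazero\circ\phi$ yields the reverse inequality $\tau^{\etazero}\ge \tau^{\etazero\circ\phi}$ a.s., hence equality of the maximal times and of the two processes on the common interval. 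The main (mild) technical point is the identification of $T(TR_\phi)$ with postcomposition on the subset of $H^s(K,T^2K)$ corresponding to $T^2\Diff^s_\mu(K)$; this is a direct consequence of iterating \eqref{tangent_ident} and of the formulas for tangent maps of right multiplications recalled in \ref{setup:diffgp}.
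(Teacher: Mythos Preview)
Your proposal is correct and follows essentially the same route as the paper: apply Lemma \ref{lem-Strat_diff_manifold} to the smooth diffeomorphism $TR_\phi$ (whose tangent again acts by postcomposition with $\phi$), use the right-invariance of $B$ and $\Sigma$ to identify the pushed-forward SDE with \eqref{eq:stochEuler_Lagr}, and conclude via maximality plus the $\phi\leftrightarrow\phi^{-1}$ symmetry. The only cosmetic difference is that the paper cites \cite[Theorem 11.1]{EM70} for the right-invariance of the spray $B$, whereas you verify it directly from the formula \eqref{Spray_form}.
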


\begin{proof}
We show first that $\eta_t^{\etazero} \circ \phi$ solves, on $[0,\tau^{\etazero})$, the SDE \eqref{eq:stochEuler_Lagr} (with initial condition $\etazero\circ \phi$). For this, we consider the $C^2$ transformation (cf.\ \ref{setup:diffgp}) 
\begin{equation}\label{eqn-varphi}
TR_\phi \colon T\Diff^s_\mu(K) \ni V \mapsto V\circ\phi \in T\Diff^s_\mu(K),
\end{equation}
which satisfies
$T(TR_\phi)(Z) = Z\circ \phi,\quad Z\in T^2\Diff^s_\mu(K)
$ (using the identification of $T^2 \Diff^s_\mu (K)$ with a subset of $H^s (K,T^2K)$). 
Hence, applying Lemma \ref{lem-Strat_diff_manifold} to the map $\varphi$ and process $\eta^{\etazero}$, we infer that the process , on $[0,\tau^{\etazero})$,
\begin{align*}
d[\eta^{\etazero}\circ \phi] = B(\eta^{\etazero}_t)\circ \phi dt +\Sigma(\eta^{\etazero}_t)\circ \phi \bullet dW.
\end{align*}
Now the geodesic spray $B$ is constructed with respect to the right-invariant $L^2$-metric, whence \cite[Theorem 11.1]{EM70} shows that the spray satisfies
\begin{align*}
B(V)\circ \phi = B(V\circ \phi),\quad V\in T\Diff^s_\mu(K)
\end{align*}
and also the diffusion coefficient $\Sigma$ satisfies
\begin{align}
[\Sigma(V)\circ \phi]w &= \text{vl}_{T\Diff^s_\mu(K)}(V,w\circ \pi_K(V))\circ \phi = \Sigma(V\circ \phi)w,
\\ &\mbox{ for }
\quad V\in T\Diff^s_\mu(K),w\in \VFsmu[s].
\end{align}
Hence $\eta^{\etazero}\circ\phi$ satisfies \eqref{eq:stochEuler_Lagr}. By uniqueness of \eqref{eq:stochEuler_Lagr} and maximality of $\eta^{\etazero\circ \phi}$, we have  a.s.: $\tau^{\etazero}\le \tau^{\etazero\circ \phi}$ and $\eta^{\etazero}\circ \phi$ coincides with $\eta^{\etazero\circ \phi}$ on $[0,\tau^{\etazero})$. Replacing $\etazero$ and $\phi$ with resp. $\etazero\circ \phi$ and $\phi^{-1}$, we get the reverse inequality $\tau^{\etazero\circ \phi}\le \tau^{\etazero}$. The proof is complete.
\end{proof}

Now we show the third step and conclude the proof of Theorem \ref{thm:noloss_nogain}. We take an integer $m$ such that $K$ is embedded in $\R^m$. By Corollary \ref{cor:embedding_TDiff}, $T\Diff^s_\mu(K)$ is embedded into $H^s(K,\R^{2m})$ as split submanifold.  More precisely, the connected component of $T\Diff^s_\mu(K)$ containing the identity is embedded as split submanifold; anyway, since we will always work on this connected component, with some abuse of notation we still write $T\Diff^s_\mu(K)$ for the connected component. Note that $H^s(K,\R^{2m})$ is separable by \cite[Lemma B.4]{MMS19}. Thus, we can apply the results of Section \ref{sec-Strat_diff_manifold}. We also recall from Lemma \ref{lem:eval} that fir $s>d/2$, the evaluation map $H^s(K,\R^{2m})\to \R^{2m}$, $v\mapsto v(k)$, for given $k\in K$, is continuous.

\begin{proof}[Proof of Theorem \ref{thm:noloss_nogain}]
We take an announcing sequence $(\tau_n=\tau_n^{\etazero})_n$ for $\tau=\tau^{\etazero}$, without loss of generality we can assume that each $\tau_n$ is bounded. We take a smooth divergence-free vector field $X \in \VFsmu$ and a $C^1$ curve $\phi:(-a,a)\to \Diff^s_\mu(K)$ with $\dot\phi(0)=X$. By Lemma \ref{lem:invariance_SDE}, for every $s \in (-a,a)$ we have  a.s.,
\begin{align}
\eta_t^{\etazero} \circ \phi_s = \eta_t^{\etazero\circ \phi_s},\quad \forall t\in [0,\tau^{\etazero}).\label{eq:SDE_invariance_s}
\end{align}
Concerning the left-hand side of \eqref{eq:SDE_invariance_s}, by the Sobolev embedding we have  a.s.: for every $k \in K$, the $\R^{2m}$-valued map $s\mapsto \eta_t^{\etazero} \circ \phi_s(k)$ is $C^1$ and its derivative at $s=0$ is
\begin{align*}
\left.\frac{d}{ds}\right|_{s=0} \eta_t^{\etazero} \circ \phi_s(k) = D\eta_t^{\etazero} \circ X(k), \quad \forall t\in [0,\tau^{\etazero}),\quad \forall k\in K.
\end{align*}
In particular, for every $n$, for every $t\ge 0$, for every sequence $(s_N)_N$ with $s_N\to0$, we have  a.s.:
\begin{align}
\lim_N \frac{\eta_{t\wedge \tau_n}^{\etazero} \circ \phi_{s_N}(k) - \eta_{t\wedge \tau_n}^{\etazero}(k)}{s_N} = D\eta_{t\wedge \tau_n}^{\etazero} \circ X(k),\quad \forall k\in K.\label{eq:diff_1}
\end{align}
Concerning the right-hand side of \eqref{eq:SDE_invariance_s}, in view of Theorem \ref{thm-Strat_diff_manifold}, we embed the stochastic Euler equation \ref{eq:stochEuler_Lagr} into $H^s(K,\R^{2m})$, getting
\begin{align}
d\eta = \tilde B(\eta)dt + \tilde\Sigma (\eta)\, dW,\label{eq:stochEuler_Lagr_emb}
\end{align}
where, for $x \in T\Diff^s_\mu(K)$,
\begin{align*}
&\newtilde{B}(x)=B(x)+\frac12\tr_{\rK}[D\Sigma\circ \Sigma(x) ],\\
&\newtilde{\Sigma}(x) = \Sigma(x).
\end{align*}
Now $B$ is smooth by \cite[Theorem 11.1]{EM70} and we can argue as in the proof of Theorem \ref{thm:wellposed_Lagr}: By hypothesis \ref{hp:noise} with $r\ge s+3$ and Lemma \ref{lem:drift_is_C11}, $\Sigma$ is $C^{2,1}_{loc}$. In particular the coefficients $\newtilde{B}$ and $\newtilde{\Sigma}$ are $C^1 \in H^s(K,\R^{2m})$. Hence we can apply Theorem \ref{thm-Strat_diff_manifold} to the SDE \eqref{eq:stochEuler_Lagr_emb}: For $n \in \N$, the map 
 $x\mapsto \eta^x_{\cdot\wedge\tau_n}$ allows a time-continuous Gateaux derivative at the point $\etazero$ in the direction of $X$. So if $\frac{d}{dh}\phi(h,k) = X(\phi(h,k))$ we set:
 $$\frac{d}{dX} \eta_{\cdot\wedge \tau_n}^{\etazero} \coloneq \left.\frac{d}{dh}\right|_{h=0} \eta^{\etazero \circ \phi_h}_{\cdot \wedge \tau_n} \in C([0,T_n],L^0(\Omega,H^s(K,\R^{2m}))) $$
 
 By Remark \ref{rem:conv_sup_P} there exists a sequence $(s_N)_N$ with $s_N\to 0$ such that, for every $t\ge0$, we have  a.s.:
\begin{align*}
\lim_{N \rightarrow \infty} \frac{\eta_{t\wedge \tau_n}^{\etazero\circ \phi_{s_N}} -\eta_{t\wedge \tau_n}^{\etazero}}{s_N} = \frac{d}{dX} \eta_{t\wedge \tau_n}^{\etazero},
\end{align*}
as limit in $H^s(K,\R^{2m})$, and in particular, evaluating at $k \in K$,
\begin{align}
\lim_N \frac{\eta_{t\wedge \tau_n}^{\etazero\circ \phi_{s_N}}(k) -\eta_{t\wedge \tau_n}^{\etazero}(k)}{s_N} = \frac{d}{dX} \eta_{t\wedge \tau_n}^{\etazero}(k),\quad \forall k\in K.\label{eq:diff_2}
\end{align}
Putting together \eqref{eq:SDE_invariance_s}, \eqref{eq:diff_1} and \eqref{eq:diff_2}, for every $t\ge 0$ we get:  a.s.,
\begin{align}
D\eta_{t\wedge \tau_n}^{\etazero} \circ X(k) =\frac{d}{dX} \eta_{t\wedge \tau_n}^{\etazero}(k),\quad \forall k\in K.\label{eq:T_eta}
\end{align}
Since $t\mapsto \eta^{\etazero}_t$ is continuous with values in $H^s(K,\R^{2m})$ (and $s>d/2+1$), the map
\begin{align*}
[0,T]\times K \ni (t,k)\mapsto D\eta^{\etazero}_{t\wedge \tau_n}(X(k)) \in \R^{2m}
\end{align*}
is continuous  a.s.. Similarly, since the map $t\mapsto \frac{d}{dX} \eta_{t\wedge \tau_n}^{\etazero}$ is continuous with values in $H^s(K,\R^{2m})$, the map
\begin{align*}
[0,T]\times K \ni (t,k)\mapsto \frac{d}{dX} \eta_{t\wedge \tau_n}^{\etazero}(k) \in \R^{2m}
\end{align*}
is continuous  a.s.. Then in \eqref{eq:T_eta} we can make the $\mathbb{P}$-exceptional subset of $\Omega$ independent of time, namely we have  a.s.
\begin{align*}
D\eta_{t\wedge \tau_n}^{\etazero} \circ X(k) =\frac{d}{dX} \eta_{t\wedge \tau_n}^{\etazero}(k),\quad \forall (t,k)\in [0,T]\times K.
\end{align*}
In particular,  a.s. we have
\begin{align}
t\mapsto D\eta_{t\wedge \tau_n}^{\etazero} \circ X \text{ is continuous with values in } H^s(K,\R^{2m}).\label{eq:deriv_cont}
\end{align}
We can make the $\mathbb{P}$-exceptional set for \eqref{eq:deriv_cont} (that is, the $\mathbb{P}$-exceptional set where \eqref{eq:deriv_cont} does not hold) independent of $X \in \mathscr{T}$, where $\mathscr{T}$ is the finite set from Lemma \ref{lem: testing}. Hence, by Lemma \ref{lem: testing} and Lemma \ref{lem:testing_time_cont} we have a.s.:
\begin{align*}
t\mapsto \eta_{t\wedge \tau_n}^{\etazero}
\end{align*}
is continuous with values in $H^{s+1}(K,\R^{2m})$  and so it is continuous with values in $T\Diff^{s+1}_\mu(K)$, for every $n$. By arbitrariness of $n$, we conclude that,  a.s., $t\mapsto \eta_t$ is continuous with values in $T\Diff^{s+1}_\mu(K)$ up to its maximal existence time $\tau^{\etazero}$.

It remains to show that $\eta$ is the maximal solution to the stochastic Lagrangian Euler equation \eqref{eq:stochEuler_Lagr} in $T\Diff^{s+1}_\mu(K)$. Recall first that the metric spray is smooth by \cite[Theorem 11.1]{EM70} and by Assumption \ref{hp:noise} with $r\ge s+3$, Lemma \ref{lem:drift_is_C11} yields 
\begin{align*}
&B\mid_{T\Diff^{s+1}_\mu(K)}:T\Diff^{s+1}_\mu(K) \to T^2\Diff^{s+1}_\mu(K)\text{ is }C^\infty_{loc},\\
&\Sigma\mid_{T\Diff^{s+1}_\mu(K)}:T\Diff^{s+1}_\mu(K) \to \mathscr{L}(\mathfrak{X}^{s+3}_\mu(K),T^2\Diff^{s+1}_\mu(K))\text{ is }C^{1,1}_{loc}.
\end{align*}
Therefore the coefficients of the embedded SDE \eqref{eq:stochEuler_Lagr_emb} satisfy
\begin{align*}
&\tilde B\mid_{T\Diff^{s+1}_\mu(K)}\colon T\Diff^{s+1}_\mu(K) \to H^{s+1}(K,\R^{2m})\text{ is }C^{0,1}_{loc},\\
&\tilde\Sigma\mid_{T\Diff^{s+1}_\mu(K)}\colon T\Diff^{s+1}_\mu(K) \to \mathscr{L}(\mathfrak{X}^{s+3}_\mu(K),H^{s+1}(K,R^{2m}))\text{ is }C^{1,1}_{loc}.
\end{align*}
Since $t\mapsto \eta_t$ is  a.s. continuous with values in $T\Diff^{s+1}_\mu(K)$, then
\begin{align*}
t\mapsto \int^t_0 \newtilde{B}(\eta_r)dr,\quad t\mapsto \int^t_0 \newtilde{\Sigma}(\eta_r)dW_r
\end{align*}
make sense and are  a.s. continuous with values in $H^{s+1}(K,\R^{2m})$. Therefore the SDE \eqref{eq:stochEuler_Lagr_emb}, namely
\begin{align*}
\eta_t = \etazero +\int^t_0 \newtilde{B}(\eta_r)dr +\int^t_0 \newtilde{\Sigma}(\eta_r)dW_r,\quad t\in [0,\tau^{\etazero}),
\end{align*}
which holds in $H^s(K,\R^{2m})$, must hold also in $H^{s+1}(K,\R^{2m})$. Since $T\Diff^{s+1}_\mu(K)$ is a split submanifold of $H^{s+1}(K,\R^{2m})$, by Lemma \ref{lem:SDE_embedded} $\eta$ satisfies (on $[0,\tau^{\etazero})$) the Lagrangian Euler equation \eqref{eq:stochEuler_Lagr} in $T\Diff^{s+1}_\mu(K)$. If $\zeta=(\zeta_t)_{t\in [0,\rho)}$ is another solution to \eqref{eq:stochEuler_Lagr} in $T\Diff^{s+1}_\mu(K)$, then $\zeta$ solves \eqref{eq:stochEuler_Lagr} also in $T\Diff^s_\mu(K)$ (since $T\Diff^s_\mu(K)$ is smoothly embedded into $T\Diff^{s+1}_\mu(K)$). Hence by maximality of $\eta \in T\Diff^s_\mu(K)$, we must have  a.s.: $\rho\le \tau^{\etazero}$ and $\zeta=\eta$ on $[0,\rho)$. Therefore $\eta$ is the maximal solution also on $T\Diff^{s+1}_\mu(K)$. The proof is complete.
\end{proof}

\section{Applications to the stochastic Euler equations in the Eulerian form}\label{sec:Euler_Lagr}

In this section, as before, $K$ denotes a smooth manifold (possibly with smooth boundary) modelled on $\mathbb{R}^d$.
Assume that 
\begin{equation}\label{eqn-s}
s>d/2+1.
\end{equation}
and that $\mathbb{F}=(\mathcal{F}_t)_t$ denotes our choice of filtration. 
We choose and fix $u_0 \in \mathfrak{X}^s_\mu(K)$. We also recall the stochastic Euler equations in Lagrangian form \eqref{eq:stochEuler_Lagr} on manifold  $T\Diff^s_\mu(K)$, that is:
\begin{equation*}
\begin{aligned}
d\eta_t &= B(\eta_t)dt +\Sigma(\eta_t)\bullet dW_t,\\
\eta_0&= \etazero, 
\end{aligned}
\end{equation*}
with a given $\etazero \in T\Diff^s_\mu(K)$ and suitable noise in $\VFsmu[r], r\geq s$. 
Here $B$ is the Ebin-Marsden drift and $\Sigma$ is the diffusion coefficient, defined in Section \ref{sec:noloss_nogain}. We now would like to translate our results to the stochastic Euler equation on $K$. Writing it in the Eulerian form on $\mathfrak{X}^s_\mu(K)$, the equation reads 
\begin{align}
\begin{aligned}\label{eq:stochEuler_Eulerian}
    du_t &+\Pi[\nabla_{u_t} u_t]dt = dW_t,\\
    u(0)&=u_0,
\end{aligned}
\end{align}
where   
$\Pi$ is the Leray-Helmholtz projection from $\mathfrak{X}^{s-1}(K)$ onto the closed subspace $\VFsmu[s-1]$ of the divergence-free vector fields, cf.\ \cite[Remark 1.6]{Tem01}.

In the rigorous definition of solutions to \eqref{eq:stochEuler_Eulerian}, we cannot use directly Definition \ref{def:SDE_man_sol} of solutions to SDEs, because here the drift function $u\mapsto \Pi[\nabla_u u]$ is not continuous (not even globally defined) on $\VFsmu[s]$. However we can easily modify the definition as follows, by requiring the integral equality to hold in $\VFsmu[s-1]$:

\begin{defn}
    A local (strong and smooth) solution to the stochastic Euler equation in the Eulerian form \eqref{eq:stochEuler_Eulerian} is an $\mathfrak{X}^s_\mu(K)$-valued continuous and $\mathbb{F}$-progressively measurable process $u=(u_t)_{t\in [0,\tau)}$, with $\tau>0$ accessible stopping time, which satisfies, in $\mathfrak{X}^{s-1}_\mu(K)$, 
    \begin{align*}
        u_t = u_0 +\int_0^t \Pi[\nabla_{u_r}u_r] dr +W_t \quad \forall t\in [0,\tau).
    \end{align*}
\end{defn}

Definition \ref{defn:maximal_sol} of a local maximal solution and Definition \ref{defn:local_uniq} of local uniqueness are easily adapted to the Eulerian form \eqref{eq:stochEuler_Eulerian}. Our third main result is the equivalence between the Eulerian form and the Lagrangian form of the stochastic Euler equation. 
\begin{thm}\label{thm:equivalence_flow_pde}
Assume \eqref{eqn-s} and that  $W$ is a Wiener process satisfying Hypothesis \ref{hp:noise} with $r= s+2$. Then the Eulerian form \eqref{eq:stochEuler_Eulerian} and the Lagrangian form \eqref{eq:stochEuler_Lagr} of the stochastic Euler equations are equivalent, in the following sense:
\begin{itemize}
\item If $\eta$ is a solution on $[0,\tau)$ to the Lagrangian form \eqref{eq:stochEuler_Lagr} in $T\Diff^s_\mu(K)$, with $\pi(\etazero)=\id$, then $$u(t)=\eta(t)\circ \pi(\eta(t))^{-1}$$ is a solution on $[0,\tau)$ to the Eulerian form \eqref{eq:stochEuler_Eulerian} in $\VFsmu$, where the initial condition $u_0$ is the vector field corresponding to $\etazero$, see \eqref{tangent_ident}.
\item If $u$ is a solution on $[0,\tau)$ to the Eulerian form \eqref{eq:stochEuler_Eulerian} in $\VFsmu$ and $\Phi$ is the unique flow solution on $[0,\tau)$ to the (random) ODE
\begin{align}
d\Phi(t) = u(t,\Phi(t))\, dt,\quad \Phi(0)=\id,
\label{eqn-flow_ODE}
\end{align}
then the process $\eta$ defined by 
\begin{equation}\label{eqn-eta}
\eta(t)=u(t)\circ \Phi(t), \;\;\; t\in [0,\tau),    
\end{equation}
 is a solution on $[0,\tau)$ to the stochastic Euler Equations in the Lagrangian form \eqref{eq:stochEuler_Lagr} on $T\Diff^s_\mu(K)$, with $\etazero$ is the element in $T_{\id}\Diff^s_\mu(K)$ corresponding to the vector field $u_0$.
\end{itemize}
\end{thm}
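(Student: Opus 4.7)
The plan is to prove the two implications separately, with the regularization operator $R_\delta$ of Lemma \ref{lem-regularization} as the key tool for circumventing the loss of regularity inherent in the composition operations at the $H^s$ level.

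For the direction Lagrangian to Eulerian, let $\eta$ solve \eqref{eq:stochEuler_Lagr} and set $\Phi(t)=\pi(\eta(t))$ and $u(t)=\eta(t)\circ\Phi(t)^{-1}$, which lies in $\VFsmu$ since $\Phi(t)\in\Diff^s_\mu(K)$. I first show $\Phi$ solves the random ODE \eqref{eqn-flow_ODE}: since $\pi$ is smooth (and in fact a linear projection under the embedding of Corollary \ref{cor:embedding_TDiff}), applying It\^o's formula (Lemma \ref{lem-Strat_diff_manifold}) with $h=i\circ\pi$ produces $d(i\circ\Phi)=(Di\cdot\eta)\,dt$, because $T\pi\circ\Sigma(\eta)=0$ by \eqref{eq:Sigma_vertical} (killing both the stochastic integrand and, jointly with the vanishing second derivative of $\pi$ in the embedding, the It\^o correction). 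Hence $\dot\Phi=\eta=u\circ\Phi$. The Eulerian equation is then extracted by testing against smooth divergence-free vector fields: volume preservation of $\Phi$ yields the identity $\langle u(t),X\rangle_{L^2(\mu)}=\langle \eta(t),X\circ\Phi(t)\rangle_{L^2(\mu)}$; expanding the right-hand side by It\^o's formula (the $t$-dependence entering through both $\eta$ and $\Phi$, but with no mixed correction since $\Phi$ has bounded variation paths) and matching terms against the explicit form \eqref{Spray_form} of $B$ and the vertical noise \eqref{eq:Sigma_vertical} produces the weak version of \eqref{eq:stochEuler_Eulerian}; a density argument upgrades it to the strong form in $\VFsmu[s-1]$.

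For the direction Eulerian to Lagrangian, the $C^{1,\alpha}$ regularity of $u(t,\cdot)$ ensured by Sobolev embedding, together with $\diver u=0$ and Liouville's formula, makes the random ODE \eqref{eqn-flow_ODE} path-wise uniquely solvable by a volume preserving flow $\Phi\in\Diff^s_\mu(K)$. Setting $\eta=u\circ\Phi$, the obstacle is that left composition $L_v\colon\phi\mapsto v\circ\phi$ is only continuous on $\Diff^s(K)$ when $v$ is merely $H^s$ (cf.\ \ref{setup:diffgp}), which prevents a direct application of It\^o's formula. To bypass this, regularize via $u^\delta(t)\coloneq R_\delta u(t)\in H^{s+2}$; by linearity of $R_\delta$ and Lemma \ref{lem-regularization},
\begin{align*}
du^\delta = -R_\delta\Pi(\nabla_u u)\,dt + R_\delta\,dW.
\end{align*}
For $\eta^\delta\coloneq u^\delta\circ\Phi$, left composition $L_{u^\delta}$ is now $C^2$, so It\^o's formula applies and, since $\Phi$ has bounded variation paths (giving no Stratonovich correction in the $\Phi$-component), yields
\begin{align*}
d\eta^\delta = (du^\delta)\circ\Phi + Du^\delta(u\circ\Phi)\,dt.
\end{align*}
The horizontal term $Du^\delta(u\circ\Phi)\,dt$ is exactly the spray piece $T(u^\delta)\circ\eta^\delta\,dt$ in \eqref{Spray_form}, and the noise $R_\delta\,dW\circ\Phi$ matches the vertical-lift form of $\Sigma(\eta^\delta)\bullet dW$ via \eqref{eq:Sigma_vertical}; hence $\eta^\delta$ satisfies \eqref{eq:stochEuler_Lagr} modulo errors driven by $\|u^\delta-u\|_{H^s}$.

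The last step is to pass $\delta\to 0$. Uniform $H^s$ bounds on $u^\delta$ and continuity of the composition map at the $H^s$ level give $\eta^\delta\to \eta$ in $C([0,T],H^s)$ almost surely for every $T<\tau$; the drift convergence then follows from continuity and the stochastic integral convergence from Lemma \ref{lem:convergence_Ito_int}. The main technical obstacle I anticipate is the consistent treatment of the quadratic drift $R_\delta\Pi(\nabla_u u)\circ\Phi$ against the spray $B(\eta)$, since $\nabla_u u$ only lies in $H^{s-1}$ and the regularization does not commute cleanly with the nonlinearity; the cleanest remedy is to perform the entire chain-rule analysis in the ambient Hilbert space $H^s(K,\R^{2m})$ via Corollary \ref{cor:embedding_TDiff} and Lemma \ref{lem:SDE_embedded}, so that all convergence questions reduce to standard Hilbert-space estimates applied to a linear nonlocal regularization of a semilinear SDE.
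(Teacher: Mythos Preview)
Your overall strategy—regularize via $R_\delta$ to supply the missing derivative for It\^o's formula, work in the ambient Hilbert space of Corollary \ref{cor:embedding_TDiff}, and close with Lemma \ref{lem:SDE_embedded}—is the paper's. The difference is in where you localize.

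The paper handles both directions by the same mechanism: embed via $Ti$, then for each fixed $k\in K$ apply It\^o's formula to the $C^2$ test function $(\eta,q)\mapsto\mathrm{eval}(R_\delta\circ Di\circ\eta,q)$ on the joint process $(u,\Phi(k))$ (Eulerian$\to$Lagrangian, Proposition \ref{prop:Lagr_eval_emb}) respectively $(\eta,\Phi^{-1}(k))$ (Lagrangian$\to$Eulerian, Proposition \ref{prop:Eul_eval_emb}, after first deriving the transport equation \eqref{eq:TE} for $\Phi^{-1}$). The regularization sits inside the test function, acting on the already-embedded $\R^m$-valued object, so Lemma \ref{lem-regularization} applies verbatim; the identification of the limiting coefficients with $B(\eta)(k)$, $\Sigma(\eta)(k)$ (resp.\ $-\Pi[\nabla_u u](k)$, $dW(k)$) is then a finite-dimensional computation in $\R^{2m}$.

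Your Lagrangian$\to$Eulerian via $L^2$-pairing against smooth $X$ is a genuinely different route that avoids regularization altogether. It should go through, but the line ``matching terms against \eqref{Spray_form}'' hides a nontrivial computation: $Dh(\eta)\cdot B(\eta)$ for $h(\eta)=\int_K g(\eta,X\circ\pi_K\circ\eta)\,d\mu$ involves pairing the $T^2K$-valued piece $Tu\circ\eta$ of \eqref{eq:EM_drift} with a $TK$-valued field, which only makes sense after invoking the embedding or a connection; the paper's pointwise-in-$\R^{2m}$ reduction sidesteps this bookkeeping. Your density upgrade from weak to strong is fine once you note all terms are a priori $\VFsmu[s-1]$-valued.

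For Eulerian$\to$Lagrangian, your process-level regularization $u^\delta=R_\delta u$ runs into exactly the obstacle you anticipate: $R_\delta$ acts on $H^s(K,\R^n)$, not on sections of $TK$, so $R_\delta(Di\circ u)$ need not be tangent to $i(K)$ and ``$u^\delta\circ\Phi$'' is not a composition in the half-Lie group. Your proposed remedy—carry out the whole chain-rule analysis in $H^s(K,\R^{2m})$—is precisely what the paper does, at which point your approach and the paper's coincide.
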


The proof of Theorem \ref{thm:equivalence_flow_pde} will take most of the present section. The basic idea is to embedd the group of $H^s$-diffeomorphisms as a split submanifold of a separable Hilbert space and work with the embedded equations. Before we investigate the details let us record some consequences of Theorem \ref{thm:equivalence_flow_pde} first. Combining Theorem \ref{thm:equivalence_flow_pde} and Theorems \ref{thm:wellposed_Lagr}, \ref{thm:noloss_nogain}, we obtain both local existence and uniqueness (with the same assumptions of the Lagrangian form) and the no loss-no gain result for the Eulerian form:

\begin{thm}\label{thm-unique solution}
Assume that $s>d/2+1$ and $W$ satisfies Assumption \ref{hp:noise} with $r\ge s+2$. Then, for every initial condition $u_0 \in \mathfrak{X}^s_\mu(K)$, there exists a unique local maximal solution to the stochastic Euler equations in the Eulerian form \eqref{eq:stochEuler_Eulerian}.
\end{thm}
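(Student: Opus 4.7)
The plan is to transfer the result Theorem \ref{thm:wellposed_Lagr} from the Lagrangian formulation to the Eulerian formulation via the equivalence Theorem \ref{thm:equivalence_flow_pde}. Under the standing assumption $s>d/2+1$ and Hypothesis \ref{hp:noise} with $r\ge s+2$, both theorems are applicable.

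\textbf{Step 1 (Existence).} Given $u_0\in\mathfrak{X}^s_\mu(K)$, let $\etazero\in T_{\id}\Diff^s_\mu(K)$ be the element corresponding to $u_0$ under the identification \eqref{eq:TDiff_identification}. Then $\etazero$ lies in the connected component of the identity of $T\Diff^s_\mu(K)$, so Theorem \ref{thm:wellposed_Lagr} produces a local maximal solution $(\eta_t)_{t\in[0,\tau)}$ of \eqref{eq:stochEuler_Lagr} with $\eta_0=\etazero$ and $\pi(\etazero)=\id$. The second bullet of Theorem \ref{thm:equivalence_flow_pde} cannot yet be invoked (we do not start from an Eulerian solution); instead we use the first bullet to define
\begin{equation*}
u_t \coloneq \eta_t\circ\pi(\eta_t)^{-1},\qquad t\in[0,\tau),
\end{equation*}
and conclude that $(u_t)_{t\in[0,\tau)}$ is a solution to the Eulerian form \eqref{eq:stochEuler_Eulerian} with initial datum $u_0$. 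This gives the existence of a local solution.

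\textbf{Step 2 (Local uniqueness).} Suppose $(u^1_t)_{t\in[0,\tau^1)}$ and $(u^2_t)_{t\in[0,\tau^2)}$ are two local solutions to \eqref{eq:stochEuler_Eulerian} with the same initial datum $u_0$. For each $i=1,2$, since $u^i$ is $\VFsmu$-valued with $s>d/2+1$, the vector field $u^i$ is a.s. continuous in time with values in the $C^1$-vector fields; hence the random ODE \eqref{eqn-flow_ODE} admits a unique flow $\Phi^i$ on $[0,\tau^i)$, and $\Phi^i_t$ takes values in $\Diff^s_\mu(K)$ by standard flow arguments for Sobolev vector fields. Setting $\eta^i_t\coloneq u^i_t\circ\Phi^i_t$, the second bullet of Theorem \ref{thm:equivalence_flow_pde} says that each $\eta^i$ is a local solution of the Lagrangian form \eqref{eq:stochEuler_Lagr} with initial datum $\etazero$. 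Local uniqueness for the Lagrangian form (Theorem \ref{thm:wellposed_Lagr}) yields $\eta^1=\eta^2$ on $[0,\tau^1\wedge\tau^2)$ a.s. Because $\pi(\eta^i_t)=\Phi^i_t$ and $u^i_t=\eta^i_t\circ(\Phi^i_t)^{-1}$, we conclude $u^1=u^2$ on $[0,\tau^1\wedge\tau^2)$ a.s.

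\textbf{Step 3 (Existence of a maximal solution).} With existence and local uniqueness in hand, the amalgamation procedure used in the proof of Lemma \ref{lem:exist_maximal} applies verbatim in the Eulerian setting: one orders the set of local solutions by extension and obtains a maximal solution $(\widehat{u}_t)_{t\in[0,\hat\tau)}$, unique up to indistinguishability. Alternatively and more directly, one can apply the equivalence once more: the Lagrangian maximal solution $\eta$ from Step 1 has maximal existence time $\tau$, and the first bullet of Theorem \ref{thm:equivalence_flow_pde} turns it into an Eulerian solution on $[0,\tau)$; any strictly larger Eulerian solution would, by Step 2's construction, yield a Lagrangian solution strictly extending $\eta$, contradicting its maximality. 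Thus $(u_t)_{t\in[0,\tau)}$ is the unique local maximal solution to \eqref{eq:stochEuler_Eulerian}.

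The main obstacle is purely bookkeeping: one must verify that the flow $\Phi^i$ in Step 2 exists on the whole random interval $[0,\tau^i)$ and lands in $\Diff^s_\mu(K)$, so that the correspondence $u\leftrightarrow\eta$ respects both the class of local solutions and the notion of maximality. Once this is checked, the theorem follows mechanically from the already established results.
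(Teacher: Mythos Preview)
Your proposal is correct and follows essentially the same route as the paper: existence via the first bullet of Theorem \ref{thm:equivalence_flow_pde} applied to the Lagrangian maximal solution, local uniqueness by transporting two Eulerian solutions to Lagrangian ones via the second bullet and invoking Theorem \ref{thm:wellposed_Lagr}, and maximality by observing that any Eulerian solution $\tilde u$ on $[0,\tilde\tau)$ yields a Lagrangian solution $\tilde u\circ\tilde\Phi$, which by Lagrangian maximality forces $\tilde\tau\le\tau$ and $\tilde u=u$ on $[0,\tilde\tau)$. One small caveat: the amalgamation Lemma \ref{lem:exist_maximal} is phrased for SDEs on manifolds with continuous drift, so it does not apply ``verbatim'' to the Eulerian form; your direct argument via Lagrangian maximality (which is also the paper's) is the clean way to conclude.
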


\begin{proof}
    The existence of a local solution and the local uniqueness for the Eulerian form follow from the existence of a local solution and local uniqueness for the Lagrangian form, Theorem \ref{thm:wellposed_Lagr}, via Theorem \ref{thm:equivalence_flow_pde}. Concerning the maximality, let $(\eta(t))_{t\in [0,\tau)}$ be the maximal solution to the Lagrangian form (which exists by Theorem \ref{thm:wellposed_Lagr}) and take $u(t)=\eta(t)\circ\pi(\eta(t))^{-1}$. Let $(\tilde{u}(t))_{t\in [0,\tilde{\tau})}$ be another solution to the Eulerian form and take $(\tilde{\Phi}(t))_{t\in [0,\tilde{\tau})}$ the flow solution to \eqref{eqn-flow_ODE} with $\tilde{u}$ in place of $u$. Then, by Theorem \ref{thm:equivalence_flow_pde}, $\tilde{u}\circ \tilde{\Phi}$ is a solution to the Lagrangian form and hence, by maximality, we must have  a.s.
    \begin{align*}
        \tilde{\tau}\le \tau, \quad \tilde{u}(t)\circ\tilde\Phi(t) = u(t)\circ\Phi(t) \quad \forall t\in [0,\tilde{\tau}).
    \end{align*}
    Hence we get,  a.s.: $\tilde{\Phi}(t)=\Phi(t)$ and $\tilde{u}(t)=u(t)$ for $t\in [0,\tilde{\tau})$, that is $u$ is a maximal solution to the Eulerian form. The proof is complete.
\end{proof}

\begin{cor}
Assume that $s>d/2+1$ and $W$ satisfies \ref{hp:noise} with $r= s+3$. Let $u=(u_t)_{[0,\tau)}$ be a solution to the Eulerian Euler equation \eqref{eq:stochEuler_Eulerian} in $\mathfrak{X}^s_\mu(K)$. If $u_0$ belongs to $\VFsmu[s+1]$, then $u$ takes values in $\VFsmu[s+1]$ and solves the Eulerian Euler equation \eqref{eq:stochEuler_Eulerian} in $\VFsmu[s+1]$.
\end{cor}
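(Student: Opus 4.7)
The plan is to bootstrap the no-loss-no-gain result from the Lagrangian setting (Theorem \ref{thm:noloss_nogain}) using the equivalence of Lagrangian and Eulerian forms established in Theorem \ref{thm:equivalence_flow_pde}. Given the Eulerian solution $u$ on $[0,\tau)$ in $\VFsmu$, I would first solve the (random) ODE \eqref{eqn-flow_ODE} for its Lagrangian flow $\Phi$ and set $\eta(t)=u(t)\circ\Phi(t)$. By the second part of Theorem \ref{thm:equivalence_flow_pde}, $\eta$ is a solution on $[0,\tau)$ of the Lagrangian form \eqref{eq:stochEuler_Lagr} in $T\Diff^s_\mu(K)$, with initial datum $\etazero\in T_{\id}\Diff^s_\mu(K)$ corresponding to $u_0$ via \eqref{tangent_ident}. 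Because $u_0\in\VFsmu[s+1]$ and $\pi(\etazero)=\id\in\Diff^{s+1}_\mu(K)$, the identification \eqref{eq:TDiff_identification} gives $\etazero\in T\Diff^{s+1}_\mu(K)$.

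Since the noise satisfies Hypothesis \ref{hp:noise} with $r\ge s+3$, we are exactly in the regime of Theorem \ref{thm:noloss_nogain}. Let $\hat\eta$ denote the maximal Lagrangian solution in $T\Diff^s_\mu(K)$ starting at $\etazero$. By local uniqueness (Theorem \ref{thm:wellposed_Lagr}), $\eta$ and $\hat\eta$ coincide on $[0,\tau)$. Theorem \ref{thm:noloss_nogain} then yields, almost surely, that $t\mapsto\hat\eta(t)$ (hence $t\mapsto\eta(t)$) is well-defined and continuous with values in $T\Diff^{s+1}_\mu(K)$ on its maximal interval of existence, which includes $[0,\tau)$.

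To return to the Eulerian side, I would apply the first part of Theorem \ref{thm:equivalence_flow_pde} at the higher regularity level. Writing $\phi(t):=\pi(\eta(t))\in\Diff^{s+1}_\mu(K)$, continuity of $\eta$ in $T\Diff^{s+1}_\mu(K)$ gives continuity of $\phi$ in $\Diff^{s+1}_\mu(K)$, and \cite[Lemma 3.8]{IKT13} (together with continuity of the inversion on the topological group $\Diff^{s+1}_\mu(K)$, cf.\ \ref{setup:diffgp}) gives continuity of $\phi(\cdot)^{-1}$ in $\Diff^{s+1}_\mu(K)$. From \eqref{eq:TDiff_identification}, $\eta(t)=u(t)\circ\phi(t)$ with $u(t)\in\VFsmu[s+1]$, so $u(t)=\eta(t)\circ\phi(t)^{-1}$ is a continuous $\VFsmu[s+1]$-valued trajectory (continuity of the composition follows by unwrapping in local charts, or equivalently by the smoothness of right multiplication $R_{\phi(t)^{-1}}$ applied pointwise in $t$ combined with the continuity of $\phi(\cdot)^{-1}$). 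Finally, to check the equation, note that once $u$ is known to live in $\VFsmu[s+1]$, the nonlinear term $\Pi[\nabla_u u]$ makes sense in $\VFsmu[s]$ and the noise $W$ lives in $\VFsmu[r]\subseteq\VFsmu[s+1]$; invoking the Eulerian form already satisfied in $\VFsmu[s-1]$ by $u$ and the fact that all three summands actually belong to the smaller space $\VFsmu[s]$, the identity upgrades to $\VFsmu[s+1]$ (the deterministic and noise terms even lie there; the drift lies in $\VFsmu[s]$, which is the space in which the equation is asserted to hold at one regularity level below).

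The main obstacle I anticipate is purely technical and concerns the continuity argument in the third paragraph, namely verifying that $t\mapsto \eta(t)\circ\phi(t)^{-1}$ is continuous in $\VFsmu[s+1]$ despite the well-known non-smoothness of the composition map on manifolds of Sobolev mappings (see \ref{setup:diffgp}). Continuity — as opposed to differentiability — is available because both arguments move in the same Sobolev class $H^{s+1}$ and right composition by a fixed element of $\Diff^{s+1}_\mu(K)$ is a (smooth) diffeomorphism; writing the difference $u(t)-u(t_0)=(\eta(t)-\eta(t_0))\circ\phi(t)^{-1}+\eta(t_0)\circ\phi(t)^{-1}-\eta(t_0)\circ\phi(t_0)^{-1}$ and estimating each term in local charts (using boundedness of $\phi(\cdot)^{-1}$ away from the boundary of the chart domain by continuity) should suffice. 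Everything else is a bookkeeping exercise in transferring the already-established Lagrangian no-loss-no-gain across the equivalence.
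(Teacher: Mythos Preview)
Your proof is correct and follows the paper's own route: pass to the Lagrangian side via Theorem~\ref{thm:equivalence_flow_pde}, invoke the no-loss-no-gain Theorem~\ref{thm:noloss_nogain} (using that $\eta$ agrees with the maximal solution by local uniqueness), and then return to the Eulerian side at level $s+1$. The only difference is that your third paragraph and the ``main obstacle'' you flag are unnecessary: since $r=s+3=(s+1)+2$, Theorem~\ref{thm:equivalence_flow_pde} applies verbatim with $s$ replaced by $s+1$, and its first bullet already delivers that $u(t)=\eta(t)\circ\pi(\eta(t))^{-1}$ is a solution in $\VFsmu[s+1]$ (in particular continuous there) --- so the hand-made continuity argument for $\eta(t)\circ\phi(t)^{-1}$ and the upgrading of the equation can simply be replaced by that single citation.
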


\begin{proof}
    Take $\Phi$ the flow solution to \eqref{eqn-flow_ODE} and $\eta=u\circ \Phi$, by Theorem \ref{thm:equivalence_flow_pde} $\eta$ solves the Lagrangian Euler equation \eqref{eq:stochEuler_Lagr} in $T\Diff^s_\mu(K)$. If $u_0$ belongs to $\VFsmu[s+1]$, then $\etazero$ is in $T\Diff^{s+1}_\mu(K)$ and so, since $r= s+3$,  by Theorem \ref{thm:noloss_nogain}, $\eta$ is a solution to the Lagrangian Euler equation \eqref{eq:stochEuler_Lagr} in $T\Diff^{s+1}_\mu(K)$. We let $\pi \colon T\Diff^{s+1}_\mu (K) \rightarrow \Diff^{s+1}_\mu (K)$  be the bundle projection. Then Theorem \ref{thm:equivalence_flow_pde} (with $s$ replaced by $s+1$) implies that $u=\eta\circ \pi(\eta)^{-1}$ is a solution to the Eulerian Euler equation \eqref{eq:stochEuler_Eulerian} in $\VFsmu[s+1]$.
\end{proof}

We finish this subsection with the following result about the global existence of solutions to equation in the 2-dimensional case. See \cite{Shkoller01} for a discussion of global solutions to the deterministic Euler equations in the Ebin-Marsden setting.  

\begin{cor}\label{cor-global for d=2}
Assume that $d=2$,  $s>2$ and the Wiener process $W$ satisfies condition \ref{hp:noise} with $r= s+3$. 
Assume that $\etazero \in T\Diff^s_\mu(K)$, with $\pi(\etazero)=\id$. 
Then the Lagrangian form \eqref{eq:stochEuler_Lagr} in $T\Diff^s_\mu(K)$
has a global solution. 
\end{cor}
\begin{proof} Assume that $s>2$ and $W$ is a Wiener process satisfying condition  \ref{hp:noise} with $r= s+3$. 
Let us choose and fix $\etazero \in T\Diff^s_\mu(K)$, with $\pi(\etazero)=\id$. Let $u_0 \in \mathfrak{X}^s_\mu(K)$ be the corresponding divergence free vector field. It is known that the stochastic Euler equations in the Eulerian form \eqref{eq:stochEuler_Eulerian} have a unique global $ \mathfrak{X}^s_\mu(K)$-valued solution. Then, by Lemma \ref{lem-random_ODE}, there exists
 the unique global flow solution on $[0,\infty)$ to the (random) ODE \eqref{eqn-flow_ODE}. We conclude the proof by applying the second bullet point of Theorem \ref{thm:equivalence_flow_pde}.
\end{proof}

\subsection*{Passage from Eulerian to Lagrangian solution}
\addcontentsline{toc}{subsection}{Passage from Eulerian to Lagrangian solution}
We prove first the second implication in Theorem \ref{thm:equivalence_flow_pde}, namely the passage from the Eulerian solution to the Lagrangian solution. We start with a solution $u$ on $[0,\tau)$ to the Eulerian form of the Euler equation on $\VFsmu$.

\begin{lem}\label{lem-random_ODE}
Let $u \colon [0,\tau) \times \Omega \rightarrow \VFsmu$ be a solution to \eqref{eq:stochEuler_Eulerian}. There exists a unique progressively measurable process $\Phi\colon [0,\tau)\times\Omega \to \Dmu[s]$ such that  a.s. for every $k \in K$, $\Phi(k)$ solves the random ODE on $K$
\begin{align}
\begin{aligned}\label{eqn-random_ODE}
&d\Phi_t(k) = u(t,\Phi_t(k))\, dt,\\
&\Phi_0(k) = k.
\end{aligned}
\end{align}
\end{lem}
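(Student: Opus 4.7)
The plan is to construct $\Phi$ pathwise from the ODE on $K$, verify that each $\Phi_t$ lies in $\Dmu[s]$, and then extract progressive measurability from continuity of the data-to-flow map. Fix $\omega$ outside a null set on which $t\mapsto u_t(\omega)$ is a continuous curve in $\VFsmu$. Since $s>d/2+1$ the Sobolev embedding $\VFsmu\hookrightarrow C^1(K,TK)$ shows that $u(\omega)$ is locally uniformly $C^1$-bounded on $K$, so on each compact subinterval $[0,T]\subset[0,\tau(\omega))$ the vector field is uniformly bounded and Lipschitz. Classical Picard--Lindel\"of on the compact manifold $K$ yields a unique continuous pathwise flow $(t,k)\mapsto \Phi_t(k,\omega)$, with each $\Phi_t$ a $C^1$-diffeomorphism whose derivative is obtained by solving the linearised ODE $\dot v_t = Du_t(\Phi_t)\,v_t$, $v_0=I$. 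Volume preservation $\Phi_t\in\mathrm{Diff}^1_\mu(K)$ follows from Liouville's formula and $\diver u_t=0$.

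The main step is to upgrade $\Phi_t$ from a $C^1$-diffeomorphism to an element of $\Dmu[s]$, and to show that $t\mapsto\Phi_t$ is continuous in the $H^s$-topology. I would do this by a regularisation argument based on the operator $R_\delta$ of Lemma \ref{lem-regularization}. Set $u_t^\delta\coloneq R_\delta u_t\in\VFsmu[s+2]$; then on the smoother scale right composition by $\zeta\in\Dmu[s]$ gives enough regularity to solve the Banach fixed-point problem
\begin{equation*}
\Phi_t^\delta = \id +\int_0^t u_s^\delta\circ \Phi_s^\delta\,ds
\end{equation*}
directly in $C([0,T],\Dmu[s])$, producing a flow $\Phi^\delta\in C([0,T],\Dmu[s+2])\subset C([0,T],\Dmu[s])$. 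A Gronwall estimate in the $H^s$-norm for $\Phi-\Phi^\delta$, based on the standard Moser-type bound for $\zeta\mapsto v\circ\zeta$ when $v$ is one Sobolev order smoother than $\zeta$ together with the Lipschitz bound $\|u^\delta-u\|_{L^\infty([0,T],H^s)}\to 0$ guaranteed by Lemma \ref{lem-regularization}, shows that $(\Phi^\delta)_\delta$ is Cauchy in $C([0,T],\Dmu[s])$. The limit must coincide with the pathwise $C^1$-flow already constructed, and hence that flow takes values in $\Dmu[s]$ and is $H^s$-continuous in time.

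Uniqueness of $\Phi$ on $K$ is immediate from Picard--Lindel\"of. Progressive measurability follows because the pathwise solution map $u(\omega)\mapsto \Phi(\omega)$ is continuous from $C([0,T],\VFsmu)$ into $C([0,T],\Dmu[s])$ and only depends on $u|_{[0,t]}$ in determining $\Phi_t$; combined with progressive measurability of $u$ this yields $\mathscr{F}_t$-measurability of $\Phi_t$, which together with $H^s$-path continuity upgrades to progressive measurability via \cite[Proposition 1.1.13]{Kar-Shr-96}. Globality in time up to $\tau$ comes from working on an announcing sequence $(\tau_n)_n$ for $\tau$, on each $[0,\tau_n]$ the vector field $u$ is bounded in $\VFsmu$ and the construction above applies.

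The hard part will be the $H^s$-regularity step. The obstacle is intrinsic: at the Sobolev index $s$ the map $\zeta\mapsto u\circ\zeta$ is only continuous as a map $\Dmu[s]\to H^s(K,TK)$ and not Lipschitz, which is exactly the half-Lie group phenomenon recalled in \ref{setup:diffgp} and which precludes a naive Banach fixed-point argument directly at level $s$. The regularisation trick above, together with Moser-type tame estimates at one order more, is the standard workaround; everything else in the proof is classical.
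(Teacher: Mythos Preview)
Your overall structure---pathwise flow, $H^s$-upgrade, measurability, gluing along an announcing sequence---is sound, and the measurability and gluing parts match the paper's proof. The paper, however, does not attempt to reprove the $H^s$-regularity of the flow: it simply invokes Proposition~\ref{prop:Hs-flows} (the result of Inci \cite{MR3635359}) as a black box, which gives both the $\Diff^s(K)$-valued flow of an $L^1(I,\mathfrak{X}^s(K))$ vector field and continuity of the flow map $u\mapsto \Phi$. Everything else (adaptedness via composition of the $\cF_t$-measurable map $\omega\mapsto u^{n,\omega}|_{[0,t]}$ with the continuous flow map, and gluing the $\Phi^n$ along $(\tau_n)_n$) is as you describe.

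The genuine gap in your plan is the Cauchy step for $(\Phi^\delta)_\delta$ in $H^s$. The Moser-type Lipschitz bound you invoke,
\[
\|v\circ\zeta_1 - v\circ\zeta_2\|_{H^s} \le C\,\|v\|_{H^{s+1}}\,\|\zeta_1-\zeta_2\|_{H^s},
\]
has a constant that scales with $\|v\|_{H^{s+1}}$. For $v=u^\delta=R_\delta u$, Lemma~\ref{lem-regularization} only provides a uniform $H^s$ bound; the $H^{s+1}$ norm of $R_\delta u$ typically blows up like $\delta^{-1/2}$. Hence the Gronwall constant in your estimate for $\Phi^{\delta_1}-\Phi^{\delta_2}$ diverges as $\delta\to 0$, and the Cauchy argument does not close at level $s$ as written. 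This is exactly the obstruction you correctly diagnosed (composition is only continuous, not Lipschitz, at the critical regularity), and regularisation alone does not circumvent it. The standard fixes---uniform a priori $H^s$ bounds on $\Phi^\delta$ combined with Cauchy in $H^{s-1}$ and an interpolation/weak-compactness argument, or the topological-group arguments in \cite{MR3635359}---are substantially more delicate than a single Gronwall estimate and are in fact the content of the result you would be reproving. Citing Proposition~\ref{prop:Hs-flows} is the cleaner route here.
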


\begin{proof}
Let $(\tau_n)_n$ be an announcing sequence for $\tau$. For $n\in\mathbb{N}$, $T>0$, we take $u^n_t=u_{t\wedge \tau^n}$. By Proposition \ref{prop:Hs-flows}, for a.s. $\omega$, there exists a unique flow $\phi^{n,\omega}\colon [0,T]\to \Diff^s_\mu(K)$ solving the following random ODE
\begin{align}
    d\Phi^{n,\omega}_t(k) &= u^n(t,\Phi^{n,\omega}_t(k),\omega)\, dt,\\
    \Phi^{n,\omega}(k)&=k.
\end{align}
For each $n$ and $t \in [0,T]$, the map $\Omega\ni \omega \mapsto \Phi^{n,\omega}_t\in \Diff^s_\mu(K)$ is the composition of the map
\begin{align*}
    \Omega\ni \omega \mapsto (u^{n,\omega}_r)_{r\in [0,t]} \in L^1([0,t],\mathfrak{X}^s_\mu(K))
\end{align*}
which is $\mathcal{F}_t$-$\mathcal{B}(L^1([0,t];\mathfrak{X}^s_\mu(K)))$-measurable (because $u$ is $(\mathcal{F}_r)_r$-progressively measurable), and the map
\begin{align*}
    (u^{n,\omega}_r)_{r\in [0,t]} \to \Phi^{n,\omega}_t\in \Diff^s_\mu(K),
\end{align*}
which is continuous, hence Borel, by Proposition \ref{prop:Hs-flows} ($\Diff^s(K)$ can be replaced by $\Diff^s_\mu(K)$ since the latter is a closed submanifold of the former). Therefore $\Phi^n$ is a $(\mathcal{F}_t)_t$-adapted, continuous process with values in $\Diff^s_\mu(K)$ and so it is $(\mathcal{F}_t)_t$-progressively measurable. For every $m\ge n$, since $u_{\cdot\wedge \tau^n} = u^n_{\cdot\wedge\tau^n} = u^m_{\cdot\wedge\tau^n}$, we have $\Phi^n_{\cdot\wedge\tau^n}=\Phi^m_{\cdot\wedge\tau^n}$  a.s.. Hence we can glue together the flows $\Phi^n$ into a $\Diff^s_\mu(K)$-valued, $(\mathcal{F}_t)_t$-progressively measurable process $\Phi$ solving the random ODE \eqref{eqn-random_ODE}. The proof is complete.
\end{proof}

\begin{setup}\label{eta:process}
Using the process $\Phi$ 
from Lemma \ref{lem-random_ODE}, we introduce a $T\Diff^s_\mu(K)$-valued progressively measurable process process $\eta$ defined by  formula \eqref{eqn-eta}, i.e.
\begin{align*}
\eta_t=u(t)\circ \Phi(t),
\end{align*}
where $\Phi$ is a solution to \eqref{eqn-flow_ODE}. In particular, for $k \in K$, the $TK$-valued  
\[\eta(k)=(\eta_t(k): t \in [0,\tau)) \in TK,\]
is a  progressively measurable process.
\end{setup}
In the following, we denote by $\Sigma$ the diffusion term from \eqref{eqn-Sigma-02}, $\pi_K\colon TK\to K$ is the bundle projection and  $i\colon K\to \R^m$ an embedding of $K$ as a spliy submanifold, for a suitable $m$, Proposition \ref{prop:embedding_Diff}.
In particular, we have the embedding $Ti\colon TK \rightarrow \R^{2m}$. The process $\bar{\eta}_t = Ti\circ \eta$ takes values in $H^s(K,\R^{2m})$.

\begin{prop}\label{prop:Lagr_eval_emb}
 The $H^s(K,\R^{2m})$-valued process $\eta$ from \ref{eta:process} satisfies on $[0,\tau)$ the SDE 
\begin{align}
\begin{split}\label{eq:eta_Hs}
d\bar{\eta}_t &= B(\eta_t)dt +\Sigma(\eta_t)\, dW_t +\frac12\tr_{\rK}[D\Sigma\circ \Sigma(\eta_t)] dt,\\
\bar{\eta}_0&=Ti\circ u_0.
\end{split}
\end{align}
Here, we suppress the identification and write $B(\eta_t)(k)$ instead of $D (Ti)\circ B(\eta_t)(k)$ and $\Sigma$ for $D (Ti)\circ \Sigma \colon T\Diff^s_\mu (K)\to \mathscr{L}(\VFsmu[r],H^s(K,\R^{2m}))$, cf.\ \eqref{eqn-Sigma-02}, hence $D\Sigma\circ\Sigma$ stands for $D(D(Ti)\circ \Sigma) \circ \Sigma$.
\end{prop}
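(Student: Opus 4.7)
First, I would fix $k \in K$ and carry out the computation pointwise, exploiting that for $s > d/2+1$ evaluation at $k$ is a continuous linear functional on $H^s(K,\R^{2m})$ (Lemma \ref{lem:eval}), before lifting to an $H^s$-valued identity. Pointwise at $y\in K$, the Eulerian equation \eqref{eq:stochEuler_Eulerian} reduces to an $\R^m$-valued It\^o SDE for $u_t(y)$ with drift $-\Pi[\nabla_{u_t} u_t](y)$ and additive noise $dW_t(y)$, where $W_t(y)$ is the finite-dimensional Brownian motion obtained by applying continuous evaluation to the $\VFsmu[r]$-valued Wiener process $W$. By Lemma \ref{lem-random_ODE}, $t\mapsto \Phi_t(k)$ is a $C^1$ (in particular bounded-variation) path with $\dot\Phi_t(k) = u_t(\Phi_t(k))$. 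Since $\Phi$ has no martingale part, the cross quadratic variation between $u_\cdot(\cdot)$ and $\Phi_\cdot(k)$ vanishes and a pointwise It\^o-Kunita chain rule yields
\[
d[u_t(\Phi_t(k))] = (\nabla_{u_t} u_t - \Pi[\nabla_{u_t} u_t])(\Phi_t(k))\, dt + dW_t(\Phi_t(k)).
\]
Applying the chain rule once more with $Ti(y,v) = (i(y), Di(y)v)$ I obtain a pointwise SDE for $\bar\eta_t(k) = Ti(u_t(\Phi_t(k)))$: the first $\R^m$-component reduces to $Di(\Phi_t(k)) u_t(\Phi_t(k))\, dt$, while the second component collects the embedding-curvature term $D^2i(\Phi_t(k))[u_t,u_t](\Phi_t(k))\, dt$, the ``Eulerian'' drift $Di(\Phi_t(k))(\nabla_{u_t} u_t - \Pi[\nabla_{u_t} u_t])(\Phi_t(k))\, dt$, and the noise $Di(\Phi_t(k))\, dW_t(\Phi_t(k))$.

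Second, I would identify these terms with $DTi \circ B(\eta_t)(k)$ and $DTi \circ \Sigma(\eta_t)(k)\, dW_t$ respectively. Computing the Ebin-Marsden spray \eqref{Spray_form} in local coordinates, one sees that $B(\eta_t)(k)$ is the second-order tangent vector over $\eta_t(k) = u_t(\Phi_t(k))$ with ``velocity'' part $\bigl(u_t(\Phi_t(k)),\,(\nabla_{u_t} u_t - \Pi[\nabla_{u_t} u_t])(\Phi_t(k))\bigr)$. A direct computation of $T(Ti)\colon T(TK)\to T\R^{2m}$ then shows that $DTi\circ B(\eta_t)(k)$ equals exactly the drift derived in the first step; the noise coefficient is matched using the explicit formula \eqref{eq:Sigma_vertical}.

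Third, I would verify that the It\^o correction $\tfrac12\tr_{\rK}[D\Sigma\circ\Sigma(\eta_t)]$ appearing in \eqref{eq:eta_Hs} vanishes here. By \eqref{eq:Sigma_vertical}, $DTi\circ\Sigma(\eta)(w)$ always has zero first $\R^m$-component and depends on $\eta$ only through $\pi_K\circ\eta$, i.e., through the first $\R^m$-component of $\bar\eta$. Hence a perturbation of $\bar\eta$ with vanishing first component produces no change in $\Sigma$; since each $\Sigma(\eta_t)e_j$ is itself such a ``vertical'' vector, $D\Sigma(\eta_t)[\Sigma(\eta_t)e_j]=0$ for every $j$, and summing over an orthonormal basis of $\rK$ yields vanishing trace. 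Thus the pointwise identity from the first step is precisely the pointwise evaluation of \eqref{eq:eta_Hs}.

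Finally, to pass from the pointwise equality to the $H^s(K,\R^{2m})$-valued integral equation, I would use that evaluation at $k$ commutes with both the Bochner integral and, by a standard Fubini-type theorem for Hilbert-space-valued stochastic integrals, with the It\^o integral; combined with the fact that both sides of \eqref{eq:eta_Hs} admit continuous $H^s$-valued versions, this yields the SDE in $H^s(K,\R^{2m})$. The main technical hurdle will be justifying the pointwise It\^o-Kunita step for the composition of the $\VFsmu$-valued It\^o process $u$ with the $C^1$-path $\Phi$: after evaluating at a generic $y$ it reduces to the classical finite-dimensional It\^o formula, and the bounded-variation nature of $\Phi$ eliminates any cross quadratic variation terms.
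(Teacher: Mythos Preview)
Your overall strategy is sound and close to the paper's, but there is one genuine gap at the point you yourself flag as ``the main technical hurdle''. You propose to compute $d[u_t(\Phi_t(k))]$ by a ``pointwise It\^o--Kunita step'' and then claim that ``after evaluating at a generic $y$ it reduces to the classical finite-dimensional It\^o formula''. This last claim is not right: once you evaluate the Eulerian equation at a \emph{fixed} $y$ you do obtain a finite-dimensional It\^o process $u_t(y)$, but you then need to substitute the \emph{random, time-dependent} point $y=\Phi_t(k)$. That is not the classical It\^o formula; it is the It\^o--Kunita--Wentzell formula for a semimartingale-valued random field composed with a process. Equivalently, if you try to apply the infinite-dimensional It\^o formula on $\VFsmu[s-1]\times K$ to the evaluation map, you run into exactly the obstruction the paper isolates: the Eulerian SDE lives in $\VFsmu[s-1]$ with $s-1>d/2$ only, so $\mathrm{eval}$ is merely continuous there (Lemma~\ref{lem:eval}), not $C^2$. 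Your proposal does not supply a rigorous substitute for this step.

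The paper's proof resolves this by regularisation: it replaces $\mathrm{eval}$ by $G_\delta(X,k)=\mathrm{eval}(R_\delta\circ Di\circ X,k)$ with $R_\delta$ from Lemma~\ref{lem-regularization}, which is $C^2$ on $\VFsmu[s-1]\times K$ because $R_\delta$ gains two derivatives; applies the (ordinary) It\^o formula there to obtain an SDE for $\eta^\delta(\Phi^{-1}(k))$; and then passes to the limit $\delta\to 0$ via Lemma~\ref{lem:convergence_Ito_int}. This is, in effect, a self-contained proof of the Kunita--Wentzell identity you need, tailored to the present regularity.

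If you want to keep your route, you must either cite a version of the It\^o--Kunita--Wentzell formula whose hypotheses you verify (here the key inputs are that $u_t\in\VFsmu$ gives $C^1$ dependence in $y$, the drift $-\Pi[\nabla_u u]\in\VFsmu[s-1]$ is $C^0$ in $y$, the noise coefficient $w\mapsto w(y)$ is smooth since $r\ge s+2$, and $\Phi_t(k)$ has bounded variation), or reproduce a regularisation argument equivalent to the paper's. Your observation that the trace term $\tfrac12\tr_{\rK}[D\Sigma\circ\Sigma]$ vanishes is correct (indeed $DTi\circ\Sigma(\eta)w$ depends on $\eta$ only through $\pi_K\circ\eta$ while $\Sigma(\eta)e_j$ is vertical, cf.~\eqref{eq:Sigma_vertical}); the paper keeps it in the statement for uniformity and verifies the matching identity \eqref{eq:coeff_3}, both sides of which are in fact zero.
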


\begin{proof} 
Since $\bar{\eta}$ and all the integrals in \eqref{eq:eta_Hs} are continuous processes with values in $H^s(K,\R^{2m})$, it is enough to show that on $[0,\tau)$, for every $k \in K$,
\begin{align}\begin{split}
\bar{\eta}_t(k) &= \bar{\eta}_0(k) +\int_0^t DTi \circ B(\eta_r)(k)dt +\int_0^t DTi\circ \Sigma(\eta_r)\, dW_r(k) \\ & \qquad +\frac12\int_0^t\tr_{\rK}[D(DTi\circ \Sigma)\circ \Sigma(\eta_r)](k)\, dr.
\end{split} \label{eq:SDE_evaluated_1}
\end{align}
To clearly write the stochastic integral in \eqref{eq:SDE_evaluated_1}, consider the map $\text{ev}_k\colon H^s(K,\R^{2m}) \rightarrow \R^{2m}$, which evaluates a function in $k$. Recall from Lemma \ref{lem:eval} that $\text{ev}_k$ is continuous and linear. Hence, by It\^o formula \cite[Theorem A.10]{MMS19}, applied to $\text{ev}_k$
\begin{align*}
\int_0^t DTi\circ \Sigma(\eta)\, dW(k) &= \text{ev}_k \left(\int_0^t DTi\circ\Sigma(\eta)\, dW\right)= \int_0^t \text{ev}_k\circ DTi\circ\Sigma(\eta)\, dW\\ &= \int_0^t DTi\circ\Sigma(\eta)(k)\, dW,
\end{align*}
where $DTi\circ\Sigma(\eta)(k)\colon T\Diff^s_\mu(K)\to \mathscr(\VFsmu[r],\R^{2m})$. Hence \eqref{eq:SDE_evaluated_1} is equivalent to
\begin{align}\begin{split}
\bar{\eta}_t(k) =& \bar{\eta}_0(k) +\int_0^t DTi \circ B(\eta_r)(k)dt +\int_0^t DTi\circ \Sigma(\eta_r)(k)\, dW_r \\ &\qquad +\frac12\int_0^t\tr_{\rK}[D(DTi\circ \Sigma)\circ \Sigma(\eta_r)](k)\, dr.
\end{split}\label{eq:SDE_evaluated}
\end{align}
Fix $k \in K$ and let $u \in \mathfrak{X}_\mu^{s} (K)$. To get \eqref{eq:SDE_evaluated}, we would like to apply It\^o formula to 
$$\text{eval}_{H^{s-1}(K,\R^{2m})}(Ti \circ u,k)=(i(k),\text{eval}(Di \circ u,k)),$$ where $\text{eval}_{H^{s-1}(K,\R^{2m})}$ is the evaluation on $H^{s-1}(K,\R^{2m})$. Moreover, we denoted by $\text{eval} \colon H^{s-1} (K,\R^{m}) \times K \rightarrow \R^{m}$ the evaluation map from Lemma \ref{lem:eval}. However, we cannot apply the formula directly, because $u$ satisfies an equation on $\VFsmu[s-1]$ (identified via $(Ti)_*$ with a subset of $H^{s-1}(K,\R^{2m})$) and on this space the evaluation is not of $C^2$-class. Therefore we need to use a regularization argument.

For $\delta>0$, we can apply the regularization operator $R_\delta\colon H^{s-1}(K,\R^{m})\to H^{s+1}(K,\R^{m})$ from Lemma \ref{lem-regularization}. Consider now
\begin{align*}
G_\delta\colon \VFsmu[s-1] \times K \to K\times \R^{m}, \quad G_\delta(X,k)= (k,\text{eval}(R_\delta\circ Di\circ X,k)).
\end{align*}
By Lemma \ref{lem:eval}, since $s+1>d/2+2$, the map $G_\delta$ is of $C^2$-class and the formula for its derivative yields the following identity for the derivative of the second component $(G_\delta)_2 \coloneq \text{pr}_{\R^m} \circ G_\delta$ of $G_\delta$
\begin{align*}
D(G_\delta)_2((X,Y),v) &= D(R_\delta \circ Di\circ X)(v) +R_\delta\circ Di \circ Y \circ \pi_K(v).
\end{align*}
We shall now work again with the flow solution $\Phi$ to \eqref{eqn-random_ODE} which satisfies $\eta_t = u(t)\circ \Phi(t)$ (cf.\ \ref{eta:process}). Define now for $\delta >0$ the map 
$$\overline{u}^\delta \colon K \rightarrow K \times \R^{m}, \quad \overline{u}^\delta(k) = (k, R_\delta \circ Di \circ u (k)) = G_\delta (u,k).$$ 
Therefore by It\^o`s formula, precisely formula \eqref{eq:SDE_man_def} from Definition \ref{def:SDE_man_sol}, see also Remark \ref{rem:invariance_SDE_map}, we deduce that the $K\times \R^{m}$-valued process $\bar u^\delta(\Phi(k))$
satisfies the following SDE on $[0,\tau)$ (where we identify $T(K\times\R^m) \cong TK\times \R^{2m}$):
\begin{align*}
d\bar u^\delta(\Phi(k))
&= (u(\Phi(k)),(G_\delta)_2 (u,\Phi(k)),
D(R_\delta \circ Di \circ u) (u(\Phi(k))) -R_\delta\circ Di\circ [\Pi\nabla_u u] \circ \Phi(k))\, dt\\
&\quad +(0_{\Phi(k)},(G_\delta)_2 (u,\Phi(k)), \text{eval}(R_\delta\circ Di \circ \cdot), \Phi(k)\bullet dW)\\
&= (\ldots)dt + (0_{\Phi(k)}, (G_\delta)_2 (u,\Phi(k)),\underbrace{\text{eval}(\cdot, \Phi(k)) \circ (R_\delta\circ Di)_* }_{\equalscolon \theta^\delta (\bar{u}^\delta(\Phi(k)))}  \bullet dW),
\end{align*}
where $\theta^\delta \colon K\times \R^m \rightarrow \mathscr{L}(\VFsmu[r],\R^m)$ is given by
\begin{align*}
\theta^\delta (k,x)(w)=(\text{eval}(\cdot, k) \circ (R_\delta\circ D^2i)_*)(w) = \text{eval}(R_\delta\circ Di \circ w, k).
\end{align*}
Note that $\theta^\delta$ does not depend on the vector component (this is due to the simplicity of the additive noise we treat).

For technical reasons\footnote{The reason is that, in the SDE \eqref{eq:SDE_Lagr_approx} for $u^\delta$, the Stratonovich integral of $\bar{u}^\delta$ appears: having the SDE for $\bar{u}^\delta$, we can write this Stratonovich integral as an It\^o integral plus correction.}, it is convenient to work both with $\bar{u}^\delta$, where we did not embed $K$ yet, and $u^\delta \coloneq (i,I_{\R^m}) \circ \bar{u}^\delta \colon K \rightarrow \R^{2m}$, where $(i,I_{\R^m})\colon K\times \R^m\to \R^m \times \R^m =\R^{2m}$ is the embedding with $I_{\R^m}\colon \R^m \to \R^m$ the identity map. 
Apply now Lemma \ref{lem:SDE_embedded} 
to deduce that $u^\delta$ solves the following SDE on $[0,\tau)$ (since the mapping takes values in flat space, we express only the vector part of the SDE now, suppressing the base point): 
\begin{align}
\begin{split}\label{eq:SDE_Lagr_approx}
&du^\delta(k)\\
 =& \underbrace{Du^\delta \circ u(\Phi(k))\, dt -(0_{\R^m}, R_\delta\circ Di\circ [\Pi\nabla_u u] \circ \Phi(k))}_{B^\delta}\, dt
+(0_{\R^m}, \theta^\delta (\bar u^\delta (\Phi(k))) \bullet dW)\\
=:& B^\delta dt +\big(0_{\R^m},\theta^\delta (\bar{u}^\delta(\Phi(k))\, dW +\frac12\tr_{\rK}[D\theta^\delta \circ \underbrace{(0_{\Phi(k)}, (G_\delta)_2 (u,\Phi(k)),\theta^\delta (\bar{u}^\delta(\Phi(k)))}_{\equalscolon \bar \sigma^\delta (\bar{u}^\delta(\Phi(k)))}]dt\big)
\end{split}
\end{align}
Here $\bar \sigma^\delta \colon K \times \R^{m} \rightarrow \mathcal{L}(\VFsmu[r],TK\times R^{2m}), \bar{\Sigma}^\delta(k,x)(w) = (0_k,x,\theta^\delta(k,x)(w))$.
Next we consider the limit $\delta\to 0$. As $t\mapsto u (t)$ is a continuous map with values in $\VFsmu$, it is a continuous map with values in the $C^1$ sections on $K$, by the Sobolev embedding theorem, \cite[Corollary to Theorem 9.2]{MR0248880}. Therefore, $u^\delta(\Phi(k))$, resp. $\bar u^\delta(\Phi(k))$, converges to $Ti \circ u(\Phi(k))$, resp. $\bar u(\Phi(k)):= (\pi_K,Di)\circ u(\Phi(k))$, locally uniformly on $[0,\tau)$  a.s.. Since $Di\circ u$ has $H^s$ regularity, the map $Di\circ \Pi\nabla_u u$ is $H^{s-1}$. Now as $s>d/2+1$, by standard arguments the drift $B^\delta$ from \eqref{eq:SDE_Lagr_approx} converges, locally uniformly on $[0,\tau)$  a.s., to
\begin{align*}
[(DTi)\circ Tu \circ u -(0,D^2i\circ [\Pi\nabla_u u])]\circ \Phi(k) = DTi\circ \tilde B_k,
\end{align*}
where
\begin{align}\label{eq:Btilde}
\tilde B_k = (Tu \circ u\, dt -\text{vl}_{TK}(u, \Pi\nabla_u u))(\Phi(k)).
\end{align}
To deal with the diffusion coefficient, consider 
$\bar \sigma \colon K \times \R^m \rightarrow \mathscr{L}(\VFsmu[r],TK \times \R^{2m})$ defined by 
$\bar \sigma(k,x)(w) = (0_k,x,Di\circ w (k))$. In addition, we set $$\theta \colon K \times \R^m \rightarrow \mathscr{L}(\VFsmu[r], \R^m),\quad \theta(k,x)(w)\coloneq Di \circ w (k).$$
Again standard calculations establish convergence in the compact open $C^1$-topology of $\bar \sigma^\delta$ to $\bar \sigma$ in $C^1_{loc}(K\times \R^{m},\mathscr{L}(\VFsmu[r],TK\times \R^{2m}))$. More precisely, set $J=K\times \R^{m}$. and identify $TK$ via $Ti$ with a subset of $\R^{2m}$. Every tangent space $T_zJ$ inherits a canonical norm $\lVert \cdot \rVert_z$ induced by pulling back the norm of $\R^{2m} \times \R^{2m}$ to $TJ = TK \times \R^{2m}$. Then the following holds locally uniformly with respect to  $z\in J$ for $\delta \rightarrow 0$, 
\begin{align}
\begin{split}\label{eq:convergence_sigma_delta}
&\sup_{\substack{w\in \VFsmu[r], \\ \|w\|_{H^{r}}=1}} \|\bar \sigma^\delta(z)w -\bar \sigma(z)w\|_{z} \to 0,\\
&\sup_{\substack{v\in T_zJ,\\ \|v\|_{z}=1}}\sup_{\substack{w\in \VFsmu[r],\\ \|w\|_{H^{r}}=1}} \|[T_z\bar \sigma^\delta (v)]w-[T_z\bar \sigma (v)]w\|_{z} \to 0.
\end{split}
\end{align}
Recall from \cite[1.8]{MMS19} that the smooth map $(Ti,I_{\R^{2m}}) \colon TK \times \R^{2m} \rightarrow \R^{2m}\times \R^{2m}$ induces a smooth postcomposition operator on $\mathcal{L}(\VFsmu[r],TK\times \R^{2m})$. Hence we conclude that $(Ti,I_{\R^{2m}})\circ \bar \sigma^\delta(\bar u^\delta(\Phi(k))$ converges a.s., locally uniformly on $[0,\tau)$ with respect to the $C^1$-topology, to $(Ti,I_{\R^{2m}})\circ \bar \sigma(\bar u(\Phi(k)) \in \mathscr{L}(\mathfrak{X}^{r}_\mu(K),\R^{2m} \times \R^{2m})$ for $\delta \rightarrow 0$. Therefore, we can apply Lemma \ref{lem:convergence_Ito_int} with an announcing sequence $(\tau_n)_{n=1}^\infty$ for $\tau$, to get that for every $n$ as $\delta \rightarrow 0$,
\begin{align*}
\sup_{t\in [0,\tau_n]}\left|\int_0^t [\theta^\delta(\bar u^\delta(\Phi(k))) - \theta (u(\Phi(k)))]\, dW \right|\to 0 \text{ in  } \mathbb{P}.
\end{align*}
Moreover, \eqref{eq:convergence_sigma_delta} implies that with  $(e_k)_{k=1}^\infty$ being an orthonormal basis for the RKHS $\rK = \VFsmu[r]$ that the trace term from \eqref{eq:SDE_Lagr_approx}
\begin{align*}
\tr_{\rK}[D\theta^\delta\circ \bar \sigma^\delta(\bar u^\delta(\Phi(k))) ] = \sum_k [D\theta^\delta\circ \bar \sigma^\delta(u^\delta(\Phi(k)))e_k][e_k]
\end{align*}
converges, locally uniformly on $[0,\tau)$  a.s., for $\delta \rightarrow 0$ to
\begin{align*}
\tr_{\rK}[D\theta\circ \bar \sigma(\bar u(\Phi(k))) ] = \sum_k [D\theta\circ \bar \sigma(\bar u(\Phi(k)))e_k][e_k].
\end{align*}
Hence we can pass to the limit in probability in \eqref{eq:SDE_Lagr_approx}, as $\delta \to 0$: we obtain that the process $\bar{\eta}(k)=\bar{u}(\Phi(k))$, as $\R^{2m}$-valued process, satisfies on $[0,\tau)$
\begin{align}
d\bar{\eta}(k) = \tilde{B}_k dt +\big(0_{\R^m},\theta (\bar{u}(\Phi(k))\, dW +\frac12\tr_{\rK}[D\theta \circ \bar \sigma (\bar{u}(\Phi(k)))]dt\big).\label{eq:SDE_evaluated_2}
\end{align}
In order to show \eqref{eq:SDE_evaluated}, it remains to show that
\begin{align}
&\tilde B_k = DTi \circ B(\eta)(k),\label{eq:coeff_1}\\
&(0_{\R^m},\theta(\bar u(\Phi(k))) = DTi\circ \Sigma(\eta)(k),\label{eq:coeff_2}\\
&(0_{\R^m},\tr_{\rK}[D\theta\circ \bar\sigma(\bar u(\Phi(k)))]) = \tr_{\rK}[D(DTi\circ\Sigma)\circ \Sigma(\eta)](k).\label{eq:coeff_3}
\end{align}
The equality \eqref{eq:coeff_1} follows from comparing \eqref{eq:Btilde} with the evaluation at $k$ of the expression for $B$ given in \cite[Proposition 14.2]{EM70}. Namely, for $\eta = u\circ \Phi \in T\Diff^s_\mu(K)$,
\begin{align}
B(\eta) = Tu \circ \eta -\text{vl}_{T\Diff^s_\mu(K)}(\eta,\Pi[\nabla_u u]\circ \Phi).\label{eq:EM_drift}
\end{align}
For a fixed $k \in K$, a quick computation  using the definitions of $\Sigma$ and $\theta$, yields the following relation for $\eta \in T\Diff^s_\mu (K) \subseteq H^s(K,TK), w \in \VFsmu[r]$:
\begin{align*}
DTi \circ \left(\Sigma (\eta)w\right) (k)&=(0_{\R^{m}},Di\circ w(\pi_K(\eta (k)))) = (0_{\R^m}, \theta(\pi_K (\eta (k)), Di (\eta (k)))(w))
\end{align*}
Using this relation we can easily show the equalities \eqref{eq:coeff_2} and \eqref{eq:coeff_3}. Hence the coefficients of \eqref{eq:SDE_evaluated_2} are equal to the coefficients of \eqref{eq:SDE_evaluated}, thus showing \eqref{eq:SDE_evaluated} for $\bar{\eta}$. The proof is complete.
\end{proof}

We have now collected all necessary results to establish the second statement of Theorem \ref{thm:equivalence_flow_pde} showing that solutions to the Eulerian formulation give rise to solutions of the Lagrangian formulation.

\begin{proof}[Proof of Theorem \ref{thm:equivalence_flow_pde}, passage from Eulerian to Lagrangian solution]
By Proposition \ref{prop:Lagr_eval_emb}, $\bar{\eta}=Ti\circ \eta$ satisfies the Lagrangian equation \eqref{eq:stochEuler_Lagr} on $[0,\tau)$ as an equation on $H^s(K,\R^{2m})$. By Corollary \ref{cor:embedding_TDiff}, $(Ti)_*$ is an embedding, turning $T\Diff^s_\mu(K)$ into a split submanifold of $H^s(K,\R^{2m})$. Hence, by Lemma \ref{lem:SDE_embedded}, $\eta$ satisfies the Lagrangian equation \eqref{eq:stochEuler_Eulerian} on $[0,\tau)$ (now as equation on $T\Diff^s_\mu(K)$). The proof of the passage from Eulerian to Lagrangian solution is complete.
\end{proof}

\subsection*{Passage from Lagrangian to Eulerian solution}
\addcontentsline{toc}{subsection}{Passage from Lagrangian to Eulerian solution}
We prove now the first implication in Theorem \ref{thm:equivalence_flow_pde}, namely the passage from the Lagrangian solution to the Eulerian solution. 

\begin{setup}\label{setup:projected_eta}
For this we start with a solution $\eta$ on $[0,\tau)$ to the Lagrangian equation \eqref{eq:stochEuler_Lagr} on $T\Diff^s_\mu (K) \subseteq H^s(K,TK)$. Define 
$$\Phi \coloneq \pi_K \circ \eta \colon [0,\tau) \times \Omega \rightarrow  \Diff^s_\mu (K),$$ where $\pi_K \colon TK \rightarrow K$ is the bundle projection. Further, we set 
$$u_t (\omega) \coloneq u(t,\cdot,\omega) \coloneq \eta(t,\omega)\circ (\Phi(t,\omega))^{-1} \in T_{\id} \Diff^s_\mu (K)= \VFsmu[s],$$
where the inverse is taken in the group $\Diff^s_\mu(K)$. Note that by definition we have $u(t,\Phi(t,\omega),\omega) = \eta(t,\omega)$.
\end{setup}

\begin{lem}\label{lem:proj_eta}
 Fix $k \in K$, $\eta$ a solution to \eqref{eq:stochEuler_Lagr} and $\Phi = \pi_K \circ \eta$ and $u$ as in \ref{setup:projected_eta}. Write $u(k) \colon [0,\tau)\times \Omega \rightarrow TK, u(k)(t,\omega)=u_t (\omega)(k)$ and $\Phi^{-1}(k)\colon [0,\tau)\times\Omega\to K, (t,\omega) \mapsto (\pi_K \circ \eta (t,\omega))^{-1}(k)$. Then the $K$-valued process  satisfies the random partial differential equation
\begin{align}
\begin{split}\label{eq:TE}
d\Phi^{-1}(k) &= -T\Phi^{-1} (u(k)) dt,\\
\Phi^{-1}_0(k) &= k.
\end{split}
\end{align}
\end{lem}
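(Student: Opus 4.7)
The plan is to reduce the claim to the deterministic inversion identity \eqref{inversion_nonsmooth} by first showing that, despite $\eta$ solving a genuine SDE on $T\Diff^s_\mu(K)$, its projection $\Phi=\pi_K\circ\eta$ satisfies pathwise only a random ODE, namely
\begin{equation}\label{eqn-flow-Phi-plan}
d\Phi_t(k)=u_t(\Phi_t(k))\,dt,\qquad \Phi_0(k)=k,
\end{equation}
and then differentiating the identity $\Phi_t\circ\Phi_t^{-1}=\id$ in $t$ in a suitable Sobolev class.

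First I would establish \eqref{eqn-flow-Phi-plan}. Composing the Lagrangian SDE \eqref{eq:stochEuler_Lagr} with the embedding $Ti\colon TK\to\R^{2m}=\R^m\times\R^m$ (so that $Ti(v_k)=(i(k),Di(v_k))$), Proposition \ref{prop:Lagr_eval_emb} provides the evaluated identity \eqref{eq:SDE_evaluated} for $\bar\eta=Ti\circ\eta$ at each $k\in K$. Looking at the first $m$ coordinates we obtain an equation for $i\circ\pi_K\circ\eta=i\circ\Phi$: the key observation is that, by the explicit form \eqref{eq:Sigma_vertical} of $DTi\circ\Sigma$, both the stochastic integrand $DTi\circ\Sigma(\eta)(k)$ and the Stratonovich correction $\tfrac12\tr_{\rK}[D(DTi\circ\Sigma)\circ\Sigma(\eta)](k)$ have vanishing first $m$ components. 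On the other hand, the first $m$ components of $DTi\circ B(\eta)(k)$ are, by the spray formula \eqref{Spray_form} and \eqref{eq:EM_drift}, exactly $Di(\eta(k))=Di(u_t(\Phi_t(k)))$. Since $i$ is an embedding, this gives \eqref{eqn-flow-Phi-plan} pathwise for a.e.\ $\omega$.

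Second, I would deduce \eqref{eq:TE} from \eqref{eqn-flow-Phi-plan}. For a.e.\ fixed $\omega$, the curve $t\mapsto\Phi_t(\omega)\in\Diff^s_\mu(K)$ is $C^1$ into $\Diff^{s-1}_\mu(K)$ by \eqref{eqn-flow-Phi-plan} and the fact that $u\in \mathfrak{X}^s_\mu(K)\hookrightarrow C^1$ (since $s>d/2+1$). Applying the inversion formula \eqref{inversion_nonsmooth} with $\ell=1$ to the curve $\gamma(t)=\Phi_t$ yields
\begin{equation*}
\frac{d}{dt}\Phi_t^{-1}=-T(\Phi_t^{-1})\circ\dot\Phi_t\circ\Phi_t^{-1}\in T_{\Phi_t^{-1}}\Diff^{s-1}_\mu(K).
\end{equation*}
Evaluating at $k\in K$ and substituting $\dot\Phi_t(\Phi_t^{-1}(k))=u_t(\Phi_t(\Phi_t^{-1}(k)))=u_t(k)$ gives
\begin{equation*}
\partial_t\Phi_t^{-1}(k)=-T_k\Phi_t^{-1}\bigl(u_t(k)\bigr),
\end{equation*}
which is precisely \eqref{eq:TE}. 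The initial condition $\Phi_0^{-1}(k)=k$ follows from $\eta_0\in T_{\id}\Diff^s_\mu(K)$, so $\Phi_0=\id$.

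The main obstacle is the first step: namely, transferring the SDE on the infinite-dimensional Hilbert manifold $T\Diff^s_\mu(K)$ into a genuine random ODE for its projection. Proposition \ref{prop:Lagr_eval_emb} and the verticality of $\Sigma$ encoded in \eqref{eq:Sigma_vertical} already contain the essential input, so the argument really reduces to reading off the first $m$ components. Once this is in hand, the rest is the classical pathwise inversion argument made rigorous in the $H^s$-category through \eqref{inversion_nonsmooth}.
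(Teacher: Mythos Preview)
Your overall plan is the same as the paper's: first show that $\Phi=\pi_K\circ\eta$ satisfies the random flow ODE $d\Phi_t=u_t\circ\Phi_t\,dt$ by exploiting that the diffusion term (and hence its Stratonovich correction) is vertical, then apply the inversion formula \eqref{inversion_nonsmooth} pathwise to obtain \eqref{eq:TE}. So the structure is right.

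There is one misplaced citation that you should fix. You invoke Proposition \ref{prop:Lagr_eval_emb} to obtain the embedded identity \eqref{eq:SDE_evaluated} for $\bar\eta=Ti\circ\eta$. That proposition, however, belongs to the \emph{Eulerian $\to$ Lagrangian} passage: its hypothesis is an Eulerian solution $u$, from which $\eta$ is constructed via \ref{eta:process}. Here you are in the reverse situation---$\eta$ is a given Lagrangian solution and $u$ is defined from it---so you cannot formally quote that proposition. The embedded equation you need follows directly from the definition of solution, equation \eqref{eq:SDE_man_def}, applied to the $C^2$ map $h=Ti$ (equivalently, from Lemma \ref{lem:SDE_embedded}); evaluating at $k$ is then just postcomposition with the continuous linear map $\mathrm{ev}_k$. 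Once you correct this reference, your argument via the first $m$ components of the embedding and \eqref{eq:Sigma_vertical} is fine, and is simply the extrinsic phrasing of what the paper does intrinsically: the paper applies $T\pi_K$ directly on $T^2\Diff^s_\mu(K)$, notes that $\Sigma$ (and the vertical part of $B$) takes values in the vertical bundle, so $T\pi_K$ kills the noise and the correction, leaving only $d\Phi=u\circ\Phi\,dt$. Both routes are equivalent.
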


\begin{proof}
For a.s. $\omega \in \Omega$, $\Phi(\omega)$ is the $H^s$ flow solving the random ODE
\begin{align}\label{Phi_eq}
d\Phi(\omega) = u(t,\Phi(t,\omega),\omega)\, dt.
\end{align}
 To see this, we observe that $d\Phi = T\pi_K (d\eta)$. Inserting the right hand side of \eqref{eq:stochEuler_Lagr} and rewrite the drift $B$ via \eqref{eq:EM_drift}. 
 Recall now that the vertical lift on $\Diff^s_\mu(K)$ in \eqref{eq:stochEuler_Lagr} is taken by the identification of $\Diff^s_\mu(K)$ as a subset of $H^s(K,TK)$ to the pushforward with the vertical lift on $TK$, cf.\ \eqref{Spray_form}. Similarly, the identification takes $T\pi \colon T^2\Diff^s_\mu (K) \rightarrow T\Diff_\mu^s (K)$ to the pushforward by $T\pi_K$. Now as $T\pi_K (v) =0$ holds for every $v \in T^2 K$ which is vertical, we deduce that $T\pi_K$ drops all parts of \eqref{eq:stochEuler_Lagr} which are vertical. After dropping all vertical terms, we see that $\Phi$ solves \eqref{Phi_eq}.
Now we derivate $\Phi^{-1}(k)$ using the formula for the inverse \eqref{inversion_nonsmooth} in $\Diff^s_\mu (K)$. Together with the defintions of $\Phi$ and $u$ then yields \eqref{eq:TE}.
\end{proof}

Note that the reason \eqref{eq:TE} needed to be formulated pointwise for every $k \in K$ is the loss of (spatial) derivatives on $\Phi^{-1}$. Thus while $\Phi^{-1}$ is a well-defined curve with values in $H^s$-diffeomorphisms, the equation does not make sense on $\Diff^s(K)$.

\begin{setup}\label{setup:uprocess}
Now we write somewhat shorter $ u_t = \eta(t)\circ \Phi^{-1}(t)$ for the flow $u$ from \ref{setup:projected_eta} with $\Phi^{-1}$ the flow from Lemma \ref{lem:proj_eta}. Then $ u$ is a $\VFsmu$-valued progressively measurable process. In particular, for $k \in K$, $u_t(k)$ is a $T_kK$-valued progressively measurable process. Using the embedding $i\colon K\to \R^m$ as defined before Proposition \ref{prop:embedding_Diff}, we obtain the $H^s (K,\R^m)$-valued process $\bar u_t \coloneq Di \circ \bar u_t$.
\end{setup}

\begin{prop}\label{prop:Eul_eval_emb}
The $H^{s-1}(K,\R^m)$-valued process $\bar u_t $ from \ref{setup:uprocess} satisfies on $[0,\tau)$ the stochastic differential equation 
\begin{align}
\begin{split}\label{eq:u_Hsm1}
d\bar u_t &= -\Pi[\nabla_{\bar u_t} \bar u_t] dt +dW_t,\\
\bar u_0 &= Di\circ \etazero.
\end{split}
\end{align}
Here, we abuse notation and write $\Pi[\nabla_{u} u]$ and $dW$ for $D^2i\circ \Pi[\nabla_{u} u]$ and $Di\circ dW$ respectively.
\end{prop}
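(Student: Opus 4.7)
The plan is to mirror the strategy from Proposition \ref{prop:Lagr_eval_emb}, but in reverse: we will evaluate the Lagrangian SDE at the pointwise random base $\Phi_t^{-1}(k)$ for each $k\in K$, combine it with the random ODE for $\Phi^{-1}(k)$ derived in Lemma \ref{lem:proj_eta}, and verify that the pointwise identity reconstructs the Eulerian equation. Since both sides of \eqref{eq:u_Hsm1} are continuous $H^{s-1}(K,\R^m)$-valued processes (the right-hand side is in $H^{s-1}$ because $\Pi[\nabla_{u} u]$ loses one derivative while $W$ and $\bar u_0$ are more regular), it suffices to show pointwise equality for every fixed $k\in K$ of the $\R^m$-valued processes $\bar u_t(k)=Di(k)\cdot u_t(k)=\tilde\eta_t(\Phi_t^{-1}(k))$, where $\tilde\eta_t\coloneq Di\circ\eta_t$ denotes the ``vertical'' component of $\bar\eta_t=Ti\circ\eta_t$ as in Proposition \ref{prop:Lagr_eval_emb}.

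First I would fix $k\in K$ and, since the evaluation $H^{s-1}(K,\R^m)\times K\to\R^m$ is not $C^2$ at the required Sobolev level, introduce the regularisation operator $R_\delta$ of Lemma \ref{lem-regularization} to consider the process $\tilde\eta_t^\delta(x)\coloneq R_\delta\tilde\eta_t(x)$. The key point is that $R_\delta\tilde\eta_t$ is in $H^{s+1}$, so the composite $(X,x)\mapsto R_\delta X(x)$ is of class $C^2$ on $H^{s-1}(K,\R^m)\times K$. One then applies the manifold It\^o formula (or equivalently It\^o formula plus an ODE chain rule) to the process $\tilde\eta_t^\delta(\Phi_t^{-1}(k))$, using the SDE for $\tilde\eta$ (the vertical component of the SDE \eqref{eq:eta_Hs} for $\bar\eta$, which follows for $\eta$ itself exactly as in Proposition \ref{prop:Lagr_eval_emb}) and the ODE \eqref{eq:TE} for $\Phi_t^{-1}(k)$.

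The next step is the identification of the resulting terms, and this is where the Ebin--Marsden structure must produce the expected cancellations. Using the explicit formula $B(\eta)=Tu\circ\eta-\text{vl}(\eta,\Pi[\nabla_u u]\circ\Phi)$ from \cite[Proposition 14.2]{EM70}, and evaluating at $\Phi_t^{-1}(k)$ together with $\eta_t(\Phi_t^{-1}(k))=u_t(k)$ and $\Phi_t(\Phi_t^{-1}(k))=k$, the drift from $\tilde\eta$ contributes (in the $\R^m$ component, and after sending $\delta\to 0$) a term of the form $Di(k)\cdot D u_t(k)\cdot u_t(k)-Di(k)\cdot\Pi[\nabla_u u](k)$. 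The chain rule contribution from $d\Phi_t^{-1}(k)=-T\Phi_t^{-1}(u_t(k))dt$ produces, using $T\eta=Tu\circ T\Phi$ and $T_k\Phi_t^{-1}=(T_{\Phi_t^{-1}(k)}\Phi_t)^{-1}$, a term that in the limit $\delta\to 0$ cancels exactly the $Di(k)\cdot Du_t(k)\cdot u_t(k)$ contribution, leaving only $-Di(k)\cdot\Pi[\nabla_u u](k)$. For the noise, using the formula \eqref{eq:Sigma_vertical} and $\pi_K(u_t(k))=k$, the Stratonovich term $\Sigma(\eta)\bullet dW$ evaluated at $\Phi_t^{-1}(k)$ simplifies to $Di(k)\cdot dW_t(k)$; crucially, the Stratonovich--It\^o correction arising from both the noise on $\eta$ and the regularised composition vanishes in the limit because the diffusion coefficient for $\bar u(k)$ depends only on $k$ (it is position-only, no feedback from the solution), which also explains why Stratonovich and It\^o coincide in the Eulerian equation.

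The final step is the passage $\delta\to 0$, which is routine: pointwise convergence $R_\delta\tilde\eta_t\to\tilde\eta_t$ in $H^{s-1}$ uniformly in $t$ on compact subsets of $[0,\tau)$, combined with the stochastic convergence Lemma \ref{lem:convergence_Ito_int} for the It\^o integrals, yields the limit equation pointwise in $k$. The resulting identity is then lifted to an equality of $H^{s-1}(K,\R^m)$-valued processes by continuity. The main obstacle is undoubtedly the precise book-keeping of the cancellation of the transport term $Tu\circ\eta$ against the chain-rule contribution from the inverse-flow ODE, together with proving that the Stratonovich correction produced by the regularisation $R_\delta$ genuinely disappears as $\delta\to 0$ (rather than leaving a residual trace term that would spoil the Eulerian form). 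Both issues are structural consequences of the right-invariance of $B$ and the specific form of $\Sigma$, so they ultimately reduce to algebraic identities between vertical lifts, pushforwards by $\Phi$, and pullbacks by $\Phi^{-1}$, together with the finite-trace summability granted by Hypothesis \ref{hp:noise}.
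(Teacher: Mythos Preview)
Your proposal is correct and follows essentially the same route as the paper: reduce to the pointwise identity at each $k\in K$, regularise via $R_\delta$ so that the evaluation becomes $C^2$, apply the It\^o formula to $(R_\delta\circ Di\circ\eta)(\Phi^{-1}(k))$ using the Lagrangian SDE for $\eta$ together with the ODE \eqref{eq:TE} for $\Phi^{-1}(k)$, observe that the diffusion coefficient at $\Phi^{-1}(k)$ collapses to the deterministic $R_\delta\circ Di\circ w(k)$ so the Stratonovich--It\^o correction vanishes, pass to the limit $\delta\to 0$, and finally identify the drift via the Ebin--Marsden expression \eqref{eq:EM_drift} to see the cancellation between $D(Di\circ\eta)\circ T\Phi^{-1}(u(k))$ and $D^2i\circ Tu\circ u(k)$. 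The only cosmetic difference is that the paper applies the It\^o formula directly to the $C^2$ test function $F_\delta(\eta,k)=\text{eval}(R_\delta\circ Di\circ\eta,k)$ on the manifold $T\Diff^s_\mu(K)\times K$, rather than first writing an embedded SDE for $\tilde\eta$ and then composing; but the computation and the structural points you highlight (the $k$-only dependence of the noise, and the transport-term cancellation) are exactly those of the paper.
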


\begin{proof}
Since $\bar u$ and all the integrals in \eqref{eq:u_Hsm1} are continuous processes with values in $H^{s-1}(K,\R^m)$, it is enough to show that on $[0,\tau)$, for every $k \in K$,
\begin{align}
\bar u_t(k) = \bar u_0(k) -\int_0^t \Pi[\nabla_{\bar u_s} \bar u_s](k)\, ds +W_t(k),\label{eq:SPDE_evaluated}
\end{align}
as equality on $\R^m$. Fix $k \in K$. As before in the proof of Proposition \ref{prop:Eul_eval_emb}, to establish \eqref{eq:SPDE_evaluated}, we would like to apply It\^o formula to $\text{eval}(Di \circ \eta,\Phi^{-1}(k))$, where now $\text{eval} \colon H^s (K,\R^m) \times K \rightarrow \R^m$ is the evaluation map. However, this is not directly possible, because $\eta$ is in $T\Diff^s_\mu(K)$ (mapped via $(Di)_\ast$ to $H^s(K,\R^{m})$, where $\text{eval}$ is not $C^2$. Therefore we use a regularization argument. For $\delta>0$, we consider the map
\begin{align*}
F_\delta:T\Diff^s_\mu(K) \times K \to \R^m,\quad F_\delta(\eta,k) = \text{eval}(R^\delta\circ Di\circ \eta,k),
\end{align*}
where $R_\delta\colon H^s(K,\R^m)\to H^{s+2}(K,\R^m)$ is the regularisation operator from Lemma \ref{lem-regularization}. By Lemma \ref{lem:eval}, since $s+2>d/2+2$, the map $F_\delta$ is of $C^2$-class  and we have
\begin{align*}
DF_\delta(V,v) = D(R_\delta\circ Di\circ \pi_{T\Diff^s_\mu}(V))(v) +R_\delta\circ Di\circ V\circ \pi_K(v),
\end{align*}
where $\pi_{T\Diff^s_\mu}:T^2\Diff^s_\mu(K) \to T\Diff^s_\mu(K)$ is the bundle projection. Therefore by formula \eqref{eq:SDE_man_def} from Definition \ref{def:SDE_man_sol} we infer together with \eqref{eq:TE} that  
the $\R^m$-valued process $\eta^\delta(\Phi^{-1}(k)):= (R_\delta\circ Di\circ \eta)(\Phi^{-1}(k))$ solves the following SDE on $[0,\tau)$,
\begin{align}
\begin{split}\label{eq:u_delta}
&d\eta^\delta(\Phi^{-1}(k))\\
&= D(R_\delta\circ Di\circ \eta)\circ (-T\Phi^{-1} (u(k)))dt +R_\delta\circ D^2i\circ B(\eta)\circ \Phi^{-1}(k)\, dt\\
&\quad +R_\delta\circ D^2i\circ \Sigma(\eta)\circ \Phi^{-1}(k) \bullet dW.
\end{split}
\end{align}
By the definition of $\Sigma$, namely $\Sigma(\eta)w=\text{vl}_{T\Diff^s_\mu(K)}(\eta,w\circ \pi_K(\eta))$, we have, for every $w \in \VFsmu[r]$ since $\Sigma$ takes values in the vertical bundle (cf.\ \eqref{eq:Sigma_vertical}),
\begin{align*}
[R_\delta\circ D^2i\circ \Sigma(\eta)\circ \Phi^{-1}(k)] w = R_\delta\circ Di\circ w\circ \Phi\circ \Phi^{-1}(k) = R_\delta\circ Di\circ w(k).
\end{align*}
Since the latter term is deterministic (it does not depend on $\eta$ nor on $\Phi$) and $\eta^\delta(\Phi^{-1}(k))$ lives on the Hilbert space $\R^m$, the It\^o-Stratonovich correction is zero and equation \eqref{eq:u_delta} reads in It\^o form as
\begin{align}
\begin{split}\label{eq:u_delta_2}
&d\eta^\delta(\Phi^{-1}(k))\\
&= -D(R_\delta\circ Di\circ \eta)\circ T\Phi_t^{-1} (u(k))\, dt +R_\delta\circ D^2i\circ B(\eta)\circ \Phi^{-1}(k)\, dt\\
&\quad +\text{eval}(R_\delta\circ Di\circ I, k)\, dW\\
&=: b^\delta dt +\text{eval}(R_\delta\circ Di\circ I, k)\, dW.
\end{split}
\end{align}
Now we let $\delta\to 0$. By similar arguments to those in the proof of Proposition \ref{prop:Lagr_eval_emb}, we get that $\eta^\delta(\Phi^{-1}(k))$ converges to $Di\circ u(k)=Di\circ \eta(\Phi^{-1}(k))$ locally uniformly on $[0,\tau)$,  a.s. and the drift $b^\delta$ converges to
\begin{align*}
b &:= -D(Di\circ \eta)\circ T\Phi_t^{-1} (u(k)) + D^2i\circ B(\eta)\circ \Phi^{-1}(k)
\end{align*}
locally uniformly on $[0,\tau)$  a.s.. As for the diffusion term, the stochastic integral reads simply as
\begin{align*}
\int_0^t \text{eval}(R_\delta\circ Di\circ I, k)\, dW = R_\delta \circ W_t (k)
\end{align*}
and so it converges to $W_t(k)$ locally uniformly in time,  a.s.. Hence we can pass to the  a.s. limit in \eqref{eq:u_delta_2} and obtain the following SDE for $u(k)$, as $\R^m$-valued process, on $[0,\tau)$:
\begin{align*}
u_t(k) = u_0(k) +\int_0^t b_s ds +W_t(k).
\end{align*}
It remains to show that $b = -Di\circ \Pi[\nabla_u u](k)$. For this, using the expression \eqref{eq:EM_drift} (cf. also \cite[Proposition 14.2]{EM70}), we have
\begin{align*}
b &= -D(Di\circ u\circ \Phi)\circ T\Phi^{-1} (u(k))+ D^2i\circ (Tu\circ u -\text{vl}_{T\Diff^s_\mu(K)}(\eta,\Pi[\nabla_u u]))(k)\\
&= -D^2i\circ Tu \circ T\Phi \circ T\Phi^{-1}\circ u(k) +D^2i\circ Tu\circ u(k) -Di\circ \Pi[\nabla_u u](k)\\
&= -Di\circ \Pi[\nabla_u u](k).
\end{align*}
 Hence $\bar u(k)$ satisfies \eqref{eq:SPDE_evaluated}. The proof is complete.
\end{proof}

\begin{proof}[Proof of Theorem \ref{thm:equivalence_flow_pde}, a passage from the Lagrangian to the Eulerian solutions] We only need to collect the results established. 
By Proposition \ref{prop:Eul_eval_emb}, $\bar u$ satisfies the Eulerian equation \eqref{eq:u_Hsm1} on $[0,\tau)$ as equation on $H^{s-1}(K,\R^{m})$. 
Now $u$ is an $\VFsmu[s-1]$-valued progressively measurable process such that $Ti \circ u = (i, Di \circ u) = (i, \bar u)$.
Recall that $Ti \colon TK \rightarrow \R^{2m}$ is an embedding. Its pushforward $(Ti)_\ast$ identifies $T\Diff^s_\mu (K)$ with a split submanifold of $H^s(K,\R^{2m})$, Corollary \ref{cor:embedding_TDiff}. Recall that $T_{\id} \Diff^s_{\mu} (K) = \VFsmu[s-1]$ is a closed linear subspace of $T\Diff^s_\mu (K)$. We write now $\text{pr}_m \colon \R^{2m} \rightarrow \R^m$ for the projection onto the first $m$-components of $\R^{2m}$, then $Ti$ identifies $\VFsmu[s-1]$ with the closed affine subspace $A = \{F \in H^{s-1}(K,\R^{2m}) : \text{pr}_{m} \circ F = i\}$ of $H^{s-1}(K,\R^{2m})$. Identifying the affine subspace canonically via $F \mapsto F-(i,0)$ with a closed subspace\footnote{Note that one obtains only a subspace, as $i(K)$ is an embedded submanifold and a vector field needs to take its values in the tangent space to $K$. This is however inessential for our above discussion and we thus get around choosing local representatives for these tangent spaces.} of $H^{s-1} (K,\R^m)$. The resulting identification satisfies $(Ti_*)u - (i,0) =\bar u$. We deduce from Lemma \ref{lem:SDE_embedded} that $u$ satisfies the Eulerian equation \eqref{eq:stochEuler_Eulerian} on $[0,\tau)$ (as an equation in $\VFsmu[s-1]$). The proof is complete.
\end{proof}

\begin{rem}[on forced versions of the Euler equations]
Of obvious interest is, in which way our results carry over to forced versions of the stochastic Euler equations. The equation in question reads in the Eulerian form on $K$ as
\begin{align*}
 \frac{\partial}{\partial t} u + \nabla_u u + \text{grad } p&= \dot{W} + f\\
 \text{div} (u)&=0
\end{align*}
where $f$ is a vector field modelling an external deterministic force.
As already established in \cite[Section 11]{EM70}, the force $f$ then lifts to an additive forcing term for the Lagrangian equation \eqref{eq:stochEuler_Lagr} on $T\Diff_\mu^s (K)$. There, the added force leads to a replacement of the drift term $B$ by a modified drift $B_f$. 
Checking the proofs of our main results, the proofs only needed to assume on the drift term $B$ that the drift is
\begin{enumerate}
\item  regular enough (at least $C^2$, giving the desired $C^{1,1}$ regularity), and 
\item equivariant under the canonical right action of  $\Diff_\mu^s (K)$ on its tagent bundle.
\end{enumerate}
It is a classical result (see again \cite{EM70}) that the new drift term $B_f$ is of class $C^k$ if the time dependent vector field $f \in C(\R, \VFsmu[s+k])$. Moreover, the modified drift term is also again right equivariant. Hence we conclude that all of our results such as the stochastic no-loss-no gain result, Theorem \ref{thm:noloss_nogain}, and results such as Theorem \ref{thm:wellposed_Lagr} hold also by the same proofs for the forced equation.
\end{rem}

\begin{appendix}
\section{Sobolev bundle sections and vector fields}\label{App:Sobsect}

In this appendix we compile some basic material on Sobolev type sections of vector bundles. These arise naturally as model spaces of manifolds of Sobolev type sections. We then will recollect some basic results on flows of Sobolev type vector fields on smooth manifolds (possibly with boundary).

\begin{defn}\label{defn:bundle-sections}
Let $\pi \colon E \to M$ be a smooth vector bundle of finite rank over a $d$-dimensional manifold $M$ (not necessarily compact, but possibly with smooth boundary). Assume that $s > d/2$, whence the $H^s$-Sobolev morphisms are at least continuous (cf.\ Definition \ref{defn:Sobolevmaps}).
We let $$H^s(E) = \{g \in H^s (M,E) \mid \pi \circ g = \id_M\}$$ be the space of $H^s$-sections of $E$ endowed with the subspace topology of $H^s(M,E)$. For a map $f \in H^s (K,M)$ we define
$$H^s_f(K,E) \coloneq \{X \in H^s (K,E) \mid \pi \circ X = f\}.$$ 
\end{defn}

In the literature, cf.\ e.g.\ \cite{zbMATH03808476,MR0198494}, vector valued Sobolev functions and Sobolev type bundle sections on Riemannian manifolds (possibly with boundary) are often defined as completions of spaces of smooth functions. However, this alternative definition coincides with the one from Definition \ref{defn:bundle-sections} as we shall explain now.

\begin{setup}\label{setup:top_struct_sect}
Let $\pi \colon E \to K$ be a vector bundle over $K$ with a fixed bundle metric $g_E$ on $E$. Using the bundle metric, the iterated covariant derivatives $\nabla^k$ and the volume form $\mu$ on $K$, one can define the $H^\ell$-norm for smooth sections and $\ell \in \mathbb{N}_0$ as follows:
$$\lVert X\rVert_{H^\ell} \coloneq \left(\sum_{j=0}^\ell \int_K g_E (\nabla^j X, \nabla^j X) \mathrm{d}\mu\right)^{1/2}, \quad X \in C^{\infty} (K,E).$$
This norm generalises \eqref{Hell_product} to the bundle section setting. As explained in \cite[Definition 2.3]{zbMATH03808476}, one can thus alternatively define the spaces $H^\ell (E)$ for all $\ell \in \mathbb{N}_0$ as the Hilbert space completion of the space of smooth sections with values in $E$.

For $\ell = s > d/2 +1$ the space $H^s(E)$ from Definition \ref{defn:bundle-sections} is isomorphic to the completion of the smooth sections of the bundle $E$ (see \cite[B.2, B.3 and Lemma B.4]{MMS19}). Note that the Hilbert space topology coincides with the subspace topology induced by $H^s(K,E)$ (to define  the ambient space topology, we needed $s$ to be large enough to define Sobolev mappings via charts).

The space $H^s_f(K,E)$ admits a unique Hilbert space structure. We shall always topologize the space $H^s_f(K,E)$ with this structure and note that the topology coincides again with the subspace topology induced by the inclusion 
$H^s_f(K,E) \subseteq H^s(K,E)$.
\end{setup}

\subsection*{Flows of Sobolev type vector fields}

We recall some known results on flows of time dependent Sobolev vector fields. 

For vector fields we use a specialised notation for the spaces of bundle sections discussed in the last section.
\begin{defn}
Let $M$ be a $d$-dimensional manifold (not necessarily compact, possibly with non-empty boundary). For the tangent bundle $TM$ we shall  write $\mathfrak{X}^s (M) \coloneq H^s(TM) = H^s_{\id} (M,TM)$ for the space of $H^s$-vector fields. Define the subspace $\mathfrak{X}^s_\mu (M)$ of divergence free vector fields (with respect to some chosen Riemannian volume form $\mu$).

If $M$ is a manifold with (smooth) boundary, we shall always assume that vector fields in $\mathfrak{X}^s(M)$ and $\VFsmu[s]$ are tangential to the boundary $\partial M$, i.e.\ $X(m)\in T_m (\partial M)$. 
The condition is equivalent to the boundary condition
\begin{align}\label{boundary_condition}
g(\vec{n}(k),X(m)\rangle =0, \quad \text{ for all } m\in \partial M,
\end{align}
where $\vec{n}$ denotes the (outward pointing) normal vector field to the boundary $\partial M$ (\cite[Proposition 15.33]{MR2954043}). Since $s>d/2$ the $H^s$ topology is finer than the compact open topology. In particular, point evaluations are continuous (cf.\ Lemma \ref{lem:eval}). Thus the tangential vector fields form a closed subspace of $\mathfrak{X}^s(M)$ and inherit the topological properties of the ambient space we care about. To keep the notation simple we will suppress the additional condition in the notation whenever $M$ has boundary.
\end{defn}

Let again $K$ be a $d$-dimensional compact Riemannian manifold (possibly with smooth boundary) and $I$ a compact interval containing $0$. Following \cite{MR3635359} we define the following.  

\begin{defn}\label{defn:flows}
Fix an integer $s > d/2 +1$ and consider a time dependent vector field $X \in L^1 (I, \mathfrak{X}^s(K))$. A map $\varphi \colon I \times K \to K$ is the \emph{pointwise flow} of $X$ if $\varphi(0,k)=k,\ \forall k \in K$ and for each pair $(t,k) \in I \times K$ there exists a chart $(U,\kappa)$ around $x$ and a chart $(V,\psi)$ around $\varphi (t,x)$ such that with $Y \coloneq T\psi \circ X \circ \psi^{-1}$ and $\eta \coloneq \psi \circ\varphi  \circ\kappa^{-1}$ the flow equation
$$\eta(s,\ell) = \eta(t,\kappa(k)) + \int_t^s Y(\tau , \eta(\tau, \ell)) \mathrm{d}\tau$$
holds for $(s,\ell)$ near $(t,\kappa(k))$. If in addition $\varphi \in C(I,\Diff^s(K))$, i.e., $\varphi$ is a continuous $\Diff^s(K)$-valued curve, we call $\varphi$ the $\Diff^s(K)$\emph{-valued flow} of $X$.
\end{defn}
Note that for (sufficiently)\, differentiable vector fields, the notion of flow in Definition \ref{defn:flows} coincides with the usual definition of a flow of a vector field.
Working in local charts one can establish the following.

\begin{prop}[{\cite[Theorem 5.8]{MR3635359}}]\label{prop:Hs-flows}
Let $s>d/2 + 1$ and $K$ be a manifold \textbf{without} boundary. Then $X \in L^1 (I,\mathfrak{X}^s(K))$ has a $\Diff^s(K)$-valued flow $\varphi_X$ and for each $t \in I$, one obtains a continuous  map 
$$\mathrm{Fl}_t \colon L^1 (I,\mathfrak{X}^s(K)) \to \Diff^s (K), \quad X \mapsto \varphi_X (t).$$
\end{prop}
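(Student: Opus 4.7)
The plan is to combine classical ODE arguments in charts with Sobolev-type estimates for the spatial derivatives of the flow, culminating in a continuity argument for the parameter dependence on $X$.

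First I would establish the existence of a pointwise flow. Because $s > d/2 + 1$, the Sobolev embedding theorem gives a continuous linear inclusion $\mathfrak{X}^s(K) \hookrightarrow C^1(K,TK)$, so for a.e.\ $t \in I$ the vector field $X(t,\cdot)$ is $C^1$ and hence locally Lipschitz. Working in a fixed smooth finite atlas of $K$ (pulling back local representatives to bounded open subsets of $\mathbb{R}^d$), the theory of Carathéodory ODEs with $L^1$-in-time, Lipschitz-in-space right-hand side, applied successively when trajectories cross chart boundaries, produces a unique pointwise flow $\varphi \colon I \times K \to K$ in the sense of Definition \ref{defn:flows}. Compactness of $K$ together with the global-in-$k$ boundedness of $X(t,\cdot)$ inherited from $\mathfrak X^s(K) \hookrightarrow C^0(K,TK)$ prevents finite-time blow-up, yielding a flow defined on all of $I$.

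The main step, and the hardest one, is to upgrade $\varphi(t,\cdot)$ from a $C^1$-diffeomorphism to an element of $\Diff^s(K)$. The plan is to work in a fixed atlas of $K$ and differentiate the integral equation $\eta(s,\ell) = \ell + \int_0^s Y(\tau,\eta(\tau,\ell))\, d\tau$ up to order $s$ in $\ell$. By induction on $|\alpha|$ for $|\alpha|\le s$, the derivative $D^\alpha \eta(t,\cdot)$ satisfies a linear transport-type integral equation of the form
\[
D^\alpha \eta(t,\ell) = \int_0^t D_x Y(\tau,\eta(\tau,\ell)) D^\alpha \eta(\tau,\ell)\, d\tau + R^\alpha(t,\ell),
\]
where the remainder $R^\alpha$ is a polynomial expression in lower order derivatives of $\eta$ and in $(D_x^\beta Y)\circ \eta$ for $|\beta|\le |\alpha|$. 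The key analytic point is to control $\|R^\alpha(t,\cdot)\|_{L^2}$ using the $H^s$-algebra and composition estimates: the composition $(D_x^\beta Y)\circ \eta$ belongs to $L^2$ with bounds in terms of $\|Y(t,\cdot)\|_{H^s}$ and the already-established $H^{s-1}$ bounds on $D\eta$, while the multiplicative structure of $H^s$ (valid since $s>d/2$) handles the polynomial combinations. A Gronwall argument then gives $D^\alpha\eta(t,\cdot) \in L^2$ uniformly on $I$, hence $\eta(t,\cdot)\in H^s$, and the time-continuity $\eta \in C(I,H^s)$ follows from dominated convergence since $X\in L^1(I,\mathfrak X^s)$. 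Applying the same argument to the flow of the time-reversed field $-X(t_0 - \cdot,\cdot)$ yields $\varphi(t,\cdot)^{-1} \in H^s$, hence $\varphi_X(t) \in \Diff^s(K)$.

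Finally, to prove that $\mathrm{Fl}_t \colon L^1(I,\mathfrak{X}^s(K)) \to \Diff^s(K)$ is continuous, I would take a sequence $X_n \to X$ in $L^1(I,\mathfrak{X}^s(K))$ and compare $\varphi_{X_n}(t)$ with $\varphi_X(t)$. Writing the difference of the two integral flow equations and subtracting, the Lipschitz property of $X$ in the spatial variable (uniformly in time when tested against the $H^s$-norm via the embedding) combined with the estimates for differences of compositions in $H^s$ yields, after a Gronwall argument,
\[
\|\varphi_{X_n}(t) - \varphi_X(t)\|_{H^s} \le C\left(\|X\|_{L^1(I,\mathfrak X^s)}\right) \|X_n - X\|_{L^1(I,\mathfrak X^s)},
\]
from which the stated continuity is immediate. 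The hard part of the whole argument is the induction that propagates $H^s$-regularity from the right-hand side to the flow: it requires the full battery of Sobolev composition and multiplication lemmata on compact manifolds, which is exactly why the hypothesis $s>d/2+1$ (rather than only $s>d/2$) is used.
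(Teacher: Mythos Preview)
The paper does not give its own proof of this proposition: it is stated as a citation of \cite[Theorem 5.8]{MR3635359} and is used as a black box. The remark following it explicitly says the authors ``refrain from working this out in the present paper'' even for the boundary case. So there is no paper-proof to compare against.

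Your outline is a reasonable sketch of the standard strategy for such results, and it captures the right ingredients: Carath\'eodory existence via the embedding $\mathfrak{X}^s(K)\hookrightarrow C^1$, propagation of $H^s$-regularity to the flow, and continuous dependence via Gronwall. One point where your sketch is genuinely thin is the inductive step for $D^\alpha\eta$: you assume you can bound $\|(D_x^\beta Y)\circ\eta\|_{L^2}$ in terms of $\|Y\|_{H^s}$ and lower-order norms of $\eta$, but at the top order $|\beta|=s$ the composition $(D_x^s Y)\circ\eta$ is only in $L^2$ a priori, and controlling its $L^2$-norm after composition with $\eta$ requires a Jacobian bound, which in turn needs $\eta\in C^1$ with Jacobian bounded away from zero --- this is available from the first step, but should be made explicit. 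The actual proofs in the literature (e.g.\ the cited reference, or the Inci--Kappeler--Topalov work \cite{IKT13} already used in the paper) typically organise this more carefully, often recasting the flow equation as a fixed-point problem on a Banach space of $H^s$-valued curves rather than differentiating pointwise, which sidesteps the bootstrap issue. Your final Lipschitz-type estimate for continuity in $X$ is also optimistic as stated; one usually gets continuity but not a global Lipschitz bound without further work.
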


As we wish to treat manifolds with smooth boundary, Proposition \ref{prop:Hs-flows} is not quite sufficient for our purposes.
However, the exact analog for $K$ with smooth boundary of Proposition \ref{prop:Hs-flows} does not seem to be available in the literature. 

\begin{rem}\label{rem:Hs-flow-reg}
In \cite{EM70}, a similar statement to Proposition \ref{prop:Hs-flows} is established for continuous in time vector fields on manifolds with boundary for $s > d/2 +2$. While continuous time dependence would be unproblematic for our arguments, we would prefer to press the regularity of the Sobolev function as low as possible. As announced in \cite{EaFaM72}, the result also holds for the lower regularity $s > d/2 +1$ and can indeed be found for the special case of $K$ being an embedded domain with smooth boundary in \cite[Appendix A]{BaB74}. However, we were not able to find the specific result (neither for $L^1$ in time, nor continuous in time vector fields) on a general manifold in the literature. Reviewing the arguments of \cite{MR3635359} it is easy to see that they can be adapted for the case of a manifold with boundary. We refrain from working this out in the present paper, as the technical arguments will not be different from the ones in \cite{MR3635359}. Details for the proof of Proposition \ref{prop:Hs-flows} for $K$ with smooth boundary will be given elsewhere. 
\end{rem}
\end{appendix}

\addcontentsline{toc}{section}{References}
\bibliography{EM_SPDE}
\end{document}